\author{Ji Xu}
\author{Daniel Hsu}
\author{Arian Maleki}
\affil{Columbia University}
\newtheorem{theorem}{Theorem}
\newtheorem{lemma}{Lemma}
\newtheorem{corollary}{Corollary}
\newtheorem{remark}{Remark}
\definecolor{purple}{rgb}{0.8,0.1,0.5}
\def\ep{\epsilon}
   \def\de{\delta}
   \def\la{\lambda}
   \def\a{\alpha}
   \def\n{\nonumber}
   \def\conp{\stackrel{p}{\rightarrow}}
   \def\bbE{\mathbb{E}}
   \def\bbR{\mathbb{R}}
   \def\funcP{P}
   \def\funcQ{\Gamma}
   \def\funcp{p}
   \def\funcq{\gamma}
   \def\vfuncq{\boldsymbol{\funcq}}
   \def\vfuncp{\mathsf{\funcp}}
   \newcommand\R{\ensuremath{\mathbb{R}}}
   \newcommand\E{\ensuremath{\mathbb{E}}}
   \newcommand\Y{\ensuremath{\boldsymbol{Y}}}
   \newcommand\y{\ensuremath{\boldsymbol{y}}}
   \newcommand\x{\ensuremath{\boldsymbol{x}}}
   \newcommand\w{\ensuremath{\mathsf{w}}}
   \newcommand\stt{\boldsymbol{\theta}^\star}
   \def\innertm(#1){\boldsymbol{\mu}_{#1}^\star}
   \newcommand\tm[1]{\innertm(#1)}
   \newcommand\tstt{\tilde{\boldsymbol{\theta}^\star}}
   \newcommand\tstti[1]{\tilde{\theta^\star_{#1}}}
   \def\vnu{\boldsymbol{\nu}}
   \def\veta{\boldsymbol{\eta}}
   \newcommand\pt[1]{\boldsymbol{\theta}^{\langle #1 \rangle}}
   \def\innerpm(#1,#2){\boldsymbol{\mu}_{#1}^{\langle #2 \rangle}}
   \renewcommand\pm[1]{\innerpm(#1)}
   \newcommand\pa[1]{\boldsymbol{a}^{\langle #1 \rangle}}
   \newcommand\pb[1]{\boldsymbol{b}^{\langle #1 \rangle}}
   \newcommand\pp[1]{\mathsf{\funcp}^{\langle #1 \rangle}}
   \newcommand\pq[1]{\boldsymbol{\funcq}^{\langle #1 \rangle}}
   \newcommand\tpt[1]{\tilde{\boldsymbol{\theta}}_{#1}}
   \newcommand\tpa[1]{\tilde{\boldsymbol{a}}^{\langle #1 \rangle}}
   \def\innertpai(#1,#2){\tilde{a}_{#1}^{\langle #2 \rangle}}
   \newcommand\tpai[1]{\innertpai(#1)}
   \newcommand\tpb[1]{\tilde{\boldsymbol{b}}^{\langle #1 \rangle}}
   \def\innertpbi(#1,#2){\tilde{b}_{#1}^{\langle #2 \rangle}}
   \newcommand\tpbi[1]{\innertpbi(#1)}
   \newcommand\tpq[1]{\tilde{\boldsymbol{\funcq}}^{\langle #1 \rangle}}
   \def\innertpqi(#1,#2){\tilde{\funcq}_{#1}^{\langle #2 \rangle}}
   \newcommand\tpqi[1]{\innertpqi(#1)}
   \newcommand\ta{\tilde{\boldsymbol{a}}}
   \newcommand\tb{\tilde{\boldsymbol{b}}}
   \newcommand\tq{\tilde{\boldsymbol{\funcq}}}
   \newcommand\bpa[1]{\bar{\boldsymbol{a}}^{\langle #1 \rangle}}
   \newcommand\bpb[1]{\bar{\boldsymbol{b}}^{\langle #1 \rangle}}
   \newcommand\bpp[1]{\bar{\mathsf{\funcp}}^{\langle #1 \rangle}}
   \newcommand\bpq[1]{\bar{\boldsymbol{\funcq}}^{\langle #1 \rangle}}
   \newcommand\tai[1]{\tilde{a}_{#1}}
   \newcommand\tbi[1]{\tilde{b}_{#1}}
   \newcommand\tqi[1]{\tilde{\funcq}_{#1}}
   \def\innertheta(#1,#2){\theta^{\star}_{\langle #2 \rangle,#1}}
   \newcommand\ptt[1]{\innertheta(#1)}
   \def\innerpdf(#1,#2,#3){\phi^{+}_{#1}(#2,#3)}
   \newcommand\pdf[1]{\innerpdf(#1)}
   \def\innerdpdf(#1,#2){\phi^{-}(#1,#2)}
   \newcommand\dpdf[1]{\innerdpdf(#1)}
   \def\innerpj(#1,#2){\Pi^{\langle #2\rangle}_{#1}}
   \def\innere(#1,#2){\boldsymbol{e}_{#1}^{\langle #2 \rangle}}
   \newcommand\e[1]{\innere(#1)}
   \newcommand\pbeta[1]{\beta^{\langle #1 \rangle}}
   \newcommand\palpha[1]{\alpha^{\langle #1 \rangle}}
   \newcommand\st[1]{\hat{\boldsymbol{\theta}}^{\langle #1 \rangle}}
   \def\innersm(#1,#2){\hat{\boldsymbol{\mu}}_{#1}^{\langle #2 \rangle}}
   \newcommand\sm[1]{\innersm(#1)}
   \newcommand\sa[1]{\hat{\boldsymbol{a}}^{\langle #1 \rangle}}
   \renewcommand\sb[1]{\hat{\boldsymbol{b}}^{\langle #1 \rangle}}
   \newcommand\sq[1]{\hat{\boldsymbol{q}}^{\langle #1 \rangle}}
   \renewcommand\sp[1]{\hat{\mathsf{p}}^{\langle #1 \rangle}}
\newcommand\sgn{\ensuremath{\operatorname{sgn}}}
\def\ddefloop#1{\ifx\ddefloop#1\else\ddef{#1}\expandafter\ddefloop\fi}
\def\ddef#1{\expandafter\def\csname bf#1\endcsname{\ensuremath{\mathbf{#1}}}}
\def\ddef#1{\expandafter\def\csname bf#1\endcsname{\ensuremath{\pmb{\csname #1\endcsname}}}}
\def\ddef#1{\expandafter\def\csname bb#1\endcsname{\ensuremath{\mathbb{#1}}}}
\def\ddef#1{\expandafter\def\csname c#1\endcsname{\ensuremath{\mathcal{#1}}}}
\def\ddef#1{\expandafter\def\csname v#1\endcsname{\ensuremath{\boldsymbol{#1}}}}
\def\ddef#1{\expandafter\def\csname v#1\endcsname{\ensuremath{\boldsymbol{\csname #1\endcsname}}}}
\renewcommand\t{{\ensuremath{\scriptscriptstyle{\top}}}}
\DeclareMathOperator*{\argmax}{arg\,max}
\renewcommand\v{\ensuremath{\boldsymbol}}
\newcommand\parens[1]{(#1)}
\renewcommand\norm[1]{\|#1\|} % already defined in commath
\newcommand\braces[1]{\{#1\}}
\renewcommand\abs[1]{|#1|} % already defined in commath
\newcommand\dotp[1]{\langle #1 \rangle}
\newcommand\Parens[1]{\mleft(#1\mright)}
\newcommand\Norm[1]{\mleft\|#1\mright\|}
\newcommand\Brackets[1]{\mleft[#1\mright]}
\title{Global Analysis of Expectation Maximization \\ for Mixtures of Two Gaussians}
\begin{document}
\maketitle
{\def\thefootnote{}
\footnotetext{E-mail:
\texttt{jixu@cs.columbia.edu},
\texttt{djhsu@cs.columbia.edu},
\texttt{arian@stat.columbia.edu}}

\begin{abstract}
Expectation Maximization (EM) is among the most popular algorithms for
estimating parameters of statistical models.
However, EM, which is an iterative algorithm based on the maximum likelihood
principle, is generally only guaranteed to find stationary points of the
likelihood objective, and these points may be far from any maximizer.
This article addresses this disconnect between the statistical principles behind
EM and its algorithmic properties.
Specifically, it provides a global analysis of EM for specific models in which
the observations comprise an i.i.d.~sample from a mixture of two Gaussians.
This is achieved by (i) studying the sequence of parameters from idealized
execution of EM in the infinite sample limit, and fully characterizing the limit
points of the sequence in terms of the initial parameters; and then (ii) based
on this convergence analysis, establishing statistical consistency (or lack
thereof) for the actual sequence of parameters produced by EM.
\end{abstract}

\section{Introduction}

Since Fisher's 1922 paper~\citep{fisher1922mathematical}, maximum likelihood estimators (MLE) have
become one of the most popular tools in many areas of science and engineering.
The asymptotic consistency and optimality of MLEs have provided users with the
confidence that, at least in some sense, there is no better way to estimate
parameters for many standard statistical models.
Despite its appealing properties, computing the MLE is often intractable.
Indeed, this is the case for many \emph{latent variable models} $\{
f(\mathcal{Y},\vz;\veta) \}$, where the \emph{latent variables} $\vz$ are not observed.
For each setting of the parameters $\veta$, the marginal distribution of the
observed data $\mathcal{Y}$ is (for discrete $\vz$)
\[
  f(\mathcal{Y}; \veta) \ = \ \sum_{\vz} f(\mathcal{Y},\vz; \veta) \,.
\]
It is this marginalization over latent variables that typically causes the
computational difficulty. Furthermore, many algorithms based on the MLE principle are only known to find stationary
points of the likelihood objective (e.g., local maxima), and these points are
not necessarily the MLE.

\subsection{Expectation Maximization}

Among the algorithms mentioned above,
Expectation Maximization (EM) has attracted more attention for the simplicity of
its iterations, and its good performance in
practice~\citep{dempster1977ml,redner1984mixture}.
EM is an iterative algorithm for climbing the likelihood objective starting from
an initial setting of the parameters $\hat{\veta}^{\langle 0 \rangle}$.
In iteration $t$, EM performs the following steps:%
\begin{align}
  \label{eq:actualEMgeneric}
  \text{E-step:} & &
  \hat{Q}(\veta \mid \hat{\veta}^{\langle t \rangle })
  & \ \triangleq \ \sum_{\vz} f(\vz \mid \mathcal{Y}; \hat{\veta}^{\langle t \rangle}) \log f(\mathcal{Y}, \vz; \veta) \,, \\
  \text{M-step:} & &
  \hat{\veta}^{\langle t+1 \rangle}
  & \ \triangleq \ \argmax_{\veta} \hat{Q}(\veta \mid \hat{\veta}^{\langle t \rangle} ) \,,
\end{align}
In many applications, each step is intuitive and can be performed very
efficiently.

Despite the popularity of EM, as well as the numerous theoretical studies of its
behavior, many important questions about its performance---such as its
convergence rate and accuracy---have remained unanswered.
The goal of this paper is to address these questions for specific models
(described in Section~\ref{sec:maincont}) in which the observation $\mathcal{Y}$
is an i.i.d.~sample from a mixture of two Gaussians.

Towards this goal, we study an idealized execution of EM in the large sample
limit, where the E-step is modified to be computed over an infinitely large
i.i.d.~sample from a Gaussian mixture distribution in the model.
In effect, in the formula for $\hat{Q}(\veta\mid\hat{\veta}^{\langle t \rangle})$, we
replace the observed data $\mathcal{Y}$ with a random variable $\Y \sim
f(\vy;\veta^\star)$ for some Gaussian mixture parameters $\veta^\star$ and then
take its expectation.
The resulting E- and M-steps in iteration $t$ are
\begin{align}
  \label{eq:populEMgeneric}
  \text{E-step:} & &
  Q(\veta \mid \veta^{\langle t \rangle })
  & \ \triangleq \ \E_{\Y}
  \sbr{ \sum_{\vz} f(\vz \mid \vY; \veta^{\langle t \rangle}) \log f(\vY, \vz; \veta) } \,, \\
 \text{M-step:} & &
  \veta^{\langle t+1 \rangle}
  & \ \triangleq \ \argmax_{\veta} Q(\veta \mid \veta^{\langle t \rangle} )
  \,.
\end{align}
This sequence of parameters $\parens{\veta^{\langle t \rangle}}_{t\geq0}$ is fully
determined by the initial setting $\veta^{\langle 0 \rangle}$.
We refer to this idealization as \emph{Population EM}.
Not only does Population EM shed light on the dynamics of EM in the large sample
limit, but it can also reveal some of the fundamental limitations of EM.
Indeed, if Population EM cannot provide an accurate estimate for the parameters
$\veta^\star$, then intuitively, one would not expect the EM algorithm with a
finite sample size to do so either.
(To avoid confusion, we refer the original EM algorithm run with a finite sample
as \emph{Sample-based EM}.)

\subsection{Models and Main Contributions}\label{sec:maincont}
In this paper, we study EM in the context of two simple yet popular and
well-studied Gaussian mixture models.
The two models, along with the corresponding Sample-based EM and Population EM
updates, are as follows:

\paragraph{Model 1.}
The observation $\cY$ is an i.i.d.~sample from the mixture distribution $0.5
N(-\stt, \vSigma) + 0.5 N(\stt, \vSigma)$; $\vSigma$ is a known covariance
matrix in $\R^d$, and $\stt$ is the unknown parameter of interest.
\begin{enumerate}
  \item Sample-based EM iteratively updates its estimate of $\stt$ according to the following equation:
    \begin{eqnarray}\label{eq:emfirstmodelact1sample}
      \st{t+1} & = & \frac{1}{n} \sum_{i=1}^n \del[2]{ 2\w_d\del[1]{\y_i,\st{t}}-1 } \y_i
      ,
    \end{eqnarray}
    where $\y_1,\dotsc,\y_n$ are the independent draws that comprise $\cY$,
    \begin{eqnarray*}
      \w_d(\y,\vtheta)
      & \triangleq &
      \frac{\phi_d(\y - \vtheta)}{\phi_d(\y - \vtheta) + \phi_d(\y + \vtheta)}
      ,
    \end{eqnarray*}
    and $\phi_d$ is the density of a Gaussian random vector with mean $\v0$ and
    covariance $\vSigma$.

  \item Population EM iteratively updates its estimate according to the following equation:
    \begin{eqnarray}\label{eq:emfirstmodelact1population}
      \pt{t+1} & = & \E (2\w_d(\Y,\pt{t})-1) \Y
      ,
    \end{eqnarray}
    where $\Y \sim 0.5 N(-\stt, \vSigma) + 0.5 N(\stt, \vSigma)$.
\end{enumerate}

\paragraph{Model 2.}
The observation $\cY$ is an i.i.d.~sample from the mixture distribution $0.5
N(\tm{1}, \vSigma) + 0.5 N(\tm{2}, \vSigma)$.
Again, $\vSigma$ is known, and $(\tm{1},\tm{2})$ are the unknown parameters of interest.
\begin{enumerate}
  \item Sample-based EM iteratively updates its estimate of $\tm{1}$ and $\tm{2}$ at every iteration according to the following equations:
    \begin{eqnarray}\label{eq:emfirstmodelact2sample}
      \sm{1,t+1} & = & \frac{\sum_{i=1}^n \mathsf{v}_d(\y_i,\sm{1,t},\sm{2,t})\y_i}{\sum_{i=1}^n \mathsf{v}_d(\y_i,\sm{1,t},\sm{2,t})}, \\
      \sm{2,t+1} & = & \frac{\sum_{i=1}^n (1-\mathsf{v}_d(\y_i,\sm{1,t},\sm{2,t}))\y_i}{\sum_{i=1}^n (1-\mathsf{v}_d(\y_i,\sm{1,t},\sm{2,t}))},
    \end{eqnarray}
    where $\y_1,\dotsc,\y_n$ are the independent draws that comprise $\cY$, and
    \begin{eqnarray*}
      \mathsf{v}_d(\y,\vmu_1,\vmu_2)
      & \triangleq &
      \frac{\phi_d(\y - \vmu_1)}{\phi_d(\y - \vmu_1) + \phi_d(\y - \vmu_2)}
      .
    \end{eqnarray*}
  \item Population EM iteratively updates its estimates according to the
    following equations:
    \begin{eqnarray}\label{eq:emfirstmodelact2population}
      \pm{1,t+1} & = & \frac{\E \mathsf{v}_d(\Y,\pm{1,t},\pm{2,t})\Y}{\bbE \mathsf{v}_d(\Y,\pm{1,t},\pm{2,t})}, \\
      \pm{2,t+1} & = & \frac{\E (1-\mathsf{v}_d(\Y,\pm{1,t},\pm{2,t}))\Y}{\bbE (1-\mathsf{v}_d(\Y,\pm{1,t},\pm{2,t}))}
      ,
    \end{eqnarray}
    where $\Y \sim 0.5 N(\tm{1}, \vSigma) + 0.5 N(\tm{2}, \vSigma)$.
\end{enumerate}

Our main contribution in this paper is a new characterization of the stationary
points and dynamics of EM in both of the above models.
\begin{enumerate}
  \item
    We prove convergence for the sequence of iterates for Population EM from
    each model:
    the sequence $(\pt{t})_{t\geq0}$ converges to either $\stt$, $-\stt$, or
    $\v0$;
    the sequence $((\pm{1,t},\pm{2,t}))_{t\geq0}$ converges to either
    $(\tm{1},\tm{2})$, $(\tm{2},\tm{1})$, or
    $((\tm{1}+\tm{2})/2,(\tm{1}+\tm{2})/2)$.
    We also fully characterize the initial parameter settings that lead to each
    limit point.

  \item
    Using this convergence result for Population EM, we also prove that the
    limits of the Sample-based EM iterates converge in probability to the
    unknown parameters of interest, as long as Sample-based EM is initialized at
    points where Population EM would converge to these parameters as well.
\end{enumerate}
Formal statements of our results are given in Section~\ref{sec:mainresult}.

\subsection{Background and Related Work}
\label{sec:related}

The EM algorithm was formally introduced by~\citet{dempster1977ml} as
a general iterative method for computing parameter estimates from
incomplete data.
Although EM is billed as a procedure for maximum likelihood
estimation, it is known that with certain initializations, the final
parameters returned by EM may be far from the MLE, both in
parameter distance and in log-likelihood
value~\citep{wu1983convergence}.
Several works characterize local convergence of EM to stationary
points of the log-likelihood objective under certain regularity
conditions~\citep{wu1983convergence,tseng2004analysis,chretien2008em}.
However, these analyses do not distinguish between global maximizers
and other stationary points (except, e.g., when the likelihood
function is unimodal).
Thus, as an optimization algorithm for maximizing the log-likelihood
objective, the ``worst-case'' performance of EM is somewhat
discouraging.

For a more optimistic perspective on EM, one may consider a
``best-case'' analysis, where (i) the data are an iid sample from a
distribution in the given model, (ii) the sample size is sufficiently
large, and (iii) the starting point for EM is sufficiently close to
the parameters of the data generating distribution.
Conditions (i) and (ii) are ubiquitous in (asymptotic) statistical
analyses, and (iii) is a generous assumption that may be satisfied in
certain cases.
\citet{redner1984mixture} show that in such a favorable scenario, EM
converges to the MLE almost surely for a broad class of mixture
models.
Moreover, recent work of~\citet{balakrishnan2014statistical} gives
non-asymptotic convergence guarantees in certain models; importantly,
these results permit one to quantify the accuracy of a pilot estimator
required to effectively initialize EM.
Thus, EM may be used in a tractable two-stage estimation procedures
given a first-stage pilot estimator that can be efficiently computed.

Indeed, for the special case of Gaussian mixtures, researchers in
theoretical computer science and machine learning have developed
efficient algorithms that deliver the highly accurate parameter
estimates under appropriate conditions.
Several of these algorithms, starting with that of
\citet{dasgupta1999learning}, assume that the means of the mixture
components are \emph{well-separated}---roughly at distance either
$d^\alpha$ or $k^\beta$ for some $\alpha,\beta>0$ for a mixture of $k$ Gaussians
in
$\R^d$~\citep{dasgupta1999learning,arora2005learning,dasgupta2007probabilistic,vempala2004spectral,kannan2008spectral,achlioptas2005spectral,chaudhuri2008learning,brubaker2008isotropic,chaudhuri2009multiview}.
More recent work employs the method-of-moments, which permit the means of the
mixture components to be arbitrarily close, provided that the sample size is
sufficiently
large~\citep{kalai2010efficiently,belkin2010polynomial,moitra2010settling,hsu2013learning,hardt2015tight}.
In particular, \citet{hardt2015tight} characterize the information-theoretic
limits of parameter estimation for mixtures of two Gaussians, and that they are
achieved by a variant of the original method-of-moments of
\citet{pearson1894contributions}.

Most relevant to this paper are works that specifically analyze EM (or variants
thereof) for Gaussian mixture models, especially when the mixture components are
well-separated.
\citet{xu1996convergence} show favorable convergence properties (akin
to super-linear convergence near the MLE) for well-separated mixtures.
In a related but different vein, \citet{dasgupta2007probabilistic}
analyze a variant of EM with a particular initialization scheme, and
proves fast convergence to the true parameters, again for
well-separated mixtures in high-dimensions.
For mixtures of two Gaussians, it is possible to exploit symmetries to
get sharper analyses.
Indeed, \citet{chaudhuri2009learning} uses these symmetries to prove
that a variant of Lloyd's
algorithm~\citep{macqueen1967kmeans,lloyd1982least} (which may be
regarded as a hard-assignment version of EM) very quickly converges to
the subspace spanned by the two mixture component means, without any
separation assumption.
Lastly, for the specific case of our Model 1,
\citet{balakrishnan2014statistical} proves linear convergence of EM (as well as
a gradient-based variant of EM) when started in a sufficiently small
neighborhood around the true parameters; here, the size of the neighborhood
grows with the separation between the two mixture components (which must be
sufficiently large).
Their analysis also proceeds by studying Population EM, and then relating
Sample-based EM to it.
Remarkably, by focusing attention on the local region around the true
parameters, they obtain non-asymptotic bounds on the parameter estimation error.
Our work is complementary to their result in that we focus on asymptotic limits
rather than finite sample analysis.
This allows us to provide a global analysis of EM, without any separation
assumption; such an analysis cannot be deduced from
the results of \citeauthor{balakrishnan2014statistical} by taking limits.

\section{Analysis of EM for Mixtures of Two Gaussians}\label{sec:mainresult}

In this section, we present our results for Population EM and Sample-based EM
under both Model 1 and Model 2, and also discuss further implications about the
expected log-likelihood function.
Without loss of generality, we may assume that the known covariance matrix
$\vSigma$ is the identity matrix $\vI_d$.
Throughout, we denote the Euclidean norm by $\norm{\cdot}$, and the signum
function by $\sgn(\cdot)$ (where $\sgn(0) = 0$, $\sgn(z)=1$ if $z>0$, and
$\sgn(z)=-1$ if $z<0$).

\subsection{Main Results for Population EM}\label{sec:mainresultmainresultpopulation}

We present results for Population EM for both models, starting with Model
1.
\begin{theorem}\label{thm:popEMsymmfp1}
Assume $\stt\in \R^d\setminus\braces{\v0}$.
Let $(\pt{t})_{t\geq0}$ denote the Population EM iterates for Model 1, and
suppose $\langle \pt{0},\stt \rangle \neq 0$.
There exists $\kappa_\theta\in (0,1)$---depending only on $\stt$ and
$\pt{0}$---such that
\begin{eqnarray*}
  \Norm{
    \pt{t+1}-\sgn(\langle \pt{0},\stt \rangle)\stt
  }
  & \leq &
  \kappa_\theta \cdot
  \Norm{
    \pt{t}-\sgn(\langle \pt{0},\stt \rangle)\stt
  }
  \,.
\end{eqnarray*}
\end{theorem}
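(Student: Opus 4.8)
The plan rests on a structural identity obtained from Gaussian integration by parts. Since $\vSigma=\vI_d$ we have $2\w_d(\y,\vtheta)-1=\tanh\bigl(\dotp{\y,\vtheta}\bigr)$, so $\pt{t+1}=\E\bigl[\tanh(\dotp{\Y,\pt{t}})\,\Y\bigr]$. Writing $\Y=S\stt+\vG$ with $S$ a Rademacher sign and $\vG\sim N(\v0,\vI_d)$ independent, symmetrizing over $S$, and then applying Stein's lemma in the variable $\vG$ yields
\begin{eqnarray*}
  \pt{t+1} & = & c_t\,\stt + d_t\,\pt{t}
  \,,
\end{eqnarray*}
where $c_t=\E\bigl[\tanh(\dotp{\stt+\vG,\pt{t}})\bigr]$, $d_t=\E\bigl[\operatorname{sech}^2(\dotp{\stt+\vG,\pt{t}})\bigr]\in(0,1)$, and $\dotp{\stt+\vG,\pt{t}}\sim N\bigl(Rp_t,\,p_t^2+q_t^2\bigr)$ with $R:=\Norm{\stt}$, $p_t:=\dotp{\pt{t},\stt}/R$ the signed coordinate of $\pt{t}$ along $\stt$, and $q_t:=\Norm{\pt{t}-(p_t/R)\stt}$ the norm of its orthogonal part. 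Thus the iteration collapses to the scalar system $p_{t+1}=c_tR+d_tp_t$, $q_{t+1}=d_tq_t$, where $c_t,d_t$ depend on $(p_t,q_t)$ only through that one–dimensional law. Since $d_t>0$ and $c_t$ has the sign of $p_t$, the sign of $p_t$ is preserved; using the symmetry $\pt{t}\mapsto-\pt{t}$ of the update map we may assume $\dotp{\pt{0},\stt}>0$, so the claimed limit is $\stt$ and it suffices to prove $(p_t,q_t)\to(R,0)$ geometrically.

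Next I would analyze the ``noiseless'' one–dimensional map $\psi(p):=cR+dp$ obtained at $q=0$; a short Stein computation identifies it with $\psi(p)=\E[\tanh(ph)\,h]$ for $h\sim N(R,1)$. It is odd, strictly increasing, has $\psi(0)=0$, and satisfies $\psi(R)=R$ by the consistency identity $\E\bigl[(2\w_d(\Y,\stt)-1)\Y\bigr]=\stt$ (immediate because $2\w_d(\Y,\stt)-1=\E[S\mid\Y]$ and $\E[S\Y]=\stt$). Crucially $\psi$ is strictly concave on $(0,\infty)$, since $\psi''(p)=-2\E\bigl[\operatorname{sech}^2(ph)\tanh(ph)\,h^3\bigr]<0$ there ($\tanh(ph)\,h>0$ a.s.). Concavity together with $\psi(0)=0$, $\psi(R)=R$ forces $\psi(p)>p$ on $(0,R)$ and $\psi(p)<p$ on $(R,\infty)$ (the latter using $\psi'(R)<1$, which follows from the mean value theorem and the monotonicity of $\psi'$); hence $R$ is the unique attracting fixed point on $(0,\infty)$, the $q=0$ iteration converges monotonically to $R$, and $|\psi(p)-R|/|p-R|$ is continuous and strictly below $1$ on every compact subinterval of $(0,\infty)$ — this is what ultimately supplies the contraction factor for the parallel coordinate.

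It remains to control the coupling between $p$ and $q$. First, $\Norm{\pt{t+1}}\leq R+1$ for all $t$: from $\Norm{\pt{t+1}}\leq\Abs{c_t}R+d_t\Norm{\pt{t}}$, $\Abs{c_t}<1$, and the elementary estimate $d_t\Norm{\pt{t}}\leq\E\bigl[\operatorname{sech}^2(N(0,\Norm{\pt{t}}^2))\bigr]\Norm{\pt{t}}\leq\sqrt{2/\pi}$ (using that $\operatorname{sech}^2$ is symmetric and unimodal, so shifting the mean only lowers $\E\operatorname{sech}^2$, together with $\int\operatorname{sech}^2=2$). With the iterates bounded, $q_{t+1}=d_tq_t$ and $d_t<1$ force $q_t\downarrow0$: if $\lim q_t>0$ then $p_t^2+q_t^2$ would eventually lie in a compact subset of $(0,\infty)$ over which $\E\operatorname{sech}^2(N(Rp_t,p_t^2+q_t^2))$ is uniformly below $1$ — a contradiction. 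Likewise $\inf_t p_t>0$: if $p_t$ approached $0$ then, since $q_t\to0$, $p_t^2+q_t^2$ would too, forcing $d_t\to1$ and $c_t=(1+o(1))Rp_t$, so $p_{t+1}=(R^2+1+o(1))p_t$ would move away from $0$, which a short induction turns into a uniform bound $p_t\geq\delta>0$. Finally, with $p_t$ confined to the compact interval $[\delta,R+1]\subset(0,\infty)$, $q_t\to0$, and $p_{t+1}=\psi(p_t)+O(q_t^2)$, a standard vanishing-perturbation argument for the globally attracting fixed point $R$ of $\psi$ gives $p_t\to R$.

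To extract a single $\kappa_\theta\in(0,1)$ valid for \emph{every} $t\geq0$, I would argue as follows. For all $t$ beyond some finite $t_0$, $(p_t,q_t)$ lies in a neighborhood of $(R,0)$ in which the linearization of the $(p,q)$ update map is diagonal with entries $\psi'(R)<1$ and $d(R,0)=\E\operatorname{sech}^2(N(R^2,R^2))<1$, giving an asymptotic contraction rate $\rho<1$. Each of the finitely many earlier steps strictly decreases $\Norm{\pt{t}-\stt}^2=(p_t-R)^2+q_t^2$ — trivially when $\Norm{\pt{t}-\stt}$ exceeds the uniform bound $2R+1$ on $\Norm{\pt{t+1}-\stt}$, and otherwise by combining $q_{t+1}<q_t$ with the one–dimensional contraction of $p$ and the $O(q_t^2)$ control on the perturbation — so $\kappa_\theta$ may be taken as the maximum of those finitely many per-step ratios and $\rho$. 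I expect the real difficulty to be exactly this last part: propagating the quadratic-in-$q_t$ coupling error so as to certify a contraction uniformly in $t$ rather than only asymptotically, hand in hand with ruling out any drift of the trajectory toward the spurious fixed point $\v0$. The engine throughout is the identity $\pt{t+1}=c_t\stt+d_t\pt{t}$ together with the strict concavity of $\psi$.
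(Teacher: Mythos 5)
Your structural identity $\pt{t+1}=c_t\,\stt+d_t\,\pt{t}$, with $c_t=\E\tanh(\dotp{\stt+\vG,\pt{t}})$ and $d_t=\E\operatorname{sech}^2(\dotp{\stt+\vG,\pt{t}})$ obtained from symmetrization plus Stein's lemma, is correct and is a genuinely different starting point from the paper's (the paper never writes the update in this scaling-plus-shift form; its functions $F$, $S$, $\funcQ$, $\funcP$ live in a different frame). Your analysis of the scalar map $\psi(p)=\E[\tanh(ph)h]$ is also sound: $\psi(R)=R$ by self-consistency, $\psi''(p)=-2\E[\operatorname{sech}^2(ph)\tanh(ph)h^3]<0$ on $(0,\infty)$ since $\tanh(ph)h^3\geq 0$ a.s., hence $\psi'(R)<1$ and $R$ globally attracts $(0,\infty)$; the a priori bound $\norm{\pt{t+1}}\leq\norm{\stt}+\sqrt{2/\pi}$ and the diagonal linearization at $(R,0)$ are fine.

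The gap is exactly where you flag it, and it is not cosmetic: the theorem requires a single $\kappa_\theta<1$ with $\norm{\pt{t+1}-\stt}\leq\kappa_\theta\norm{\pt{t}-\stt}$ for \emph{every} $t\geq 0$, including $t=0$, and your decomposition along $\stt$ does not deliver the early steps. In your coordinates the error is $(p_t-R,\,q_t)$ with $R=\norm{\stt}$; the transverse part contracts, $q_{t+1}=d_tq_t$, but the parallel update is $p_{t+1}=\psi(p_t)+e_t$ with only $|e_t|\leq Cq_t^2$ for an uncontrolled constant $C$, so the cross terms $2\kappa_1|p_t-R|\,Cq_t^2+C^2q_t^4$ must be absorbed by the slack $(1-d_t^2)q_t^2+(1-\kappa_1^2)(p_t-R)^2$, and nothing in your estimates guarantees this when $q_t$ is of order one. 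The claim that ``each early step strictly decreases $\norm{\pt{t}-\stt}^2$'' is therefore asserted, not proved. The paper sidesteps this entirely by decomposing in the frame aligned with the \emph{iterate} $\pt{t}$ rather than with $\stt$: there the two error coordinates are $\norm{\pt{t}}-\dotp{\stt,\pt{t}}/\norm{\pt{t}}$ and the orthogonal component of $\stt$, and the update contracts each one separately with \emph{no} cross term --- the parallel error via $|F(\norm{\pt{t}},x_\theta)-x_\theta|\leq\kappa_b''\,|\norm{\pt{t}}-x_\theta|$ (concavity of $F$ in its first argument together with $F(x_\theta,x_\theta)=x_\theta$, the same concavity phenomenon you exploit for $\psi$) and the transverse error by the factor $|1-2S(0,\norm{\pt{t}},x_\theta)|<1$ --- uniformly over the compact set in which the iterates are confined. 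To finish your proof you would need either to switch to that frame for the per-step bound, or to prove a quantitative coupling estimate certifying contraction of $(p_t-R)^2+q_t^2$ even for large $q_t$; the sketch does neither. (Your repelling-fixed-point argument for $\inf_t p_t>0$ also needs the extra care you anticipate, since when $q_t$ is large the expansion factor near $p=0$ is only about $d_t(R^2+1)$, which need not exceed $1$; that part is repairable, but the uniform per-step contraction is the real missing piece.)
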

The proof of Theorem~\ref{thm:popEMsymmfp1}, as well as all other omitted
proofs, is given in Appendix~\ref{sec:proofs}.
Theorem~\ref{thm:popEMsymmfp1} asserts that if $\pt{0}$ is not on the hyperplane
$\braces{ \vx \in \R^d : \dotp{\vx,\stt} = 0 }$, then the sequence
$(\pt{t})_{t\geq0}$ converges to either $\stt$ or $-\stt$.

Our next result shows that if $\dotp{\pt{0},\stt} = 0$, then $(\pt{t})_{t\geq0}$
still converges, albeit to $\v0$.

\begin{theorem}\label{thm:popsymm2zero}
Let $(\pt{t})_{t\geq0}$ denote the Population EM iterates for Model 1.
If $\langle \pt{0},\stt \rangle = 0$, then
\begin{eqnarray*}
  \pt{t} & \to & \v0
  \quad \text{as $t\to\infty$}
  \,.
\end{eqnarray*}
\end{theorem}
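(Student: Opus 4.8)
The plan is to show that Population EM maps the hyperplane $\stt^{\perp}$ into itself, and that the iteration, once restricted to that hyperplane, collapses to a scalar recursion whose only fixed point is $0$.

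Since $\vSigma=\vI_d$, a direct computation gives $2\w_d(\y,\vtheta)-1=\tanh(\dotp{\y,\vtheta})$, so the update \eqref{eq:emfirstmodelact1population} reads $\pt{t+1}=\E[\tanh(\dotp{\Y,\pt{t}})\,\Y]$. Write $\Y=S\,\stt+\vG$, where $S$ is uniform on $\{-1,+1\}$ and $\vG\sim N(\v0,\vI_d)$ is independent of $S$. I would first prove by induction that $\dotp{\pt{t},\stt}=0$ for every $t\ge0$, the base case being the hypothesis. If $\dotp{\pt{t},\stt}=0$, then $\dotp{\Y,\pt{t}}=\dotp{\vG,\pt{t}}$, whence
\[
  \dotp{\pt{t+1},\stt}
  = \E\bigl[\tanh(\dotp{\vG,\pt{t}})\bigl(S\norm{\stt}^2+\dotp{\vG,\stt}\bigr)\bigr]
  = 0 ,
\]
because $S$ is mean-zero and independent of $\vG$, and because $\dotp{\vG,\pt{t}}$ and $\dotp{\vG,\stt}$ are jointly Gaussian with covariance $\dotp{\pt{t},\stt}=0$, hence independent, with $\dotp{\vG,\stt}$ mean-zero. (This is the orthogonal-invariance phenomenon already underlying Theorem~\ref{thm:popEMsymmfp1}, and I would reuse that argument if convenient.)

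Given the invariant, the $S\,\stt$ part of $\Y$ also integrates out (again using $\E S=0$), leaving $\pt{t+1}=\E[\tanh(\dotp{\vG,\pt{t}})\,\vG]$ with $\vG\sim N(\v0,\vI_d)$. If $\pt{t}=\v0$ the claim is immediate; otherwise, decomposing $\vG$ into its components parallel and orthogonal to $\pt{t}$, the orthogonal component is mean-zero and, being Gaussian and uncorrelated with $\dotp{\vG,\pt{t}}$, independent of it, so it contributes nothing. Hence $\pt{t+1}=c_t\,\pt{t}$ for some scalar $c_t\ge0$; every iterate thus lies on the ray $\R_{\ge0}\,\pt{0}$, and, writing $r_t:=\norm{\pt{t}}$, the dynamics reduce to the scalar recursion $r_{t+1}=F(r_t)$ with $F(r):=\E_{G\sim N(0,1)}[\,G\tanh(rG)\,]$. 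It then remains to analyze $F$ on $[0,\infty)$: it is continuous (dominated convergence, via $|G\tanh(rG)|\le|G|$), with $F(0)=0$, $F(r)>0$ for $r>0$ (the integrand is nonnegative and positive off $\{G=0\}$), and, crucially, $F(r)<r$ for all $r>0$, since $G\tanh(rG)<rG^2$ pointwise for $G\ne0$ (as $|\tanh x|<|x|$) while $\E[rG^2]=r$. Therefore $(r_t)_{t\ge0}$ is nonincreasing and bounded below by $0$, so it converges to some $r_\infty\ge0$ with $F(r_\infty)=r_\infty$; since $F$ has no positive fixed point, $r_\infty=0$, i.e.\ $\pt{t}\to\v0$.

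The main obstacle is establishing the invariant $\dotp{\pt{t},\stt}=0$, i.e.\ that $\stt^{\perp}$ is mapped into itself by the Population EM map; once this is in place the remainder is an essentially one-dimensional argument, whose only delicate point is the strict inequality $F(r)<r$ — exactly what forbids a nonzero fixed point (and, since $F'(0)=1$, also why convergence to $\v0$ is only sublinear here).
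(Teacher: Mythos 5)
Your proof is correct, but it takes a genuinely different route from the paper's. The paper obtains Theorem~\ref{thm:popsymm2zero} as a byproduct of Theorem~\ref{thm:hyperplaneinit1} (Model 2 initialized with $\langle\pb{0},\stt\rangle=0$), whose proof has three ingredients: a classification of all fixed points of the Population EM map (Appendix~\ref{ssec:fixedpointsEM}, via the functions $\funcP$, $\funcQ$, $F$ and Lemmas~\ref{lemma:aconvergeupperbound} and~\ref{lem:Fpropert1}); a proof that every accumulation point of the iterates is a fixed point, using the classical EM ascent argument (the expected log-likelihood is nondecreasing and bounded above, so the KL-type discrepancy between consecutive iterates tends to zero, Lemma~\ref{lemma:converge}); and the sign-invariance Lemma~\ref{lemma:sign} together with the compactness Lemma~\ref{lemma:upperboundEMestimates} to single out $(\v0,\v0)$ and upgrade subsequential to full convergence. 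You instead exploit the extra symmetry of Model 1 head-on: once $\stt^\perp$ is shown to be invariant and the $S\stt$ component of $\Y$ integrates out, the iterate stays on the ray $\R_{\ge0}\,\pt{0}$ and the dynamics collapse to the scalar recursion $r_{t+1}=F(r_t)$ with $F(r)=\E[G\tanh(rG)]$ --- which is exactly the paper's $F(r,0)$ from \eqref{eq:Fdefinition} --- and the pointwise bound $\abs{\tanh x}<\abs{x}$ gives the strict inequality $F(r)<r$ for $r>0$, forcing $r_t\downarrow 0$. Your argument is more elementary and self-contained: it needs no boundedness lemma (monotonicity of $r_t$ supplies compactness for free), no fixed-point classification, and no appeal to the ascent property of EM; and your closing remark that $F'(0)=1$ pinpoints why the paper abandons its geometric-rate machinery in this regime. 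What the paper's heavier apparatus buys is generality: the same argument simultaneously treats Model 2, where the location parameter $\pa{t}$ must also be driven to $\v0$ and no one-dimensional reduction of the dynamics is available.
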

Theorems~\ref{thm:popEMsymmfp1} and~\ref{thm:popsymm2zero} together characterize
the fixed points of Population EM for Model 1, and fully specify the conditions
under which each fixed point is reached.
The results are simply summarized in the following corollary.
\begin{corollary}\label{lemma:mainsymmetric}
  If $(\pt{t})_{t\geq0}$ denote the Population EM iterates for Model 1, then
  \begin{eqnarray*}
    \pt{t} & \to & \sgn(\dotp{\pt{0},\stt}) \stt
    \quad \text{as $t\to\infty$}
    \,.
  \end{eqnarray*}
\end{corollary}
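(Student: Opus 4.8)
The plan is to obtain the corollary by combining Theorems~\ref{thm:popEMsymmfp1} and~\ref{thm:popsymm2zero}, splitting into two cases according to whether $\dotp{\pt{0},\stt}$ is zero.

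In the case $\dotp{\pt{0},\stt}\neq0$, I would first note that this forces $\stt\neq\v0$, so Theorem~\ref{thm:popEMsymmfp1} applies. Writing $s\triangleq\sgn(\dotp{\pt{0},\stt})\in\{-1,+1\}$ and letting $\kappa_\theta\in(0,1)$ be the constant supplied by that theorem, a one-line induction on $t$ from the theorem's contraction inequality yields
\[
  \Norm{\pt{t}-s\stt} \ \leq \ \kappa_\theta^{\,t}\,\Norm{\pt{0}-s\stt}
  \qquad\text{for every } t\geq1.
\]
Since $\kappa_\theta<1$, the right-hand side tends to $0$ as $t\to\infty$, hence $\pt{t}\to s\stt=\sgn(\dotp{\pt{0},\stt})\stt$, which is the asserted limit in this case.

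In the complementary case $\dotp{\pt{0},\stt}=0$, I would simply observe that $\sgn(\dotp{\pt{0},\stt})=\sgn(0)=0$, so the target $\sgn(\dotp{\pt{0},\stt})\stt$ equals $\v0$, and Theorem~\ref{thm:popsymm2zero} states precisely that $\pt{t}\to\v0$ under this hypothesis. Note that this case also subsumes the degenerate situation $\stt=\v0$, in which $\dotp{\pt{0},\stt}=0$ holds automatically. Taking the two cases together proves the corollary.

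I do not anticipate a genuine obstacle here: all the analytic content has already been established in the two theorems, and the only point requiring care is the bookkeeping around the convention $\sgn(0)=0$, which is exactly what allows the single expression $\sgn(\dotp{\pt{0},\stt})\stt$ to encode all three limit points $\stt$, $-\stt$, and $\v0$ simultaneously.
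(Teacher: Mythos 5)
Your proposal is correct and matches the paper's intent exactly: the corollary is presented there as an immediate summary of Theorems~\ref{thm:popEMsymmfp1} and~\ref{thm:popsymm2zero}, with the nonzero-inner-product case handled by iterating the contraction and the orthogonal case (including $\stt=\v0$) handled by the convention $\sgn(0)=0$. No gaps.
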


We now discuss Population EM with Model 2.
To state our results more concisely, we use the following re-parameterization of
the model parameters and Population EM iterates:
\begin{align}
  \pa{t}
  & \ \triangleq \
  \frac{\pm{1,t}+\pm{2,t}}{2} -\frac{\tm{1}+\tm{2}}{2} \,,
  &
  \pb{t}
  & \ \triangleq \
  \frac{\pm{2,t}-\pm{1,t}}{2} \,,
  &
  \stt
  & \ \triangleq \
  \frac{\tm{2}-\tm{1}}{2}
  \,.\label{equ:reparamization}
\end{align}
If the sequence of Population EM iterates $((\pm{1,t},\pm{2,t}))_{t\geq0}$
converges to $(\tm{1},\tm{2})$, then we expect $\pb{t} \rightarrow \stt$.
Hence, we also define $\pbeta{t}$ as the angle between $\pb{t}$ and $\stt$, i.e.,
\begin{eqnarray*}
  \pbeta{t}
  & \triangleq &
  \arccos\del{
    \frac{\dotp{\pb{t},\stt}}{\norm{\pb{t}}\norm{\stt}}
  }
  \ \in \
  \intcc{0,\pi}
  \,.
\end{eqnarray*}
(This is well-defined as long as $\pb{t} \neq \v0$ and $\stt \neq \v0$.)

We first present results on Population EM with Model 2 under the initial
condition $\dotp{\pb{0},\stt} \neq 0$.

\begin{theorem}\label{thm:convergencerate1}
Assume $\stt\in \R^d\setminus\braces{\v0}$.
Let $(\pa{t},\pb{t})_{t\geq0}$ denote the (re-parameterized) Population EM
iterates for Model 2, and suppose $\dotp{\pb{0},\stt} \neq 0$.
Then $\pb{t} \neq \v0$ for all $t \geq 0$.
Furthermore, there exist $\kappa_a\in (0,1)$---depending only on $\norm{\stt}$
and $\abs{\dotp{\pb{0},\stt}/\norm{\pb{0}}}$---and $\kappa_\beta\in
(0,1)$---depending only on $\norm{\stt}$, $\dotp{\pb{0},\stt} / \norm{\pb{0}}$,
$\norm{\pa{0}}$, and $\norm{\pb{0}}$---such that
\begin{eqnarray*}
  \norm{\pa{t+1}}^2
  & \leq &
  \kappa_a^2 \cdot \norm{\pa{t}}^2
  +
  \frac{\norm{\stt}^2 \sin^2(\pbeta{t})}{4}
  \,,
  \\
  \sin(\pbeta{t+1})
  & \leq &
  \kappa_{\beta}^t \cdot \sin(\pbeta{0})
  \,.
\end{eqnarray*}
\end{theorem}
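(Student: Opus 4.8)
The plan is to pass to the reparameterized coordinates, derive closed‑form one‑step recursions for $(\pa{t},\pb{t})$, reduce the dynamics to a plane using the symmetry of the mixture, and then read off the two estimates.

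\emph{Setting up the recursions.} Since $\vSigma=\vI_d$, the responsibility in (\ref{eq:emfirstmodelact2population}) equals $\sigma(-2\dotp{\Y-\bar\vmu_t,\pb{t}})$ with $\sigma(z)=1/(1+e^{-z})$ and $\bar\vmu_t=(\pm{1,t}+\pm{2,t})/2$; writing $\Y=(\tm{1}+\tm{2})/2+\vxi$ with $\vxi\sim\tfrac12 N(-\stt,\vI_d)+\tfrac12 N(\stt,\vI_d)$ and $f=\sigma(-2\dotp{\vxi-\pa{t},\pb{t}})$, the update becomes $\pa{t+1}=\tfrac12\Parens{\E[f\vxi]/\E f+\E[(1-f)\vxi]/\E(1-f)}$ and $\pb{t+1}=\tfrac12\Parens{\E[(1-f)\vxi]/\E(1-f)-\E[f\vxi]/\E f}$. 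Writing $\vxi=R\stt+\vg$ with $R$ a uniform sign and $\vg\sim N(\v0,\vI_d)$, the weight $f$ depends on $\vxi$ only through $\dotp{\vxi,\pb{t}}$, so Gaussian integration by parts in $\vg$ together with rotational symmetry of $\vg$ orthogonal to $\{\stt,\pb{t}\}$ gives $\E[f\vxi],\E[(1-f)\vxi]\in\mathrm{span}\{\stt,\pb{t}\}$; hence (by induction) $\pa{t},\pb{t}\in\mathrm{span}\{\stt,\pb{0}\}$ for $t\ge1$, and the analysis takes place in a plane. With $\Phi:=-2\dotp{\vxi-\pa{t},\pb{t}}$ (conditionally on $R$, Gaussian with mean $-2R\dotp{\stt,\pb{t}}+2\dotp{\pa{t},\pb{t}}$ and variance $4\norm{\pb{t}}^2$), the same integration by parts yields
\[
\pb{t+1}=r_t\,\pb{t}+\tfrac12 D_t\,\stt,\qquad \pa{t+1}=m_t\,\pb{t+1},
\]
where $m_t=\E[\tanh(\Phi/2)]=2\E\sigma(\Phi)-1$, $r_t=\E[\sigma(\Phi)(1-\sigma(\Phi))]/(\E\sigma(\Phi)\,\E(1-\sigma(\Phi)))$, and $D_t=-\E[R\,\sigma(\Phi)]/(\E\sigma(\Phi)\,\E(1-\sigma(\Phi)))$; the relation $\pa{t+1}=m_t\pb{t+1}$ follows from the coefficient identity $S_t=m_tD_t$ for the $\stt$–component of $\pa{t+1}$. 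When $\pa{t}=\v0$ the $\pb{}$–recursion is exactly the Model~1 iterate (\ref{eq:emfirstmodelact1population}), so $r_t,D_t$ are the same objects underlying Theorems~\ref{thm:popEMsymmfp1}--\ref{thm:popsymm2zero}.

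\emph{The angle.} By Jensen applied to $X=\sigma(\Phi)\in(0,1)$ one has $r_t\in(0,1)$, and a stochastic‑dominance argument gives $\sgn D_t=\sgn\dotp{\stt,\pb{t}}$; since $\dotp{\stt,\pb{t+1}}=r_t\dotp{\stt,\pb{t}}+\tfrac12 D_t\norm{\stt}^2$, this keeps the sign of $\dotp{\pb0,\stt}$ (which we may take positive, as $\sin\pbeta{t}$ and the dynamics are unchanged under $\stt\mapsto-\stt$), so $\pb{t}\neq\v0$ for all $t$. Substituting the $\pb{}$–recursion into $\norm{\stt}^2\norm{\pb{t+1}}^2-\dotp{\stt,\pb{t+1}}^2$ the cross terms cancel, giving $\norm{\stt}^2\norm{\pb{t+1}}^2-\dotp{\stt,\pb{t+1}}^2=r_t^2(\norm{\stt}^2\norm{\pb{t}}^2-\dotp{\stt,\pb{t}}^2)$, i.e.\ $\norm{\pb{t+1}}\sin\pbeta{t+1}=r_t\norm{\pb{t}}\sin\pbeta{t}$; combined with $\norm{\pb{t+1}}^2\ge r_t^2\norm{\pb{t}}^2+r_tD_t\dotp{\stt,\pb{t}}$ this yields $\sin^2\pbeta{t+1}\le\sin^2\pbeta{t}\big/\Parens{1+D_t\dotp{\stt,\pb{t}}/(r_t\norm{\pb{t}}^2)}$, so $\pbeta{t}$ is non‑increasing. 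It then remains to bound $D_t\dotp{\stt,\pb{t}}/(r_t\norm{\pb{t}}^2)$ away from $0$: as $\dotp{\stt,\pb{t}}/\norm{\pb{t}}=\norm{\stt}\cos\pbeta{t}\ge\norm{\stt}\cos\pbeta{0}>0$, the two mixture components of $\Phi$ are separated by at least a fixed number of standard deviations, which—via Gaussian estimates and the a priori bounds below—lower‑bounds $D_t$ and upper‑bounds $r_t$, producing $\kappa_\beta<1$. Iterating, and handling $t=0$ separately (where the factor is only $\le1$ since $\pa{0}$ need not be aligned with $\pb{0}$), gives $\sin\pbeta{t+1}\le\kappa_\beta^{\,t}\sin\pbeta{0}$.

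\emph{The perturbation $\pa{t}$ and the main obstacle.} From $\pa{t+1}=m_t\pb{t+1}$ write $\norm{\pa{t+1}}^2=m_t^2\norm{\pb{t+1}}^2$ and split along/orthogonal to $\stt$. The orthogonal part has norm $\abs{m_t}\norm{\pb{t+1}}\sin\pbeta{t+1}=\abs{m_t}r_t\norm{\pb{t}}\sin\pbeta{t}$, which the a priori bounds control by $\tfrac12\norm{\stt}\sin\pbeta{t}$, supplying the additive term $\norm{\stt}^2\sin^2\pbeta{t}/4$. For the parallel part one estimates $\abs{m_t}$ through $m_t=\E[\tanh(\Phi/2)]-\E[\tanh(\Psi/2)]$ with $\Psi$ the symmetrized $\Phi$ (so $\E\tanh(\Psi/2)=0$), bounding this by $\abs{\dotp{\pa{t},\pb{t}}}$ times the density of $\Psi/2$ away from its modes—carrying an exponential factor $\exp(-\dotp{\stt,\pb{t}}^2/(2\norm{\pb{t}}^2)+\cdots)$—and using $\pa{t}=m_{t-1}\pb{t}$ (so $\norm{\pa{t}}=\abs{m_{t-1}}\norm{\pb{t}}$, $\dotp{\pa{t},\pb{t}}=m_{t-1}\norm{\pb{t}}^2$) to close the recursion $\abs{m_t}\le\kappa_a\abs{m_{t-1}}\norm{\pb{t}}/\norm{\pb{t+1}}$, checking $t=0$ directly. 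The heart of the proof—and the step I expect to be the main obstacle—is the family of a priori bounds everything above rests on: uniform constants $0<\underline\nu\le\norm{\pb{t}}^2\le\overline\nu$ and $\norm{\pa{t}}\le A$ depending only on $\norm{\stt}$, $\dotp{\pb0,\stt}/\norm{\pb0}$, $\norm{\pb0}$, $\norm{\pa0}$. They are needed because $r_t$ is bounded away from $1$ precisely when $\norm{\pb{t}}$ is bounded away from $0$ and the means of $\Phi$ are bounded (hence the bound on $\norm{\pa{t}}$), while $\norm{\pb{t}}$ stays bounded even for large $\norm{\pb0}$ only through the one‑step contraction $r_0=O(\norm{\pb0}^{-1})$. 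Establishing them requires a simultaneous induction on $\norm{\pb{t}}$, the monotone ratio $\dotp{\stt,\pb{t}}/\norm{\pb{t}}$, and $\norm{\pa{t}}$, together with the delicate Gaussian tail estimates converting ``$\Phi$'s two components are at least $\norm{\stt}\cos\pbeta{0}$ standard deviations apart'' into quantitative control of $D_t$, $r_t$, and $m_t$.
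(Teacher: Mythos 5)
Your roadmap matches the paper's architecture almost exactly: reduce to the plane spanned by $\stt$ and $\pb{0}$, write $\pq{t+1}$ (hence $\pb{t+1}$) as a linear combination of $\pb{t}$ and $\stt$, observe $\pa{t+1}=(1-2\pp{t+1})\pb{t+1}$, extract the angle contraction from the ratio of the two coefficients, and control everything by uniform upper and lower bounds on $\norm{\pa{t}},\norm{\pb{t}}$. Your derivation of the decomposition via Stein's identity, and the identity $r_t=\E[\sigma(1-\sigma)]/(\E\sigma\,\E(1-\sigma))<1$ by a variance argument, are genuinely slicker than the paper's coordinate-by-coordinate integration (its functions $S$ and $R$ are exactly your $D_t$ and $r_t$ coefficients), and your exact relation $\norm{\pb{t+1}}\sin\pbeta{t+1}=r_t\norm{\pb{t}}\sin\pbeta{t}$ is equivalent to the paper's $\sin\pbeta{t+1}=\frac{R_s}{R_s+\theta^\star_{\langle 1\rangle,t}S_s}\sin\pbeta{t}$.

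However, the proposal has genuine gaps precisely where you flag ``the main obstacle.'' First, the uniform a priori bounds $0<\underline\nu\le\norm{\pb{t}}\le\overline\nu$ and $\norm{\pa{t}}\le A$ are not established; in the paper these are Lemmas~\ref{lemma:upperboundEMestimates} and~\ref{lem:lowerboundbt}, whose proofs occupy the bulk of the argument and require several nontrivial scalar estimates (the $\funcQ/\funcP$ ratio bound, the lower bound $\funcP\ge\frac14$, and the derivative bound $\partial\funcQ/\partial x_b\ge\frac12$ near $x_b=0$). Without them neither $\kappa_\beta<1$ nor $\kappa_a<1$ can be extracted. Second, the contraction of the aligned component of $\pa{t}$ --- your recursion $\abs{m_t}\le\kappa_a\abs{m_{t-1}}\norm{\pb{t}}/\norm{\pb{t+1}}$ --- is only sketched; the paper's Lemma~\ref{lemma:aconvergeupperbound} proving $\frac{\funcQ(1-2\funcP)}{2\funcP(1-\funcP)}\le\kappa_a x_a$ rests on the concavity of $K$, the inequality $l(x)(\frac12-\Phi(-x))/x<\frac12$, and a three-case analysis, none of which your density heuristic supplies. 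Third, a concrete error: you split $\pa{t+1}$ along/orthogonal to $\stt$ and claim the orthogonal part $\abs{m_t}r_t\norm{\pb{t}}\sin\pbeta{t}$ is at most $\frac12\norm{\stt}\sin\pbeta{t}$ ``by the a priori bounds.'' This does not follow: $\abs{m_t}r_t\norm{\pb{t}}=\frac{(1-2\funcP)R_s}{2\funcP(1-\funcP)}$ is bounded by a constant depending on the initialization, not by $\norm{\stt}/2$. The correct split (as in the paper) is along/orthogonal to $\pb{t}$, where the orthogonal component equals $\frac{(1-2\funcP)S_s}{2\funcP(1-\funcP)}\,\norm{\stt}\sin\pbeta{t}\le\frac12\norm{\stt}\sin\pbeta{t}$ because $S_s<\funcP$. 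This is fixable, but as written the step fails.
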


By combining the two inequalities from Theorem~\ref{thm:convergencerate1},
we conclude
\begin{eqnarray*}
  \norm{\pa{t+1}}^2
  & = &
  \kappa_a^{2t}
  \norm{\pa{0}}^2
  + \frac{\norm{\stt}^2}{4}
  \sum_{\tau=0}^t \kappa_a^{2\tau} \cdot \sin^2(\pbeta{t-\tau})
  \\
  & \leq &
  \kappa_a^{2t}
  \norm{\pa{0}}^2
  +
  \frac{\norm{\stt}^2}{4}
  \sum_{\tau=0}^t \kappa_a^{2\tau} \kappa_\beta^{2(t-\tau)} \cdot \sin^2(\pbeta{0})
  \\
  & \leq &
  \kappa_a^{2t} \norm{\pa{0}}^2 + \frac{\norm{\stt}^2}{4} t
  \del{ \max\cbr{\kappa_a, \kappa_\beta}}^t
  \sin^2(\pbeta{0})
  \,.
\end{eqnarray*}

Theorem~\ref{thm:convergencerate1} shows that the re-parameterized Population EM
iterates converge, at a linear rate, to the average of the two means
$(\tm{1}+\tm{2})/2$, as well as the line spanned by $\stt$.
The theorem, however, does not provide any information on the convergence of the
magnitude of $\pb{t}$ to the magnitude of $\stt$.
This is given in the next theorem.

\begin{theorem}\label{thm:convergenceratemagnitude}
Assume $\stt\in \R^d\setminus\braces{\v0}$.
Let $(\pa{t},\pb{t})_{t\geq0}$ denote the (re-parameterized) Population EM
iterates for Model 2, and suppose $\dotp{\pb{0},\stt} \neq 0$.
Then there exist $T_0 > 0$, $\kappa_b \in (0,1)$, and $c_b>0$---all depending
only on $\norm{\stt}$, $\abs{\dotp{\pb{0},\stt}/\norm{\pb{0}}}$,
$\norm{\pa{0}}$, and $\norm{\pb{0}}$---such that
\begin{eqnarray*}
  \Norm{\pb{t+1}-\sgn(\dotp{\pb{0},\stt})\stt}^2
  &\leq&
  \kappa_b^2 \cdot \Norm{\pb{t}-\sgn(\dotp{\pb{0},\stt})\stt}^2
  +c_b \cdot \norm{\pa{t}}
  \quad \forall t > T_0
  \,.
\end{eqnarray*}
\end{theorem}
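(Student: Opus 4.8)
The plan is to recognize the magnitude dynamics of $\pb{t}$ as a small perturbation of the \emph{Model~1} dynamics, which are already fully understood via Theorem~\ref{thm:popEMsymmfp1} and Corollary~\ref{lemma:mainsymmetric}. Assume without loss of generality that $\dotp{\pb{0},\stt}>0$, so the target is $\stt$ (the case $\dotp{\pb{0},\stt}<0$ is identical with $-\stt$ in place of $\stt$). Write $\sigma(x)=1/(1+e^{-x})$, $r=\norm{\stt}$, $\v u=\stt/r$. Passing to the recentered variable $\Y'\coloneqq\Y-\tfrac12(\tm{1}+\tm{2})\sim\tfrac12 N(-\stt,\vI)+\tfrac12 N(\stt,\vI)$ and using \eqref{equ:reparamization}, a routine calculation recasts \eqref{eq:emfirstmodelact2population} as $\pb{t+1}=\Psi(\pb{t},\pa{t})$, where $\Psi(\v b,\v a)\coloneqq-\E[\sigma(-2\dotp{\Y'-\v a,\v b})\,\Y']/(2W(1-W))$ with $W=W(\v b,\v a)\coloneqq\E[\sigma(-2\dotp{\Y'-\v a,\v b})]\in(0,1)$; in particular $\Psi$ is $C^\infty$. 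The crucial structural fact is that the slice $\v a=\v0$ is \emph{exactly} Model~1: since $\Y'\overset{d}{=}-\Y'$ one gets $W(\v b,\v0)=\tfrac12$ and hence $\Psi(\v b,\v0)=\E[\tanh(\dotp{\Y',\v b})\,\Y']$, which is precisely the Model~1 Population-EM map \eqref{eq:emfirstmodelact1population} with true parameter $\stt$ (using $2\w_d(\y,\vtheta)-1=\tanh(\dotp{\y,\vtheta})$). Thus $\Psi(\stt,\v0)=\stt$, $\Psi(\v0,\v0)=\v0$, and Theorem~\ref{thm:popEMsymmfp1} and Corollary~\ref{lemma:mainsymmetric} apply verbatim to $\v b\mapsto\Psi(\v b,\v0)$.

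Granting this reduction, I would proceed as follows. Theorem~\ref{thm:convergencerate1} (and its displayed consequence) gives $\pa{t}\to\v0$ geometrically --- so $\sum_{t}\norm{\pa{t}}<\infty$ and $\norm{\pa{t}}\le1$ eventually --- and $\sin(\pbeta{t})\to0$, and since $\dotp{\pb{0},\stt}>0$ this forces $\pbeta{t}\to0$, so $\dotp{\pb{t},\stt}>0$ and $\pbeta{t}\le\pi/4$ for $t$ past some $T_1$. Next I would show (the crux, see below) that for $t$ past some $T_0\ge T_1$ the iterate $\pb{t}$ lies in a \emph{fixed compact} set $K\subset\{\v b:\dotp{\v b,\stt}>0\}$ that avoids both $\v0$ and the hyperplane $\{\dotp{\cdot,\stt}=0\}$. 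On such a $K$ the map $\v b\mapsto\Psi(\v b,\v0)$ contracts toward $\stt$: from Theorem~\ref{thm:popEMsymmfp1}, or directly from $\norm{D_{\v b}\Psi(\stt,\v0)}_{\mathrm{op}}<1$ --- by rotational symmetry about $\v u$ its eigenvalues are $\E[\operatorname{sech}^2(rX)]<1$ perpendicular to $\v u$ and $\E[X^2\operatorname{sech}^2(rX)]=1-\var(X\tanh(rX))<1$ along $\v u$, where $X=\dotp{\Y',\v u}\sim\tfrac12 N(-r,1)+\tfrac12 N(r,1)$ and the last identity uses $\E X^2=r^2+1$ together with the fixed-point relation $\E[X\tanh(rX)]=r$ coming from Corollary~\ref{lemma:mainsymmetric} --- combined with the global attraction of Corollary~\ref{lemma:mainsymmetric} and a compactness argument, one obtains $\bar\kappa\in(0,1)$ (depending only on $K$, hence only on the stated initial quantities) with $\Norm{\Psi(\v b,\v0)-\stt}\le\bar\kappa\Norm{\v b-\stt}$ for $\v b\in K$, possibly after replacing a single iteration by a block of a fixed number of iterations. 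Since $\Psi$ is smooth it is Lipschitz on $K\times\{\norm{\v a}\le1\}$, say $\Norm{\Psi(\v b,\v a)-\Psi(\v b,\v0)}\le L\norm{\v a}$, with $L$ dimension-free (because $\Psi(\v b,\cdot)$ depends on $\v a$ only through $\dotp{\v a,\v b}$ and $\Psi$ takes values in $\operatorname{span}(\stt,\v b)$). Hence for $t>T_0$, $\Norm{\pb{t+1}-\stt}\le\Norm{\Psi(\pb{t},\v0)-\stt}+L\norm{\pa{t}}\le\bar\kappa\Norm{\pb{t}-\stt}+L\norm{\pa{t}}$; squaring, bounding the cross term by $\eta\bar\kappa^2\Norm{\pb{t}-\stt}^2+\eta^{-1}L^2\norm{\pa{t}}^2$ for $\eta$ small enough that $\kappa_b^2\coloneqq(1+\eta)\bar\kappa^2<1$, and using $\norm{\pa{t}}^2\le\norm{\pa{t}}$ for large $t$, yields the claimed inequality with $c_b\coloneqq(1+\eta^{-1})L^2$.

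The main obstacle is the trapping claim. For the \emph{upper} bound, once $\norm{\pa{t}}$ is small the E-step weight $W(\pb{t},\pa{t})$ is bounded away from $0$ and $1$ by an absolute constant $c_0$: with $U_t\coloneqq\dotp{\Y'-\pa{t},\pb{t}}$ one has $\var(U_t)\ge\norm{\pb{t}}^2$ while $\abs{\E U_t}=\abs{\dotp{\pa{t},\pb{t}}}\le\norm{\pa{t}}\,\mathrm{std}(U_t)$, so the event $\{U_t<0\}$ has probability in $[c_0,1-c_0]$; and since $\E[\sigma(-2U_t)\Y']$ lies in $\operatorname{span}(\stt,\pb{t})$ (the orthogonal part of $\Y'$ being an independent mean-zero Gaussian), each of $\pm{1,t+1}-\tfrac12(\tm{1}+\tm{2})$ and $\pm{2,t+1}-\tfrac12(\tm{1}+\tm{2})$ has norm $\le\sqrt{(r^2+2)/c_0}$, whence $\norm{\pb{t+1}}\le C_0\coloneqq\sqrt{(r^2+2)/c_0}$. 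For the \emph{lower} bound, $\v0$ is a repelling fixed point of $\Psi(\cdot,\v0)$ --- $D_{\v b}\Psi(\v0,\v0)$ is the data covariance $\vI+r^2\v u\v u^{\scriptscriptstyle\top}$, of spectral radius $1+r^2>1$, and near $\v0$ one has $\dotp{\pb{t+1},\v u}=(1+r^2)\dotp{\pb{t},\v u}+O(\norm{\pb{t}}(\norm{\pb{t}}+\norm{\pa{t}}))$ --- so, because $\dotp{\pb{t},\v u}\ge\norm{\pb{t}}/\sqrt2>0$ and $\sum_{t}\norm{\pa{t}}<\infty$, the quantity $\dotp{\pb{t},\v u}$ cannot stay arbitrarily small, giving $\liminf_t\norm{\pb{t}}\ge\ell>0$. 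Together with $\pbeta{t}\le\pi/4$ these confine $\pb{t}$, for $t>T_0$, to $K\coloneqq\{\v b:\tfrac{\ell}{2}\le\norm{\v b}\le C_0,\ \dotp{\v b,\v u}\ge\tfrac{\ell}{2\sqrt2}\}$, a compact subset of $\{\dotp{\cdot,\stt}>0\}$ avoiding $\v0$ and the bad hyperplane. A secondary delicate point is obtaining the contraction rate on $K$ \emph{uniformly} rather than merely pointwise in $\v b$, which is where the argument leans on either the quantitative form of $\kappa_\theta$ from the proof of Theorem~\ref{thm:popEMsymmfp1} or on the direct Jacobian computation plus the compactness/absorption argument sketched above.
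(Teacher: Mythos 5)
Your overall architecture is sound and in fact mirrors the paper's at a structural level: wait until $\norm{\pa{t}}$ is small (Theorem~\ref{thm:convergencerate1}), trap $\pb{t}$ in a compact set bounded away from $\v0$ and from the hyperplane $\{\dotp{\cdot,\stt}=0\}$ (the paper's Lemmas~\ref{lemma:upperboundEMestimates}, \ref{lem:lowerboundbt}, \ref{lem:planarestimates}), treat the $\va$-dependence as a Lipschitz perturbation of the $\va=\v0$ (Model~1) map, and square at the end. Your repelling-fixed-point argument for the lower bound on $\norm{\pb{t}}$ is essentially the paper's (Lemma~\ref{lem:lowerboundbt} uses $\partial\funcQ/\partial x_b|_{x_b=0}=(1+x_\theta^2)/2>1/2$, the one-dimensional shadow of your $\vI+r^2\v u\v u^{\scriptscriptstyle\top}$), and your local Jacobian computation at $\stt$ is correct and consistent with the paper's bound $\partial F(x_b,x_\theta)/\partial x_b|_{x_b=x_\theta}\le e^{-x_\theta^2/2}$.

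The genuine gap is at the crux: the \emph{uniform one-step} contraction $\Norm{\Psi(\v b,\v0)-\stt}\le\bar\kappa\Norm{\v b-\stt}$ on all of $K$. Neither of the two routes you offer delivers it as stated. The route ``$\norm{D_{\v b}\Psi(\stt,\v0)}_{\mathrm{op}}<1$ plus global attraction plus compactness'' fails because convergence of orbits to $\stt$ does not imply that each individual step decreases the distance to $\stt$; the Jacobian controls only a neighborhood of $\stt$, and on the rest of $K$ you have no pointwise bound on the ratio $\Norm{\Psi(\v b,\v0)-\stt}/\Norm{\v b-\stt}$. Passing to blocks of iterations would repair this but proves a different (multi-step) recursion than the one claimed in the theorem. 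The route ``cite Theorem~\ref{thm:popEMsymmfp1}'' does give pointwise strict contraction for each $\v b\in K$, and combined with upper semicontinuity of the ratio at $\stt$ (via the Jacobian) a compactness argument then yields a uniform $\bar\kappa<1$ --- but this simply imports the hard analytic content rather than proving it: in the paper, the one-step contraction of the symmetric map is \emph{not} a soft consequence of local stability plus attraction; it rests on the global concavity of $F(\cdot,x_\theta)$ together with $F(0,x_\theta)=0$ and $F(x_\theta,x_\theta)=x_\theta$ (Lemma~\ref{lem:Fpropert1}), which give $|F(x_b,x_\theta)-x_\theta|\le\kappa_b''|x_b-x_\theta|$ uniformly (Lemma~\ref{lemma:1a0conratepre}), and on the separate transverse bound $|\tpbi{{\langle t\rangle,2},t+1}-\ptt{2,t}|\le\kappa_s|\ptt{2,t}|$ via the function $S$. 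Indeed the paper proves Theorem~\ref{thm:popEMsymmfp1} \emph{using} this machinery, so a proof of Theorem~\ref{thm:convergenceratemagnitude} that leans on Theorem~\ref{thm:popEMsymmfp1} must still supply these estimates somewhere. A secondary issue: your trapping radius $\ell$ is defined as a $\liminf$, so as written your $T_0$, $\kappa_b$, $c_b$ are not visibly functions of only $\norm{\stt}$, $|\dotp{\pb{0},\stt}|/\norm{\pb{0}}$, $\norm{\pa{0}}$, $\norm{\pb{0}}$ as the theorem requires; you would need an explicit lower bound in those quantities, as the paper provides with $c_{L,1}=\min(\norm{\pb{0}},c_l)$.
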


If $\dotp{\pb{0},\stt} = 0$, then we show convergence of the (re-parameterized) Population EM
iterates to the degenerate solution $(\v0,\v0)$.

\begin{theorem}\label{thm:hyperplaneinit1}
Let $(\pa{t},\pb{t})_{t\geq0}$ denote the (re-parameterized) Population EM
iterates for Model 2.
If $\dotp{\pb{0},\stt} = 0$, then
\begin{eqnarray*}
  (\pa{t}, \pb{t})
  & \to & (\v0,\v0)
  \quad \text{as $t\to\infty$}
  \,.
\end{eqnarray*}
\end{theorem}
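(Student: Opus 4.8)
The plan is to peel off two invariances that collapse the vector dynamics onto a single line, and then to analyze the resulting scalar recursion directly — the same strategy as for the degenerate case of Model~1 (Theorem~\ref{thm:popsymm2zero}). To set up, write $\bar{\vmu}\triangleq(\tm{1}+\tm{2})/2$ and represent a draw as $\Y=\bar{\vmu}+S\stt+\vZ$ with $S\in\{-1,+1\}$ uniform and $\vZ\sim N(\v0,\vI_d)$ independent of $S$. Put $w_t(\y)\triangleq\mathsf{v}_d(\y,\pm{1,t},\pm{2,t})$; completing the square in the Gaussian densities shows that $w_t(\y)$ depends on $\y$ only through $\dotp{\y-\bar{\vmu}-\pa{t},\,\pb{t}}$. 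If $\pb{0}=\v0$, then $w_0\equiv\tfrac12$ and a single update already yields $(\pa{1},\pb{1})=(\v0,\v0)$, so I may assume $\pb{0}\neq\v0$ and set $\vu\triangleq\pb{0}/\norm{\pb{0}}$ and $L\triangleq\R\vu$.

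First I would show, by induction on $t$, that $\pb{t}\in L$ and $\dotp{\pb{t},\stt}=0$ for all $t\geq0$; the base case is exactly the hypothesis of the theorem. For the inductive step, given $\dotp{\pb{t},\stt}=0$, the argument of $w_t$ at $\Y$ equals $\dotp{\vZ,\pb{t}}-\dotp{\pa{t},\pb{t}}$, so $w_t(\Y)$ is a function of $\dotp{\vZ,\pb{t}}$ alone and is in particular independent of $S$ and of the part of $\vZ$ orthogonal to $\pb{t}$. Splitting $\vZ$ into its $\pb{t}$-component and the rest and using $\E S=0$, the Model~2 updates give $\pm{1,t+1}-\bar{\vmu}$ and $\pm{2,t+1}-\bar{\vmu}$ as scalar multiples of $\pb{t}$; hence so are their half-sum $\pa{t+1}$ and half-difference $\pb{t+1}$, which yields $\pb{t+1}\in\R\pb{t}=L$ and $\dotp{\pb{t+1},\stt}=0$. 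This also shows $\pa{t}\in L$ for every $t\geq1$.

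Next I would reduce to one dimension. For $t\geq1$ write $\pa{t}=\alpha_t\vu$ and $\pb{t}=\beta_t\vu$ (and set $\alpha_0\triangleq\dotp{\pa{0},\vu}$, $\beta_0=\norm{\pb{0}}$), so that with $G\sim N(0,1)$ the map $w_t$ becomes the scalar function $g\mapsto 1/(1+\exp(2\beta_t(g-\alpha_t)))$. Setting $p_t\triangleq\E w_t(G)$ and combining the two Model~2 updates using $\E G=0$ gives
\[
  \beta_{t+1}=-\frac{\E[G\,w_t(G)]}{2p_t(1-p_t)},
  \qquad
  \alpha_{t+1}=(2p_t-1)\,\beta_{t+1}.
\]
Gaussian integration by parts, $\E[G\,w_t(G)]=\E[w_t'(G)]=-2\beta_t\,\E[w_t(G)(1-w_t(G))]$, turns the $\beta$-recursion into $\beta_{t+1}=\rho_t\,\beta_t$, where
\[
  \rho_t\;\triangleq\;\frac{\E[w_t(G)(1-w_t(G))]}{p_t(1-p_t)}\;=\;1-\frac{\var(w_t(G))}{p_t(1-p_t)}\;\in\;(0,1);
\]
the inclusion holds because $\var(w_t(G))>0$ precisely when $\beta_t\neq0$ (and if $\beta_t=0$ at any step then $(\alpha_{t+1},\beta_{t+1})=(0,0)$ and we are done). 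Thus $\abs{\beta_t}$ is nonincreasing, hence converges to some $\beta_\infty\geq0$, and $\abs{\alpha_t}<\abs{\beta_t}$ for $t\geq1$ since $\abs{2p_t-1}<1$.

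To finish I would rule out $\beta_\infty>0$. Note $\rho_t=\rho(\alpha_t,\beta_t)$ for a function $\rho$ that is continuous on $\R^2$ with $\rho(\alpha,\beta)<1$ whenever $\beta\neq0$, and for $t\geq1$ the pair $(\alpha_t,\beta_t)$ lies in the compact set $K\triangleq\{(\alpha,\beta):\abs{\alpha}\leq\abs{\beta_1},\ \beta_\infty\leq\abs{\beta}\leq\abs{\beta_1}\}$; if $\beta_\infty>0$ then $1-\rho$ attains a positive minimum $\de>0$ on $K$, forcing $\abs{\beta_{t+1}}\leq(1-\de)\abs{\beta_t}$ for all $t\geq1$ and contradicting $\abs{\beta_t}\downarrow\beta_\infty>0$. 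Hence $\beta_\infty=0$, so $\norm{\pb{t}}=\abs{\beta_t}\to0$ and then $\norm{\pa{t}}=\abs{\alpha_t}\leq\abs{\beta_t}\to0$, which is the claim. The hard part is exactly this last step: near the origin the contraction factor degrades ($\rho_t\approx 1-\beta_t^2$ there, so $\abs{\beta_t}$ decays only like $t^{-1/2}$), so there is no uniform geometric rate and one must rely on monotonicity of $\abs{\beta_t}$ together with a compactness argument rather than a naive fixed-point contraction; the rest — the orthogonality/collinearity bookkeeping and the justification of the integration by parts and of interchanging expectation with differentiation — is routine.
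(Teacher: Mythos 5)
Your proof is correct, and it takes a genuinely different route from the paper's. The paper proceeds indirectly in three steps: it first characterizes \emph{all} fixed points of the Population EM map (showing $\va=\v0$ via the contraction Lemma~\ref{lemma:aconvergeupperbound} and then $\vb\in\{-\stt,\v0,\stt\}$ via concavity of the auxiliary function $F$); it then shows every accumulation point of the iterates is a fixed point, using the classical EM monotonicity argument (the expected log-likelihood is nondecreasing and the KL-type discrepancy between consecutive iterates tends to zero, cf.\ Lemma~\ref{lemma:converge}); finally it invokes the sign invariance $\sgn\dotp{\pb{t},\stt}=\sgn\dotp{\pb{0},\stt}=0$ to exclude $\pm\stt$ and closes with compactness. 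You instead exploit the orthogonality $\dotp{\pb{0},\stt}=0$ to collapse the entire dynamics onto the line $\R\,\pb{0}$ (the signal term $S\stt$ integrates out of every update), reduce to the exact scalar recursion $\beta_{t+1}=\rho_t\beta_t$ with $\rho_t=1-\var(w_t(G))/(p_t(1-p_t))\in(0,1)$ via Stein's identity, and then conclude by monotonicity of $\abs{\beta_t}$ plus a compactness argument on the continuous factor $\rho(\alpha,\beta)$. I checked the key computations — the reduction of $w_t$ to a function of $\dotp{\vZ,\pb{t}}$ alone, the formulas $\beta_{t+1}=-\E[Gw_t]/(2p_t(1-p_t))$ and $\alpha_{t+1}=(2p_t-1)\beta_{t+1}$, and the integration by parts — and they are right; the degenerate cases $\pb{0}=\v0$ and $\beta_t=0$ are handled. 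What each approach buys: the paper's fixed-point characterization and accumulation-point lemma are heavier machinery but are reused elsewhere (e.g., for the corollaries on stationary points of the expected log-likelihood), whereas your argument is shorter and self-contained, yields an explicit per-step contraction factor, and makes transparent why the convergence is only $O(t^{-1/2})$ near the origin — but it is special to the orthogonal initialization, since without $\dotp{\pb{0},\stt}=0$ the dynamics are genuinely two-dimensional. Notably, both proofs must end with the same ``monotone sequence plus compactness, no uniform contraction'' move, for exactly the reason you identify.
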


Theorems~\ref{thm:convergencerate1},~\ref{thm:convergenceratemagnitude},
and~\ref{thm:hyperplaneinit1} together characterize the fixed points of
Population EM for Model 2, and fully specify the conditions under which each
fixed point is reached.
The results are simply summarized in the following corollary.
\begin{corollary}\label{lemma:main}
  If $(\pa{t},\pb{t})_{t\geq0}$ denote the (re-parameterized) Population EM
  iterates for Model 2, then
  \begin{eqnarray*}
    \pa{t} & \to & \frac{\tm{1} + \tm{2}}{2}
    \quad \text{as $t\to\infty$}
    \,,
    \label{equ:a}
    \\
    \pb{t} & \to &
    \sgn\parens{\dotp{\pb{0},\tm{2}-\tm{1}}} \frac{\tm{2}-\tm{1}}{2}
    \quad \text{as $t\to\infty$}
    \,.
    \label{equ:b}
  \end{eqnarray*}
\end{corollary}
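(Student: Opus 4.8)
The plan is to obtain Corollary~\ref{lemma:main} as an immediate consequence of Theorems~\ref{thm:convergencerate1}, \ref{thm:convergenceratemagnitude}, and~\ref{thm:hyperplaneinit1}, split according to whether $\dotp{\pb0,\stt}$ vanishes; the only ingredient beyond bookkeeping is an elementary convergence argument for a contracting recursion driven by a null sequence. It is convenient to record at the outset that, under the re-parameterization~\eqref{equ:reparamization}, the target $\sgn\parens{\dotp{\pb0,\tm2-\tm1}}\,\tfrac{\tm2-\tm1}{2}$ is exactly $\sgn(\dotp{\pb0,\stt})\,\stt$, and that when $\dotp{\pb0,\stt}=0$ this equals $\v0$ because $\sgn(0)=0$. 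Hence the case $\dotp{\pb0,\stt}=0$ (which also subsumes the degenerate situation $\stt=\v0$) is handled verbatim by Theorem~\ref{thm:hyperplaneinit1}: it gives $(\pa t,\pb t)\to(\v0,\v0)$, i.e. $\pa t\to\v0$ and $\pb t\to\sgn(\dotp{\pb0,\stt})\stt$, which is the asserted conclusion in this case.

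It remains to treat $\dotp{\pb0,\stt}\neq0$, which forces $\stt\neq\v0$ and, by Theorem~\ref{thm:convergencerate1}, also $\pb t\neq\v0$ for every $t$, so that each $\pbeta t$ is well defined. From the second bound of Theorem~\ref{thm:convergencerate1}, $\sin(\pbeta{t+1})\le\kappa_\beta^{\,t}\sin(\pbeta0)$ with $\kappa_\beta\in(0,1)$, hence $\sin(\pbeta t)\to0$. Substituting this into the first bound of that theorem and summing the resulting geometric recursion (exactly the display following Theorem~\ref{thm:convergencerate1}) gives
\[
  \norm{\pa{t+1}}^2 \ \le\ \kappa_a^{\,2t}\,\norm{\pa0}^2 \;+\; \tfrac{1}{4}\,\norm{\stt}^2\, t\,\bigl(\max\{\kappa_a,\kappa_\beta\}\bigr)^{t}\,\sin^2(\pbeta0) .
\]
Since $\kappa_a,\kappa_\beta\in(0,1)$ and $t\mu^{t}\to0$ for any $\mu\in(0,1)$, the right-hand side tends to $0$, so $\norm{\pa t}\to0$, i.e. $\pa t\to\v0$; this is the first claimed limit.

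For the second limit, set $u_t\triangleq\norm{\pb t-\sgn(\dotp{\pb0,\stt})\stt}^2$. Theorem~\ref{thm:convergenceratemagnitude} supplies $T_0$, $\kappa_b\in(0,1)$, and $c_b>0$ with $u_{t+1}\le\kappa_b^2\,u_t+c_b\norm{\pa t}$ for all $t>T_0$, and we have just shown $\norm{\pa t}\to0$. Unrolling from $T_0$ yields $u_{T_0+1+k}\le\kappa_b^{2k}u_{T_0+1}+c_b\sum_{j=0}^{k-1}\kappa_b^{2j}\norm{\pa{T_0+k-j}}$; given $\eps>0$, pick $N$ with $\norm{\pa s}<\eps$ for all $s\ge N$ and split the sum at $j=k-(N-T_0)$ --- the part with small $j$ contributes at most $c_b\eps/(1-\kappa_b^2)$ since those indices $T_0+k-j$ exceed $N$, while the finitely many remaining terms carry coefficients $\kappa_b^{2j}$ that vanish as $k\to\infty$ --- so $\limsup_t u_t\le c_b\eps/(1-\kappa_b^2)$, and letting $\eps\to0$ gives $u_t\to0$, i.e. $\pb t\to\sgn(\dotp{\pb0,\stt})\stt$, which unwinds through~\eqref{equ:reparamization} to the second claimed limit. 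I expect no genuine obstacle here: once the three theorems are in hand the argument is routine, the one place deserving a little care being this last recursion, where one must accommodate both the inhomogeneous term $c_b\norm{\pa t}$ and the fact that the estimate holds only for $t>T_0$ (harmless, since discarding finitely many initial iterates does not affect the limit).
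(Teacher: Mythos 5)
Your proposal is correct and follows exactly the route the paper intends: the corollary is presented there as an immediate summary of Theorems~\ref{thm:convergencerate1}, \ref{thm:convergenceratemagnitude}, and~\ref{thm:hyperplaneinit1}, and you supply precisely the routine limit arguments (the geometric unrolling of the $\norm{\pa{t}}$ recursion and the standard contraction-plus-null-sequence argument for $\pb{t}$) that the paper leaves implicit. The case split on $\dotp{\pb{0},\stt}$ and the observation that $\sgn(0)=0$ makes the two cases consistent are exactly right.
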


\subsection{Main Results for Sample-based EM}\label{sec:mainresultmainresultsamplebase}

Using the results on Population EM presented in the above section, we can now establish consistency of
(Sample-based) EM.
We focus attention on Model 2, as the same results for Model 1 easily follow as
a corollary.
First, we state a simple connection between the Population EM and Sample-based
EM iterates.
\begin{theorem}\label{lemma:pconv}
  Suppose Population EM and Sample-based EM for Model 2 have the same initial
  parameters:
  $\sm{1,0}=\pm{1,0}$ and $\sm{2,0}=\pm{2,0}$.
  Then for each iteration $t \geq 0$,
  \begin{equation*}
    \sm{1,t} \ \to \ \pm{1,t}
    \quad\text{and}\quad
    \sm{2,t} \ \to \ \pm{2,t}
    \quad \text{as $n\to\infty$}
    \,,
  \end{equation*}
  where convergence is in probability.
\end{theorem}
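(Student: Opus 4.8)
The plan is to argue by induction on the iteration index $t$. The base case $t = 0$ is immediate from the hypothesis $\sm{1,0} = \pm{1,0}$ and $\sm{2,0} = \pm{2,0}$. For the inductive step, assume $\sm{1,t} \conp \pm{1,t}$ and $\sm{2,t} \conp \pm{2,t}$ as $n \to \infty$. I introduce the maps $g_n(\va,\vb) \triangleq n^{-1} \sum_{i=1}^n \mathsf{v}_d(\y_i,\va,\vb)\y_i$ and $h_n(\va,\vb) \triangleq n^{-1} \sum_{i=1}^n \mathsf{v}_d(\y_i,\va,\vb)$ on $\R^d \times \R^d$, together with their population counterparts $g(\va,\vb) \triangleq \E\, \mathsf{v}_d(\Y,\va,\vb)\Y$ and $h(\va,\vb) \triangleq \E\, \mathsf{v}_d(\Y,\va,\vb)$, so that $\sm{1,t+1} = g_n(\sm{1,t},\sm{2,t}) / h_n(\sm{1,t},\sm{2,t})$ and $\pm{1,t+1} = g(\pm{1,t},\pm{2,t}) / h(\pm{1,t},\pm{2,t})$ (and the analogous identities hold for the second component with $\mathsf{v}_d$ replaced by $1 - \mathsf{v}_d$). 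It then suffices to prove $g_n(\sm{1,t},\sm{2,t}) \conp g(\pm{1,t},\pm{2,t})$ and $h_n(\sm{1,t},\sm{2,t}) \conp h(\pm{1,t},\pm{2,t})$, with the latter limit strictly positive, and then invoke the continuous mapping theorem for the ratio.

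The crux of the argument --- and the step I expect to be the main obstacle --- is a \emph{uniform law of large numbers}: for every compact $K \subseteq \R^d \times \R^d$, $\sup_{(\va,\vb) \in K} \norm{g_n(\va,\vb) - g(\va,\vb)} \conp 0$ and $\sup_{(\va,\vb) \in K} \abs{h_n(\va,\vb) - h(\va,\vb)} \conp 0$. The difficulty is that, although $\mathsf{v}_d$ takes values in $(0,1)$, the summands $\mathsf{v}_d(\y_i,\va,\vb)\y_i$ are unbounded. I would handle this by truncation. Writing $\mathsf{v}_d(\y,\va,\vb) = \sigma\bigl(\dotp{\y,\va - \vb} - \tfrac12 \norm{\va}^2 + \tfrac12 \norm{\vb}^2\bigr)$ with $\sigma$ the logistic function shows that, on $K$, the map $(\va,\vb) \mapsto \mathsf{v}_d(\y,\va,\vb)$ is Lipschitz with constant $O(1 + \norm{\y})$, hence $(\va,\vb) \mapsto \mathsf{v}_d(\y,\va,\vb)\y$ is Lipschitz with constant $O\bigl((1+\norm{\y})\norm{\y}\bigr)$. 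Since $\Y$ is sub-Gaussian, $\E\norm{\Y}$, $\E\norm{\Y}^2$, and $\E[\norm{\Y}\ind{\norm{\Y} > R}]$ are all finite, the last vanishing as $R \to \infty$. Splitting every summand according to whether $\norm{\y_i} \le R$ or $\norm{\y_i} > R$: the truncated part is an average of uniformly bounded, uniformly Lipschitz functions of $(\va,\vb)$, whose supremum deviation over $K$ is controlled by discretizing $K$ on a finite net and applying the ordinary pointwise law of large numbers at the net points; the tail part has expected supremum norm at most a constant times $\E[\norm{\Y}\ind{\norm{\Y} > R}] + \E[n^{-1}\sum_i \norm{\y_i}\ind{\norm{\y_i} > R}]$, which is made arbitrarily small by taking $R$ large. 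Sending $n \to \infty$ and then $R \to \infty$ yields both uniform limits.

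With the uniform law of large numbers in hand, the remainder is routine. Dominated convergence (with envelopes $1$ and $\norm{\Y}$) shows that $h$ and $g$ are continuous, and $h(\va,\vb) > 0$ everywhere because $\mathsf{v}_d > 0$ pointwise. Given $\epsilon > 0$, the inductive hypothesis lets me choose a compact $K$ whose interior contains $(\pm{1,t},\pm{2,t})$ and with $\P[(\sm{1,t},\sm{2,t}) \in K] \ge 1 - \epsilon$ for all large $n$; on that event, $\norm{g_n(\sm{1,t},\sm{2,t}) - g(\pm{1,t},\pm{2,t})} \le \sup_{K}\norm{g_n - g} + \norm{g(\sm{1,t},\sm{2,t}) - g(\pm{1,t},\pm{2,t})}$, and both terms tend to $0$ in probability --- the first by the uniform law of large numbers, the second by continuity of $g$ together with $(\sm{1,t},\sm{2,t}) \conp (\pm{1,t},\pm{2,t})$. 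Thus $g_n(\sm{1,t},\sm{2,t}) \conp g(\pm{1,t},\pm{2,t})$, and likewise $h_n(\sm{1,t},\sm{2,t}) \conp h(\pm{1,t},\pm{2,t}) > 0$; the continuous mapping theorem applied to the quotient gives $\sm{1,t+1} \conp \pm{1,t+1}$, and repeating the argument verbatim with $1 - \mathsf{v}_d$ gives $\sm{2,t+1} \conp \pm{2,t+1}$, closing the induction. The corresponding consistency statement for Model~1 follows from the same template, with $2\w_d(\y,\vtheta) - 1$ in place of $\mathsf{v}_d$ and no denominator to control.
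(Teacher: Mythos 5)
Your proof is correct, and it closes the induction by a decomposition that is genuinely different from the paper's. You write the error at the random evaluation point as
\[
\norm{g_n(\sm{1,t},\sm{2,t}) - g(\pm{1,t},\pm{2,t})}
\ \le \
\sup_{K}\norm{g_n - g}
\ + \
\norm{g(\sm{1,t},\sm{2,t}) - g(\pm{1,t},\pm{2,t})}
\,,
\]
i.e.\ a \emph{uniform} law of large numbers over a compact set plus continuity of the \emph{population} map, with the uniform LLN established by truncation at radius $R$ and an $\eps$-net on $K$. The paper instead pivots around the deterministic population iterate: it bounds the error by $\norm{g_n(\pm{1,t},\pm{2,t}) - g(\pm{1,t},\pm{2,t})}$ plus $\norm{g_n(\sm{1,t},\sm{2,t}) - g_n(\pm{1,t},\pm{2,t})}$, handling the first term with an ordinary pointwise WLLN (legitimate because $(\pm{1,t},\pm{2,t})$ is a fixed, non-random point) and the second with explicit Lipschitz bounds on $\w_d$ in its parameters, whose Lipschitz constants involve $\norm{\y_i}$ and are controlled by another WLLN. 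The paper's route is lighter --- no covering argument, no truncation --- because for a fixed iteration count $t$ one never needs uniformity in the parameters. Your heavier machinery is not wasted, though: a uniform (in fact quantitative) concentration bound of exactly the kind you sketch is what the paper ends up proving anyway (Lemma~\ref{lemma:prob}) to get Theorem~\ref{lemma:empconverge2}, where the number of iterations is unbounded and the pointwise-plus-Lipschitz trick no longer suffices. One minor remark: your closing sentence about Model~1 understates the reduction used in the paper --- there Model~1 is obtained from Model~2 via the re-parameterization of Lemma~\ref{lem:connectionpopEMModel1and2} rather than by rerunning the template --- but your direct argument for it is equally valid.
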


Note that Theorem~\ref{lemma:pconv} does not necessarily imply that the fixed
point of Sample-based EM (when initialized at
$(\sm{1,0},\sm{2,0})=(\pm{1,0},\pm{2,0})$) is the same as that of Population EM.
It is conceivable that as $t \to \infty$, the discrepancy between (the iterates
of) Sample-based EM and Population EM increases.
We show that this is not the case:
the fixed points of Sample-based EM indeed converge to the fixed points of
Population EM.

\begin{theorem}\label{lemma:empconverge2}
  Suppose Population EM and Sample-based EM for Model 2 have the same initial
  parameters:
  $\sm{1,0}=\pm{1,0}$ and $\sm{2,0}=\pm{2,0}$.
  If $\dotp{\pm{2,0}-\pm{1,0},\stt} \neq 0$, then
  \begin{equation*}
    \limsup_{t\to\infty}
    |\sm{1,t}-\pm{1,t}|
    \ \to \
    0
    \quad\text{and}\quad
    \limsup_{t\to\infty}
    |\sm{2,t}-\pm{2,t}|
    \ \to \
    0
    \quad \text{as $n\to\infty$}
    \,,
  \end{equation*}
  where convergence is in probability.
\end{theorem}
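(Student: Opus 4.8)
The plan is to combine the two ingredients we have in hand: (i) Theorem~\ref{lemma:pconv}, which gives convergence in probability of the Sample-based EM iterates to the Population EM iterates \emph{for each fixed $t$}; and (ii) Theorems~\ref{thm:convergencerate1} and~\ref{thm:convergenceratemagnitude} (equivalently, Corollary~\ref{lemma:main}), which give a \emph{geometric contraction} for Population EM toward its limit when $\dotp{\pb{0},\stt}\neq 0$. The key point is that a per-iteration contraction, combined with a uniform-in-$t$ bound on the one-step perturbation induced by finite samples, prevents the discrepancy $\abs{\sm{\cdot,t}-\pm{\cdot,t}}$ from accumulating as $t\to\infty$. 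So the first step is to set up a one-step perturbation analysis: write $\sm{i,t+1} = M_n(\sm{1,t},\sm{2,t})$ and $\pm{i,t+1} = M(\pm{1,t},\pm{2,t})$, where $M_n$ is the empirical M-step map from~\eqref{eq:emfirstmodelact2sample} and $M$ is the population M-step map from~\eqref{eq:emfirstmodelact2population}. Then I would decompose
\[
  \Norm{\sm{i,t+1}-\pm{i,t+1}}
  \ \leq\
  \Norm{M_n(\sm{1,t},\sm{2,t}) - M(\sm{1,t},\sm{2,t})}
  \ +\
  \Norm{M(\sm{1,t},\sm{2,t}) - M(\pm{1,t},\pm{2,t})}
  \,.
\]
The first term is a ``sampling error'' that I want to bound uniformly over a bounded region of parameters; the second term is a ``stability'' term I want to control using the contraction built into the Population EM analysis.

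Second, I would establish the uniform sampling-error bound. The empirical map $M_n$ differs from $M$ only through replacing expectations of the form $\E[\mathsf{v}_d(\Y,\vmu_1,\vmu_2)\Y]$ and $\E[\mathsf{v}_d(\Y,\vmu_1,\vmu_2)]$ by their sample averages over $\y_1,\dots,\y_n$. Since $\mathsf{v}_d \in [0,1]$ and $\Y$ is sub-Gaussian, a uniform law of large numbers (e.g.\ via a covering/bracketing argument over $(\vmu_1,\vmu_2)$ ranging in a fixed compact ball $B$ containing all iterates with high probability, together with the fact that the denominator $\E[\mathsf{v}_d]$ and $\E[1-\mathsf{v}_d]$ are bounded below on $B$) gives
\[
  \sup_{(\vmu_1,\vmu_2)\in B}\
  \Norm{M_n(\vmu_1,\vmu_2) - M(\vmu_1,\vmu_2)}
  \ \xrightarrow{p}\ 0
  \quad\text{as }n\to\infty
  \,.
\]
Call this supremum $\varepsilon_n$; it does not depend on $t$. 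I also need to argue that, with probability tending to $1$, \emph{all} Sample-based iterates $(\sm{1,t},\sm{2,t})$ stay in $B$: this follows because the Population EM iterates converge (hence lie in a bounded set), and the contraction argument below shows the Sample-based iterates stay within $O(\varepsilon_n)$ of them for all $t$, which is itself bounded once $\varepsilon_n$ is small. (This is a mild bootstrapping step, handled by induction on $t$.)

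Third, I would feed these into a recursion. Working in the re-parameterized coordinates $(\pa{t},\pb{t})$ and their sample analogues $(\sa{t},\sb{t})$, Theorems~\ref{thm:convergencerate1} and~\ref{thm:convergenceratemagnitude} tell us that, once $t > T_0$, the population map is a contraction (with some factor $\kappa<1$) toward $(\tfrac{\tm{1}+\tm{2}}{2}, \sgn(\dotp{\pb{0},\stt})\stt)$ in a neighborhood of that limit. Provided $n$ is large enough that $\varepsilon_n$ is small and the Sample-based iterates have entered that neighborhood (which happens in finitely many steps since Population EM does, and Sample-based EM tracks it for those finitely many steps by Theorem~\ref{lemma:pconv}), the discrepancy $\delta_t \triangleq \Norm{\sa{t}-\pa{t}} + \Norm{\sb{t}-\pb{t}}$ satisfies
\[
  \delta_{t+1} \ \leq\ \kappa\,\delta_t + C\varepsilon_n
  \quad\text{for all }t > T_1
  \,,
\]
for a constant $C$ depending only on $B$ and the Lipschitz constant of $M$ on $B$. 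Unrolling gives $\limsup_{t\to\infty}\delta_t \leq C\varepsilon_n/(1-\kappa)$, which $\to 0$ in probability as $n\to\infty$. Translating back from the re-parameterization to $(\sm{1,t},\sm{2,t})$ and $(\pm{1,t},\pm{2,t})$ via~\eqref{equ:reparamization} yields the claim.

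The main obstacle I anticipate is the uniform sampling-error bound together with the proof that the Sample-based iterates never escape a fixed compact region --- these two are intertwined, since the region must be chosen before $\varepsilon_n$ is controlled, yet staying in the region relies on $\varepsilon_n$ being small. The clean way around this is to fix $B$ once and for all as a ball large enough to strictly contain a neighborhood of the Population EM trajectory and its limit (possible because that trajectory is bounded), prove the uniform LLN on that $B$, and then use a ``first exit time'' argument: on the high-probability event $\{\varepsilon_n \leq \eta\}$ with $\eta$ chosen from the contraction constants, an induction on $t$ shows the Sample-based iterates stay within $O(\eta)$ of the Population iterates and hence never leave $B$. A secondary technical point is that Theorem~\ref{thm:convergenceratemagnitude}'s contraction only kicks in for $t > T_0$ and carries the extra additive term $c_b\norm{\pa{t}}$; but since $\norm{\pa{t}}\to 0$ geometrically by Theorem~\ref{thm:convergencerate1}, that term is summable and contributes only a vanishing amount to $\limsup_t \delta_t$, so it does not affect the conclusion.
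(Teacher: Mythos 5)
Your overall architecture is the same as the paper's: use Theorem~\ref{lemma:pconv} for a finite burn-in so that the sample iterates enter a good neighborhood of the limit; prove a uniform-in-$(\vmu_1,\vmu_2)$ law of large numbers over a fixed compact set (the paper does this quantitatively in Lemma~\ref{lemma:prob}, getting $O(\sqrt{d/n})$ rates via symmetrization); and then run a one-step ``contraction plus bounded perturbation'' recursion together with an induction that keeps the sample iterates inside that compact set (the paper's \eqref{equ:empcon3}). The error-accumulation lemma you implicitly invoke when you ``unroll'' the recursion is exactly the paper's Lemma~\ref{lem:expavoidserroracc}.

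The one step that does not follow from what you cite is the stability term. Your recursion $\delta_{t+1}\le\kappa\,\delta_t+C\varepsilon_n$ for $\delta_t=\norm{\sa{t}-\pa{t}}+\norm{\sb{t}-\pb{t}}$ requires a \emph{two-point} Lipschitz contraction $\norm{M(x)-M(y)}\le\kappa\norm{x-y}$ of the population map on a neighborhood of the limit. Theorems~\ref{thm:convergencerate1} and~\ref{thm:convergenceratemagnitude} only give a \emph{one-point} contraction toward the fixed point, and in a coupled, non-Lipschitz form: the bound on $\norm{\pb{t+1}-\sgn(\dotp{\pb{0},\stt})\stt}^2$ carries an additive $c_b\norm{\pa{t}}$ to the first power under a square, which is why the paper's sample-based recursion involves $\sqrt{c_b\norm{\sa{t}}}$ and why a clean linear two-point contraction is not available without a separate Jacobian analysis of $M$ near the fixed point (plausibly true, but proved nowhere in the paper and not supplied by you). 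The paper sidesteps this by measuring the sample iterates' distance to the \emph{limit} $(\v0,\stt)$ rather than to the population iterates: Lemma~\ref{lemma:connecpopsamp} applies the already-proved one-point contraction with $(\sa{t},\sb{t})$ playing the role of the initialization, Lemma~\ref{lem:expavoidserroracc} handles the square-root coupling, and the triangle inequality with $\pa{t}\to\v0$, $\pb{t}\to\stt$ then bounds $\limsup_t$ of the discrepancy. Relatedly, your remark that the $c_b\norm{\pa{t}}$ term is ``summable and contributes a vanishing amount'' is correct for the population sequence, but in the sample-based recursion the relevant quantity is $\norm{\sa{t}}$, which stabilizes at $O(\varepsilon_n)$ rather than vanishing in $t$; it contributes $O(\sqrt{\varepsilon_n})$ to the limsup --- still tending to $0$ as $n\to\infty$, so the conclusion survives, but the bookkeeping must be done explicitly as in Lemma~\ref{lem:expavoidserroracc} rather than absorbed into a linear contraction.
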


\subsection{Population EM and Expected Log-likelihood}
Do the results we derived in the last section regarding the performance of EM provide any information on the performance of other ascent algorithms, such as gradient ascent, that aim to maximize the log-likelihood function? To address this question, we show how our analysis can determine the stationary points of the expected log-likelihood and characterize the shape of the expected log-likelihood in a neighborhood of the stationary points.  Let $G(\veta)$ denote the expected log-likelihood, i.e.,
\[
G(\veta) \triangleq \bbE (\log f_{\veta}(\Y)) = \int f(\y; \veta^*)\log f(\y;
\veta) \dif\y,
\]
where $\veta^*$ denotes the true parameter value. Also consider the following standard regularity conditions:

\begin{description}
\item[R1] The family of probability density functions $f(\y; \veta)$ have common support.
\item[R2] $\nabla_{\veta}  \int f(\y; \veta^*)\log f(\y; \veta) \dif\y =  \int
  f(\y; \veta^*)\nabla_{\veta} \log f(\y; \veta) \dif\y$, where $\nabla_{\veta}$ denotes the gradient with respect to $\veta$.
\item[R3] $\nabla_{\veta} (\mathbb{E} \sum_{\vz}  f(\vz \mid  \Y; \veta^{\langle
  t\rangle}))\log f(\Y, \vz; \eta) = \mathbb{E} \sum_{\vz} f(\vz \mid  \Y;
  \veta^{\langle t\rangle})\nabla_{\veta} \log f(\Y, \vz; \veta) $.
\end{description}
These conditions can be easily confirmed for many models including the Gaussian mixture models. The following theorem connects the fixed points of the Population EM and the stationary points of the expected log-likelihood.

\begin{lemma}\label{lem:stationarypoints}
 Let $\bar{\veta} \in \mathbb{R}^d$ denote a stationary point of $G(\veta)$.
 Also assume that $Q(\veta \mid \veta^{\dotp{t}})$ has a unique and finite stationary point
 in terms of $\veta$ for every $\veta^{\dotp{t}}$, and this stationary point is
 its global maxima. Then, if the model satisfies conditions
 R1--R3, and the Population EM algorithm is initialized at $\bar{\veta}$, it
 will stay at $\bar{\veta}$. Conversely, any fixed point of Population EM is a stationary point of $G(\veta)$.
\end{lemma}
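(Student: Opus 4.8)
The plan is to reduce everything to the classical EM decomposition of the expected log-likelihood together with a Gibbs-inequality argument, exactly as in the finite-sample theory of \citet{wu1983convergence} but transported to the population level. Fix a conditioning parameter $\veta_0$ and introduce the auxiliary functional
\[
  H(\veta \mid \veta_0) \ \triangleq \ \bbE_{\Y}\Bigl[ \textstyle\sum_{\vz} f(\vz \mid \Y; \veta_0)\, \log f(\vz \mid \Y; \veta) \Bigr] .
\]
Because $\log f(\Y,\vz;\veta) = \log f(\Y;\veta) + \log f(\vz\mid\Y;\veta)$ and $\sum_{\vz} f(\vz\mid\Y;\veta_0) = 1$, one gets the identity $Q(\veta\mid\veta_0) = G(\veta) + H(\veta\mid\veta_0)$ for all $\veta$ and $\veta_0$. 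Condition R1 makes $H(\veta_0\mid\veta_0) - H(\veta\mid\veta_0) = \bbE_{\Y}[\, D_{\mathrm{KL}}( f(\cdot\mid\Y;\veta_0) \,\|\, f(\cdot\mid\Y;\veta) ) \,]$ a well-defined nonnegative quantity that vanishes at $\veta = \veta_0$; hence $\veta_0$ is a global maximizer over $\R^d$ of $\veta\mapsto H(\veta\mid\veta_0)$, and since $\veta_0$ is an interior point, $\nabla_{\veta} H(\veta\mid\veta_0)\big|_{\veta=\veta_0} = \v0$. Conditions R2 and R3 are precisely what justify differentiating $G$ and $Q$ (hence $H$) under the integral over $\Y$ and the sum over $\vz$. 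Differentiating the decomposition in $\veta$ and evaluating at $\veta = \veta_0$ then gives the key identity
\[
  \nabla_{\veta} Q(\veta\mid\veta_0)\big|_{\veta=\veta_0} \ = \ \nabla_{\veta} G(\veta_0)
  \qquad \text{for every } \veta_0 .
\]

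Given this identity, both directions are immediate. For the first claim, suppose $\bar\veta$ is a stationary point of $G$, so $\nabla_\veta G(\bar\veta) = \v0$; then by the identity $\bar\veta$ is a stationary point of $\veta\mapsto Q(\veta\mid\bar\veta)$, and by hypothesis this functional has a unique (finite) stationary point which is also its global maximizer. Hence $\argmax_{\veta} Q(\veta\mid\bar\veta) = \bar\veta$, so the M-step started at $\bar\veta$ returns $\bar\veta$, and by induction Population EM initialized at $\bar\veta$ stays at $\bar\veta$ forever. Conversely, if $\bar\veta$ is a fixed point of Population EM, then $\bar\veta = \argmax_\veta Q(\veta\mid\bar\veta)$, which by the uniqueness hypothesis is the unique stationary point of $Q(\cdot\mid\bar\veta)$; therefore $\nabla_\veta Q(\veta\mid\bar\veta)\big|_{\veta=\bar\veta} = \v0$, and the identity gives $\nabla_\veta G(\bar\veta) = \v0$, i.e., $\bar\veta$ is a stationary point of $G$.

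I expect the only real work to be in establishing $\nabla_{\veta} H(\veta\mid\veta_0)\big|_{\veta=\veta_0} = \v0$ rigorously: this requires the conditional divergence $\bbE_{\Y}[ D_{\mathrm{KL}}( f(\cdot\mid\Y;\veta_0) \| f(\cdot\mid\Y;\veta) ) ]$ to be finite and differentiable in a neighborhood of $\veta_0$ (guaranteed by R1) together with the interchange of $\nabla_\veta$ with $\bbE_{\Y}$ and $\sum_{\vz}$ (guaranteed by R2--R3). For the Gaussian mixtures of Models 1 and 2 these conditions are routine to verify directly, since the relevant densities are smooth, strictly positive, and enjoy the needed integrability; one should also read ``stationary point'' as an interior critical point of $\R^d$, which is consistent with the hypothesis $\bar\veta\in\R^d$ and is exactly what turns the Gibbs-inequality maximizer into a point with vanishing gradient.
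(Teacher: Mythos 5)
Your proof is correct, and it reaches the same key identity as the paper---namely $\nabla_{\veta} Q(\veta\mid\veta_0)\big|_{\veta=\veta_0}=\nabla_{\veta}G(\veta_0)$---by a genuinely different route. The paper obtains this identity by a direct one-line cancellation: writing $f(\vz\mid\y;\bar\veta)=f(\y,\vz;\bar\veta)/f(\y;\bar\veta)$ inside $\nabla_\veta Q$ collapses the sum over $\vz$ to $\nabla_\veta f(\y;\veta)/f(\y;\bar\veta)$, which integrates to $\nabla_\veta G(\bar\veta)$ under R2--R3. You instead invoke the classical EM decomposition $Q(\veta\mid\veta_0)=G(\veta)+H(\veta\mid\veta_0)$ and argue via the Gibbs inequality that $H(\cdot\mid\veta_0)$ attains an interior maximum at $\veta_0$, hence has vanishing gradient there. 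Your route makes the self-consistency structure of EM explicit and is the standard textbook argument; the paper's is more elementary and requires one fewer interchange of gradient with expectation, since it never differentiates $H$ as a separate object. The only (minor) extra hypothesis your argument leans on is that $H(\veta\mid\veta_0)$ be finite and differentiable in $\veta$ near $\veta_0$, which goes slightly beyond the literal statements of R1--R3 but is immediate for the Gaussian mixtures at hand, as you note. Both arguments handle the two directions of the lemma identically once the identity is in place, so the proposal is a valid alternative proof.
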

\begin{proof}
Let $\bar{\veta}$ denote a stationary point of $G(\veta)$. We first prove that
$\bar{\veta}$ is a stationary point of $Q(\veta \mid \bar{\veta})$.
\begin{eqnarray*}
\left. \nabla_{\veta} Q(\veta \mid \bar{\veta}) \right|_{\veta = \bar{\veta}}
&=& \int \sum_{\vz} f(\vz \mid \y; \bar{\veta}) \frac{ \left.  \nabla_{\veta}
f(\y, \vz; \veta) \right|_{\veta = \bar{\veta}}}{f(\y, \vz; \bar{\veta})}  f(\y; \veta^*)\dif\y \\
&=&\int \sum_{\vz}  \frac{ \left. \nabla_{\veta} f(\y, \vz; \veta) \right|_{\veta =
\bar{\veta}}}{f(\y; \bar{\veta})}  f(\y; \veta^*)\dif\y \\
&=& \int  \frac{ \left. \nabla_{\veta} f(\y, \veta) \right|_{\veta =
\bar{\veta}}}{f(\y; \bar{\veta})} f(\y; \veta^*)\dif\y =\v0 \,,
\end{eqnarray*}
where the last equality is using the fact that $\bar{\veta}$ is a stationary
point of $G(\veta)$. Since $Q(\veta \mid \bar{\veta})$ has a unique stationary point, and we have assumed that the unique stationary point is its global maxima, then Population EM will stay at that point.  The proof of the other direction is similar.
\end{proof}
\begin{remark}
The fact that $\veta^*$ is the global maximizer of $G(\veta)$ is well-known in
the
statistics and machine learning literature~\citep[e.g.,][]{conniffe1987expected}. Furthermore, the fact that
$\veta^*$ is a global maximizer of $Q(\veta \mid \veta^*)$ is known as the self-consistency property~\citep{balakrishnan2014statistical}.
\end{remark}

It is straightforward to confirm the conditions of Lemma \ref{lem:stationarypoints} for mixtures of Gaussians. This lemma confirms that Population EM may be trapped in every local maxima. However, less intuitively it may get stuck at local minima or saddle points as well.
Our next result characterizes the stationary points of $G(\boldsymbol{\theta})$ for Model 1.
\begin{corollary}
  $G(\vtheta)$ has only three stationary points. If $d=1$ (so $\vtheta = \theta
  \in \R$), then $0$ is a local
minima of $G(\theta)$, while $\theta^*$ and $-\theta^*$ are global maxima. If
$d>1$, then $\v0$ is a saddle point, and $\stt$ and $-\stt$ are global maxima.
\end{corollary}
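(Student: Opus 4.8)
The plan is to combine Lemma~\ref{lem:stationarypoints} with the Population EM convergence results already in hand: the lemma identifies the stationary points of $G$ with the fixed points of Population EM, Corollary~\ref{lemma:mainsymmetric} (together with Theorems~\ref{thm:popEMsymmfp1} and~\ref{thm:popsymm2zero}) already lists those fixed points, and then each stationary point is classified by examining $G$ directly along lines through it.

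\textbf{Step 1: there are exactly three stationary points.} First I would check that Model~1 satisfies the hypotheses of Lemma~\ref{lem:stationarypoints}. Conditions R1--R3 are routine for Gaussian mixtures. For the remaining hypothesis, writing out $Q(\vtheta'\mid\vtheta)$ for Model~1 and collecting terms shows that, up to an additive constant independent of $\vtheta'$, it equals $-\tfrac12\norm{\vtheta'-M(\vtheta)}^2$ with $M(\vtheta)=\E[(2\w_d(\Y,\vtheta)-1)\Y]$ the Population EM map of~\eqref{eq:emfirstmodelact1population}; being strictly concave in $\vtheta'$, it has a unique finite stationary point which is its global maximum. Lemma~\ref{lem:stationarypoints} then yields that the stationary points of $G$ are precisely the fixed points of Population EM. Any fixed point $\bar\vtheta$ generates the constant trajectory $\pt{t}\equiv\bar\vtheta$, so Corollary~\ref{lemma:mainsymmetric} forces $\bar\vtheta=\sgn(\dotp{\bar\vtheta,\stt})\stt\in\{\stt,-\stt,\v0\}$; conversely each of these is a genuine fixed point ($M(\v0)=\v0$ because $\w_d(\cdot,\v0)\equiv\tfrac12$, and $M(\stt)=\stt$, $M(-\stt)=-\stt$ by applying the contraction of Theorem~\ref{thm:popEMsymmfp1} to the initializations $\pt{0}=\stt$ and $\pt{0}=-\stt$). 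Since $\stt\neq\v0$, these three points are distinct.

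\textbf{Step 2: $\stt$ and $-\stt$ are global maxima.} By the Remark, the true parameter value globally maximizes $G$; and since the data distribution $0.5N(-\stt,\vI_d)+0.5N(\stt,\vI_d)$ is unchanged under $\stt\mapsto-\stt$, we have $G(\stt)=G(-\stt)$, so both are global maxima.

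\textbf{Step 3: classifying $\v0$.} I would start from the closed form $G(\vtheta)-G(\v0)=-\tfrac12\norm{\vtheta}^2+\E[\log\cosh(\dotp{\Y,\vtheta})]$, which follows from writing $f(\y;\vtheta)=(2\pi)^{-d/2}e^{-(\norm{\y}^2+\norm{\vtheta}^2)/2}\cosh(\dotp{\y,\vtheta})$. A short computation gives $\nabla^2 G(\v0)=\E[\Y\Y^\t]-\vI_d=\stt(\stt)^\t$. If $d=1$, this Hessian is the strictly positive scalar $(\theta^\star)^2$, so $0$ is a strict local minimum. If $d>1$, the Hessian is positive semidefinite but singular, with null space equal to the hyperplane $\{\vtheta:\dotp{\vtheta,\stt}=0\}$, so the second-order test is inconclusive there; instead I would observe that for a unit vector $\vu\perp\stt$ the scalar $\dotp{\Y,\vu}$ is $N(0,1)$, and use the elementary bound $\log\cosh(x)\le x^2/2$ (with equality only at $x=0$) to get $G(t\vu)-G(\v0)=-t^2/2+\E[\log\cosh(t\dotp{\Y,\vu})]<0$ for every $t\neq0$. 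Together with the strictly positive curvature $(\stt)^\t\nabla^2 G(\v0)\,\stt=\norm{\stt}^4$ in the direction $\stt$, this exhibits $\v0$ as a saddle point when $d>1$.

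The step I expect to be the main obstacle is the $d>1$ classification of $\v0$: the Hessian degenerates in every direction orthogonal to $\stt$, so local information alone does not decide whether $\v0$ is a saddle, and one has to feed in a global fact about $G$ along the hyperplane $\stt^\perp$. The $\log\cosh$ identity together with the inequality $\log\cosh(x)\le x^2/2$ gives the cleanest such fact; alternatively one could invoke the monotone-ascent property of Population EM together with Theorem~\ref{thm:popsymm2zero} (Population EM started anywhere on $\stt^\perp$ converges to $\v0$) to conclude that $G(\v0)\ge G(\vtheta)$ for all $\vtheta$ with $\dotp{\vtheta,\stt}=0$, which again forces $\v0$ to be a saddle once $d>1$.
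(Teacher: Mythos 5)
Your proof is correct, and Step 1 is exactly the paper's argument: the paper's entire justification for this corollary is the one sentence ``straightforward result of Lemma~\ref{lem:stationarypoints} and Corollary~\ref{lemma:mainsymmetric},'' which is precisely your identification of the stationary points of $G$ with the fixed points of Population EM (after verifying that $Q(\cdot\mid\vtheta)$ is, up to a constant, $-\tfrac12\norm{\vtheta'-M(\vtheta)}^2$, hence strictly concave with a unique maximizer). Where you go beyond the paper is in Steps 2--3: the paper never actually proves the local/global classification of the three stationary points, whereas you supply it. Your computation $\nabla^2 G(\v0)=\E[\Y\Y^\t]-\vI_d=\stt(\stt)^\t$ is right, correctly handles the $d=1$ case, and your resolution of the degenerate $d>1$ case via the closed form $G(\vtheta)-G(\v0)=-\tfrac12\norm{\vtheta}^2+\E\log\cosh(\dotp{\Y,\vtheta})$ together with the strict inequality $\log\cosh(x)<x^2/2$ for $x\neq0$ is clean and airtight; this is genuinely needed since the Hessian is singular on all of $\stt^\perp$. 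One small caveat: the alternative route you sketch at the end (monotone ascent of Population EM plus Theorem~\ref{thm:popsymm2zero}) only yields the weak inequality $G(\vtheta)\le G(\v0)$ on $\stt^\perp$, which rules out $\v0$ being a local maximum but does not by itself rule out its being a (non-strict) local minimum; your primary $\log\cosh$ argument is the one that actually closes this.
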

The proof is a straightforward result of Lemma \ref{lem:stationarypoints} and
Corollary \ref{lemma:mainsymmetric}. The phenomenon that Population EM may stuck in local minima or saddle points also happens in Model 2.
We can employ Corollary~\ref{lemma:main} and Lemma \ref{lem:stationarypoints} to
explain the shape of the expected log-likelihood function $G$. To simplify the
notation, we consider the re-parametrization $\va \triangleq
\frac{\vmu_1+\vmu_2}{2}$ and $\vb \triangleq \frac{\vmu_2-\vmu_1}{2}$.

\begin{corollary}
$G(\va,\vb)$ has three stationary points:
\begin{equation*}
  \left( \frac{\tm{1}+\tm{2}}{2}, \frac{\tm{2}- \tm{1}}{2} \right)
  \,,
  \qquad
  \left( \frac{\tm{1}+\tm{2}}{2}, \frac{\tm{1}-\tm{2}}{2} \right)
  \,,
  \qquad
  \text{and}
  \qquad
  \left( \frac{\tm{1}+\tm{2}}{2}, \frac{\tm{1}+ \tm{2}}{2} \right)
  \,.
\end{equation*}
The first two points are global maxima.
The third point is a saddle point.
\end{corollary}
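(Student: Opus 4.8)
The plan is to combine Lemma~\ref{lem:stationarypoints} with Corollary~\ref{lemma:main} to pin down the stationary points of $G(\va,\vb)$, and then perform a local second-order analysis to classify them. First I would verify that the regularity conditions R1--R3 and the hypotheses of Lemma~\ref{lem:stationarypoints} hold for Model~2: the Gaussian mixture densities have common support $\R^d$, differentiation under the integral is justified by standard Gaussian tail bounds, and for each $\veta^{\dotp{t}}$ the function $Q(\veta\mid\veta^{\dotp{t}})$ is (up to additive constants) a negative-definite quadratic in $(\vmu_1,\vmu_2)$ weighted by the soft assignments, so it has a unique finite stationary point which is its global maximum. This licenses the ``iff'' between fixed points of Population EM and stationary points of $G$. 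Then, translating Corollary~\ref{lemma:main} through the reparameterization \eqref{equ:reparamization} (noting $\va=\pa{t}+(\tm1+\tm2)/2$ and $\vb=\pb{t}$), the limit points of Population EM are exactly the three listed points: $\va$ always converges to $(\tm1+\tm2)/2$, while $\vb$ converges to $\pm(\tm2-\tm1)/2$ when $\dotp{\pb0,\stt}\neq0$ and to $\v0$ when $\dotp{\pb0,\stt}=0$. Since every stationary point of $G$ is a fixed point of Population EM, and these are the only three fixed points, $G$ has exactly these three stationary points.

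To classify them, I would note that $G(\va,\vb)=G(\vmu_1,\vmu_2)$ is symmetric under swapping $\vmu_1\leftrightarrow\vmu_2$, which in the $(\va,\vb)$ coordinates is the reflection $\vb\mapsto-\vb$. Hence $G$ takes the same value at the first two points, and they are interchanged by an isometry of $G$; it therefore suffices to show one of them is a global maximum. For that I would invoke the remark following Lemma~\ref{lem:stationarypoints}: $\veta^*$ is the global maximizer of the expected log-likelihood $G$ (equivalently, $G(\veta)=-D_{\mathrm{KL}}(f_{\veta^*}\,\|\,f_\veta)+\text{const}$ is maximized exactly where $f_\veta=f_{\veta^*}$). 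Since $f_{(\vmu_1,\vmu_2)}=f_{(\vmu_2,\vmu_1)}$, both $\parens{(\tm1+\tm2)/2,(\tm2-\tm1)/2}$ and $\parens{(\tm1+\tm2)/2,(\tm1-\tm2)/2}$ realize $\veta=\veta^*$ as a density and so are global maxima of $G$.

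For the third point $\parens{(\tm1+\tm2)/2,(\tm1+\tm2)/2}$, i.e.\ $\vb=\v0$ (so $\vmu_1=\vmu_2$), I would show it is a saddle by exhibiting directions of increase and of decrease of $G$. Along any perturbation of $\va$ away from $(\tm1+\tm2)/2$ with $\vb$ held at $\v0$, the model is a single Gaussian $N(\va,\vI_d)$ whose KL divergence from the true mixture is strictly increasing in $\norm{\va-(\tm1+\tm2)/2}$, so $G$ strictly decreases. Conversely, perturbing $\vb$ from $\v0$ in the direction of $\stt=(\tm2-\tm1)/2$ (with $\va=(\tm1+\tm2)/2$) moves the model toward the true two-component mixture and strictly increases $G$ for small perturbations---this can be seen either directly from a second-order expansion of $G$ at $\vb=\v0$ (the relevant block of the Hessian in the $\stt$ direction is positive because separating the means in the correct direction first-order-improves the fit), or more cheaply by noting that Population EM started just off the hyperplane $\dotp{\vb,\stt}=0$ is an ascent method on $G$ and escapes toward a global max, so $\vb=\v0$ cannot be a local max; combined with the existence of a descent direction it is a saddle. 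The main obstacle is making the ``increase along $\stt$'' claim rigorous without a full Hessian computation: I expect the cleanest route is the second-order Taylor expansion of $\va,\vb\mapsto -D_{\mathrm{KL}}(f_{\veta^*}\|f_{(\va,\vb)})$ at $\vb=\v0$, showing the quadratic form is negative definite in the $\va$-directions and in the $\vb$-directions orthogonal to $\stt$ but positive in the $\stt$-direction, which also re-confirms via Lemma~\ref{lem:stationarypoints} that no other stationary points were missed.
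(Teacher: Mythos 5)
Your proposal is correct and follows essentially the same route as the paper: the paper's (very terse) justification is exactly to equate stationary points of $G$ with fixed points of Population EM via Lemma~\ref{lem:stationarypoints}, read off the three fixed points from Corollary~\ref{lemma:main}, identify the first two as global maxima via the KL/self-consistency remark, and classify the third by the sign pattern of the local quadratic behavior (your computation that the $\va$-directions are strictly concave while the $\stt$-direction in $\vb$ has second derivative $\norm{\stt}^2>0$ is the right way to make the saddle rigorous). One caution: the ``cheaper'' alternative you float---that Population EM is an ascent method on $G$ and escapes from near $\vb=\v0$, hence $\vb=\v0$ cannot be a local maximum---does not actually work, since the EM map need not be a small-step method and ascent plus escape to a global maximizer is perfectly consistent with the starting point lying in a strict local maximum's basin of lower $G$-values; stick with the second-order expansion, which you correctly identify as the clean route.
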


\section{Concluding Remarks}

Our analysis of Population EM and Sample-based EM shows that the EM algorithm
can, at least for the Gaussian mixture models studied in this work, compute
statistically consistent parameter estimates.
Previous analyses of EM only established such results for specific methods of
initializing EM
\citep[e.g.,][]{dasgupta2007probabilistic,balakrishnan2014statistical}; our
results show that they are not really necessary in the large sample limit.
However, in any real scenario, the large sample limit may not accurately
characterize the behavior of EM.
Therefore, these specific methods for initialization, as well as non-asymptotic
analysis, are clearly still needed to understand and effectively apply EM.

There are several interesting directions concerning EM that we hope to pursue in
follow-up work.
The first considers the behavior of EM when the dimension $d = d_n$ may grow
with the sample size $n$.
Our proof of Theorem~\ref{lemma:empconverge2} reveals that the parameter
error of the $t$-th iterate (in Euclidean norm) is of the order $\sqrt{d/n}$ as
$t\to\infty$.
Therefore, we conjecture that the theorem still holds as long as $d_n = o(n)$.
This would be consistent with results from statistical physics on the MLE for
Gaussian mixtures, which characterize the behavior when $d_n \propto n$ as
$n\to\infty$~\citep{barkai1994statistical}.

Another natural direction is to extend these results to more general Gaussian
mixture models (e.g., with unequal mixing weights or unequal covariances) and
other latent variable models.

\paragraph{Acknowledgements.}
The second named author thanks Yash Deshpande and Sham Kakade for many helpful
initial discussions.
JX and AM were partially supported by NSF grant CCF-1420328.
DH was partially supported by NSF grant DMREF-1534910 and a Sloan Fellowship.

\bibliography{paper}
\bibliographystyle{plainnat}

\appendix

\section{Proofs of the Main Results}
\label{sec:proofs}

\subsection{Organization}
This appendix is devoted to the proofs of our main results, and is organized as
follows.
\begin{itemize}
\item Section \ref{sec:connection1} establishes a connection between Model 1 and
  Model 2 for Population EM.
    This connection enables us to use the analysis of Population EM for Model 2 for the analysis of Population EM for Model 1.
\item Section \ref{sec:structuralprop} presents several structural properties of Population EM. These properties will be used in the proofs of our main results.

\item Section \ref{sec:notation} introduces several notations that will be used in the proofs of our main results.

\item Section \ref{sec:proof:thm:convergencerate1} presents the proof of Theorem \ref{thm:convergencerate1}.

\item Section \ref{sec:prooftheomeconvergenceratemag} presents the proof of Theorem \ref{thm:convergenceratemagnitude}.

\item Section \ref{sec:prooftheorem1symmetric} presents the proof of Theorem \ref{thm:popEMsymmfp1}.

\item Section \ref{sec:proofthm5} presents the proof of
  Theorem~\ref{thm:hyperplaneinit1}, which also implies
    Theorem~\ref{thm:popsymm2zero}.

\item Section \ref{sec:proofsampleEM-populationfiniett} presents the proof of Theorem \ref{lemma:pconv}.

\item Section \ref{sec:proofofempconverge2} presents the proof of Theorem \ref{lemma:empconverge2}.

\item Appendix \ref{sec:auxiliary} includes a few auxiliary results that are used in the proofs of our main results.

\end{itemize}

\subsection{Connection Between Models 1 and 2}\label{sec:connection1}

In this section, we draw a connection between Model 1 and Model 2. This will
enable us to conclude most of the results for Model 1 from the results we prove
for Model 2.
First, consider the re-parametrization introduced in \eqref{equ:reparamization}.
The iterations of Population EM can be written in terms of these new parameters
$\pa{t}, \pb{t}$ as
\begin{eqnarray}
\pa{t+1}&=&\frac{\pq{t+1}(1-2\pp{t+1})}{2\pp{t+1}(1-\pp{t+1})}, \label{equ:itea}\\
\pb{t+1}&=&\frac{\pq{t+1}}{2\pp{t+1}(1-\pp{t+1})}, \label{equ:iteb}
\end{eqnarray}
where
\begin{eqnarray}
\pq{t+1}&=&\bbE \w_d(\Y-\pa{t}, \pb{t})\Y\n\\
&=&\int \w_d(\y-\pa{t}, \pb{t})\y\pdf{d, \y, \stt}\dif \y
  \, , \label{equ:iteq}\\
\pp{t+1}&=&\bbE \w_d(\Y-\pa{t}, \pb{t})\n\\
&=&\int \w_d(\y-\pa{t}, \pb{t})\pdf{d, \y, \stt}\dif \y
  \, .
  \label{equ:itep}
\end{eqnarray}
Above, we use
\[
  \pdf{d, \y, \stt}
  \ \triangleq \
  \frac{1}{2}(\phi_d(\y-\stt)+\phi_d(\y+\stt))
\]
as shorthand for the Gaussian mixture density $0.5 N(-\stt, I_d)+ 0.5
N(\stt, I_d)$.

The following lemma establishes a connection between the iterations of
Population EM for Model 1 and for Model 2.
\begin{lemma}\label{lem:connectionpopEMModel1and2}
  If $\pa{0}=\v0$, then $\pa{t}=\v0$ for every $t$.
  Furthermore,
\begin{eqnarray*}
  \pb{t+1}
  \ = \
  2 \bbE \w_d(\Y, \pb{t})Y
  \ = \
  \bbE (2\w_d(\Y, \pb{t})-1)\Y
  \, .
\end{eqnarray*}
\end{lemma}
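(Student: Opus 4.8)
The plan is to prove the two assertions in sequence, since the second rests on the first. For the invariance claim, I would argue by induction on $t$. Suppose $\pa{t} = \v0$; I want to conclude $\pa{t+1} = \v0$. By \eqref{equ:itea}, it suffices to show $\pq{t+1} = \v0$, since $\pa{t+1}$ is just a scalar multiple of $\pq{t+1}$ (the denominator $2\pp{t+1}(1-\pp{t+1})$ is a nonzero scalar, as $\pp{t+1} \in (0,1)$ being an expectation of a quantity strictly between $0$ and $1$). Now with $\pa{t} = \v0$, equation \eqref{equ:iteq} reads $\pq{t+1} = \int \w_d(\y, \pb{t})\, \y\, \pdf{d,\y,\stt}\, \dif\y$. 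The key observation is the symmetry of the integrand under $\y \mapsto -\y$: the mixture density $\pdf{d,\y,\stt} = \tfrac12(\phi_d(\y-\stt) + \phi_d(\y+\stt))$ is even in $\y$, while the weight function satisfies $\w_d(-\y,\vtheta) = 1 - \w_d(\y,\vtheta)$ (directly from the definition of $\w_d$, since swapping the sign of $\y$ swaps the two Gaussian densities in numerator and denominator). Therefore, substituting $\y \to -\y$,
\begin{eqnarray*}
  \pq{t+1}
  & = &
  \int \w_d(\y,\pb{t})\,\y\,\pdf{d,\y,\stt}\,\dif\y
  \ = \
  -\int \w_d(-\y,\pb{t})\,\y\,\pdf{d,\y,\stt}\,\dif\y
  \\
  & = &
  -\int (1-\w_d(\y,\pb{t}))\,\y\,\pdf{d,\y,\stt}\,\dif\y
  \ = \
  -\int \y\,\pdf{d,\y,\stt}\,\dif\y + \pq{t+1}
  \,,
\end{eqnarray*}
so $\int \y\,\pdf{d,\y,\stt}\,\dif\y = \v0$ (which also follows since that density is even), hence $\pq{t+1} = -\pq{t+1}$, i.e.\ $\pq{t+1} = \v0$. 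Since $\pa{0} = \v0$ by hypothesis, the induction goes through and $\pa{t} = \v0$ for all $t$.

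For the second assertion, I now use $\pa{t} = \v0$ (just established) in equations \eqref{equ:iteb}--\eqref{equ:itep}. From \eqref{equ:iteb}, $\pb{t+1} = \pq{t+1}/(2\pp{t+1}(1-\pp{t+1}))$ with $\pq{t+1} = \bbE\,\w_d(\Y,\pb{t})\Y$ and $\pp{t+1} = \bbE\,\w_d(\Y,\pb{t})$, where $\Y \sim \pdf{d,\cdot,\stt}$. The task reduces to showing the scalar denominator equals $\tfrac12$, i.e.\ $\pp{t+1}(1-\pp{t+1}) = \tfrac14$, equivalently $\pp{t+1} = \tfrac12$. But $\pp{t+1} = \int \w_d(\y,\pb{t})\,\pdf{d,\y,\stt}\,\dif\y$, and by exactly the same $\y \mapsto -\y$ symmetry, this integral equals $\int(1-\w_d(\y,\pb{t}))\,\pdf{d,\y,\stt}\,\dif\y = 1 - \pp{t+1}$, so $\pp{t+1} = \tfrac12$. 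Hence $\pb{t+1} = \pq{t+1}/(2 \cdot \tfrac14) = 2\,\pq{t+1} = 2\,\bbE\,\w_d(\Y,\pb{t})\Y$. The final equality $2\bbE\,\w_d(\Y,\pb{t})\Y = \bbE(2\w_d(\Y,\pb{t})-1)\Y$ is immediate once we note $\bbE\,\Y = \v0$ (the mixture mean), so subtracting $\bbE\,\Y$ costs nothing.

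I do not anticipate a serious obstacle here; the proof is essentially a bookkeeping exercise built on the single symmetry identity $\w_d(-\y,\vtheta) = 1 - \w_d(\y,\vtheta)$ together with the evenness of the Model~1 mixture density. The one point requiring a little care is justifying that $\pp{t+1} \in (0,1)$ strictly (so that division is legitimate and the reparameterization \eqref{equ:itea}--\eqref{equ:iteb} is well-defined), which follows because $\w_d(\y,\vtheta) \in (0,1)$ pointwise for every $\y$ and the density $\pdf{d,\cdot,\stt}$ has full support; and checking that the relevant first moments are finite so Fubini/change-of-variables applies, which holds since $\Y$ has Gaussian tails and $\w_d$ is bounded. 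Both are routine for Gaussian mixtures.
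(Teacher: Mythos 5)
Your proof of the second assertion (that $\pp{t+1}=\tfrac12$, hence $\pb{t+1}=2\,\bbE\,\w_d(\Y,\pb{t})\Y=\bbE(2\w_d(\Y,\pb{t})-1)\Y$) is correct and matches the paper's argument. The first part, however, has a genuine error. You claim that $\pq{t+1}=\v0$ and derive $\pa{t+1}=\v0$ from that via \eqref{equ:itea}. But your symmetry chain does not establish $\pq{t+1}=-\pq{t+1}$: after substituting $\y\mapsto-\y$ and using $\w_d(-\y,\pb{t})=1-\w_d(\y,\pb{t})$, you arrive at
\begin{equation*}
\pq{t+1} \;=\; -\int \y\,\pdf{d,\y,\stt}\,\dif\y \;+\; \pq{t+1} \;=\; \v0+\pq{t+1}\,,
\end{equation*}
which is a tautology, not the sign flip you assert. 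Indeed $\pq{t+1}=\bbE\,\w_d(\Y,\pb{t})\Y$ is \emph{not} zero in general: since $\w_d(\y,\pb{t})$ upweights $\y$ aligned with $\pb{t}$, this expectation points roughly along $\pb{t}$, and if it were $\v0$ then \eqref{equ:iteb} would force $\pb{t+1}=\v0$ as well, contradicting the very formula $\pb{t+1}=2\,\bbE\,\w_d(\Y,\pb{t})\Y$ you prove in the second half. That internal inconsistency should have been a red flag.

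The repair is short and you already have the needed ingredient. The same symmetry that gives $\pp{t+1}=\tfrac12$ is what kills $\pa{t+1}$: by \eqref{equ:itea}, $\pa{t+1}=\pq{t+1}(1-2\pp{t+1})/\bigl(2\pp{t+1}(1-\pp{t+1})\bigr)$, and since $\pp{t+1}=\tfrac12$ when $\pa{t}=\v0$, the factor $1-2\pp{t+1}$ vanishes, so $\pa{t+1}=\v0$ regardless of the (nonzero) value of $\pq{t+1}$. This is exactly the paper's one-line induction. So reorder your argument: establish $\pp{t+1}=\tfrac12$ first, deduce $\pa{t+1}=\v0$ from the $(1-2\pp{t+1})$ factor, and then read off $\pb{t+1}=2\pq{t+1}$ as you did. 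The boundedness and well-definedness remarks at the end of your writeup are fine.
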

Observe that the expression for $\pb{t+1}$ in
Lemma~\ref{lem:connectionpopEMModel1and2} is the same as the Population EM
update under Model 1, given in \eqref{eq:emfirstmodelact1population}.

Lemma~\ref{lem:connectionpopEMModel1and2} tells us that Model 1 is a special
case of Model 2 if we know the mean $\parens{\tm{1}+\tm{2}}/2$ is known.
In this case, $\pb{t}$ is regarded as an estimate of $\parens{\tm{2}-\tm{1}}/2$,
in the same way that $\pt{t}$ is an estimate of $\stt$ in Model 1.
(This explains our choice of the notation $\stt \triangleq
\parens{\tm{2}-\tm{1}}/2$ in \eqref{equ:reparamization}.)

The proof of Lemma~\ref{lem:connectionpopEMModel1and2} is a simple induction
that exploits the fact that $\w_d(\Y, \pb{t})+\w_d(-\Y, \pb{t})=1$:
if $\pa{t}=\v0$, then
\begin{eqnarray*}
  \pp{t+1}
  \ = \
  \bbE \w_d(\Y, \pb{t})
  \ = \
  \frac{1}{2}(\bbE \w_d(\Y, \pb{t})+\bbE \w_d(-\Y, \pb{t}))
  \ = \ \frac{1}{2}
  \, .
\end{eqnarray*}

\subsection{Some Structural Properties of Population EM}\label{sec:structuralprop}

An important structural property of Population EM is that the updates are
orthogonally invariant.
This means that our analysis of Population EM can make use of any orthogonal
basis as the coordinate system without affecting the conclusions.
This is spelled out in the following lemma.

\begin{lemma}\label{lemma:rotate}
Let $\vA \in \mathbb{R}^{d \times d}$ denote an orthogonal matrix.
Define, $\tpa{t} \triangleq \vA \pa{t}$, $\tpb{t} \triangleq \vA\pb{t}$, $\tstt
\triangleq \vA \stt$, and $\tpq{t} = \vA \pq{t}$.
Then
\begin{eqnarray}
\tpq{t+1}&=& \int \w_d(\y-\tpa{t}, \tpb{t}) \y\pdf{d, \y, \tstt}\dif\y,
\n\\
\pp{t+1}&=& \int \w_d(\y-\tpa{t}, \tpb{t})\pdf{d, \y, \tstt}\dif\y.\n
\end{eqnarray}
and
\begin{eqnarray}\label{eq:tildeiterationab}
\tpa{t+1}&=&\frac{\tpq{t+1}(1-2\pp{t+1})}{2\pp{t+1}(1-\pp{t+1})}, \n \\
\tpb{t+1}&=&\frac{\tpq{t+1}}{2\pp{t+1}(1-\pp{t+1})}.
\end{eqnarray}
\end{lemma}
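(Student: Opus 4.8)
The plan is to obtain the identity from a single change of variables, exploiting the rotational invariance of the standard Gaussian density. Since $\vSigma = \vI_d$, the density $\phi_d(\x)$ depends on $\x$ only through $\norm{\x}$, so $\phi_d(\vA\x) = \phi_d(\x)$ for every orthogonal $\vA$ and every $\x$; moreover the linear substitution $\y = \vA^\t\y'$ has Jacobian determinant $\abs{\det\vA} = 1$. These two facts are all that the argument uses.

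First I would record two pointwise consequences of rotational invariance. For any $\vu,\vv \in \R^d$, writing $\vA^\t\vu \mp \vv = \vA^\t(\vu \mp \vA\vv)$ and applying $\phi_d(\vA^\t\,\cdot\,) = \phi_d(\,\cdot\,)$ to the numerator and the denominator in the definition of $\w_d$ gives
\begin{align*}
  \w_d(\vA^\t\vu, \vv)
  \ = \
  \frac{\phi_d(\vu - \vA\vv)}{\phi_d(\vu - \vA\vv) + \phi_d(\vu + \vA\vv)}
  \ = \
  \w_d(\vu, \vA\vv) \,,
\end{align*}
and similarly $\pdf{d, \vA^\t\y, \stt} = \pdf{d, \y, \vA\stt} = \pdf{d, \y, \tstt}$. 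Next I would substitute $\y = \vA^\t\y'$ in \eqref{equ:iteq} and move the constant matrix $\vA$ inside the integral:
\begin{align*}
  \vA\pq{t+1}
  &\ = \
  \int \w_d(\y - \pa{t}, \pb{t})\,(\vA\y)\,\pdf{d, \y, \stt}\dif\y \\
  &\ = \
  \int \w_d(\vA^\t(\y' - \tpa{t}), \pb{t})\,\y'\,\pdf{d, \vA^\t\y', \stt}\dif\y' \,,
\end{align*}
where I used $\vA^\t\y' - \pa{t} = \vA^\t(\y' - \vA\pa{t}) = \vA^\t(\y' - \tpa{t})$ and the unit Jacobian. Applying the two pointwise identities with $\vu = \y' - \tpa{t}$ and $\vv = \pb{t}$ reduces the integrand to $\w_d(\y' - \tpa{t}, \tpb{t})\,\y'\,\pdf{d, \y', \tstt}$, which is exactly the claimed formula for $\tpq{t+1}$. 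The same substitution in \eqref{equ:itep}, with nothing to pull in since $\pp{t+1}$ is a scalar, yields the claimed formula for $\pp{t+1}$. Finally, left-multiplying the updates \eqref{equ:itea}--\eqref{equ:iteb} by $\vA$ and using $\vA\pq{t+1} = \tpq{t+1}$ together with the invariance of the scalar $\pp{t+1}$ produces \eqref{eq:tildeiterationab}.

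I do not expect any genuine obstacle here; the whole proof is a change of variables, and the only thing requiring care will be the bookkeeping of $\vA$ versus $\vA^\t$ and correctly grouping the arguments of $\w_d$ and $\pdf{}$ before invoking rotational invariance. It is worth stating the conclusion plainly: the Population EM one-step map $(\va, \vb; \stt) \mapsto (\va',\vb')$ is equivariant under the diagonal action $\x\mapsto\vA\x$ of the orthogonal group, so running it on the rotated iterates with rotated target $\tstt$ simply rotates the original trajectory, which is the content of the lemma.
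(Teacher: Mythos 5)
Your proof is correct and follows the same route as the paper, which simply invokes a change of integration variables (your version is more explicit about the rotational invariance of $\phi_d$, the unit Jacobian, and the identities $\w_d(\vA^\t\vu,\vv)=\w_d(\vu,\vA\vv)$ and $\pdf{d,\vA^\t\y,\stt}=\pdf{d,\y,\vA\stt}$). No issues.
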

\begin{proof}
The proof is a simple change of integration variables from $\y$ to $\tilde{\y} =
\vA^{-1/2} \y$.
\end{proof}

Using the Lemma~\ref{lemma:rotate}, we can establish some simple invariances about Population EM,
which we state in the following lemmas.
\begin{lemma}\label{lemma:sign}
  For all $t\geq 0$,
  \begin{align*}
    \sgn\Parens{\dotp{\pa{t}, \pb{t}}}
    & \ = \
    \sgn\Parens{\dotp{\pa{t+1}, \pb{t+1}}}
    \ = \
    \sgn\Parens{\frac{1}{2}-\pp{t+1}}
    \, ,
    \\
    \sgn\Parens{\dotp{\pb{t}, \stt}}
    & \ = \
    \sgn\Parens{\dotp{\pb{t+1}, \stt}}
    \, .
\end{align*}
\end{lemma}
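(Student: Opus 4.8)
The plan is to reduce the lemma to two elementary facts about the logistic weight function together with a monotone‑correlation (Chebyshev/FKG‑type) inequality. I first record three preliminaries. (i) Since $\phi_d(\y-b)/\phi_d(\y+b) = e^{2\dotp{\y,b}}$, the weight is a logistic function of a linear form: with $\sigma(x)\triangleq 1/(1+e^{-x})$ one has $\w_d(\y,b)=\sigma(2\dotp{\y,b})$, hence $\w_d(\y-\pa{t},\pb{t}) = \sigma\Parens{2\dotp{\y,\pb{t}}-2\dotp{\pa{t},\pb{t}}}$. (ii) Because $\w_d\in(0,1)$ pointwise, $\pp{t+1}\in(0,1)$, so the denominator $2\pp{t+1}(1-\pp{t+1})$ in \eqref{equ:itea}--\eqref{equ:iteb} is strictly positive; thus $\pb{t+1}=\lambda\,\pq{t+1}$ with $\lambda>0$, and dividing \eqref{equ:itea} by \eqref{equ:iteb} gives the identity $\pa{t+1} = (1-2\pp{t+1})\,\pb{t+1}$. (iii) In the reparametrization the data satisfies $\Y = R\stt+\vZ$ with $R$ uniform on $\{\pm1\}$ and $\vZ\sim N(\v0,\vI_d)$ independent; in particular $\Y$ is symmetric about $\v0$, so $\dotp{\Y,v}$ is symmetric about $0$ for every fixed $v\in\R^d$.

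For the first chain of equalities, put $c\triangleq\dotp{\pa{t},\pb{t}}$ and $V\triangleq\dotp{\Y,\pb{t}}$ (symmetric about $0$ by (iii)). Then $\pp{t+1}=\E[\sigma(2V-2c)]=:g(c)$; differentiating under the integral sign, $g'(c)=-2\,\E[\sigma'(2V-2c)]<0$, so $g$ is strictly decreasing, while $g(0)=\E[\sigma(2V)]=\tfrac12$ by symmetry of $V$ together with $\sigma(x)+\sigma(-x)=1$. Hence $\sgn\Parens{\tfrac12-\pp{t+1}}=\sgn(c)=\sgn\Parens{\dotp{\pa{t},\pb{t}}}$. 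For the equality with $\dotp{\pa{t+1},\pb{t+1}}$: when $c=0$, $\pp{t+1}=\tfrac12$, so the identity in (ii) forces $\pa{t+1}=\v0$ and all three quantities vanish; when $c\neq0$, the monotone‑correlation inequality gives $\dotp{\pq{t+1},\pb{t}}=\E[\sigma(2V-2c)V]=\cov\Parens{\sigma(2V-2c),V}>0$ (using $\E V=0$, and that $V$ is nondegenerate since $c\neq0$ forces $\pb{t}\neq\v0$), hence $\pq{t+1}\neq\v0$, hence $\pb{t+1}\neq\v0$, and then $\dotp{\pa{t+1},\pb{t+1}}=(1-2\pp{t+1})\Norm{\pb{t+1}}^2$ has sign $\sgn\Parens{\tfrac12-\pp{t+1}}$.

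For the second equality, it suffices (by $\pb{t+1}=\lambda\pq{t+1}$ with $\lambda>0$, which preserves the sign even when both sides vanish) to show $\sgn\Parens{\dotp{\pq{t+1},\stt}}=\sgn\Parens{\dotp{\pb{t},\stt}}$. Decompose $\pb{t}=\beta_1\stt+\beta_2 w$ with $w\perp\stt$, so $\sgn\Parens{\dotp{\pb{t},\stt}}=\sgn(\beta_1)$ and $V=\beta_1 U+\beta_2 W$ with $U\triangleq\dotp{\Y,\stt}$, $W\triangleq\dotp{\Y,w}$; from $\Y=R\stt+\vZ$ one checks that $U$ and $W$ are independent (the Gaussian parts are uncorrelated since $\dotp{\stt,w}=0$, and $R$ drops out of $W$) and both symmetric about $0$. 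Conditioning on $U$ and using $W\perp U$, $\dotp{\pq{t+1},\stt}=\E[\sigma(2V-2c)U]=\E[h(U)U]=\cov(h(U),U)$, where $h(u)\triangleq\E_W[\sigma(2\beta_1 u+2\beta_2 W-2c)]$ is nondecreasing, constant, or nonincreasing according as $\beta_1>0$, $\beta_1=0$, or $\beta_1<0$; the monotone‑correlation inequality (with $U$ nondegenerate) then yields $\sgn\Parens{\dotp{\pq{t+1},\stt}}=\sgn(\beta_1)=\sgn\Parens{\dotp{\pb{t},\stt}}$.

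I expect the main obstacle to be coordinating the degenerate cases with the correlation step—specifically, verifying that $\pb{t+1}$ cannot vanish exactly in the regime where $\sgn(\tfrac12-\pp{t+1})\neq 0$, and that the inequality $\cov(f(V),V)\geq 0$ (for $f$ nondecreasing, strict unless $V$ is a.s.\ constant) is applied to a genuinely nondegenerate linear form; both reduce to noting $\pb{t}\neq\v0$, which fails only in a case where everything is already zero. The analytic inputs—differentiation under the integral, the correlation inequality, and the independence reduction afforded by the Gaussian‑mixture representation of $\Y$—are all routine, so what remains is bookkeeping.
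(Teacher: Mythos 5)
Your proof is correct, and it reaches the conclusion by a genuinely different route from the paper's. Both arguments share the same reduction: from $\pa{t+1}=(1-2\pp{t+1})\pb{t+1}$ and $\pb{t+1}=\lambda\pq{t+1}$ with $\lambda>0$, everything comes down to showing $\sgn(1-2\pp{t+1})=\sgn(\dotp{\pa{t},\pb{t}})$ and $\sgn(\dotp{\pq{t+1},\stt})=\sgn(\dotp{\pb{t},\stt})$. The paper establishes these by rotating coordinates so that $\stt$ (resp.\ $\pb{t}$) aligns with the first axis and then pairing $y_1$ with $-y_1$ (resp.\ $\Y$ with $-\Y$) inside the integral, producing an integrand whose sign is manifest. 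You instead exploit the representation $\w_d(\y-\pa{t},\pb{t})=\sigma(2\dotp{\y,\pb{t}}-2\dotp{\pa{t},\pb{t}})$: the sign of $\tfrac12-\pp{t+1}$ follows from strict monotonicity of $c\mapsto\E\sigma(2V-2c)$ together with the symmetry $g(0)=\tfrac12$, and the sign of $\dotp{\pq{t+1},\stt}$ follows from the independence of $U=\dotp{\Y,\stt}$ and $W=\dotp{\Y,w}$ under $\Y=R\stt+\vZ$ plus Chebyshev's correlation inequality applied to the monotone kernel $h(u)=\E_W\sigma(2\beta_1u+2\beta_2W-2c)$. Your version is arguably more robust on one point the paper glosses over: the paper writes $\sgn(\dotp{\pa{t+1},\pb{t+1}})=\sgn(1-2\pp{t+1})\,|\sgn(\dotp{\pb{t+1},\stt})|$, which silently conflates $\pb{t+1}\neq\v0$ with $\dotp{\pb{t+1},\stt}\neq0$ (these differ when $\pb{t}\perp\stt$ but $\dotp{\pa{t},\pb{t}}\neq0$), whereas your strict inequality $\dotp{\pq{t+1},\pb{t}}=\cov(\sigma(2V-2c),V)>0$ cleanly rules out $\pb{t+1}=\v0$ exactly when it must be ruled out. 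The trade-off is that your argument invokes the correlation inequality and differentiation under the integral as external tools, while the paper's is entirely self-contained elementary algebra on the explicit Gaussian-mixture integrals; both are valid.
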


\begin{lemma}\label{lemma:symmetrizeinitial}
  The following holds for any two settings of $(\pa{0}_{(1)}, \pb{0}_{(1)})$ and
  $(\pa{0}_{(2)}, \pb{0}_{(2)})$.
  \begin{enumerate}
    \item
      If $\pa{0}_{(1)}=-\pa{0}_{(2)}$ and $\pb{0}_{(1)}=\pb{0}_{(2)}$, then
      \[
        \pa{t}_{(1)}=-\pa{t}_{(2)}
        \quad\text{and}\quad
        \pb{t}_{(1)}=\pb{t}_{(2)}
        \, ,
        \qquad
        \forall t\geq 0
        \, .
      \]
    \item
      If $\pb{0}_{(1)}=-\pb{0}_{(2)}$ and $\pa{0}_{(1)}=\pa{0}_{(2)}$, then
      \[
        \pa{t}_{(1)}=\pa{t}_{(2)}
        \quad\text{and}\quad
        \pb{t}_{(1)}=-\pb{t}_{(2)}
        \, ,
        \qquad
        \forall t\geq 0
        \, .
      \]
  \end{enumerate}
\end{lemma}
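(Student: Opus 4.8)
The plan is to prove both parts by induction on $t$, with the case $t=0$ being the hypothesis and the inductive step a short computation based on the iteration \eqref{equ:itea}--\eqref{equ:itep} together with a few symmetries of the weight function $\w_d$ and the mixture density $\pdf{d,\y,\stt}$.

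First I would record the ingredients. Since $\phi_d$ is the even $N(\v0,\vI_d)$ density, the weight function satisfies (i) $\w_d(-\x,\vtheta)=1-\w_d(\x,\vtheta)$ and (ii) $\w_d(\x,-\vtheta)=1-\w_d(\x,\vtheta)$ for all $\x,\vtheta$, and the mixture density obeys (iii) $\pdf{d,-\y,\stt}=\pdf{d,\y,\stt}$, so that $\int \y\,\pdf{d,\y,\stt}\dif\y=\v0$ while $\int \pdf{d,\y,\stt}\dif\y=1$. I would also note the purely algebraic fact that $p\mapsto 1-p$ fixes the denominator $2p(1-p)$ in \eqref{equ:itea}--\eqref{equ:iteb} and negates $1-2p$; hence, if a perturbation of $(\pa{t},\pb{t})$ leaves $\pq{t+1}$ unchanged but sends $\pp{t+1}$ to $1-\pp{t+1}$, it negates $\pa{t+1}$ and fixes $\pb{t+1}$, whereas if it additionally negates $\pq{t+1}$ it fixes $\pa{t+1}$ and negates $\pb{t+1}$. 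The two parts of the lemma are exactly these two scenarios, so it remains to verify the claimed behavior of $\pq{t+1}$ and $\pp{t+1}$.

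For part 1, under the inductive hypothesis $\pa{t}_{(2)}=-\pa{t}_{(1)}$ and $\pb{t}_{(2)}=\pb{t}_{(1)}$, I would write $\pq{t+1}_{(2)}$ and $\pp{t+1}_{(2)}$ as integrals against $\pdf{d,\y,\stt}$, substitute $\y\mapsto-\y$, use (iii) to drop the sign on the density, rewrite $-\y+\pa{t}_{(1)}=-(\y-\pa{t}_{(1)})$, and apply (i); the $1-\w_d$ term then leaves behind a $\int\y\,\pdf{d,\y,\stt}\dif\y=\v0$ in the $\pq{}$ computation and a $\int\pdf{d,\y,\stt}\dif\y=1$ in the $\pp{}$ computation, giving $\pq{t+1}_{(2)}=\pq{t+1}_{(1)}$ and $\pp{t+1}_{(2)}=1-\pp{t+1}_{(1)}$. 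Feeding this into the bookkeeping above closes the induction. For part 2, under $\pb{t}_{(2)}=-\pb{t}_{(1)}$ and $\pa{t}_{(2)}=\pa{t}_{(1)}$, no change of variables is needed: applying (ii) directly inside the two integrals yields $\pq{t+1}_{(2)}=\int\y\,\pdf{d,\y,\stt}\dif\y-\pq{t+1}_{(1)}=-\pq{t+1}_{(1)}$ and $\pp{t+1}_{(2)}=1-\pp{t+1}_{(1)}$, and the bookkeeping again finishes it. (As a cross-check, part 2 also follows by composing part 1 with Lemma~\ref{lemma:rotate} applied to $\vA=-\vI_d$, since $\pdf{d,\y,-\stt}=\pdf{d,\y,\stt}$ makes simultaneous negation of $\pa{t}$ and $\pb{t}$ an invariance of the iteration.)

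I do not anticipate a genuine obstacle here; the computation is routine. The only place to stay disciplined is the sign bookkeeping --- keeping straight that part 1 changes only $\pp{}$ (with $\pq{}$ invariant) while part 2 changes both $\pq{}$ and $\pp{}$, with correspondingly different conclusions for $\pa{}$ versus $\pb{}$ --- and in pointing out exactly where the vanishing first moment $\int\y\,\pdf{d,\y,\stt}\dif\y=\v0$ is invoked, namely to kill the stray $\int\y\,\pdf{d,\y,\stt}\dif\y$ produced when the $1-\w_d$ term is split off; that is the only non-formal input.
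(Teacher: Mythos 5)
Your proposal is correct and follows essentially the same route as the paper's proof: induction on $t$, using the symmetries $\w_d(-\x,\vb)=1-\w_d(\x,\vb)$ and $\w_d(\x,-\vb)=1-\w_d(\x,\vb)$ together with the symmetry of the mixture density and $\E\Y=\v0$ to show that the perturbation fixes or negates $\pq{t+1}$ and sends $\pp{t+1}$ to $1-\pp{t+1}$, after which the algebra of \eqref{equ:itea}--\eqref{equ:iteb} closes the induction. The paper states the same four expectation identities and leaves the bookkeeping implicit; your write-up just makes both steps explicit.
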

The proof of Lemma~\ref{lemma:sign} is in Appendix~\ref{sec:proof:lemma:sign}, and
the proof of Lemma~\ref{lemma:symmetrizeinitial} is in Appendix~\ref{sec:proof:lemma:symmetrizeinitial}.
These two lemmas imply that in our analysis of Population EM, we may assume without loss of
generality that
\[
  \dotp{\pa{0}, \pb{0}}
  \ \geq \
  0
  \quad\text{and}\quad
  \dotp{\pb{0}, \stt}
  \ \geq \
  0
  \, .
\]

Next, we show that effectively all of the action of Population EM takes place in a two-dimensional
subspace.

\begin{lemma}
  \label{lemma:general-projection}
  Consider any matrix $\vU \in \R^{d \times k}$ such that $\vU^\t\vU = \vI_k$ and $\stt$ and $\vd$
  are in the range of $\vU$.
  For any matrix $\vV \in \R^{d \times l}$ such that $\vV^\t\vU = \v0$, and any vector $\vc \in
  \R^d$, we have
  \[
    \vV^\t \E\Brackets{ \w_d(\Y-\vc, \vd)\Y } \ = \ \v0
  \]
  where $\Y \sim 0.5 N(-\stt, \vI_d) + 0.5 N(\stt, \vI_d)$.
\end{lemma}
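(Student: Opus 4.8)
The plan is to exploit the rotational invariance from Lemma~\ref{lemma:rotate} together with the symmetry of the mixture density. Set $\vmu \triangleq \E[\w_d(\Y-\vc,\vd)\Y]$; we must show $\vV^\t\vmu = \v0$, i.e.\ $\vmu$ lies in the range of $\vU$. Equivalently, writing $\vP_\vU = \vU\vU^\t$ for the orthogonal projector onto $\mathrm{range}(\vU)$ and $\vP_\perp = \vI_d - \vP_\vU$, it suffices to prove $\vP_\perp\vmu = \v0$. Since $\vV^\t\vU = \v0$, the columns of $\vV$ lie in $\mathrm{range}(\vP_\perp)$, so $\vP_\perp\vmu = \v0$ implies $\vV^\t\vmu = \v0$.

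First I would pick a convenient orthogonal matrix $\vA$ to apply Lemma~\ref{lemma:rotate}. Because $\stt$ and $\vd$ both lie in $\mathrm{range}(\vU)$, which has dimension $k$, I can choose $\vA$ to be a reflection that fixes $\mathrm{range}(\vU)$ pointwise and acts as $-\vI$ on the orthogonal complement; concretely, $\vA = \vP_\vU - \vP_\perp$, which is symmetric and orthogonal and satisfies $\vA\stt = \stt$, $\vA\vd = \vd$, and $\vA\vc' = -\vc'$ for any $\vc' \perp \mathrm{range}(\vU)$. The subtlety is that the shift vector $\vc$ need \emph{not} lie in $\mathrm{range}(\vU)$, so $\vA\vc \neq \vc$ in general; I will therefore need a version of the invariance that tracks $\vc$ explicitly. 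This is immediate from the change-of-variables argument behind Lemma~\ref{lemma:rotate}: substituting $\y = \vA\tilde\y$ in the integral defining $\vmu$ and using $\phi_d(\vA\tilde\y \mp \stt) = \phi_d(\tilde\y \mp \vA\stt) = \phi_d(\tilde\y \mp \stt)$ (since $\vA$ is orthogonal and fixes $\stt$) gives
\begin{eqnarray*}
  \vmu
  & = &
  \int \w_d(\y-\vc,\vd)\,\y\,\pdf{d,\y,\stt}\dif\y
  \ = \
  \vA \int \w_d(\tilde\y - \vA\vc, \vd)\,\tilde\y\,\pdf{d,\tilde\y,\stt}\dif\tilde\y
  \,,
\end{eqnarray*}
where I also used $\w_d(\vA\tilde\y - \vc,\vd) = \w_d(\tilde\y - \vA\vc, \vA\vd) = \w_d(\tilde\y - \vA\vc,\vd)$, which follows from $\w_d(\vA\vy,\vA\vtheta) = \w_d(\vy,\vtheta)$ for orthogonal $\vA$ (each Gaussian density is invariant under the simultaneous rotation).

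Now decompose $\vc = \vc_\| + \vc_\perp$ with $\vc_\| = \vP_\vU\vc$ and $\vc_\perp = \vP_\perp\vc$, so $\vA\vc = \vc_\| - \vc_\perp$. The key observation is that the integrand $\w_d(\tilde\y - \vc,\vd)\,\tilde\y\,\pdf{d,\tilde\y,\stt}$, when further substituted $\tilde\y \mapsto \vB\tilde\y$ for the reflection $\vB$ that fixes $\mathrm{range}(\vU)$ \emph{and} the direction of $\vc_\perp$ while negating the remaining coordinates, shows that $\vmu$ actually has no component outside $\mathrm{span}(\mathrm{range}(\vU), \vc_\perp)$ — but more to the point, I do not even need that refinement. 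Instead: apply the displayed identity, then note $\vP_\perp\vA = -\vP_\perp$, so $\vP_\perp\vmu = -\vP_\perp\vmu'$ where $\vmu' = \int \w_d(\tilde\y - \vA\vc,\vd)\tilde\y\,\pdf{d,\tilde\y,\stt}\dif\tilde\y$. If $\vc_\perp = \v0$ then $\vA\vc = \vc$, so $\vmu' = \vmu$ and we get $\vP_\perp\vmu = -\vP_\perp\vmu$, forcing $\vP_\perp\vmu = \v0$, which is exactly the claim. For the general case with $\vc_\perp \neq \v0$, I enlarge $\vU$ to $\vU'$ whose range is $\mathrm{span}(\mathrm{range}(\vU),\vc_\perp)$ (dimension $k+1$, still containing $\stt$, $\vd$, and now $\vc$); the $\vc_\perp = \v0$ argument applied to $\vU'$ gives $\vmu \in \mathrm{range}(\vU')$, and then a \emph{second} reflection — the one fixing $\mathrm{range}(\vU)$ and $\vd$, $\stt$ but negating $\vc_\perp$ — combined with the substitution $\tilde\y \mapsto$ (that reflection)$\,\tilde\y$ shows the $\vc_\perp$-component of $\vmu$ is its own negative, hence zero. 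So $\vmu \in \mathrm{range}(\vU)$ and $\vV^\t\vmu = \v0$.

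\textbf{Main obstacle.} The only real subtlety is bookkeeping the shift vector $\vc$ under rotation: unlike $\stt$ and $\vd$, it is not assumed to lie in $\mathrm{range}(\vU)$, so the naive application of Lemma~\ref{lemma:rotate} does not immediately close. The fix — either passing to the enlarged subspace $\mathrm{span}(\mathrm{range}(\vU),\vP_\perp\vc)$ and then using a reflection that negates $\vP_\perp\vc$, or equivalently splitting $\vc$ into parallel and perpendicular parts and running two reflections — is routine once one notices that $\w_d$ and $\pdf{d,\cdot,\stt}$ are each invariant under any orthogonal map fixing $\stt$, $\vd$, and (for the second reflection) $\vP_\vU\vc$. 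I would present it via the enlarged subspace, since that keeps the argument to a single clean symmetrization step.
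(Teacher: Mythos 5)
Your proof is correct, but it takes a genuinely different route from the paper's. The paper does not use reflections at all: it observes that $\w_d(\y-\vc,\vd)=\w_k(\vU^\t(\y-\vc),\vU^\t\vd)$ because $\vd=\vU\vU^\t\vd$, so the weight is a function of $\vU^\t\Y$ alone; since $\stt$ lies in the range of $\vU$, the components $\vU^\t\Y$ and $\vV^\t\Y$ are independent, the expectation factors as $\E[\w_d(\vU^\t(\Y-\vc),\vU^\t\vd)]\cdot\E[\vV^\t\Y]$, and the second factor vanishes because the mixture has mean zero. Your symmetrization argument reaches the same conclusion, and it does close: the identity $\vmu=\vB\vmu$ you need for the second reflection holds because $\dotp{\vB\vc,\vd}=\dotp{\vc,\vd}$ whenever the reflection $\vB$ fixes $\vd$, so the shifted weight with shift $\vB\vc$ coincides with the one with shift $\vc$. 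But this same observation dissolves the ``main obstacle'' you flag: since $\w_d(\vx,\vd)$ depends on $\vx$ only through $\dotp{\vx,\vd}$, the integrand $\w_d(\y-\vc,\vd)\,\pdf{d,\y,\stt}$ is invariant under $\y\mapsto\vB\y$ for \emph{any} orthogonal $\vB$ fixing $\stt$ and $\vd$, irrespective of where $\vc$ lives---the perpendicular part of $\vc$ never enters, because $\dotp{\vc,\vd}=\dotp{\vP_\vU\vc,\vd}$. Hence the single reflection $\vA=\vP_\vU-\vP_\perp$ already gives $\vmu=\vA\vmu$ and therefore $\vP_\perp\vmu=\v0$; the enlarged subspace $\vU'$ and the second reflection are unnecessary. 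What each approach buys: the paper's independence argument is shorter and exposes \emph{why} the statement is true (the weight cannot see the $\vV$-coordinates, which are mean-zero and independent), while your reflection argument generalizes more readily to situations where independence fails but a symmetry of the integrand survives; just note that your stated justification for the second reflection (invariance under maps fixing $\vP_\vU\vc$) is not quite the operative condition---what matters is that the reflection fixes $\vd$, hence preserves $\dotp{\y-\vc,\vd}$.
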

\begin{proof}
  It is easy to see that because $\stt$ is in the range of $\vU$, we have that $\vU^\t\Y$ is
  independent of $\vV^\t\Y$.
  Moreover, because $\vd$ is in the range of $\vU$, we have $\vU\vU^\t\vd = \vd$.
  Thus, for any $\y \in \R^d$,
  \[
    \w_d(\y-\vc, \vd)
    \ = \
    \w_k(\vU^\t(\y-\vc), \vU^\t\vd)
    \, .
  \]
  This implies
  \begin{align*}
    \vV^\t \E \w_d(\Y-\vc, \vd)\Y
    & \ = \
    \E \w_d(\vU^\t(\Y-\vc), \vU^\t\vd)\vV^\t \Y
    \\
    & \ = \
    \E\Brackets{\w_d(\vU^\t(\Y-\vc), \vU^\t\vd)}
    \cdot \E\Brackets{\vV^\t \Y}
    \\
    & \ = \
    \v0
  \end{align*}
  where the last equality follows because $\Y$ has mean zero.
\end{proof}

\begin{lemma}\label{lemma:projection}
Let $M_0$ denote the span of $\pb{0} \neq \v0$ and $\stt \neq \v0$ for the model
$\Y \sim 0.5N(-\stt, \vI_d)+0.5N(\stt, \vI_d)$.
Then $\pb{t} \in M_0$ for all $t\geq 0$.
\end{lemma}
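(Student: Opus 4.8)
The plan is to run a short induction on $t$ and reduce everything to Lemma~\ref{lemma:general-projection}. Two ingredients make this work. First, by \eqref{equ:iteb} the iterate $\pb{t+1}$ is simply the \emph{scalar} multiple $\pq{t+1}/\bigl(2\pp{t+1}(1-\pp{t+1})\bigr)$ of the vector $\pq{t+1} = \E\bigl[\w_d(\Y-\pa{t},\pb{t})\Y\bigr]$, where $\Y \sim 0.5N(-\stt,\vI_d)+0.5N(\stt,\vI_d)$; note that $\pdf{d,\y,\stt}$ appearing in \eqref{equ:iteq} is exactly this mixture density, so $\pq{t+1}$ is precisely the kind of expectation handled by Lemma~\ref{lemma:general-projection}. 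Second, Lemma~\ref{lemma:general-projection} says that such an expectation has no component orthogonal to any subspace containing $\stt$ and the ``weight direction'' $\pb{t}$. Hence it suffices to prove $\pq{t+1}\in M_0$ whenever $\pb{t}\in M_0$, and then the induction closes.

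Concretely, for the inductive step I would fix an orthonormal $\vU\in\R^{d\times k}$ whose range is $M_0$ (so $k\in\{1,2\}$, since $\pb{0}$ and $\stt$ are both nonzero), and an orthonormal $\vV\in\R^{d\times(d-k)}$ whose range is $M_0^\perp$, so that $\vV^\t\vU=\v0$. By definition $\stt\in M_0$, and by the inductive hypothesis $\pb{t}\in M_0$, so both lie in the range of $\vU$; applying Lemma~\ref{lemma:general-projection} with $\vc=\pa{t}$ and $\vd=\pb{t}$ gives $\vV^\t\pq{t+1}=\v0$, i.e.\ $\pq{t+1}\perp M_0^\perp$, so $\pq{t+1}\in M_0$. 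Since $\pb{t+1}$ is a scalar multiple of $\pq{t+1}$, we conclude $\pb{t+1}\in M_0$, which completes the induction; the base case $t=0$ is immediate from the definition of $M_0$.

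The one subtlety, and what I would flag as the (mild) ``hard part,'' is that the argument must not secretly assume $\pa{t}\in M_0$: in general $\pa{0}$ is an arbitrary point of $\R^d$, and the argument hinges precisely on the fact that Lemma~\ref{lemma:general-projection} allows the shift $\vc$ to be \emph{arbitrary}. (As a byproduct one actually gets $\pa{t}\in M_0$ for all $t\ge 1$, since $\pa{t+1}$ is likewise a scalar multiple of $\pq{t+1}$, but this is not needed here.) One should also check that the recursion is well-defined, i.e.\ $2\pp{t+1}(1-\pp{t+1})\neq 0$; this holds because $\w_d$ is valued strictly in $(0,1)$, so $\pp{t+1}\in(0,1)$ and the denominator lies in $(0,1/2]$. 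Beyond these bookkeeping points the proof is essentially the one-line application of Lemma~\ref{lemma:general-projection} described above.
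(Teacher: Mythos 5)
Your proof is correct and is essentially the paper's own argument: an induction on $t$ that applies Lemma~\ref{lemma:general-projection} with $\vU$ an orthonormal basis for $M_0$ and $\vV$ a basis for $M_0^\perp$, combined with the observation that $\pb{t+1}$ is a scalar multiple of $\pq{t+1}$. The extra bookkeeping you flag (the arbitrary shift $\vc=\pa{t}$ and the nonvanishing of $2\pp{t+1}(1-\pp{t+1})$) is a sound elaboration of the same one-line proof.
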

\begin{proof}
  This follows from Lemma~\ref{lemma:general-projection} and induction, by letting the columns of
  $\vU$ be an orthonormal basis for $M_0$, and letting the columns of $\vV$ to be a basis for the
  orthogonal complement of $M_0$.
\end{proof}
Recall that in Section~\ref{sec:mainresultmainresultpopulation}, we defined the angle between
$\pb{t}$ and $\stt$ as $\pbeta{t}$.
For our analysis, it will turn out to be useful to consistently refer to the cosine and sine of this
angle, and hence we should regard the angle as possibly being any value between $0$ and $2\pi$.
To do this, we fix an orthogonal basis $\{\e{1, t}, \cdots, \e{d, t}\}$ such that
\begin{equation}
  \dotp{\pb{t}, \e{1, t}}
  \ = \
  \|\pb{t}\|
  \quad\text{and}\quad
  \stt
  \ = \
  \dotp{\stt, \e{1, t}}\e{1, t}
  +
  \dotp{\stt, \e{2, t}}
  \e{2, t}
  \, ,
  \label{eq:rotationcondition}
\end{equation}
and define $\pbeta{t} \in [0, 2\pi]$ be the angle such that
\begin{eqnarray}
  \cos \pbeta{t}&=& \frac{\langle \pb{t}, \stt \rangle}{\|\pb{t}\|_2 \|\stt\|_2}
  \, ,
  \\
  \sin \pbeta{t}&=& \frac{\dotp{\stt, \e{2, t}}}{\|\stt\|}
  \, .
\end{eqnarray}
Using this definition of the angle $\pbeta{t}$, we can establish the following monotonicity
property.

\begin{lemma}\label{lem:planarestimates}
If $0\leq \pbeta{0}< \pi/2$, then $\pbeta{0} \geq \pbeta{1}\geq \ldots \pbeta{t} \geq \ldots \geq 0$.
\end{lemma}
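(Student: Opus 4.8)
The plan is to prove, by induction on $t \geq 0$, the two statements $\pbeta{t} \in [0,\pi/2)$ and $\pbeta{t+1} \leq \pbeta{t}$. The base case $\pbeta{0} \in [0,\pi/2)$ is the hypothesis. Since $\cos\pbeta{0} > 0$ we have $\dotp{\pb{0},\stt} > 0$ (note $\stt,\pb{0}\neq\v0$, as otherwise $\pbeta{0}$ would be undefined), so Lemma~\ref{lemma:sign} gives $\dotp{\pb{t},\stt} > 0$ for every $t$; in particular $\pb{t} \neq \v0$ and, with the natural choice of $\e{2,t}$ in~\eqref{eq:rotationcondition} (the one making $\sin\pbeta{t} \geq 0$), we get $\pbeta{t} \in [0,\pi/2)$ for all $t$. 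So it remains only to prove the single-step inequality $\pbeta{t+1} \leq \pbeta{t}$ under the assumption $\pbeta{t} \in [0,\pi/2)$; since the angle between two fixed vectors does not depend on any orientation convention, this amounts to showing that the (unsigned) angle between $\pb{t}$ and $\stt$ is nonincreasing.

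To do this I would work in the plane $\mathrm{span}(\stt,\pb{t})$. Put $u_1 = \stt/\norm{\stt}$ and choose a unit vector $u_2 \perp u_1$ in this plane so that $\pb{t} = \norm{\pb{t}}(\cos\pbeta{t}\,u_1 + \sin\pbeta{t}\,u_2)$ with $\sin\pbeta{t} \geq 0$. Writing $\Y = \epsilon\stt + G$ with $\epsilon$ uniform on $\{-1,+1\}$ and $G \sim N(\v0,\vI_d)$ independent, the scalars $\Y_1 := \dotp{\Y,u_1}$ and $\Y_2 := \dotp{\Y,u_2}$ are independent with $\Y_1 \sim 0.5N(-\norm{\stt},1)+0.5N(\norm{\stt},1)$ and $\Y_2 \sim N(0,1)$, and a direct computation with Gaussian densities gives $\w_d(\Y-\pa{t},\pb{t}) = \sigma(2\dotp{\Y-\pa{t},\pb{t}})$ where $\sigma(x) := (1+e^{-x})^{-1}$; moreover this weight depends on $\Y$ only through $(\Y_1,\Y_2)$, since $\pb{t}$ lies in the $(u_1,u_2)$-plane. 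By Lemma~\ref{lemma:general-projection}, $\pq{t+1} = \E[\w_d(\Y-\pa{t},\pb{t})\Y]$ also lies in this plane, and by~\eqref{equ:iteb} the iterate $\pb{t+1}$ is a \emph{positive} multiple of $\pq{t+1}$ (because $\pp{t+1} \in (0,1)$), so $\pbeta{t+1}$ equals the angle between $\pq{t+1}$ and $\stt$. It therefore suffices to compute the two coordinates $\dotp{\pq{t+1},u_1}$ and $\dotp{\pq{t+1},u_2}$.

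Conditioning on $\epsilon$ and decomposing $\dotp{G,u_i}$ along and orthogonal to $\dotp{G,\pb{t}}$, Gaussian integration by parts (Stein's identity) in the direction $\pb{t}$ yields
\[
  \dotp{\pq{t+1},u_2} \ = \ 2\norm{\pb{t}}\sin\pbeta{t}\cdot R
  \qquad\text{and}\qquad
  \dotp{\pq{t+1},u_1} \ = \ Q + 2\norm{\pb{t}}\cos\pbeta{t}\cdot R
  \,,
\]
where $R := \E[\sigma'(2\dotp{\Y-\pa{t},\pb{t}})] > 0$ and $Q := \tfrac{\norm{\stt}}{2}\bigl(\Psi(2\norm{\stt}\norm{\pb{t}}\cos\pbeta{t}) - \Psi(-2\norm{\stt}\norm{\pb{t}}\cos\pbeta{t})\bigr)$, with $\Psi(m) := \E[\sigma(m+2\dotp{G,\pb{t}}-2\dotp{\pa{t},\pb{t}})]$ an increasing function (it comes from averaging the logistic weight over the Gaussian part $G$, conditioned on the mixture label $\epsilon$). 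Crucially, since $\cos\pbeta{t} > 0$ and $\norm{\stt} > 0$, monotonicity of $\Psi$ gives $Q \geq 0$; and the shift $\dotp{\pa{t},\pb{t}}$ only translates the sigmoid, so $\pa{t}$ plays no role. Consequently $\dotp{\pq{t+1},u_1} > 0$ and $\dotp{\pq{t+1},u_2} \geq 0$, whence $\pbeta{t+1} \in [0,\pi/2)$ and
\[
  \tan\pbeta{t+1}
  \ = \
  \frac{\dotp{\pq{t+1},u_2}}{\dotp{\pq{t+1},u_1}}
  \ = \
  \frac{2\norm{\pb{t}}\sin\pbeta{t}\cdot R}{Q + 2\norm{\pb{t}}\cos\pbeta{t}\cdot R}
  \ \leq \
  \frac{\sin\pbeta{t}}{\cos\pbeta{t}}
  \ = \
  \tan\pbeta{t}
  \,,
\]
using $Q \geq 0$ and $R > 0$. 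Since $\tan$ is increasing on $[0,\pi/2)$, this gives $\pbeta{t+1} \leq \pbeta{t}$ and closes the induction. The part that takes the most care is the bookkeeping of the angle given its definition via the moving frame~\eqref{eq:rotationcondition} in $[0,2\pi]$: one must first invoke Lemma~\ref{lemma:sign} to confine every $\pbeta{t}$ to $[0,\pi/2)$, where the signed- and unsigned-angle descriptions coincide and $\tan$-monotonicity is equivalent to angle-monotonicity; after that, the Stein-identity evaluation of $\pq{t+1}$ and the sign facts $Q \geq 0$ and $\dotp{\pq{t+1},u_2} \geq 0$ are the technical core, but each is a routine Gaussian computation.
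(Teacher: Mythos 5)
Your proof is correct, and it follows the same geometric skeleton as the paper's: restrict attention to the two-dimensional span of $\stt$ and $\pb{t}$, show that $\pq{t+1}$ (hence $\pb{t+1}$, a positive multiple of it) is a \emph{nonnegative linear combination} of $\stt$ and $\pb{t}$, and conclude that the angle to $\stt$ cannot increase by comparing tangents. The difference is in how that conic decomposition is obtained. The paper rotates to a frame aligned with $\pb{t}$ and manipulates the one-dimensional integrals directly, producing $\tpqi{{\langle t\rangle, 2}, t+1}=\ptt{2, t}S(\cdot)$ and $\tpqi{{\langle t\rangle, 1}, t+1}=\ptt{1, t}S(\cdot)+R(\cdot)+R(\cdot)$ with $S,R>0$ read off from the integrands, and then compares the cotangent of the increment angle $\palpha{t}$ with that of $\pbeta{t}$. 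You instead align the frame with $\stt$, write $\Y=\epsilon\stt+G$, and use Stein's identity in the direction $\pb{t}$ to get $\pq{t+1}=\frac{Q}{\norm{\stt}}\stt+2R\,\pb{t}$, with $R=\E[\sigma'(\cdot)]>0$ and $Q\geq 0$ by monotonicity of the Gaussian-averaged sigmoid $\Psi$; this is literally the same decomposition (your $Q/\norm{\stt}$ and $2R$ are the paper's $S$ and $R_s/\norm{\pb{t}}$), derived by a cleaner route that also makes transparent why $\pa{t}$ is irrelevant (it only translates the sigmoid). What your version gives up is reusable infrastructure: the paper's explicit functions $S$, $R$, $F$ introduced here are the workhorses of Theorems~\ref{thm:convergencerate1} and~\ref{thm:convergenceratemagnitude}, where quantitative lower bounds on $S$ over compact sets (not just positivity) are needed. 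Your handling of the angle convention—confining everything to $[0,\pi/2)$ via Lemma~\ref{lemma:sign} before invoking $\tan$-monotonicity—is sound and correctly addresses the one genuinely delicate bookkeeping point.
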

\begin{proof}
We assume $\pbeta{0}>0$, the extension to $\pbeta{0}=0$ is straightforward.
Define $\alpha^{\dotp{t}}$ as the angle between $\pb{t}$ and $\pb{t+1}$ such that
\begin{eqnarray}
\cos \palpha{t} &=& \frac{\langle \pb{t}, \pb{t+1} \rangle }{\|\pb{t}\|_2 \|\pb{t+1}\|_2} \, ,
  \\
\sin \palpha{t}&=& \frac{\dotp{ \pb{t+1}, \e{2, t}}}{\|\pb{t+1}\|}
  \, .
\end{eqnarray}
The strategy of the proof is to use induction to prove that the following three statements hold for
$\forall t\geq 0$:
\begin{enumerate}
  \item[(i)] $\pbeta{t}\in (0, \frac{\pi}{2})$.
  \item[(ii)]$\palpha{t}\in (0, \pbeta{t})$.
  \item[(iii)]$\pbeta{t+1} = \pbeta{t}- \palpha{t}\in (0, \pbeta{t})$.
\end{enumerate}
It is clear that the claim of the lemma holds if (iii) holds for all $ t\geq 0$.
The inductive argument uses the following chain of arguments for step $t$:
\begin{description}
  \item[Claim 1] If (i) holds for $t$, then (ii) holds for $t$.
  \item[Claim 2] If (i) and (ii) hold for $t$, then (iii) holds for $t$.
  \item[Claim 3] If (i), (ii), and (iii) hold for $t$, then (i) holds for $t+1$.
\end{description}
Since (i) holds for $t=0$ by assumption, it suffices to prove Claims 1--3.

Claim 3 is trivially true, and Claim 2 follows from the fact that $\stt$  and all $\pb{t}$ lie in a
the same two-dimensional subspace.
So we just have to prove Claim 1.
For sake of clarity, we choose the orthogonal basis $\e{1, t}, \e{2, t}, \dotsc, \e{d, t}$ satisfying
\eqref{eq:rotationcondition} to simplify the calculation.
  Let $\vU_t$ be the orthogonal matrix whose rows are
  $\e{1,t},\e{2,t},\dotsc,\e{d,t}$, so
\[
  \vU_t\pb{t}
  \ = \
  (\norm{\pb{t}}, 0, 0, \dotsc, 0)^\t
  \, ,
  \qquad
  \vU_t\stt
  \ = \
  (\ptt{1, t}, \ptt{2, t}, 0, \dotsc, 0)^\t
  \, .
\]
Define
\begin{align*}
  \tpb{t+1}_{\langle t \rangle}
  & \ \triangleq \
  \vU_t \pb{t+1}
  \, ,
  &
  \tpb{t}
  & \ \triangleq \
  \vU_t \pb{t}
  \, ,
  &
  \tpt{t}
  & \ \triangleq \
  \vU_t \stt
  \, .
\end{align*}
Also, let  $\tpbi{{\langle t \rangle, i}, t+1}$ denote the $i^{\rm th}$ element of $\tpb{t+1}_{\langle t \rangle}$. We also define the same notations of $\tpq{t+1}_{\langle t \rangle}, \tpq{t}, \tpqi{{\langle t\rangle, i}, t+1}$ for $\pq{t}$ and $\tpa{t+1}_{\langle t \rangle}, \tpa{t}, \tpai{{\langle t\rangle, i}, t+1}$ for $\pa{t}$.
Using this coordinate system and Lemma \ref{lemma:projection}, Claim 1 is equivalent to proving that
if $\ptt{2, t}>0$, then $\palpha{t}>0$ and $\palpha{t}<\pbeta{t}$.
Therefore, in the rest of the proof, we essentially do these two steps.
\begin{enumerate}
\item $\palpha{t}> 0$: First note that $\tpb{t+1}$ and $\tpq{t+1}$ are in the same direction. Hence, to prove that $\palpha{t}>0$ we should show that $\tpqi{{\langle t\rangle, 2}, t+1}> 0$.
We have
\begin{eqnarray}\label{eq:q2dynamic1}
\tpqi{{\langle t\rangle, 2}, t+1}&=&\int \w_d(\y-\tpa{t}, \tpb{t})y_2\pdf{d, \y, \tpt{t}} \dif\y\n\\
&=&\int \w(y_1-\tpai{1, t}, \|\pb{t}\|)y_2\frac{1}{2}(\phi_d(\y-\tpt{t})+\phi_d(\y+\tpt{t})) \dif\y\n\\
&=&\frac{1}{2}\int \w(y_1-\tpai{1, t}, \|\pb{t}\|)\phi(y_1- \ptt{1, t}) \dif y_1\int y_2\phi(y_2-\ptt{2, t})
  \dif y_2\n\\
&~&+\frac{1}{2}\int \w(y_1-\tpai{1, t}, \|\pb{t}\|)\phi(y_1+ \ptt{1, t}) \dif y_1\int
  y_2\phi(y_2+\ptt{2, t}) \dif y_2 \\
&=&\ptt{2, t}\int \w(y_1-\tpai{1, t}, \|\pb{t}\|)\frac{1}{2}(\phi(y_1- \ptt{1, t})-\phi(y_1+ \ptt{1, t}))
  \dif y_1 \\
&=& \ptt{2, t}\int_{0}^{\infty}\frac{e^{2y_1\| \tpb{t}\|}-e^{-2y_1\| \tpb{t}\|}}{e^{2y_1\|
\tpb{t}\|}+e^{ 2\tpai{1, t}\| \tpb{t}\|}+e^{-2y_1\| \tpb{t}\|}+e^{-2\tpai{1, t}\|
\tpb{t}\|}}\dpdf{y_1, \ptt{1, t}}  \dif y_1 \nonumber \\
&=& \ptt{2, t} S(\tpai{1, t}, \|\pb{t}\|, \ptt{1, t}), \label{equ:iteq22}
\end{eqnarray}
where $\dpdf{y, \theta}\triangleq \frac{1}{2}(\phi(y-\theta)-\phi(y+\theta))$ is shorthand for the
difference of two Gaussian densities, and
$S: \mathbb{R}^3 \rightarrow \mathbb{R}$ is defined by
\begin{align*}
  \lefteqn{
    S(x_a, x_b, x_\theta)
  } \\
  & \ \triangleq \
  \int_{0}^{\infty}\frac{e^{2yx_b}-e^{-2yx_b}}{e^{2yx_b}+e^{-2x_ax_b}+e^{-2yx_b}+e^{2x_ax_b}}
  \cdot \frac{1}{2\sqrt{2\pi}}(e^{-(y-x_\theta)^2/2}-e^{-(y+x_\theta)^2/2})
  \dif y
  \\
  & \ = \
  \int_0^\infty \w(y-x_a, x_b)\frac{1}{2}(\phi(y-x_\theta)-\phi(y+x_\theta)) \dif y
  \, .
\end{align*}
Hence it is clear that $\ptt{2, t}>0$ implies $\palpha{t}>0$ since $S(x_a, x_b, x_\theta)>0$ for all
$x_b>0$ and $x_\theta>0$.
\item  $\palpha{t}< \pbeta{t}$: We just need to show $\palpha{t}<\pi/2$ and compare the co-tangent of $\palpha{t}$ and $\pbeta{t}$. This means that we have to show $\tpqi{{\langle t\rangle, 1}, t+1}>0$ and
    compare $\frac{ \tpqi{{\langle t\rangle, 1}, t+1}}{ \tpqi{{\langle t\rangle, 2}, t+1}}$  with $\frac{ \ptt{1, t}}{ \ptt{2, t}}$. We first calculate    $\tpqi{{\langle t\rangle, 1}, t+1}$.
\begin{eqnarray}
\tpqi{{\langle t\rangle, 1}, t+1}&=&\int \w_d(\y-\tpa{t}, \tpb{t})y_1\pdf{d, \y, \tpt{t}}\dif\y\n\\
&=&\int \w(y_1-\tpai{1, t}, \|\pb{t}\|)y_1\pdf{d, \y, \tpt{t}} \dif\y \\
&=&\int \w(y_1-\tpai{1, t}, \|\pb{t}\|)y_1\pdf{, y_1, \ptt{1, t}} \dif y_1 \\
&=&\funcQ( \tpai{1, t}, \| \tpb{t}\|, \ptt{1, t})\n\\
&=& \int_{0}^{\infty}\frac{e^{2y_1\| \tpb{t}\|}-e^{-2y_1\| \tpb{t}\|}}{e^{2y_1\| \tpb{t}\|}+e^{
2\tpai{1, t}\| \tpb{t}\|}+e^{-2y_1\| \tpb{t}\|}+e^{-2\tpai{1, t}\| \tpb{t}\|}} y_1\pdf{, y_1, \ptt{1, t}}
\dif y_1, \n\\\label{equ:iteq21}
\end{eqnarray}
where $\funcQ : \mathbb{R}^3 \rightarrow \mathbb{R}$ is defined as
\begin{eqnarray*}
\funcQ(x_{a}, x_{b}, x_{\theta})=\int \w(y-x_a, x_b)y\frac{1}{2}(\phi(y-x_\theta)+\phi(y+x_\theta))
\dif y \, .
\end{eqnarray*}
It is clear that  $\tpqi{{\langle t\rangle, 1}, t+1}>0$. For comparing the co-tangent of two angles, we need to further simplify $\tpqi{{\langle t\rangle, 1}, t+1}$. We have,
\begin{eqnarray}
\tpqi{{\langle t\rangle, 1}, t+1}&=& \funcQ( \tpai{1, t}, \| \tpb{t}\|, \ptt{1, t}) \nonumber \\
&=&\int \w(y_1-\tpai{1, t}, \|\pb{t}\|)y_1\frac{1}{2}(\phi(y_1-\ptt{1, t})+\phi(y_1+\ptt{1, t})) \dif y_1 \\
&=&\ptt{1, t} \int \w(y_1-\tpai{1, t}, \|\pb{t}\|)\frac{1}{2}(\phi(y_1-\ptt{1, t})-\phi(y_1+\ptt{1, t}))
  \dif y_1 \nonumber \\
&&+\int \w(y_1-\tpai{1, t}, \|\pb{t}\|)\frac{1}{2}\{(y_1- \ptt{1, t})\phi(y_1-\ptt{1, t})+(y_1+
\ptt{1, t})\phi(y_1+\ptt{1, t})\} \dif y_1 \nonumber \\
&=&\ptt{1, t} S(\tpai{1, t}, \|\pb{t}\|, \ptt{1, t})+\int_{-\infty}^{\infty}
\w(y_1+\ptt{1, t}-\tpai{1, t}, \|\pb{t}\|)y_1 \frac{1}{2}\phi(y_1) \dif y_1 \nonumber \\
&&+\int_{-\infty}^{\infty} \w(y_1-\ptt{1, t}-\tpai{1, t}, \|\pb{t}\|)y_1 \frac{1}{2}\phi(y_1) \dif y_1
\, . \nonumber \\
&=&  \ptt{1, t} S(\tpai{1, t}, \|\pb{t}\|, \ptt{1, t})  + R(\| \tpb{t}\|, \tpai{1, t}-\ptt{1, t})+R(\|\pb{t}\|, \tpai{1, t}+\ptt{1, t}), \label{eq:Q1SRR}
\end{eqnarray}
where $R: \mathbb{R}^2 \rightarrow \mathbb{R}$ is defined as
\begin{eqnarray*}
R(x_{b}, x)&\triangleq
  &\int_{0}^{+\infty}\frac{e^{2yx_{b}}-e^{-2yx_b}}{e^{2yx_b}+e^{2xx_b}+e^{-2yx_b}+e^{-2xx_b}}\frac{1}{2\sqrt{2\pi}}ye^{-y^2/2}
  \dif y \, , \\
&=&\int_{-\infty}^{\infty} \w(y-x, x_b)y \frac{1}{2}\phi(y) \dif y \, .
\end{eqnarray*}
Employing \eqref{eq:Q1SRR} and \eqref{equ:iteq22} we have
\begin{align*}
\cot {\palpha{t}}
& \ = \
\frac{ \tpbi{{\langle t\rangle, 1}, t+1}}{ \tpbi{{\langle t\rangle, 2}, t+1}}
\ = \
\frac{ \tpqi{{\langle t\rangle, 1}, t+1}}{ \tpqi{{\langle t\rangle, 2}, t+1}}
\\
& \ = \
\frac{ \ptt{1, t}}{ \ptt{2, t}}+\frac{R(\| \tpb{t}\|, \tpai{1, t}- \ptt{1, t})+R(\|\pb{t}\|, \tpai{1, t}+ \ptt{1, t})}{ \tpqi{{\langle t\rangle, 2}, t+1}}
  \\
  & \ > \
  \frac{\ptt{1, t}}{ \ptt{2, t}}
  \ = \
  \cot {\pbeta{t}}
  \, ,
\end{align*}
where the last inequality is due to the fact that $R(x_{b}, x) >0 $ for all $x_b>0$.
\end{enumerate}
\end{proof}

\subsection{Notations for the Remaining Proofs}\label{sec:notation}
In this section we collect the main notations that will be used in the proofs of our results. For the basic notation, we let $\phi_d(\vx)$ and $\Phi_d(\vx)$ denote the pdf and CDF for $d$-dimension standard Gaussian distribution respectively. We use $\phi(x)$ and $\Phi(x)$ as shorthand for one-dimension case. Let $\pdf{d, x, x_\theta}$ denote the pdf for $\boldsymbol{X}\sim 0.5 N(-\vx_{\theta}, I)+0.5 N(\vx_{\theta}, I)$, i.e.,
\[\pdf{d, \vx, \vx_\theta}=\frac{1}{2}(\phi_d(\vx-\vx_\theta)+\phi_d(\vx+\vx_\theta)).\]
We shorthand $\pdf{1, x, x_\theta}$ as $\pdf{, x, x_\theta}$ if $x\in \bbR$. In addition, we let $\dpdf{x, x_\theta}$ denote the difference between these two pdf, i.e.,
\[\dpdf{\vx, \vx_\theta}=\frac{1}{2}(\phi(\vx-\vx_\theta)-\phi_d(\vx+\vx_\theta)).\]
Next, we introduce the notations for important functions.
Similar to the proof of Lemma \ref{lem:planarestimates}, in many other proofs we will use the
rotation matrix $\vU_t$ that satisfies $\vU_t \pb{t} = (\|\pb{t}\|_2, 0, 0, \ldots, 0)^\t$, and $\vU_t \stt
= (\ptt{1, t}, \ptt{2, t} , 0, \ldots, 0)^\t$.  We also use the notations $\tpb{t+1}_{\langle t
\rangle} \triangleq \vU_t \pb{t+1}$, $\tpb{t} \triangleq \vU_t \pb{t}$ and $\tstt \triangleq \vU_t \stt$.
Also, let  $\tpbi{{\langle t\rangle, i}, t+1}$ denote the $i^{\rm th}$ element of $\tpb{t+1}_{\langle
t \rangle}$. We also define the same notations of $\tpq{t+1}_{\langle t
\rangle}, \tpq{t}, \tpqi{{\langle t\rangle, i}, t+1}$ for $\pq{t}$ and $\tpa{t+1}_{\langle t
\rangle}, \tpa{t}, \tpai{{\langle t\rangle, i}, t+1}$ for $\pa{t}$. By Lemma \ref{lemma:sign} and
\ref{lemma:symmetrizeinitial}, we can assume that $\dotp{\pb{t}, \stt}\geq 0$ and
$\dotp{\pb{t}, \pa{t}}\geq 0$ for all $ t\geq 0$ without loss of generality. Hence we have $$\ptt{1, t}\geq 0, \quad \text{and}\quad \tpai{1, t}\geq 0.$$
 Since for any $i \geq 3$, the $i^{\rm th}$ coordinate is zero in the span of $\tpb{t}$ and $\tstt$, we prove that $\tpbi{{\langle t\rangle, i}, t+1} =0$ for $i \geq 3$. Then according to \eqref{eq:tildeiterationab} we have
\begin{eqnarray}\label{eq:tildeiterationab2}
\tpa{t+1}&=&\frac{\tpq{t+1}(1-2\pp{t+1})}{2\pp{t+1}(1-\pp{t+1})}, \n \\
\tpb{t+1}&=&\frac{\tpq{t+1}}{2\pp{t+1}(1-\pp{t+1})}.
\end{eqnarray}
The first notation we would like to introduce $\funcP: \mathbb{R}^3 \rightarrow \mathbb{R}$
\begin{eqnarray}
\funcP(x_{a}, x_{b}, x_{\theta})&\triangleq&\int\frac{e^{{yx_{b}}-{x_{a}x_{b}}}}{e^{{yx_{b}}-{x_{a}x_{b}}}+e^{-{yx_{b}}+{x_{a}x_{b}}}}\frac{1}{2\sqrt{2\pi}}(e^{-(y-x_{\theta})^2/2}+e^{-(y+x_{\theta})^2/2})
  \dif y\n \\
&=&\int \w(y-x_a, x_b)\pdf{, y, x_\theta} \dif y \, .
  \label{eq:Pdef}
\end{eqnarray}
The importance of this function is clarified in the following calculations:
\begin{eqnarray}
\pp{t+1}&=&\bbE \w(\Y-\tpa{t}, \tpb{t})\n\\
&=&\int \w(y_1-\tpai{1, t}, \tpbi{1, t})\pdf{, y, \ptt{1, t}}\n\\
&=&\int\frac{e^{y_1\| \tpb{t}\|- \tpai{1, t}\| \tpb{t}\|}}{e^{y_1\| \tpb{t}\|- \tpai{1, t}\|
\tpb{t}\|}+e^{-y_1\| \tpb{t}\|+\tpai{1, t}\|
\tpb{t}\|}}\frac{1}{2\sqrt{2\pi}}(e^{-\frac{(y_1-\ptt{1, t})^2}{2}}+e^{-\frac{(y_1+\ptt{1, t})^2}{2}})
\dif y_1\n\\
&=&\funcP(\tpai{1, t}, \| \tpb{t}\|, \ptt{1, t}), \label{equ:itep2}
\end{eqnarray}
Our second notation is the $\funcQ: \mathbb{R}^3 \rightarrow \mathbb{R}$, where
\begin{eqnarray}
\funcQ(x_{a}, x_{b}, x_{\theta})& \triangleq &\int\frac{e^{{yx_{b}}-{x_{a}x_{b}}}}{e^{{y
x_{b}}-{x_{a}x_{b}}}+e^{-{yx_{b}}+{x_{a}x_{b}}}}y\frac{1}{2\sqrt{2\pi}}(e^{-(y-x_{\theta})^2/2}+e^{-(y+x_{\theta})^2/2})
\dif y.\n\\
&=&\int \w(y-x_a, x_b)y\pdf{, y, x_\theta} \dif y.\label{eq:Qdef}
\end{eqnarray}
Note that according to \eqref{equ:iteq21}, we have
\begin{equation}
\tpqi{{\langle t\rangle, 1}, t+1} = \funcQ( \tpai{1, t}, \| \tpb{t}\|, \ptt{1, t}).\n
\end{equation}
The next function we introduce here is $S: \mathbb{R}^3 \rightarrow \mathbb{R}$
\begin{eqnarray}
S(x_a, x_b, x_\theta)&\triangleq&
  \int_{0}^{\infty}\frac{e^{2yx_b}-e^{-2yx_b}}{e^{2yx_b}+e^{-2x_ax_b}+e^{-2yx_b}+e^{2x_ax_b}}\frac{1}{2\sqrt{2\pi}}(e^{-(y-x_\theta)^2/2}-e^{-(y+x_\theta)^2/2})
  \dif y\n\\
&=&\int \w(y-x_a, x_b) \dpdf{y, x_\theta} \dif y.\n
\end{eqnarray}
Note that according to \eqref{equ:iteq22},
\begin{equation}
\tpqi{{\langle t\rangle, 2}, t+1} = \ptt{2, t} S(\tpai{1, t}, \|\pb{t}\|, \ptt{1, t}).\label{eq:q2thetaS}
\end{equation}
Another notation we introduce here is $R: \mathbb{R}^2 \rightarrow \mathbb{R}$ defined as
\begin{eqnarray}
R(x_{b}, x)&\triangleq
  &\int_{0}^{+\infty}\frac{e^{2yx_{b}}-e^{-2yx_b}}{e^{2yx_b}+e^{2xx_b}+e^{-2yx_b}+e^{-2xx_b}}\frac{1}{2\sqrt{2\pi}}ye^{-y^2/2}
  \dif y\n\\
&=&\frac{1}{2}\int \w(y-x, x_b)y\phi(y) \dif y.\n
\end{eqnarray}
According to \eqref{eq:Q1SRR},
\begin{eqnarray}
\tpqi{{\langle t\rangle, 1}, t+1}&=& \funcQ (\tpai{1, t}, \|\pb{t}\|, \ptt{1, t})\n\\
 &=& \ptt{1, t} S(\tpai{1, t}, \| \tpb{t}\|, \ptt{1, t}) + R(\| \tpb{t}\|, \tpai{1, t} - \ptt{1, t} ) + R (\| \tpb{t}\|, \tpai{1, t}+\ptt{1, t}).\label{eq:Q1SRR1}
\end{eqnarray}

The final notation that we will be using in this paper is the function $F : \mathbb{R}^2 \rightarrow \mathbb{R}$:
\begin{eqnarray}
F(x_b, x_{\theta})&=&\int\frac{e^{(y+x_{\theta})x_{b}}-e^{-(y+x_{\theta})x_{b}}}{e^{(y+x_{\theta})x_{b}}+e^{-(y+x_{\theta})x_{b}}}(y+x_{\theta})\frac{1}{\sqrt{2\pi}}e^{-y^2/2}
  \dif y\n\\
&=&\int (2\w(y+x_\theta, x_b)-1)(y+x_{\theta})\phi(y) \dif y.\label{eq:Fdefinition}
\end{eqnarray}
To understand where this function may appear, note that
\begin{eqnarray}
\lefteqn{\funcQ(0, x_b, x_\theta) = \int \w(y, x_b)y\frac{1}{2}\pdf{, y, x_\theta} \dif y} \nonumber \\
&=& \int \w(y, x_b) y\frac{1}{2}\phi(y-x_\theta) \dif y +\int \w(y, x_b)
y\frac{1}{2}\phi(y+x_{\theta}) \dif y \nonumber \\
&=& \int \w(y+x_\theta, x_b) (y+x_\theta) \frac{1}{2}\phi(y) \dif y +\int \w(y-x_\theta, x_b)
(y-x_\theta)\frac{1}{2}\phi(y) \dif y \nonumber \\
&=& \int \w(y+x_\theta, x_b) (y+x_\theta)\frac{1}{2}\phi(y) \dif y -\int \w(-y-x_\theta, x_b)
(y+x_\theta)\frac{1}{2}\phi(y) \dif y \nonumber \\
&=&\frac{1}{2}
\int\frac{e^{(y+x_{\theta})x_{b}}-e^{-(y+x_{\theta})x_{b}}}{e^{(y+x_{\theta})x_{b}}+e^{-(y+x_{\theta})x_{b}}}(y+x_{\theta})\frac{1}{\sqrt{2\pi}}e^{-y^2/2}
\dif y = \frac{1}{2} F(x_b, x_{\theta}). \label{eq:QFrelation}
\end{eqnarray}

\subsection{Proof of Theorem \ref{thm:convergencerate1}}\label{sec:proof:thm:convergencerate1}
Without loss of generality, we assume that $\dotp{\pb{0}, \stt}>0$ and $\dotp{\pa{0}, \pb{0}}\geq 0$. We employ the notations and equations reviewed in Appendix \ref{sec:notation}. For notational simplicity we also define $R_s \triangleq R(\| \tpb{t}\|, \tpai{1, t} - \ptt{1, t} ) + R (\| \tpb{t}\|, \tpai{1, t}+ \ptt{1, t})$ and $S_s \triangleq S(\tpai{1, t}, \| \tpb{t}\|, \ptt{1, t}) $. Since $\pb{t+1}$ and $\pq{t+1}$ are in the same direction, we have
\begin{eqnarray}
\cos \pbeta{t+1} &=& \frac{\langle \tpb{t+1}, \tstt \rangle}{\| \tstt\| \| \tpb{t+1} \|} =  \frac{\langle \tpq{t+1}, \tstt \rangle}{\| \tstt\| \| \tpq{t+1} \|} \nonumber \\
&=& \frac{\tpqi{{\langle t\rangle, 1}, t+1}  \ptt{1, t}+ \tpqi{{\langle t\rangle, 2}, t+1} \ptt{2, t} }{\| \tstt \| \sqrt{ (\tpqi{{\langle t\rangle, 1}, t+1})^2+ (\tpqi{{\langle t\rangle, 2}, t+1})^2}  } \nonumber \\
&\overset{(a)}{=}& \frac{ (\ptt{1, t})^2 S_s + \ptt{1, t} R_s  + (\ptt{2, t})^2 S_s }{\|\tstt\| \sqrt{( \ptt{1, t})^2 S_s^2+ R_s^2+ 2R_s \ptt{1, t} S_s+ (\ptt{2, t})^2 S_s^2}  } \nonumber \\
&=& \frac{ \| \tstt\|^2 S_s + \ptt{1, t} R_s   }{\|\tstt\| \sqrt{\| \tstt\|^2 S_s^2+ R_s^2+ 2R_s \ptt{1, t} S_s}  }, \n
\end{eqnarray}
where Equality (a) is the result of \eqref{eq:Q1SRR1} and \eqref{eq:q2thetaS}.
Hence it is straightforward to check that
\begin{eqnarray*}
\sin \beta^{t+1}\ =\ \frac{\ptt{2, t} R_s }{ \| \tstt\| \sqrt{(\|\tstt\|^2 S_s^2+ R_s^2+ 2R_s \ptt{1, t} S_s)}}\ \leq \ \frac{\ptt{2, t}}{\| \tstt\|} \frac{R_s}{R_s + \ptt{1, t} S_s}.
\end{eqnarray*}
Note that since $\tpbi{2, t}=0$, we have $$\frac{\ptt{2, t}}{\|\tstt\|} = \sin (\pbeta{t}).$$ Hence, we have
\begin{eqnarray}\label{eq:sinbetatplus1}
\sin \pbeta{t+1} = \frac{R_s}{ R_s + \ptt{1, t} S_s} \sin \pbeta{t}.
\end{eqnarray}
Our goal is to prove that there exists $0<\kappa_\beta<1$, such that $R_s/(R_s + \ptt{1, t} S_s) \leq \kappa_\beta$ at every iteration.  Toward this goal we will prove that $\ptt{1, t} S_s >0$. First note that since according to Lemma \ref{lem:planarestimates} the angle $\pbeta{t}$ is decreasing $\ptt{1, t}$ is an increasing sequence. Hence, $\ptt{1, t} S_s \geq \ptt{1, 0} S_s$. Our goal is to show that $S_s>0$. Note that $S_s = S(\tpai{1, t}, \| \tpb{t}\|, \ptt{1, t})$ is only zero if $\|\pb{t}\|=0$ and can only go to zero if $\tpai{1, t} \rightarrow \infty$ or $\|\pb{t}\| \rightarrow \infty$. Hence, if we find a lower bound for $\inf_{t} \| \pb{t}\|$ and prove that we have an upper bound for $\sup_t \|\pa{t}\|$ and $\sup_t \| \pb{t}\|$, then we obtain a non-zero lower bound for $S_s$. The following two lemmas prove our claims:

\begin{lemma}\label{lemma:upperboundEMestimates}
For any initialization $\pa{0}, \pb{0}\in \bbR^d$, we have
\begin{eqnarray}
\|\pa{t}\|^2&\leq& \max\{\|\pa{0}\|^2, \frac{2}{\pi}+\frac{\|\stt\|^2}{2}, \frac{16}{9}+\frac{73}{36}\|\stt\|^2\}\ \triangleq\  c_{U, 1}^2, \forall t\geq 0, \n\\
\|\pb{t}\|^2&\leq& \max\{\|\pb{0}\|^2, \|\stt\|^2+\frac{1}{4c_{U, 2}^2(1-c_{U, 2})^2}(1+\|\stt\|^2)\}\ \triangleq\  c_{U, 3}^2, \forall t\geq 0, \n
\end{eqnarray}
where $c_{U, 2}=\frac{1}{4}(1-\Phi(c_{U, 1}+\|\stt\|))$. Hence,  $\{\|\pa{t}\|, \|\pb{t}\|\}_t$ belong to a compact set.
\end{lemma}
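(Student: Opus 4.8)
The plan is to establish both bounds jointly by induction on $t$, letting the rotation-invariance reductions of Sections~\ref{sec:structuralprop} and~\ref{sec:notation} carry the bookkeeping. For $t=0$ the inequalities hold by the definitions of $c_{U,1}$ and $c_{U,3}$, so assume they hold at iteration $t$. If $\pb{t}$ is the zero vector then $\w_d(\cdot,\mathbf{0})\equiv\tfrac12$, so $\pq{t+1}=\tfrac12\E\Y=\mathbf{0}$, $\pp{t+1}=\tfrac12$, and $\pa{t+1}=\pb{t+1}=\mathbf{0}$; hence assume $\pb{t}\neq\mathbf{0}$. By Lemmas~\ref{lemma:sign} and~\ref{lemma:symmetrizeinitial} we may assume $\dotp{\pa{t},\pb{t}}\geq0$ and $\dotp{\pb{t},\stt}\geq0$, and by Lemma~\ref{lemma:projection} together with Lemma~\ref{lemma:general-projection} all of $\pq{t+1},\pa{t+1},\pb{t+1}$ lie in $M_0=\mathrm{span}(\pb{0},\stt)$. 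Passing to the basis $\e{1,t},\dotsc,\e{d,t}$ of Section~\ref{sec:notation}, we have $0\leq\ptt{1,t}\leq\|\stt\|$, $\ptt{1,t}^2+\ptt{2,t}^2=\|\stt\|^2$, and $0\leq\tpai{1,t}=\dotp{\pa{t},\pb{t}}/\|\pb{t}\|\leq\|\pa{t}\|\leq c_{U,1}$, while by \eqref{eq:tildeiterationab2}, \eqref{equ:itep2}, \eqref{eq:Qdef}, and \eqref{eq:q2thetaS} the iterates are $\pp{t+1}=\funcP(\tpai{1,t},\|\pb{t}\|,\ptt{1,t})$, $\pq{t+1}=\funcQ(\tpai{1,t},\|\pb{t}\|,\ptt{1,t})\,\e{1,t}+\ptt{2,t}\,S(\tpai{1,t},\|\pb{t}\|,\ptt{1,t})\,\e{2,t}$, $\pb{t+1}=\pq{t+1}/(2\pp{t+1}(1-\pp{t+1}))$, and $\pa{t+1}=(1-2\pp{t+1})\pq{t+1}/(2\pp{t+1}(1-\pp{t+1}))$. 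So everything reduces to estimating the one-dimensional integrals $\funcP,\funcQ,S$ at a point whose first two arguments are controlled by the inductive hypothesis and whose third lies in $[0,\|\stt\|]$.

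The next step is to record four elementary estimates, valid for $x_b\geq0$, $x_\theta\in[0,\|\stt\|]$, $x_a\in[0,c_{U,1}]$ (write $Y\sim 0.5N(-x_\theta,1)+0.5N(x_\theta,1)$): (i) $0\leq\funcQ(x_a,x_b,x_\theta)\leq\E|Y|\leq\sqrt{1+x_\theta^2}$, from $0\leq\w\leq1$, monotonicity of $\w(\cdot-x_a,x_b)$, and $\E Y=0$; (ii) $0\leq S(x_a,x_b,x_\theta)\leq\tfrac12$, and termwise comparison of integrands gives $S\leq\funcP$; (iii) restricting the integral for $\funcP$ to $\{y\geq x_a\}$ (where $\w\geq\tfrac12$ since $x_b\geq0$) and keeping only the $N(-x_\theta,1)$ component of $Y$ gives $\funcP(x_a,x_b,x_\theta)\geq\tfrac14(1-\Phi(x_a+x_\theta))\geq\tfrac14(1-\Phi(c_{U,1}+\|\stt\|))=c_{U,2}$, and symmetrically $1-\funcP\geq c_{U,2}$, hence $\pp{t+1}(1-\pp{t+1})\geq c_{U,2}(1-c_{U,2})$; moreover $1-\funcP\geq\tfrac14$ because $x_a\geq0$ forces $\P(Y\leq x_a)\geq\tfrac12$; and (iv) since reweighting $Y$ by the increasing function $\w(\cdot-x_a,x_b)$ only raises the mean and is dominated by hard-thresholding at $x_a$, one has $\funcQ/\funcP\leq\E[Y\mid Y>x_a]\leq x_\theta+M(x_a-x_\theta)$ with $M$ the Gaussian Mills ratio, and likewise $-\funcQ/(1-\funcP)\geq\E[Y\mid Y<x_a]$. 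Given these, the bound on $\pb{t+1}$ is immediate: using (i), (ii) with $S\leq\tfrac12$, and (iii),
\[
  \|\pb{t+1}\|^2=\frac{\funcQ^2+\ptt{2,t}^2S^2}{4\pp{t+1}^2(1-\pp{t+1})^2}\ \leq\ \frac{1+\ptt{1,t}^2+\tfrac14\ptt{2,t}^2}{4c_{U,2}^2(1-c_{U,2})^2}\ \leq\ \frac{1+\|\stt\|^2}{4c_{U,2}^2(1-c_{U,2})^2}\ \leq\ c_{U,3}^2 .
\]

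The subtle point is the bound on $\pa{t+1}$: the prefactor $(1-2\pp{t+1})/(2\pp{t+1}(1-\pp{t+1}))$ blows up as $\pp{t+1}\to0$ or $1$, so the crude denominator bound $c_{U,2}(1-c_{U,2})$ would only yield $\|\pa{t+1}\|\leq c_{U,3}$, not the sharper $c_{U,1}$ that later proofs rely on. Instead I would write $\pa{t+1}=\tfrac12(\pq{t+1}/\pp{t+1}-\pq{t+1}/(1-\pp{t+1}))$, recognizing the two terms as reweighted means of $\Y$: their $\e{1,t}$-coordinates $\funcQ/\funcP$ and $-\funcQ/(1-\funcP)$ are controlled by (iv) in terms of $\tpai{1,t}\leq\|\pa{t}\|$ (not in terms of $1/\pp{t+1}$), and their $\e{2,t}$-coordinates, proportional to $\ptt{2,t}S/\funcP$ and $\ptt{2,t}S/(1-\funcP)$, are at most a constant multiple of $\ptt{2,t}\leq\|\stt\|$ by (ii) and $1-\funcP\geq\tfrac14$. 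Treating separately the regime where $\|\pb{t}\|$ is small — there a first-order expansion of $\w$ around $\tfrac12$ puts $\pp{t+1}$ near $\tfrac12$ and makes the prefactor $O(\|\pb{t}\|^2)$ — and the complementary regime, where (iv) applies, one obtains an inequality of the form $\|\pa{t+1}\|^2\leq\max\{\|\pa{t}\|^2,\,C^2\}$ with $C^2=\max\{\tfrac{2}{\pi}+\tfrac{\|\stt\|^2}{2},\ \tfrac{16}{9}+\tfrac{73}{36}\|\stt\|^2\}$; this closes the induction and gives $\|\pa{t}\|^2\leq\max\{\|\pa{0}\|^2,C^2\}=c_{U,1}^2$, whence $\{\|\pa{t}\|,\|\pb{t}\|\}_t$ lies in a compact set. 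The main obstacle is purely quantitative: carrying out the one-dimensional Gaussian computations — the Mills-ratio estimate behind (iv) and the small-$\|\pb{t}\|$ expansion — with constants sharp enough that \emph{both} regime-dependent bounds on $\|\pa{t+1}\|^2$ fall below $c_{U,1}^2$; conceptually nothing beyond the monotone-reweighting (Harris/FKG-type) inequality, the observation $x_a\geq0\Rightarrow\pp{t+1}\leq\tfrac34$, and standard tail estimates for $\Phi$ is needed.
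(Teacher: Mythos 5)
Your overall architecture matches the paper's: reduce to the two relevant coordinates, bound $\|\pa{t+1}\|$ by a ``contraction-or-absolute-constant'' dichotomy, and then use the resulting uniform bound $\tpai{1,t}\leq c_{U,1}$ to lower-bound $\pp{t+1}$ and hence control $\|\pb{t+1}\|$. Your treatment of $\pb{t+1}$ and your estimates (i)--(iii) are fine and essentially coincide with the paper's \eqref{eq:upperboundeq5}, \eqref{eq:PbiggerthanS}, and Lemma~\ref{lem:Plowerbound}.

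The gap is estimate (iv), on which your entire bound for $\|\pa{t+1}\|$ rests. The claim that reweighting by the increasing function $\w(\cdot-x_a,x_b)$ ``is dominated by hard-thresholding at $x_a$,'' i.e.\ $\funcQ/\funcP\leq\E[Y\mid Y>x_a]$, does not follow from any monotone-reweighting/FKG argument: the likelihood ratio $\ind{y>x_a}/\w(y-x_a,x_b)$ between the two tilted measures is not monotone, and the general principle is false --- for a two-point law supported on $\{x_a+\eps,\,x_a+M\}$ the sigmoid-tilted mean strictly \emph{exceeds} the indicator-tilted mean, because within $\{y>x_a\}$ the sigmoid up-weights large $y$. (There is also a sign slip in how you propose to use the companion bound on $-\funcQ/(1-\funcP)$, though that part is salvageable since $\funcQ/(1-\funcP)\geq 0$.) What the paper actually proves (Lemma~\ref{lem:Q1overPbound}) is the pointwise inequality $\w(y-x_a,x_b)\,(y-x_a)\leq(y-x_a)^{+}$, giving $\funcQ\leq x_a\funcP+\E(Y-x_a)^{+}$, and it then divides by $\funcP$ using the lower bound $\funcP\geq\P(Y\geq x_a)$ from Lemma~\ref{lem:Plowerbound} --- a lower bound that is only established for $x_a\geq x_\theta$. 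This is precisely why the paper splits into the cases $\tpai{1,t}\geq\ptt{1,t}$ (Mills-ratio bound, yielding the term $2/\pi+\|\stt\|^2/2$) and $\tpai{1,t}<\ptt{1,t}$ (where $\funcP\geq 1/4$ gives the crude bound $\tfrac{4}{3}\sqrt{1+\|\stt\|^2}$, yielding $16/9+\tfrac{73}{36}\|\stt\|^2$). Your alternative split on $\|\pb{t}\|$ small versus large does not line up with where the Mills-type bound is valid --- the obstruction is governed by the relation between $x_a$ and $x_\theta$, not by $x_b$ --- so in the regime where $\|\pb{t}\|$ is large but $\tpai{1,t}<\ptt{1,t}$ your sketch has no valid estimate. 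To close the argument, replace (iv) by the pointwise inequality above and adopt the $x_a$-versus-$x_\theta$ case split.
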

We postpone the proof of this lemma until Appendix \ref{sec:proofuboundest}, but the fact that the estimates remain bounded should not be surprising for the reader.

\begin{lemma}\label{lem:lowerboundbt}
Let $\pb{t}$ and $\pa{t}$ denote the estimates of Population EM. There exists a value $c_l>0$ depending on $\|\stt\|$, $\langle \pb{0}, \stt \rangle$, $\|\pa{0}\|$, and $\|\pb{0}\|$ such that
\[
\|\pb{t}\| \geq \min (\|\pb{0}\|, c_l)\triangleq c_{L, 1}.
\]
\end{lemma}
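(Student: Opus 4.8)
The plan is to show that a single Population EM step can shrink $\Norm{\pb{t}}$ by at most a fixed factor, and in fact \emph{enlarges} it once $\Norm{\pb{t}}$ is small, after which a short induction yields the bound. First I would make the usual reductions: by Lemmas~\ref{lemma:sign} and~\ref{lemma:symmetrizeinitial} we may assume $\dotp{\pb{0},\stt}>0$ and $\dotp{\pa{0},\pb{0}}\ge 0$ (the degenerate case $\dotp{\pb{0},\stt}=0$ is that of Theorem~\ref{thm:hyperplaneinit1}, where $\pb{t}\to\v0$), and work in the rotating frame $\vU_t$ so that $\tpb{t}=(\Norm{\pb{t}},0,\dots,0)^\t$ and $\tstt=(\ptt{1,t},\ptt{2,t},0,\dots,0)^\t$ with $\ptt{1,t}\ge 0$. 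Two a priori facts are then available: by Lemma~\ref{lemma:upperboundEMestimates}, $\Norm{\pa{t}}\le c_{U,1}$ and $\Norm{\pb{t}}\le c_{U,3}$ for all $t$, so $\tpai{1,t}\in[0,c_{U,1}]$; and since $\dotp{\pb{0},\stt}>0$ forces $\pbeta{0}\in[0,\pi/2)$, Lemma~\ref{lem:planarestimates} makes $\pbeta{t}$ non-increasing, so $\ptt{1,t}=\Norm{\stt}\cos\pbeta{t}$ is non-decreasing and $\ptt{1,t}\ge\ptt{1,0}=\dotp{\pb{0},\stt}/\Norm{\pb{0}}=:c_0>0$ for every $t$ (also $\dotp{\pb{t},\stt}>0$, so $\pb{t}\ne\v0$ and the frame is well-defined). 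Hence all the relevant arguments lie in the compact box $K:=[0,c_{U,1}]\times[0,c_{U,3}]\times[c_0,\Norm{\stt}]$.

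The core of the argument concerns the ratio $g(x_a,x_b,x_\theta):=\funcQ(x_a,x_b,x_\theta)/x_b$. Since $\Norm{\pb{t+1}}=\Norm{\pq{t+1}}/\bigl(2\pp{t+1}(1-\pp{t+1})\bigr)\ge 2\Norm{\pq{t+1}}$ by~\eqref{eq:tildeiterationab2} (using $2\pp{t+1}(1-\pp{t+1})\le\tfrac12$), and $\Norm{\pq{t+1}}\ge\tpqi{{\langle t\rangle,1},t+1}=\funcQ(\tpai{1,t},\Norm{\pb{t}},\ptt{1,t})\ge0$ by~\eqref{eq:Q1SRR1} (using $S>0$ and $R>0$), we obtain
\[
\Norm{\pb{t+1}}\ \ge\ 2\,\funcQ(\tpai{1,t},\Norm{\pb{t}},\ptt{1,t})\ =\ 2\,g(\tpai{1,t},\Norm{\pb{t}},\ptt{1,t})\,\Norm{\pb{t}}\,.
\]
Expanding $\w(z,x_b)=\tfrac12+\tfrac12 zx_b+O\bigl((zx_b)^3\bigr)$ inside the integral defining $\funcQ$ and integrating against $\tfrac12(\phi(y-x_\theta)+\phi(y+x_\theta))$ shows $\funcQ(x_a,x_b,x_\theta)=\tfrac12 x_b(1+x_\theta^2)+O(x_b^3)$, with the remainder uniform over $K$; hence $g$ extends continuously to $\{x_b=0\}$ with $g(x_a,0,x_\theta)=\tfrac12(1+x_\theta^2)$. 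Being continuous and strictly positive on the compact box $K$, $g$ attains a positive minimum $c''$, so \textbf{(A)} $\Norm{\pb{t+1}}\ge 2c''\Norm{\pb{t}}$ for all $t$; and since $g(x_a,0,x_\theta)\ge\tfrac12(1+c_0^2)>\tfrac12$, uniform continuity yields $c_l\in(0,c_{U,3}]$ with $g>\tfrac12$ whenever $x_b\le c_l$, so \textbf{(B)} $\Norm{\pb{t}}\le c_l$ implies $\Norm{\pb{t+1}}>\Norm{\pb{t}}$.

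The lemma now follows by induction. If $2c''\ge 1$, (A) alone gives $\Norm{\pb{t}}\ge\Norm{\pb{0}}$; otherwise set $c_{L,1}:=\min(\Norm{\pb{0}},2c''c_l)$. Then $\Norm{\pb{0}}\ge c_{L,1}$, and assuming $\Norm{\pb{t}}\ge c_{L,1}$: if $\Norm{\pb{t}}\le c_l$ then (B) gives $\Norm{\pb{t+1}}>\Norm{\pb{t}}\ge c_{L,1}$, while if $\Norm{\pb{t}}>c_l$ then (A) gives $\Norm{\pb{t+1}}\ge 2c''\Norm{\pb{t}}>2c''c_l\ge c_{L,1}$. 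I expect the main obstacle to be the estimate of the previous paragraph: isolating the leading term $\tfrac12 x_b(1+x_\theta^2)$ of $\funcQ$ with a remainder that is \emph{uniform} over $K$ requires controlling the Taylor remainder of $\w$ — equivalently of the auxiliary functions $R$ and $S$ — uniformly in $x_a$ and $x_\theta$, which in turn needs $\cosh$-type tail bounds on the Gaussian integrals; the remaining steps are bookkeeping.
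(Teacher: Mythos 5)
Your proposal is correct and follows essentially the same route as the paper's proof: the same key reduction $\Norm{\pb{t+1}}\ge 2\,\funcQ(\tpai{1,t},\Norm{\pb{t}},\ptt{1,t})$, the same observation that $\funcQ$ behaves like $\tfrac12 x_b(1+x_\theta^2)$ near $x_b=0$ (the paper phrases this via $\partial\funcQ/\partial x_b|_{x_b=0}$ and the mean value theorem, you via the ratio $\funcQ/x_b$), the same compactness argument for $x_b$ bounded away from $0$, and the same two-case induction. The only cosmetic difference is that your step (A) gives a uniform multiplicative bound over the whole box, which the paper does not need.
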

We postpone the proof of this claim to Appendix \ref{sec:proof:lowerboundbt}. Note that according to Lemma \ref{lemma:upperboundEMestimates} we know that $\sup_t \|\pa{t}\| \leq c_{U, 1}$ and $\sup_t \|\pb{t}\| \leq c_{U, 3}$. Hence, we define
\[
 \kappa_\beta= \max_{ c_{L, 1}\leq \| \tpb{t}\| \leq c_{U, 3}, \|\tpa{t}\| \leq c_{U, 1} } \frac{ R_s}{\ptt{1, 0} S(\tpai{1, t}, \| \tpb{t}\|, \ptt{1, t})+  R_s)} \in(0, 1),
\]
then \eqref{eq:sinbetatplus1} implies
\[
\sin \pbeta{t+1} \leq \kappa_\beta \sin \pbeta{t}.
\]
This proves the first claim in Theorem \ref{thm:convergencerate1}. Our next goal is to prove the second claim, i.e.,
\[
\|\pa{t+1} \|^2 \leq \kappa_a^2 \|\pa{t}\|^2 + \frac{\|\stt\|^2 \sin \pbeta{t}}{4}.
\]
As before we write $\|\pa{t+1} \|^2 =( \tpai{{\langle t\rangle, 1}, t+1} )^2 + ( \tpai{{\langle t\rangle, 2}, t+1} )^2$ and then bound each term separately. According to \eqref{eq:tildeiterationab2}, we have
\begin{eqnarray}\label{eq:a_1boundconvergence}
\tpai{{\langle t\rangle, 1}, t+1}\ =\ \frac{\funcQ ( \tpai{1, t}, \| \tpb{t} \|, \ptt{1, t})(1 -2\funcP(\tpai{1, t}, \| \tpb{t} \|, \ptt{1, t}))}{\funcP(\tpai{1, t}, \|\tpb{t} \|, \ptt{1, t}) (1-\funcP(\tpai{1, t}, \|\tpb{t} \|, \ptt{1, t})) }\ \overset{(\star)}{\leq}\ \kappa_a \tpai{1, t} \ \leq\ \kappa_a \|\tpa{t}\|.
\end{eqnarray}
where Inequality $(\star)$ is due to the following lemma:

\begin{lemma}\label{lemma:aconvergeupperbound}
For any $\theta\geq 0$, there exists a constant $\kappa_a\in (0, 1)$ only depending on $\theta$ and continuous for $\theta>0$ such that $$\frac{\funcQ(x_a, x_b, \theta)(1-2\funcP(x_a, x_b, \theta))}{2\funcP(x_a, x_b, \theta)(1-\funcP(x_a, x_b, \theta))}\leq \kappa_ax_a,\ \ \forall x_a\geq 0, x_b>0.$$
\end{lemma}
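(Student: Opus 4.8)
The plan is to analyze the ratio $g(x_a,x_b,\theta) \triangleq \frac{\funcQ(x_a,x_b,\theta)(1-2\funcP(x_a,x_b,\theta))}{2\funcP(x_a,x_b,\theta)(1-\funcP(x_a,x_b,\theta))}$ and compare it to $x_a$ directly, by manipulating the integral representations \eqref{eq:Pdef} and \eqref{eq:Qdef}. First I would record the elementary identity $\w(y-x_a,x_b) + \w(-y+x_a,x_b) = 1$ and use it to symmetrize the integrals. Writing $\funcP = \int \w(y-x_a,x_b)\,\pdf{,y,\theta}\,\dif y$ and splitting the Gaussian mixture into $\phi(y-\theta)$ and $\phi(y+\theta)$, a change of variables $y \mapsto y + x_a$ recenters everything, so $\funcP(x_a,x_b,\theta) = \E[\w(Z, x_b)]$ and $\funcQ(x_a,x_b,\theta) = \E[(Z+x_a)\w(Z,x_b)]$ where $Z$ is drawn from $\tfrac12 N(\theta-x_a,1) + \tfrac12 N(-\theta-x_a,1)$ (up to checking signs). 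This lets me write $\funcQ = x_a \funcP + \E[Z\,\w(Z,x_b)]$, and hence $g - x_a$ becomes a ratio whose numerator is $\E[Z\w(Z,x_b)](1-2\funcP) - x_a\cdot(\text{something involving }\funcP(1-\funcP)\text{ vs.\ }\funcP(1-2\funcP))$. The key structural fact to extract is that $g(x_a,x_b,\theta) \le x_a$ reduces, after clearing denominators (all of $\funcP$, $1-\funcP$ are in $(0,1)$), to an inequality of the form $A(x_a,x_b,\theta) \le x_a\, B(x_a,x_b,\theta)$ with $B>0$; equivalently $g/x_a \le 1$ and, more, bounded away from $1$.

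Next I would prove the \emph{strict} inequality $g(x_a,x_b,\theta) < x_a$ for each fixed $x_a>0$, $x_b>0$, $\theta \ge 0$, which is where the real content lies. The natural route is a correlation/FKG-type argument: $\w(\cdot,x_b)$ is monotincreasing, and one shows that the presence of the factor $(1-2\funcP)$ — which has the sign of $\tfrac12 - \funcP$, i.e.\ the sign of $-x_a$ roughly — exactly cancels against the sign of the ``extra'' term $\E[Z\w(Z,x_b)]$ in such a way that the ratio contracts. Concretely I expect to show $\funcQ(1-2\funcP) - 2x_a\funcP(1-\funcP) = \funcQ - 2\funcP\funcQ - 2x_a\funcP + 2x_a\funcP^2 < 0$, i.e.\ after substituting $\funcQ = x_a\funcP + \E[Z\w]$, that $\E[Z\w(Z,x_b)](1-2\funcP) < 0$; and this inequality holds because $\E[Z\w(Z,x_b)]$ and $1-2\funcP = 1-2\E[\w(Z,x_b)]$ have opposite signs, which follows from the covariance inequality $\cov(Z, \w(Z,x_b))$ having the same sign as $\E[Z]=-x_a \le 0$ together with monotonicity of $\w$ in its first argument. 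I would then need $x_b>0$ and $\theta\ge 0$ to pin down signs carefully (the $\theta=0$ case is degenerate and handled separately, giving $\funcP=\tfrac12$ so $g=0$).

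Having the strict pointwise bound $g(x_a,x_b,\theta) < x_a$, the remaining task is to produce a \emph{uniform} constant $\kappa_a(\theta) \in (0,1)$ valid for all $x_a \ge 0$, $x_b > 0$. Write $\kappa_a(\theta) \triangleq \sup_{x_a \ge 0,\, x_b > 0} g(x_a,x_b,\theta)/x_a$ (interpreting the ratio at $x_a=0$ by its limit). The main obstacle — and the part requiring genuine care — is controlling this supremum near the boundary of the parameter region, i.e.\ as $x_a \to 0$, $x_a \to \infty$, $x_b \to 0$, and $x_b \to \infty$. For $x_a\to\infty$: the conditional means $Z$ concentrate around $\pm\theta - x_a$, both very negative, so $\w(Z,x_b)\to 0$ exponentially when $x_b$ is bounded below, making $g/x_a \to 0$; when $x_b$ is also small one uses the linearization $\w(z,x_b) \approx \tfrac12 + \tfrac12 z x_b$ to see $\funcP \approx \tfrac12 - \tfrac12 x_a x_b$ and a direct Taylor expansion shows $g/x_a$ stays bounded below $1$. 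For $x_b\to 0$: same linearization shows $g$ is of order $x_b^2$ while the denominator is of order $1$, so $g/x_a \to 0$. For $x_b\to\infty$: $\w(\cdot,x_b)$ becomes a sharp indicator of $\{z>0\}$, so everything converges to explicit truncated-Gaussian expressions that are manifestly $<x_a$. For $x_a\to 0$: expand $g$ in powers of $x_a$; the leading term is $x_a\cdot c(x_b,\theta)$ with $c(x_b,\theta) = \frac{\E[Z_0\w(Z_0,x_b)]'}{\ldots} < 1$ by the derivative version of the covariance argument. Combining these boundary estimates with the fact that $g/x_a$ is continuous on the interior and strictly below $1$ there yields a compact-set argument giving $\kappa_a(\theta) < 1$; continuity in $\theta$ for $\theta>0$ follows from dominated convergence since all the integrands are smooth and dominated uniformly on $\theta$-compacta. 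The trickiest bookkeeping will be the simultaneous limit $x_a\to\infty$, $x_b\to 0$ with $x_ax_b$ in various regimes, which I would handle by case-splitting on whether $x_ax_b$ is bounded or not.
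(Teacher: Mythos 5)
Your recentering identity is correct: writing $Z = Y - x_a$ with $Y \sim \tfrac12 N(\theta,1)+\tfrac12 N(-\theta,1)$, one has $\funcQ = x_a\funcP + \E[Z\w(Z,x_b)]$. But the algebra after that has a sign-critical error. Expanding the numerator of $g - x_a$ gives
\[
\funcQ(1-2\funcP) - 2x_a\funcP(1-\funcP) \;=\; -\,x_a\funcP \;+\; \E[Z\w(Z,x_b)]\,(1-2\funcP),
\]
so the inequality $g < x_a$ is equivalent to $\E[Z\w(Z,x_b)](1-2\funcP) < x_a\funcP$, not to $\E[Z\w(Z,x_b)](1-2\funcP) < 0$. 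Your claim that the latter holds because the two factors have opposite signs is false. Since $\w(\cdot,x_b)$ is increasing, Chebyshev's correlation inequality gives $\cov(Z,\w(Z,x_b)) \ge 0$ \emph{regardless} of the sign of $\E[Z]$, so $\E[Z\w] = -x_a\funcP + \cov(Z,\w)$ need not be negative. Concretely, as $x_b\to\infty$ the weight $\w(\cdot,x_b)$ tends to the indicator of $\{z>0\}$, whence $\E[Z\w]\to\E[(Y-x_a)\mathds{1}\{Y>x_a\}]>0$ while $1-2\funcP\to 1-2\,\mathbb{P}(Y>x_a)>0$ for $x_a>0$; the product is strictly positive for all large finite $x_b$. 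So the pointwise inequality does not reduce to a sign statement, and the heart of your argument collapses.

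What is actually needed is the quantitative bound $\E[Z\w](1-2\funcP) < x_a\funcP$ with a uniform multiplicative gap, and this is where the paper spends essentially all of its effort: it writes $\E[Z\w] = -\tfrac{x_a}{2}+\tfrac14(F(x_b,\theta+x_a)+F(x_b,\theta-x_a))$ and $1-2\funcP = K(\theta+x_a,x_b)-K(\theta-x_a,x_b)$, uses concavity of $K$ in its first argument to control the difference quotient, bounds $F$ by $l(x)=x(1-2\Phi(-x))+2\phi(x)$, and reduces everything to the nontrivial Gaussian inequality $l(x)(\tfrac12-\Phi(-x))/x < \tfrac12$ (Lemma~\ref{lem:lupperbound}), with a separate argument and a separate constant for the regime $x_a\ge\theta$. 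Your proposal contains no substitute for this quantitative step, and the subsequent supremum/compactness argument for extracting a uniform $\kappa_a(\theta)<1$ therefore has nothing to rest on. The gap is not a bookkeeping issue at the boundary of the parameter region; it is the central estimate.
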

The proof of this Lemma is presented in Appendix \ref{proof:lemubkappa_a}. Our next step is to establish the convergence of $\tilde{a}_{t, 2}^{t+1}$.  From \eqref{eq:tildeiterationab2} and \eqref{eq:q2thetaS} we have:
\begin{eqnarray}
\tpai{{\langle t\rangle, 2}, t+1} &=& \frac{\tpqi{{\langle t\rangle, 2}, t+1} (1- 2\funcP (\tpai{1, t}, \|\pb{t}\|, \ptt{1, t} ))}{2\funcP (\tpai{1, t}, \| \tpb{t}\|, \ptt{1, t} ) (1-\funcP (\tpai{1, t}, \| \tpb{t}\|, \ptt{1, t} ))} \nonumber \\
&=& \ptt{2, t} \frac{S(\tpai{1, t}, \|\pb{t}\|, \ptt{1, t}) (1- 2\funcP (\tpai{1, t}, \|\pb{t}\|, \ptt{1, t} ))}{2\funcP (\tpai{1, t}, \| \tpb{t}\|, \ptt{1, t} ) (1-\funcP (\tpai{1, t}, \| \tpb{t}\|, \ptt{1, t} ))}.\n
\end{eqnarray}
And according to \eqref{equ:itep2}, we have
\begin{eqnarray}
\funcP(\tpai{1, t}, \|\pb{t}\|, \ptt{1, t})&=&\int \w(y-\tpai{1, t}, \|\pb{t}\|)\pdf{, y, \ptt{1, t}} \dif y \n\\
&=&\int_{y=0}^{\infty}(\w(y-\tpai{1, t}, \|\pb{t}\|)+\w(-y-\tpai{1, t}, \|\pb{t}\|))\pdf{, y, \ptt{1, t}}
  \dif y\n\\
&=&\int_{y=0}^{\infty}\frac{e^{2y\|\pb{t}\|}+2e^{-2\tpai{1, t}\|\pb{t}\|}+e^{-2y\|\pb{t}\|}}{e^{2y\|\pb{t}\|}+e^{2\tpai{1, t}\|\pb{t}\|}+e^{-2y\|\pb{t}\|}+e^{-2\tpai{1, t}\|\pb{t}\|}}\pdf{, y, \ptt{1, t}}
  \dif y\n\\
&>&S(\tpai{1, t}, \|\pb{t}\|, \ptt{1, t})\ \geq\  0.\label{eq:PbiggerthanS}
\end{eqnarray}

Hence, we have
\begin{equation}\label{eq:a_2boundconvergence}
\tpai{{\langle t\rangle, 2}, t+1} \ \leq\ \frac{\ptt{2, t}}{2}\ =\ \frac{\|\stt\| \sin \pbeta{t}}{2}.
\end{equation}
Combining \eqref{eq:a_1boundconvergence} and \eqref{eq:a_2boundconvergence} establishes the second part of our main Theorem.

\subsubsection{Proof of Lemma \ref{lemma:upperboundEMestimates}}\label{sec:proofuboundest}
In this section we use the notations and equations that are summarize in Appendix
\ref{sec:notation}. Without loss of generality, we assume that $\dotp{\pb{0}, \stt}>0$ and $\dotp{\pa{0}, \pb{0}}\geq 0$ and
it is straightforward to show that if $\|\pb{0}\|=0$, then $$\pa{t}=\pb{t}=\textbf{0}, \forall t\geq 1.$$
Hence, we assume that $\|\pb{0}\|>0$. We again rotate the coordinates with the $\vU_t$ matrix we introduced in Appendix \ref{sec:notation}. Under this coordinate systems we know that $\tpai{{\langle t\rangle, i}, t+1} = \tpbi{{\langle t\rangle, i}, t+1} =0$ for every $i \geq 3$. Hence, we will prove the boundedness of $\tpai{{\langle t\rangle, 1}, t+1}, \tpai{{\langle t\rangle, 2}, t+1}, \tpbi{{\langle t\rangle, 1}, t+1}$, and $\tpbi{{\langle t\rangle, 2}, t+1}$. We start with bounding $\tpai{{\langle t\rangle, 2}, t+1}$ and $\tpbi{{\langle t\rangle, 2}, t+1}$. According to \eqref{eq:tildeiterationab2} we have
\begin{eqnarray}
\tpai{{\langle t\rangle, 2}, t+1}&=&\frac{\tpqi{{\langle t\rangle, 2}, t+1}(1-2\pp{t+1})}{2\pp{t+1}(1-\pp{t+1})}\ \leq\ \frac{\tpqi{{\langle t\rangle, 2}, t+1}}{2\pp{t+1}}\ \overset{(b)}{=}\ \frac{\ptt{2, t} S(\tpai{1, t}, \|\pb{t}\|, \ptt{1, t})}{2\pp{t+1}},  \nonumber \\
\tpbi{{\langle t\rangle, 2}, t+1}&=&\frac{\tpqi{{\langle t\rangle, 2}, t+1} }{2\pp{t+1}(1-\pp{t+1})}\ \overset{(c)}{\leq}\ \frac{\tpqi{{\langle t\rangle, 2}, t+1} }{\pp{t+1}}\ \overset{(d)}{=}\ \frac{  \ptt{2, t} S(\tpai{1, t}, \|\pb{t}\|, \ptt{1, t})}{\pp{t+1}}, \label{eq:boundinga2b2}
\end{eqnarray}
where Equalities (b) and (d) are due to \eqref{eq:q2thetaS}. To obtain Inequality (c) we used the following chain of arguments: According to Lemma \ref{lemma:sign}, $\pp{t}\leq 0.5$ for every $t$. Hence, $2(1-\pp{t+1})\geq 1$.

With exactly same calculation showed in \eqref{eq:PbiggerthanS}, we have
$$\pp{t+1}\ =\ \funcP(\tpai{i, t}, \|\pb{t}\|, \ptt{1, t})\ >\ S(\tpai{i, t}, \|\pb{t}\|, \ptt{1, t}).$$ Together with \eqref{eq:boundinga2b2}, we obtain
\begin{eqnarray}
|\tpai{{\langle t\rangle, 2}, t+1}|&\leq& \frac{|\ptt{2, t}|}{2}\ \leq\ \frac{\|\stt\|}{2}.\n\\
|\tpbi{{\langle t\rangle, 2}, t+1}|&\leq& \frac{|\ptt{2, t}|}{2(1-\pp{t+1})}\ \leq\ \|\stt\|.\label{eq:upperboundeq3}
\end{eqnarray}
Hence the only remaining step is to bound $\tpai{{\langle t\rangle, 1}, t+1}$ and $\tpbi{{\langle t\rangle, 1}, t+1}$. To bound $\tpai{{\langle t\rangle, 1}, t+1}$ we consider two separate cases.

\begin{enumerate}
\item $\tpai{1, t}\geq\ptt{1, t}\geq 0, t\geq 0$:   First note that according to \eqref{eq:tildeiterationab2}, we have
\begin{eqnarray}\label{eq:boundinga1upper}
0\leq \tpai{{\langle t\rangle, 1}, t+1}&=&\frac{\tpqi{{\langle t\rangle, 1}, t+1}(1-2\pp{t+1})}{2\pp{t+1}(1-\pp{t+1})}\n\\
&=&\frac{\funcQ(\tpai{1, t}, \|\pb{t}\|, \ptt{1, t})(1-2\funcP(\tpai{1, t}, \|\pb{t}\|, \ptt{1, t}))}{2\funcP(\tpai{1, t}, \|\pb{t}\|, \ptt{1, t})(1-\funcP(\tpai{1, t}, \|\pb{t}\|, \ptt{1, t}))}\n\\
&\leq& \frac{\funcQ(\tpai{1, t}, \|\pb{t}\|, \ptt{1, t})}{2\funcP(\tpai{1, t}, \|\pb{t}\|, \ptt{1, t})}.
\end{eqnarray}
Hence to bound $\tpai{{\langle t\rangle, 1}, t+1}$ we require a bound for $$\frac{\funcQ(\tpai{1, t}, \|\pb{t}\|, \ptt{1, t})}{2\funcP(\tpai{1, t}, \|\pb{t}\|, \ptt{1, t})}.$$
Our next lemma provides such a bound.

\begin{lemma}\label{lem:Q1overPbound}
If $x_a\geq x_\theta\geq 0, x_b\geq 0$, we have
\[
\frac{\funcQ(x_a, x_b, x_\theta)}{2\funcP(x_a, x_b, x_\theta)}\ \leq\ \frac{x_a+\sqrt{\frac{2}{\pi}}}{2}.
\]
\end{lemma}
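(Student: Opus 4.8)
The plan is to read $\funcQ/(2\funcP)$ as half the mean of the probability density proportional to $\w(y-x_a,x_b)\bigl(\tfrac12\phi(y-x_\theta)+\tfrac12\phi(y+x_\theta)\bigr)$, peel off the two mixture components to obtain a one-dimensional statement, and then handle the latter by an integration by parts together with a Mills-ratio estimate. Concretely, by \eqref{eq:Pdef}--\eqref{eq:Qdef} we have $\funcP=\tfrac12(\funcP^{+}+\funcP^{-})$ and $\funcQ=\tfrac12(\funcQ^{+}+\funcQ^{-})$, where $\funcP^{\sigma}=\E[\w(Y_\sigma-x_a,x_b)]$ and $\funcQ^{\sigma}=\E[\w(Y_\sigma-x_a,x_b)\,Y_\sigma]$ for $Y_\sigma\sim N(\sigma x_\theta,1)$, $\sigma\in\{-1,1\}$. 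Since each $\funcP^{\sigma}>0$, the bound $\funcQ\le(x_a+\sqrt{2/\pi})\funcP$ follows once $\funcQ^{\sigma}\le(x_a+\sqrt{2/\pi})\funcP^{\sigma}$ for both $\sigma$, which after substituting $v=y-x_a$ reads
\[
  \E\bigl[\w(V,x_b)\,V\bigr]\ \le\ \sqrt{2/\pi}\,\E\bigl[\w(V,x_b)\bigr],\qquad V\sim N(m,1),\quad m:=\sigma x_\theta-x_a .
\]
The assumption $x_a\ge x_\theta\ge0$ is used exactly here, to guarantee $m\le0$. (The same one-dimensional statement also arises, via \eqref{eq:Q1SRR1}, from bounding $R(x_b,x_a-x_\theta)$ and $R(x_b,x_a+x_\theta)$ against $\funcP$ separately.)

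For the displayed inequality I would rewrite the target as $\E[\w(V,x_b)(\sqrt{2/\pi}-V)]\ge0$. Using $(\sqrt{2/\pi}-v)\phi(v-m)=(\sqrt{2/\pi}-m)\phi(v-m)+\frac{d}{dv}\phi(v-m)$ and integrating the last term by parts (boundary terms vanish, since $\w$ is bounded and $\phi$ decays), one gets
\[
  \E\bigl[\w(V,x_b)(\sqrt{2/\pi}-V)\bigr]\ =\ (\sqrt{2/\pi}-m)\,\E\bigl[\w(V,x_b)\bigr]-\E\bigl[\w'(V,x_b)\bigr],
\]
where $\w'(\cdot,x_b)$ is the derivative of $\w(\cdot,x_b)$. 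When $x_b>0$ (the case $x_b=0$ is trivial, as then $\w\equiv\tfrac12$ and $\w'\equiv0$), $\w(\cdot,x_b)$ is the cdf and $\w'(\cdot,x_b)$ the density of a symmetric (logistic) random variable $L$; so taking $Z\sim N(0,1)$ independent of $L$ and $W:=L+Z$, one checks that $\E[\w'(V,x_b)]=q_W(m)$ and $\E[\w(V,x_b)]=Q_W(m)$, with $q_W,Q_W$ the density and cdf of $W$. Thus everything reduces to the reversed-hazard bound $q_W(m)\le(\sqrt{2/\pi}-m)\,Q_W(m)$ for $m\le0$.

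To prove that bound, differentiate $q_W=f_L*\phi$ to get $(\log q_W)'(m)=\E[L\mid W=m]-m$. The joint density $f_L(\ell)\phi(w-\ell)$ is log-supermodular in $(\ell,w)$ (its mixed second log-derivative equals $1$), hence $w\mapsto\E[L\mid W=w]$ is nondecreasing; together with $\E[L\mid W=0]=0$ (symmetry of $L$ and $Z$) this gives $\E[L\mid W=m]\le0$ for $m\le0$, so $(\log q_W)'(m)\le-m=(\log\phi)'(m)$ there. Therefore $q_W/\phi$ is nonincreasing on $(-\infty,0]$, and averaging this monotone ratio against $\phi$ over $(-\infty,m]$ yields $Q_W(m)\ge\frac{q_W(m)}{\phi(m)}\Phi(m)$, i.e.\ $q_W(m)\le\frac{\phi(m)}{\Phi(m)}\,Q_W(m)$. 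It now suffices to prove the one-dimensional inequality $\phi(m)/\Phi(m)\le\sqrt{2/\pi}-m$ for all $m\le0$.

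This last inequality is the main obstacle; it is sharp at $m=0$ and is really a Mills-ratio estimate (equivalently $\phi(a)/\bar\Phi(a)\le a+\sqrt{2/\pi}$ for $a\ge0$, with $\bar\Phi:=1-\Phi$). I would prove it by ODE comparison: the Mills ratio $\rho(a)=\bar\Phi(a)/\phi(a)$ satisfies $\rho'(a)=a\rho(a)-1$, while $g(a):=\tfrac12(\sqrt{a^2+4}-a)$ satisfies $g'(a)\ge a g(a)-1$ --- after clearing the radical and squaring, this reduces to $(a^2{+}1)^2(a^2{+}4)\ge a^2(a^2{+}3)^2$, i.e.\ to $4\ge0$. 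Hence $(\rho-g)e^{-a^2/2}$ is nonincreasing; being positive at $a=0$ ($\rho(0)=\sqrt{\pi/2}>1=g(0)$) and tending to $0$ as $a\to\infty$, it stays nonnegative, so $\rho\ge g$, i.e.\ $h(a):=\phi(a)/\bar\Phi(a)\le 1/g(a)=u(a):=\tfrac12(a+\sqrt{a^2+4})$. Finally, since $u(a)-a=1/u(a)$ and $0<h(a)-a\le u(a)-a$, the function $F(a):=h(a)-a$ satisfies $F'(a)=h(a)F(a)-1\le u(a)\cdot u(a)^{-1}-1=0$, so $F$ is nonincreasing and $F(a)\le F(0)=\sqrt{2/\pi}$. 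All the intervening steps are routine bookkeeping with Gaussian integrals; the two substantive ingredients are the monotonicity of the posterior mean $\E[L\mid W=w]$ and this Mills-ratio bound.
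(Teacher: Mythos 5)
Your proof is correct, but it takes a genuinely different route from the paper's. The paper's argument (Appendix~\ref{sec:proofoflemmaQ1overPbound}) centers at $x_a$, writes $\funcQ = x_a\funcP+\int\w(y,x_b)\,y\,\pdf{, y+x_a, x_\theta}\dif y$, and then simply discards the weight (bounding $\w(y,x_b)\,y$ by $y$ for $y\ge0$ and by $0$ for $y<0$); the remaining truncated-Gaussian means evaluate to $\tfrac12\bigl(W(x_a+x_\theta)+W(x_a-x_\theta)\bigr)$ with $W(x)=\phi(x)-x(1-\Phi(x))$, which the Mills-ratio bound of Lemma~\ref{lemma:gaussian} turns into $\sqrt{2/\pi}$ times exactly the lower bound on $\funcP$ supplied by Lemma~\ref{lem:Plowerbound}. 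You instead keep the weight, split over the two mixture components, integrate by parts, and recognize $\E[\w(V,x_b)]$ and $\E[\w'(V,x_b)]$ as the cdf and density of a logistic-plus-Gaussian convolution $W$, reducing the claim to the reversed-hazard bound $q_W(m)\le(\sqrt{2/\pi}-m)Q_W(m)$ for $m\le0$, which you transfer to the pure Gaussian via monotonicity of the posterior mean $\E[L\mid W=w]$ (log-supermodularity). Both chains bottom out in the same sharp inequality $\phi(a)/(1-\Phi(a))\le a+\sqrt{2/\pi}$; you prove it by a two-stage ODE comparison ($\rho\ge g$ for the Mills ratio, then $F'\le0$) where the paper does a direct sign analysis of $r(x)=(x+\sqrt{2/\pi})(1-\Phi(x))-\phi(x)$. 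Your version is self-contained in that it never needs the explicit $\funcP$ lower bound of Lemma~\ref{lem:Plowerbound}, and the reversed-hazard/MLR step is a reusable structural fact about such convolutions; the price is that it is considerably longer than the paper's reduction, which gets away with the crude bound $\w\le1$ precisely because that $\funcP$ lower bound is already on hand.
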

We prove this lemma in the Appendix \ref{sec:proofoflemmaQ1overPbound}. Combining \eqref{eq:boundinga1upper} and Lemma \ref{lem:Q1overPbound} proves
\begin{eqnarray}
(\tpai{{\langle t\rangle, 1}, t+1})^2&\leq&(\frac{\tpai{1, t}+\sqrt{\frac{2}{\pi}}}{2})^2\ \leq\ \frac{\|\pa{t}\|^2+\frac{2}{\pi}}{2}.\n
\end{eqnarray}
Therefore, combined with \eqref{eq:upperboundeq3}, we have
\begin{eqnarray}
\|\pa{t+1}\|^2&=&\|\tpa{t+1}\|\ =\ (\tpai{{\langle t\rangle, 1}, t+1})^2+(\tpai{{\langle t\rangle, 2}, t+1})^2\n\\
&\leq&\frac{\|\pa{t}\|^2+\frac{2}{\pi}}{2}+\frac{\|\stt\|^2}{4}\n\\
&\leq& \max\{(\|\pa{t}\|)^2, \frac{2}{\pi}+\frac{\|\stt\|^2}{2}\}\label{eq:upperboundeq4}.
\end{eqnarray}
\item $\tpai{1, t}<\ptt{1, t}$: Again according to \eqref{eq:tildeiterationab2} we have
\begin{eqnarray}\label{eq:boundinga1upper2}
0\leq \tpai{{\langle t\rangle, 1}, t+1}&=&\frac{\tpqi{{\langle t\rangle, 1}, t+1}(1-2\pp{t+1})}{2\pp{t+1}(1-\pp{t+1})}\n\\
&=&\frac{\funcQ(\tpai{1, t}, \|\pb{t}\|, \ptt{1, t})(1-2\funcP(\tpai{1, t}, \|\pb{t}\|, \ptt{1, t}))}{2\funcP(\tpai{1, t}, \|\pb{t}\|, \ptt{1, t})(1-\funcP(\tpai{1, t}, \|\pb{t}\|, \ptt{1, t}))}.\n
\end{eqnarray}
We know from Lemma \ref{lemma:sign} that $\pp{t+1}\leq 0.5$. In the range $0<\pp{t+1}\leq 0.5$,
$$\frac{(1-2\pp{t+1})}{2\pp{t+1}(1-\pp{t+1})}$$
is a positive decreasing function of $\pp{t+1}$. Hence, if we a lower bound for $\funcP$ may lead to an upper bound for $\tpai{{\langle t\rangle, 1}, t+1}$. The following lemma provides such an upper bound.

\begin{lemma}\label{lem:Plowerbound} If  $x_a\geq x_\theta\geq 0, x_b\geq 0$, we have
\[
\funcP(x_a, x_b, x_\theta)\geq \frac{1}{2}(1-\Phi(x_{a}-x_{\theta}))+\frac{1}{2}(1-\Phi(x_{a}+x_{\theta})).
\]
If $0\leq x_a<x_\theta, x_b\geq 0$, we have
\[
\funcP(x_{a}, x_{b}, x_{\theta})\geq\frac{1}{4}.\]
where $\Phi(x)$ is the CDF for a standard Gaussian distribution.
\end{lemma}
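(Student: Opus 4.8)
The plan is to reduce the estimate to two elementary facts about Gaussian expectations of the logistic weight $\w$. Starting from the definition \eqref{eq:Pdef} and splitting the mixture density,
\[
  \funcP(x_a,x_b,x_\theta)
  \ = \
  \frac12\int \w(y-x_a,x_b)\,\phi(y-x_\theta)\,\dif y
  +
  \frac12\int \w(y-x_a,x_b)\,\phi(y+x_\theta)\,\dif y .
\]
After the substitutions $y = x_\theta + z$ in the first integral and $y = -x_\theta + z$ in the second, this becomes
\[
  \funcP(x_a,x_b,x_\theta)
  \ = \
  \tfrac12\,\E\,\w\Parens{Z-(x_a-x_\theta),\,x_b}
  +
  \tfrac12\,\E\,\w\Parens{Z-(x_a+x_\theta),\,x_b},
  \qquad Z\sim N(0,1) ,
\]
so everything comes down to lower bounding $\E\,\w(Z-c,x_b)$ as a function of the shift $c$, applied to the two shifts $c = x_a-x_\theta$ and $c = x_a+x_\theta$.

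I would then establish, for every $x_b \ge 0$: \textbf{(i)} if $c \ge 0$, then $\E\,\w(Z-c,x_b) \ge 1-\Phi(c)$; and \textbf{(ii)} if $c \le 0$, then $\E\,\w(Z-c,x_b) \ge \tfrac12$. Granting these, the lemma drops out by inspecting signs. If $x_a \ge x_\theta \ge 0$, both shifts $x_a-x_\theta$ and $x_a+x_\theta$ are nonnegative, so applying \textbf{(i)} to each term yields the first claimed bound. If $0 \le x_a < x_\theta$, the first shift $x_a-x_\theta$ is negative, so \textbf{(ii)} gives $\E\,\w(Z-(x_a-x_\theta),x_b) \ge \tfrac12$, while the second term is nonnegative, yielding $\funcP \ge \tfrac14$. (The degenerate case $x_b = 0$, where $\w \equiv \tfrac12$, is covered by the same two inequalities.)

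For \textbf{(i)}, using $\w(z,x_b) = 1 - \w(-z,x_b)$ and the change of variable $u = y-c$, a short manipulation rewrites the gap as
\[
  \E\,\w(Z-c,x_b) - \Parens{1-\Phi(c)}
  \ = \
  \int_{-\infty}^{0} \w(u,x_b)\,\Parens{\phi(u+c) - \phi(u-c)}\,\dif u ,
\]
and for $u<0$, $c \ge 0$ one has $\abs{u+c} \le \abs{u-c}$, hence $\phi(u+c) \ge \phi(u-c)$ because $\phi$ is even and decreasing on $[0,\infty)$; since $\w(u,x_b) \ge 0$ as well, the integrand is nonnegative. For \textbf{(ii)}, writing $c = -c'$ with $c' \ge 0$, using $2\w(y,x_b)-1 = \tanh(yx_b)$, and symmetrizing over $Z \overset{d}{=} -Z$,
\[
  2\,\E\,\w(Z+c',x_b) - 1
  \ = \
  \E\,\tanh\Parens{(Z+c')x_b}
  \ = \
  \tfrac12\,\E\Brackets{\tanh\Parens{(Z+c')x_b} - \tanh\Parens{(Z-c')x_b}}
  \ \ge \ 0 ,
\]
the last inequality because $\tanh$ is increasing and $(Z+c')x_b \ge (Z-c')x_b$ pointwise. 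The main thing to get right is the sign bookkeeping of the two shifts so that the correct one of \textbf{(i)}/\textbf{(ii)} is invoked for each term; the analytic input is only evenness and monotonicity of $\phi$ and monotonicity of $\tanh$, so I do not anticipate a genuine obstacle here.
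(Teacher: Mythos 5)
Your proof is correct, and it takes a route that differs from the paper's in a worthwhile way. The paper works with the mixture density directly: it folds the integral onto $y\geq 0$ via $\w(y,x_b)+\w(-y,x_b)=1$, obtains $\funcP=\int_{y\geq0}\pdf{,y+x_a,x_\theta}\dif y+\int_{y\geq0}\w(-y,x_b)\bigl(\pdf{,y-x_a,x_\theta}-\pdf{,y+x_a,x_\theta}\bigr)\dif y$, and checks the density difference is nonnegative when $x_a\geq x_\theta$; for $x_a<x_\theta$ it instead computes $\partial\funcP/\partial x_a\leq 0$ and reduces to the boundary case $x_a=x_\theta$, where the first bound gives $\funcP\geq\tfrac14+\tfrac12(1-\Phi(2x_\theta))$. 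You split the mixture into its two Gaussian components up front and isolate two reusable single-Gaussian facts, so your first regime is essentially the paper's computation done componentwise (the inequality $\phi(u+c)\geq\phi(u-c)$ for $u<0$, $c\geq0$ plays the role of the paper's density comparison), while your second regime replaces the monotonicity-in-$x_a$ argument with a direct $\tanh$-symmetrization showing $\E\,\w(Z-c,x_b)\geq\tfrac12$ for $c\leq0$. Your version is more modular and avoids the derivative computation; the paper's monotonicity of $\funcP$ in $x_a$ is not wasted, though, since it is reused elsewhere (e.g., in bounding the iterates in Lemma~\ref{lemma:upperboundEMestimates}). All the steps you flag as needing care (the sign bookkeeping of the shifts, the identity $2\w(y,x_b)-1=\tanh(yx_b)$, and the degenerate case $x_b=0$) check out.
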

The proof of this lemma is presented in the Appendix \ref{sec:proofoflemmaPlowerbound}. By plugging $\funcP(\tpai{1, t}, \|\pb{t}\|, \ptt{1, t}) = 0.25$ in \eqref{eq:boundinga1upper2}, we have
\begin{eqnarray}
0\leq \tpai{{\langle t\rangle, 1}, t+1}&=&\frac{\tpqi{{\langle t\rangle, 1}, t+1}(1-2\pp{t+1})}{2\pp{t+1}(1-\pp{t+1})}\ \leq\ \frac{4}{3}\funcQ(\tpai{1, t}, \|\pb{t}\|, \ptt{1, t})\n\\
&\leq& \frac{4}{3} \int |y| \pdf{, y, \ptt{1, t}} \dif y\ \leq\ \frac{4}{3} \sqrt{\int y^2
\pdf{, y, \ptt{1, t}} \dif y }\n\\
&=&\frac{4}{3}\sqrt{1+(\ptt{1, t})^2}\ \leq\ \frac{4}{3}\sqrt{1+\|\stt\|^2}.\label{eq:upperboundeq5}
\end{eqnarray}
\end{enumerate}

Combining this with \eqref{eq:upperboundeq3}, we obtain
\begin{eqnarray}
\|\pa{t+1}\|^2&\leq& \frac{16}{9}(1+\|\stt\|^2)+\frac{\|\stt\|^2}{4}\n\\
&=& \frac{16}{9}+\frac{73}{36}\|\stt\|^2\label{eq:upperboundeq6}
\end{eqnarray}
Therefore combining \eqref{eq:upperboundeq4} and \eqref{eq:upperboundeq6}, we have
\begin{eqnarray}
\|\pa{t}\|^2\leq \max\{\|\pa{0}\|^2, \frac{2}{\pi}+\frac{\|\stt\|^2}{2}, \frac{16}{9}+\frac{73}{36}\|\stt\|^2\}\ =\ c_{U, 1}^2<\infty, \forall t\geq 0\n
\end{eqnarray}
So far we have bounded $\{\|\pa{t}\|\}_{t\geq 0}$ by $c_{U, 1}$. Also, in \eqref{eq:upperboundeq3} we obtained an upper bound for $\tpai{{\langle t\rangle, 1}, t+1}$. Our next step is to obtain an upper bound for $\tpbi{{\langle t\rangle, 1}, t+1}$. First note that
\[
 \tpbi{{\langle t\rangle, 1}, t+1}=\frac{\tpqi{{\langle t\rangle, 1}, t+1}}{2\pp{t+1}(1-\pp{t+1})}.
 \]
 Hence, we have to find an upper bound for $\funcQ$ and a lower bound for $\funcP$. Note that
\begin{eqnarray}
\frac{\partial \funcP(\tpai{1, t}, \|\pb{t}\|, \ptt{1, t})}{\partial
\tpai{1, t}}=-\int\frac{2\|\pb{t}\|}{(e^{y\|\pb{t}\|-\tpai{1, t}\|\pb{t}\|}+e^{-y\|\pb{t}\|+\tpai{1, t}\|\pb{t}\|})^2}\pdf{, y, \ptt{1, t}}
\dif y\leq0.\n
\end{eqnarray}
Therefore $\pp{t+1}=\funcP(\tpai{1, t}, \|\pb{t}\|, \ptt{1, t})$ is a decreasing function of $\tpai{1, t}$. Since $\tpai{1, t}\leq \|\pa{t}\|\leq c_{U, 1}$, with Lemma \ref{lem:Plowerbound}, we have $\forall t\geq 0$
\begin{eqnarray}
\pp{t+1}&=&\funcP(\tpai{1, t}, \|\pb{t}\|, \ptt{1, t}) \geq \funcP(c_{U, 1}, \|\pb{t}\|, \ptt{1, t})\n\\
&\geq&\min\{\frac{1}{4}, \frac{1}{2}(1-\Phi(c_{U, 1}-\ptt{1, t}))+\frac{1}{2}(1-\Phi(c_{U, 1}+\ptt{1, t})) \}\n\\
&\geq&\frac{1}{4}(1-\Phi(c_{U, 1}+\|\stt\|)) \triangleq c_{U, 2}>0.\n
\end{eqnarray}
Note that in \eqref{eq:upperboundeq5}, we derived an upper bound for $\funcQ(\tpai{1, t}, \|\pb{t}\|, \ptt{1, t})$. Therefore, we have
\begin{eqnarray}
0&\leq& \tpbi{{\langle t\rangle, 1}, t+1}\ =\ \frac{\tpqi{{\langle t\rangle, 1}, t+1}}{2\pp{t+1}(1-\pp{t+1})}\n\\
 &\leq&\frac{1}{2c_{U, 2}(1-c_{U, 2})}\funcQ(\tpai{1, t}, \|\pb{t}\|, \ptt{1, t})\ \leq\  \frac{1}{2c_{U, 2}(1-c_{U, 2})}\sqrt{1+\|\stt\|^2}\n
\end{eqnarray}
Thus with \eqref{eq:upperboundeq3}, we have
\begin{eqnarray}
\|\pb{t}\|^2\ \leq\ \max\{\|\pb{0}\|^2, \|\stt\|^2+\frac{1}{4c_{U, 2}^2(1-c_{U, 2})^2}(1+\|\stt\|^2)\}\ =\ c_{U, 3}^2\ <\ \infty, \forall t\geq 0.\n
\end{eqnarray}
This completes the proof of Lemma \ref{lemma:upperboundEMestimates}.

\subsubsection{Proof of  Lemma \ref{lem:lowerboundbt}}\label{sec:proof:lowerboundbt}

Without loss of generality we only consider the case $\langle \pb{t}, \stt \rangle >0$ and $\langle \pa{t}, \pb{t} \rangle\geq 0$. Before we start the proof we remind the reader a couple of facts that we have proved in Lemma \ref{lem:planarestimates}
and Lemma \ref{lemma:upperboundEMestimates}.
\begin{enumerate}
\item $\sup_t \|\pa{t}\| \leq c_{U, 1}$ and $\sup_t \|\pb{t}\| \leq c_{U, 3}$.
\item $\pb{1}, \pb{2}, \ldots, \pb{t}, \ldots$ and $\stt$ are all on the same two-dimensional plane:
\item The angle $\pbeta{t} \triangleq \arccos \left( \frac{ \langle \pb{t}, \stt \rangle}{\|\pb{t}\|\|\stt\|} \right)$ is non-increasing in terms of $t$.
\item $\pa{t}$ is in the same direction as $\pb{t}$ for all $ t\geq 1$.
\end{enumerate}

We use the notations we summarized in Appendix \ref{sec:notation}. Similar to that section we consider the $\vU_t$ transformed vectors $\tpa{t}$, $\tpb{t}$.
Note that we have
\begin{eqnarray}\label{eq:lbbt}
\|\pb{t+1}\|& =& \|\tpb{t+1}\|\ \geq\ \tpbi{{\langle t\rangle, 1}, t+1}\n\\
 &=& \frac{\funcQ (\tpai{1, t}, \| \tpb{t}\|, \ptt{1, t})}{ 2 \funcP(\tpai{1, t}, \| \pb{t}\|, \ptt{1, t}) (1- \funcP(\tpai{1, t}, \| \pb{t}\|, \ptt{1, t}))}\n\\
 &\geq& 2 \funcQ (\tpai{1, t}, \| \pb{t}\|, \ptt{1, t}),
\end{eqnarray}
where $\funcQ$ and $\funcP$ are defined in \eqref{eq:Qdef} and \eqref{eq:Pdef}. Hence, the goal of the rest of the proof is to show that:
\[
2 \funcQ (\tpai{1, t}, \| \pb{t}\|, \ptt{1, t}) \geq \min \{\| \pb{t}\|, c_l \}.
\]
The main idea of this part is as follows.
First note that
\[
\left. \frac{\partial \funcQ (x_a, x_b, x_\theta)}{\partial x_b} \right|_{x_b=0} = \frac{1+ |x_\theta|^2}{2}.
\]
Hence, intuitively speaking we can argue that there exists a neighborhood of $x_b=0$ on which the derivative is always larger than $0.5$. Hence, when $\|\pb{t}\|$ belongs to this neighborhood, $\| \pb{t+1}\|$ is larger than $\|\pb{t}\|$ and cannot go to zero.  Next lemma justifies this claim.

\begin{lemma}\label{lem:lowerboundderivative}
For $\ptt{1, 0}>0$ there exists a value $\delta_b$ only depending on $c_{U, 1}$, $\ptt{1, 0}$ and $\|\stt\|$ such that
\[
\inf_{0 \leq x_a \leq c_{U, 1}, 0 \leq x_b \leq \delta_b, \ptt{1, 0} \leq x_\theta \leq \|\stt\| } \frac{\partial \funcQ (x_a, x_b, x_\theta)}{\partial x_b} \geq 1/2.
\]
\end{lemma}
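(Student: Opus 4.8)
\textbf{Proof plan for Lemma~\ref{lem:lowerboundderivative}.}
The plan is to compute $\partial_{x_b}\funcQ$ in closed form, evaluate it at $x_b=0$ --- where the symmetry of the mixture density makes it exactly $(1+x_\theta^2)/2$, strictly above $1/2$ --- and then absorb the deviation for small $x_b$ into the resulting slack, uniformly over the compact ranges of $x_a$ and $x_\theta$. First I would differentiate $\funcQ(x_a,x_b,x_\theta)=\int \w(y-x_a,x_b)\,y\,\pdf{, y, x_\theta}\,\dif y$ under the integral sign. Writing $\w(z,x_b)=(1+e^{-2zx_b})^{-1}$ gives $\partial_{x_b}\w(y-x_a,x_b)=2(y-x_a)\,\w(y-x_a,x_b)\bigl(1-\w(y-x_a,x_b)\bigr)$, whose modulus is at most $\tfrac12|y-x_a|$ since $\w(1-\w)\le\tfrac14$. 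Hence the $x_b$-derivative of the integrand is dominated by $\tfrac12|y-x_a|\,|y|\,\pdf{, y, x_\theta}$, an integrable function that dominates uniformly for $(x_a,x_b,x_\theta)$ in any bounded box; dominated convergence legitimizes the differentiation and shows that
\[
  H(x_a,x_b,x_\theta)\ \triangleq\ \frac{\partial\funcQ(x_a,x_b,x_\theta)}{\partial x_b}\ =\ \int 2(y-x_a)\,\w(y-x_a,x_b)\bigl(1-\w(y-x_a,x_b)\bigr)\,y\,\pdf{, y, x_\theta}\,\dif y
\]
is continuous in all three arguments.

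Second, evaluating at $x_b=0$, where $\w(\cdot,0)\equiv\tfrac12$, and using that $\pdf{, y, x_\theta}$ is the density of a mean-zero variable with second moment $1+x_\theta^2$,
\[
  H(x_a,0,x_\theta)\ =\ \frac12\int (y-x_a)\,y\,\pdf{, y, x_\theta}\,\dif y\ =\ \frac{1+x_\theta^2}{2}\ \geq\ \frac12+\frac{(\ptt{1, 0})^2}{2}
\]
for all $x_\theta\geq\ptt{1, 0}$; since $\ptt{1, 0}>0$ this exceeds $1/2$ by the fixed amount $(\ptt{1, 0})^2/2$, independently of $x_a$.

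Third, I would bound the remainder. From $\w(z,x_b)-\tfrac12=\tfrac12\tanh(zx_b)$ and $|\tanh u|\le|u|$ we get $\tfrac14-\w(z,x_b)(1-\w(z,x_b))=(\w(z,x_b)-\tfrac12)^2\le\tfrac14(zx_b)^2$, hence
\[
  \bigl|H(x_a,x_b,x_\theta)-H(x_a,0,x_\theta)\bigr|\ \leq\ \frac{x_b^2}{2}\int |y-x_a|^3\,|y|\,\pdf{, y, x_\theta}\,\dif y\ \leq\ \frac{C_0}{2}\,x_b^2 ,
\]
where, bounding $|y-x_a|^3\leq(|y|+c_{U, 1})^3$ and using that the moments of $\pdf{, y, x_\theta}$ are uniformly bounded for $0\leq x_\theta\leq\norm{\stt}$, the constant $C_0$ is finite and depends only on $c_{U, 1}$ and $\norm{\stt}$. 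Combining the three displays, for all $0\leq x_a\leq c_{U, 1}$ and $\ptt{1, 0}\leq x_\theta\leq\norm{\stt}$,
\[
  H(x_a,x_b,x_\theta)\ \geq\ \frac12+\frac{(\ptt{1, 0})^2}{2}-\frac{C_0}{2}\,x_b^2\ \geq\ \frac12
  \qquad\text{whenever}\quad x_b\ \leq\ \delta_b\ \triangleq\ \min\Bigl\{1,\ \frac{\ptt{1, 0}}{\sqrt{C_0}}\Bigr\},
\]
and this $\delta_b$ depends only on $c_{U, 1}$, $\ptt{1, 0}$, and $\norm{\stt}$, which is the assertion.

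There is no deep obstacle here; the points that require care are (i) the uniform domination that justifies differentiating under the integral and gives continuity of $H$, and (ii) honest bookkeeping of the moment bound $C_0$ so that the claimed dependence of $\delta_b$ is correct. (A non-quantitative alternative for the last step: $H$ is continuous on the compact box $[0,c_{U, 1}]\times[0,1]\times[\ptt{1, 0},\norm{\stt}]$ and satisfies $H(\cdot,0,\cdot)>1/2$ there, so uniform continuity yields some admissible $\delta_b$, albeit without a closed form.)
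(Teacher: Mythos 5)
Your proposal is correct, and it shares the paper's first step exactly: both compute $\partial_{x_b}\funcQ$ at $x_b=0$ and find $\tfrac12(1+x_\theta^2)$, which exceeds $\tfrac12$ by $\tfrac12(\ptt{1,0})^2$ on the given range of $x_\theta$. Where you diverge is in how the strict inequality at $x_b=0$ is propagated to a neighborhood $0\le x_b\le\delta_b$. The paper argues by contradiction: if no such $\delta_b$ existed, one could extract (by compactness of $[0,c_{U,1}]\times[\ptt{1,0},\norm{\stt}]$) a convergent subsequence of counterexamples with $x_b\to0$, and continuity of the derivative would force $\tfrac12(1+(\theta^\infty)^2)\le\tfrac12$, a contradiction. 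This is exactly the ``non-quantitative alternative'' you mention in your closing parenthetical, so you have in effect subsumed the paper's argument. Your main route is instead quantitative: the identity $\w(1-\w)-\tfrac14=-(\w-\tfrac12)^2$ together with $|\tanh u|\le|u|$ gives a second-order-in-$x_b$ bound on the deviation of the derivative from its value at $x_b=0$, with a constant $C_0$ controlled by uniformly bounded moments of $\pdf{,y,x_\theta}$; this yields an explicit admissible $\delta_b=\min\{1,\ptt{1,0}/\sqrt{C_0}\}$. What your approach buys is an effective constant and no appeal to sequential compactness; what the paper's buys is brevity (it skips the domination argument for differentiating under the integral, which you rightly flag as the point needing care). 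Both are valid, and the explicit $\delta_b$ would in fact be the more useful form if one wanted to trace constants through the downstream lower bound on $\norm{\pb{t}}$ in Lemma~\ref{lem:lowerboundbt}.
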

We present the proof of this result in the Appendix \ref{sec:proofoflemmalowerboundderivative}. We remind the reader that according to Lemma \ref{lemma:upperboundEMestimates}, $\|\pa{t}\| \leq c_{U, 1}$. Furthermore, since the angle $\pbeta{t}$ is a non increasing sequence we have $\ptt{1, t}\geq \ptt{1, 0}.$
Suppose that $\|\pb{t}\|\leq \delta_b$. Then from \eqref{eq:lbbt} we know that $\|\pb{t+1}\| \geq 2\funcQ (\tpai{1, t}, \|\pb{t}\|, \ptt{1, t})$. Also from the mean value theorem we have:
\[
|\funcQ (\tpai{1, t}, \|\pb{t}\|, \ptt{1, t}) - \funcQ(\tpai{1, t}, 0, \ptt{1, t})|\ =\  \frac{\partial \funcQ(\tpai{1, t}, x_{b}, \ptt{1, t})}{\partial x_b}|_{x_b=\xi} \|\pb{t}\|\ \geq\ \frac{1}{2}\|\pb{t}\|,
\]
where $\xi\in [0, \|\pb{t}\|]$ and to obtain the last inequality we used Lemma \ref{lem:lowerboundderivative} and the fact that $\|\pb{t}\| \leq \delta_b$.

So far we have proved that if $\|\pb{t}\|\leq \delta_b$, then $\|\pb{t+1}\| \geq \|\pb{t}\|$. But, we have not ruled out the possibility of the situation in which $\|\pb{t}\| \geq \delta_b$, but $\|\pb{t+1}\|$ is close to zero. That requires a simple continuity argument. Note that since $\funcQ$ is a continuous function of all its variables, its infimum over a compact set is achieved at certain point. Since, the value of $\funcQ(x_1, x_b, x_\theta)$ is only zero when $x_b = 0$, we conclude that the infimum is not zero. Hence, we conclude that
\[
c_l\ \triangleq\ \inf_{0 \leq x_a \leq c_{U, 1}, \delta_b \leq x_b \leq c_{U, 3}, \ptt{1, 0} \leq x_\theta \leq \|\theta\| } 2\funcQ (x_a, x_b, x_\theta)\ >\ 0.
\]
Hence, we have if $\|\pb{t}\|\geq \delta_b$, then
\[\|\pb{t+1}\|\ \geq\  2 \funcQ (\tpai{1, t}, \| \pb{t}\|, \ptt{1, t})\ \geq\ c_l.\]
Therefore combining the result of $\|\pb{t}\|\leq \delta_b$ and $\|\pb{t}\|\geq \delta_b$, we know Lemma \ref{lem:lowerboundbt} holds.

\subsubsection{Proof of Lemma \ref{lemma:aconvergeupperbound}}\label{proof:lemubkappa_a}
We consider three cases and deal with them separately: (i) $0<x_a<\theta$, (ii) $x_a\geq \theta$, (iii) $x_a=0$.

\begin{itemize}
\item[(i)]  $0<x_a<\theta$: Let $x_1\triangleq\theta+x_a>\theta-x_a\triangleq x_2>0$. We first simplify the left hand side of the inequality. Our main goal in this section is to derive sharp upper bounds for $\funcQ(x_a, x_b, \theta)$ and $1- 2\funcP(x_a, x_b, \theta)$. We start with  $\funcQ(x_a, x_b, \theta)$. Note that
\begin{eqnarray}
\lefteqn{\funcQ(x_a, x_b, \theta)=\int \w(y-x_a, x_b)y\pdf{, y, \theta} \dif y}\n\\
&=&x_a \funcP(x_a, x_b, \theta)+\int(\w(y-x_a, x_b)-\frac{1}{2}+\frac{1}{2})(y-x_a )\pdf{, y, \theta}
\dif y\n\\
&=&x_a \funcP(x_a, x_b, \theta)-\frac{1}{2}x_a +\int(\w(y-x_a, x_b)-\frac{1}{2})(y-x_a )\pdf{, y, \theta}
\dif y\n\\
&=&x_a \funcP(x_a, x_b, \theta)-\frac{1}{2}x_a
+\frac{1}{2}\int(\w(y-x_a, x_b)-\frac{1}{2})(y-x_a)(\phi(y-\theta)+\phi(y+\theta)) \dif y \n\\
&=&x_a \funcP(x_a, x_b, \theta)-\frac{1}{2}x_a +\frac{1}{4}(F(x_b, \theta-x_a)+F(x_b, \theta+x_a))\n\\
&=&x_a \funcP(x_a, x_b, \theta)-\frac{1}{2}x_a +\frac{1}{4}(F(x_b, x_1)+F(x_b, x_2)), \label{eq:Q1upper1}
\end{eqnarray}
where $F$ is defined in \eqref{eq:Fdefinition}.
Next, we find an upper bound for $\frac{1}{2}(F(x_b, x_1)+F(x_b, x_2))$. Note that $\forall x_b\geq 0, x_\theta\geq 0$, we have
\begin{eqnarray}
F(x_b, x_\theta)&=
  &\int\frac{e^{yx_{b}}-e^{-yx_{b}}}{e^{yx_{b}}+e^{-yx_{b}}}y\frac{1}{\sqrt{2\pi}}e^{-(y-x_\theta)
^2/2} \dif y\n\\
&\leq&\int_{0}^{\infty}y\frac{1}{\sqrt{2\pi}}e^{-(y-x_\theta)^2/2} \dif
y+\int_{0}^{\infty}y\frac{1}{\sqrt{2\pi}}e^{-(y+x_\theta)^2/2} \dif y\n\\
&=&x_\theta\int_{-x_\theta}^{x_\theta}\frac{1}{\sqrt{2\pi}}e^{-y^2/2} \dif
y+\int_{-x_\theta}^{\infty}y\frac{1}{\sqrt{2\pi}}e^{-y^2/2} \dif
y+\int_{x_\theta}^{\infty}y\frac{1}{\sqrt{2\pi}}e^{-y^2/2} \dif y\n\\
&\leq&x_\theta(1-2\Phi(-x_\theta))+2\phi(x_\theta)\ \triangleq\ l(x_\theta).\n
\end{eqnarray}
Therefore, if we plug in $x_\theta=x_1$ and $x_\theta=x_2$, we have
\begin{eqnarray}
\frac{1}{2}(F(x_b, x_1)+F(x_b, x_2))\ \leq\ \frac{1}{2}(l(x_1)+l(x_2))\ \leq\ l(x_1), \label{eq:Q1upper2}
\end{eqnarray}
where the last inequality holds since $l(x)$ is an increasing function. This can be proved by taking the derivative of $l(x)$:
\begin{eqnarray}
\frac{\dif l(x_\theta)}{\dif x_\theta}\ =\ 1-2\Phi(-x_\theta)+2x_\theta\phi(x_\theta)-2x_\theta\phi(x_\theta)\ =\ 1-2\Phi(-x_\theta)\ \geq\ 0.\n
\end{eqnarray}
Combining \eqref{eq:Q1upper1} and \eqref{eq:Q1upper2} we obtain
\begin{eqnarray}\label{eq:Q1upper3}
\funcQ(x_a, x_b, \theta) \leq x_a \funcP(x_a, x_b, \theta)-\frac{1}{2}x_a + 2 l(x_1).
\end{eqnarray}

Now we obtain an upper bound for $1-2\funcP(x_a, x_b, \theta)$. Note that,
\begin{eqnarray}
1-2\funcP(x_a, x_b, \theta)&=&\int(\frac{1}{2}-\frac{e^{yx_b}}{e^{yx_b}+e^{-yx_b}})\frac{1}{\sqrt{2\pi}}(e^{-(y+x_a-\theta)^2/2}+e^{-(y+x_a+\theta)^2/2})
  \dif y\n\\
&=&\int(\frac{1}{2}-\frac{e^{yx_b}}{e^{yx_b}+e^{-yx_b}})\frac{1}{\sqrt{2\pi}}(e^{-(y-x_2)^2/2}+e^{-(y+x_1)^2/2})
  \dif y\n\\
&=&\int\frac{e^{-yx_b}-e^{yx_b}}{2(e^{yx_b}+e^{-yx_b})}\frac{1}{\sqrt{2\pi}}(e^{-(y-x_2)^2/2}+e^{-(y+x_1)^2/2})
  \dif y\n\\
&=&\int\frac{e^{yx_b}-e^{-yx_b}}{2(e^{yx_b}+e^{-yx_b})}\frac{1}{\sqrt{2\pi}}(e^{-(y-x_1)^2/2}-e^{-(y-x_2)^2/2})
  \dif y\n\\
&=&K(x_1, x_b)-K(x_2, x_b), \label{eq:defK}
\end{eqnarray}
where $K(x, b)\triangleq
\int\frac{e^{yb}-e^{-yb}}{2(e^{yb}+e^{-yb})}\frac{1}{\sqrt{2\pi}}e^{-(y-x)^2/2} \dif y$. The following lemma proved in the Appendix \ref{sec:proofoflemmaconcavityK} summarizes some of the nice properties of this function, which will be used later in our proof.

\begin{lemma}\label{lem:concavityK}
$K(x, b)$ is a concave, strictly increasing function of $x$. Furthermore, $K(0, x_b) =0$.
\end{lemma}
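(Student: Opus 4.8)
The plan is to pass to a cleaner representation of $K$ and then read off all three assertions from elementary facts about $\tanh$. First I would observe that $\frac{e^{yb}-e^{-yb}}{2(e^{yb}+e^{-yb})}=\tfrac12\tanh(yb)$, so after the change of variable $y=z+x$,
\[
  K(x,b) \ = \ \frac12\int \tanh\bigl(b(z+x)\bigr)\phi(z)\,\dif z \ = \ \frac12\,\E\,\tanh\bigl(b(Z+x)\bigr),\qquad Z\sim N(0,1).
\]
In this form $K(0,b)=\tfrac12\,\E\,\tanh(bZ)=0$ is immediate, since $\tanh$ is odd and $Z$ is symmetric about $0$.

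For strict monotonicity in $x$ I would differentiate under the integral sign, which is legitimate because $\partial_x\tanh(b(z+x))=b\operatorname{sech}^2(b(z+x))$ is bounded (by $|b|$) uniformly in $z$, so dominated convergence applies. This yields $\partial_x K(x,b)=\tfrac{b}{2}\,\E\operatorname{sech}^2\bigl(b(Z+x)\bigr)>0$ for every $b>0$, so $K(\cdot,b)$ is strictly increasing.

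For concavity, differentiating once more (again justified, as $|\operatorname{sech}^2 u\,\tanh u|$ is bounded) gives $\partial_x^2 K(x,b)=-b^2\,\E\,g\bigl(b(Z+x)\bigr)$, where $g(u):=\operatorname{sech}^2(u)\tanh(u)$ is odd with the sign of $u$. Because $K$ is odd in $x$, concavity can hold only on $x\ge 0$, which is exactly where the lemma is invoked (it is applied at the nonnegative points $x_1=\theta+x_a$ and $x_2=\theta-x_a$). So the task reduces to showing $\E\,g\bigl(b(Z+x)\bigr)\ge 0$ for all $x\ge0$, $b>0$. Writing $c:=bx\ge0$ and letting $\psi$ be the (even, unimodal) density of $W:=bZ$, I would symmetrize:
\[
  \E\,g(W+c) \ = \ \int g(u)\,\psi(u-c)\,\dif u \ = \ \int_0^\infty g(u)\bigl(\psi(u-c)-\psi(u+c)\bigr)\,\dif u,
\]
where the reduction to the half-line uses that $g$ is odd and $\psi$ is even. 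For $u>0$ we have $g(u)>0$, and $\psi(u-c)\ge\psi(u+c)$ because $|u-c|\le u+c$ and $\psi$ is non-increasing in $|\cdot|$; hence the integrand is nonnegative, giving $\partial_x^2 K(x,b)\le 0$ on $[0,\infty)$.

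The main obstacle is the concavity step: the integrand of $\partial_x^2 K$ changes sign, so it is not pointwise sign-definite and no direct argument works. The key device is the symmetrization that pairs $u$ with $-u$ to rewrite $\E\,g(W+c)$ as a single integral over $(0,\infty)$ of a product of two nonnegative factors, exploiting that $g=(\tanh)'\cdot\tanh$ is odd together with the symmetry and unimodality of the Gaussian. One must also be careful to state the concavity only on $x\ge0$ — consistent with $K$ being odd in $x$ and with the way the lemma is used downstream.
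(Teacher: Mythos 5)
Your proof is correct and follows essentially the same route as the paper's: both compute the first and second $x$-derivatives of $K$ (the paper by integrating by parts against $\phi(y-x)$, you by shifting variables and differentiating the $\tanh$ factor directly — the resulting integrals coincide), and both establish concavity via the identical symmetrization of the second-derivative integral onto $(0,\infty)$, pairing $u$ with $-u$ and using that the Gaussian density decreases in $|\cdot|$. Your observation that concavity can only hold for $x\ge 0$ is consistent with the paper, whose proof likewise restricts the second-derivative sign argument to $x\ge 0$ and only invokes the lemma at nonnegative arguments.
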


Given \eqref{eq:Q1upper3} and \eqref{eq:defK} we can now prove the claimed upper bound in Lemma \ref{lemma:aconvergeupperbound}. We have

\begin{eqnarray}
\lefteqn{\frac{\funcQ(x_a, x_b, \theta)(1-2\funcP(x_a, x_b, \theta))}{2\funcP(x_a, x_b, \theta)(1-\funcP(x_a, x_b, \theta))} } \nonumber \\
&=&\frac{\{x_a \funcP(x_a, x_b, \theta)-\frac{1}{2}x_a +\frac{1}{4}(F(x_b , x_1)+F(x_b , x_2))\}(1-2\funcP(x_a, x_b, \theta))}{2\funcP(x_a, x_b, \theta)(1-\funcP(x_a, x_b, \theta))}\n\\
&=&x_a+\frac{\frac{1}{2}(F(x_b, x_1)+F(x_b, x_2))(1-2\funcP(x_a, x_b, \theta))-x_a}{4\funcP(x_a, x_b, \theta)(1-\funcP(x_a, x_b, \theta))}\n\\
&=&x_a+\frac{\frac{1}{2}(F(x_b, x_1)+F(x_b, x_2))(K(x_1, x_b)-K(x_2, x_b))-\frac{x_1-x_2}{2}}{4\funcP(x_a, x_b, \theta)(1-\funcP(x_a, x_b, \theta))} \nonumber \\
 &\leq& x_a+\frac{l(x_1)(K(x_1, x_b)-K(x_2, x_b))-\frac{1}{2}(x_1-x_2)}{4\funcP(x_a, x_b, \theta)(1-\funcP(x_a, x_b, \theta))}.\label{eq:ata1}
\end{eqnarray}
It is straightforward to use the concavity of $K(x, x_b)$ in terms of $x$ and prove that the function $\frac{K(x_1, x_b) -K(x_2, x_b)}{x_1-x_2}$ is a decreasing function of $x_2$. Hence, it is maximized at $x_2=0$. Since $K(0, x_b)=0$, proved in Lemma \ref{lem:concavityK}, we have
\begin{equation}\label{eq:LminusKbound}
K(x_1, x_b) - K(x_2, x_b) \leq \frac{K(x_1, x_b)}{x_1} (x_1-x_2).
\end{equation}
Combining \eqref{eq:ata1} and \eqref{eq:LminusKbound} implies:
 \begin{eqnarray}
\frac{\funcQ(x_a, x_b, \theta)(1-2\funcP(x_a, x_b, \theta))}{2\funcP(x_a, x_b, \theta)(1-\funcP(x_a, x_b, \theta))} &\leq& x_a+\frac{(l(x_1)\frac{K(x_1, x_b)}{x_1}-\frac{1}{2})(x_1-x_2)}{4\funcP(x_a, x_b, \theta)(1-\funcP(x_a, x_b, \theta))}\n\\
&=&x_a+\frac{(l(x_1)\frac{K(x_1, x_b)}{x_1}-\frac{1}{2})x_a}{2\funcP(x_a, x_b, \theta)(1-\funcP(x_a, x_b, \theta))}\n\\
&=&(1+\frac{l(x_1)\frac{K(x_1, x_b)}{x_1}-\frac{1}{2}}{2\funcP(x_a, x_b, \theta)(1-\funcP(x_a, x_b, \theta))})x_a.\label{eq:ata2}
\end{eqnarray}
Our next step is to find an upper bound for $(l(x_1)\frac{K(x_1, x_b)}{x_1}-\frac{1}{2})$. Note that
\begin{eqnarray}
K(x_1, x_b)&=&\int\frac{e^{yx_b}-e^{-yx_b}}{2(e^{yx_b}+e^{-yx_b})}\frac{1}{\sqrt{2\pi}}e^{-(y-x_1)^2/2}
  \dif y\n\\
&=&\int_{0}^{\infty}\frac{1}{2}\frac{1}{\sqrt{2\pi}}e^{-(y-x_1)^2/2} \dif
  y-\int_{0}^{\infty}\frac{e^{-yx_b}}{(e^{yx_b}+e^{-yx_b})}\frac{1}{\sqrt{2\pi}}e^{-(y-x_1)^2/2}
  \dif y\n\\
&~&-\int_{0}^{\infty}\frac{1}{2}\frac{1}{\sqrt{2\pi}}e^{-(y+x_1)^2/2} \dif
  y+\int_{0}^{\infty}\frac{e^{-yx_b}}{(e^{yx_b}+e^{-yx_b})}\frac{1}{\sqrt{2\pi}}e^{-(y+x_1)^2/2}
  \dif y\n\\
&\leq&\int_{0}^{x_1}\frac{1}{\sqrt{2\pi}}e^{-y^2/2} \dif y\ =\ \frac{1}{2}-\Phi(-x_1).\n
\end{eqnarray}
Finally to obtain an upper bound  for $\frac{1}{x}l(x)(\frac{1}{2}-\Phi(-x))$ we use the following lemma:
\begin{lemma}\label{lem:lupperbound}
Define $l(x)\triangleq x(1-2\Phi(-x))+2\phi(x)$, then for all $ x>0$, we have
\[
\frac{l(x)(\frac{1}{2}-\Phi(-x))}{x}<\frac{1}{2}.
\]
\end{lemma}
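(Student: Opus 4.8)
The plan is to rewrite the claimed inequality as a single positivity statement about a smooth function of the Gaussian CDF, and then prove that statement by splitting into a "small $x$" regime handled by elementary integral estimates and a "large $x$" regime handled by the standard Mills-ratio bound, choosing the cutoffs so that the two ranges overlap.

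\emph{Reduction.} Using $1-2\Phi(-x)=2\Phi(x)-1$ and $\tfrac12-\Phi(-x)=\Phi(x)-\tfrac12$, write $l(x)=2x\Parens{\Phi(x)-\tfrac12}+2\phi(x)$ and set $u\triangleq\Phi(x)-\tfrac12\in(0,\tfrac12)$ for $x>0$. Since $\tfrac14-u^2=\Parens{\tfrac12-u}\Parens{\tfrac12+u}=\Phi(-x)\Phi(x)$, multiplying the claimed inequality $l(x)\Parens{\Phi(x)-\tfrac12}/x<\tfrac12$ through by $x/2$ and using $l(x)/2=xu+\phi(x)$ turns it into $u\Parens{xu+\phi(x)}<\tfrac x4$, i.e.
\[
  D(x) \ \triangleq\ x\,\Phi(x)\Phi(-x)-\phi(x)\Parens{\Phi(x)-\tfrac12} \ >\ 0
  \qquad\text{for all } x>0 .
\]
So the whole lemma reduces to proving $D(x)>0$ on $(0,\infty)$, which is convenient because $D(0)=0$ and $D(x)\to 0^+$ as $x\to\infty$ (since $\Phi(x)\Phi(-x)\sim\phi(x)/x$ gives $D(x)\sim\tfrac12\phi(x)$), so the inequality is genuinely tight at both ends.

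\emph{Small $x$.} For $t\in(0,x]$ we have $\phi(t)<\phi(0)=1/\sqrt{2\pi}$, hence $u=\int_0^x\phi(t)\,dt<x\phi(0)$; moreover integrating $\phi'(t)=-t\phi(t)$ gives $\phi(x)=\phi(0)-\int_0^x t\phi(t)\,dt$, so
\[
  xu+\phi(x)\ =\ \phi(0)+\int_0^x (x-t)\phi(t)\,dt\ <\ \phi(0)\Parens{1+\tfrac{x^2}{2}} .
\]
Multiplying the two displayed bounds, $u\Parens{xu+\phi(x)}<x\phi(0)^2\Parens{1+\tfrac{x^2}{2}}$, and this is at most $x/4$ exactly when $\tfrac{1}{2\pi}\Parens{1+\tfrac{x^2}{2}}\le\tfrac14$, i.e. when $x^2\le\pi-2$. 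Hence $D(x)>0$ on $\big(0,\sqrt{\pi-2}\,\big]$.

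\emph{Large $x$, and conclusion.} The standard lower Mills-ratio bound $1-\Phi(x)>\tfrac{x}{1+x^2}\phi(x)$ (one integration by parts) gives $x\Phi(x)\Phi(-x)>\tfrac{x^2}{1+x^2}\Phi(x)\phi(x)$, and a one-line simplification yields $D(x)>\phi(x)\Parens{\tfrac12-\tfrac{\Phi(x)}{1+x^2}}$. For $x\ge 1$ we have $1+x^2\ge 2>2\Phi(x)$, so the right-hand side is positive and $D(x)>0$. Since $\pi-2\approx1.14>1$, the intervals $\big(0,\sqrt{\pi-2}\,\big]$ and $[1,\infty)$ cover all of $(0,\infty)$, which proves the lemma. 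The only step that requires care is making these two crude estimates overlap: the small-$x$ argument throws away a lot (near $0$ the ratio of the two sides of the lemma is only $\pi/2$), so it is not obvious in advance that bounding $\phi$ by $\phi(0)$ still leaves a usable interval — the point is that it produces the threshold $x^2<\pi-2$, which happens to exceed $1$, precisely where the Mills-ratio estimate becomes clean. If one preferred sharper constants or a unified argument one could instead study $D'(x)=\Phi(x)\Phi(-x)-\phi(x)^2-x\phi(x)\Parens{\Phi(x)-\tfrac12}$ and show it changes sign exactly once on $(0,\infty)$ (using $D'(0)=\tfrac14-\tfrac1{2\pi}>0$, $D(x)\to 0^+$, and monotonicity of $D'(x)/\phi(x)$), from which $D>0$ follows; but that route is longer than the two-regime bound above.
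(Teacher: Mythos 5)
Your proof is correct, and it takes a genuinely different route from the paper's. Both arguments begin with the same reduction: the claimed inequality is equivalent to the positivity on $(0,\infty)$ of $x-l(x)(1-2\Phi(-x))$, which (up to a factor of $2$) is exactly your $D(x)=x\,\Phi(x)\Phi(-x)-\phi(x)(\Phi(x)-\tfrac12)$ and the paper's $J(x)$. From there the paper runs a global shape analysis: it computes $J''$, shows $J''(x)/\phi(x)$ is strictly increasing and changes sign once, deduces that $J'$ is first decreasing then increasing with $J'(0)=\tfrac12-\tfrac1\pi>0$ and $J'(x)\to0$, and concludes that $J$ rises from $J(0)=0$ and falls back to $0$ at infinity, hence is positive in between. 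You instead give a two-regime argument with no second derivatives: near the origin you bound $\phi(t)\le\phi(0)$ inside the integrals $u=\int_0^x\phi$ and $xu+\phi(x)=\phi(0)+\int_0^x(x-t)\phi(t)\,dt$ to get $D(x)>0$ for $x^2\le\pi-2$, and for $x\ge1$ you invoke the lower Mills-ratio bound $\Phi(-x)>\tfrac{x}{1+x^2}\phi(x)$ to get $D(x)>\phi(x)\bigl(\tfrac12-\tfrac{\Phi(x)}{1+x^2}\bigr)>0$; the regimes overlap because $\pi-2>1$. I checked the algebra in both regimes (the identity $\tfrac14-u^2=\Phi(x)\Phi(-x)$, the threshold computation $\tfrac1{2\pi}(1+\tfrac{x^2}{2})\le\tfrac14\iff x^2\le\pi-2$, and the simplification to $\phi(x)(\tfrac12-\Phi(x)/(1+x^2))$) and it is all sound. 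What each approach buys: the paper's is a single unified argument but rests on a somewhat delicate sign-change/monotonicity chain for $J'$ and $J''$; yours is more elementary and locally verifiable, at the cost of having to confirm that the two crude estimates leave an overlapping window — which, as you note, is the one genuinely non-automatic point, and it works out because $\sqrt{\pi-2}>1$.
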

The proof of this lemma is presented in Appendix \ref{sec:proofoflemmalupperbound}. Using this lemma, we have
$$\frac{2l(x_1)(\frac{1}{2}-\Phi(-x_1))}{x_1}<1, \forall x_1=x_a+\theta\in [\theta, 2\theta].$$
By continuity of the function $\frac{l(x)(\frac{1}{2}-\Phi(-x))}{x}$, we have

$$\bar{\kappa}_a(\theta)=\sup_{x_1\in[\theta, 2\theta]}\frac{2l(x_1)(\frac{1}{2}-\Phi(-x_1))}{x_1}\ <\ 1.$$
It is straightforward to prove that $\bar{\kappa}_a(\theta) $ is a continuous function of $\theta\in (0, \infty)$.
 Since $4\funcP(x_a, x_b, \theta)(1-\funcP(x_a, x_b, \theta))\leq 1$,
we can bound \eqref{eq:ata2} in the following way:
\begin{eqnarray}
\frac{\funcQ(x_a, x_b, \theta)(1-2\funcP(x_a, x_b, \theta))}{2\funcP(x_a, x_b, \theta)(1-\funcP(x_a, x_b, \theta))}&\leq&(1+\frac{l(x_1)\frac{\frac{1}{2}-\Phi(-x_1)}{x_1}-\frac{1}{2}}{2\funcP(x_a, x_b, \theta)(1-\funcP(x_a, x_b, \theta))})x_a\n\\
&\leq&(1+\frac{\bar{\kappa}_a(\theta)-1}{4\funcP(x_a, x_b, \theta)(1-\funcP(x_a, x_b, \theta))})x_a\n\\
&\leq&\bar{\kappa}_a(\theta)x_a, \ \ \forall 0<x_a<\theta, x_b>0.\label{eq:finalboundcasei}
\end{eqnarray}
This completes the proof of part (i).

\item[(ii)] $x_a\geq \theta$: According to Lemma \ref{lemma:sign}, $1-2\funcP(x_a, x_b, \theta)\geq 0$. Together with Lemma \ref{lem:Q1overPbound} we have
\begin{eqnarray}
\frac{\funcQ(x_a, x_b, \theta)(1-2\funcP(x_a, x_b, \theta))}{2\funcP(x_a, x_b, \theta)(1-\funcP(x_a, x_b, \theta))}&\leq&\frac{(x_a+\sqrt{\frac{2}{\pi}})(1-2\funcP(x_a, x_b, \theta))}{2(1-\funcP(x_a, x_b, \theta))}\n\\
&=&x_a+\frac{\sqrt{\frac{2}{\pi}}(1-2\funcP(x_a, x_b, \theta))-x_a}{2(1-\funcP(x_a, x_b, \theta))}.\label{eq:ata4}
\end{eqnarray}
From Lemma \ref{lem:Plowerbound}, we have
\begin{eqnarray}
1-2\funcP(x_a, x_b, \theta)\leq\Phi(x_a+\theta)+\Phi(x_a-\theta)-1.\label{eq:ata5}
\end{eqnarray}
Note that
\begin{eqnarray}
\frac{\partial (\Phi(x_a+\theta)+\Phi(x_a-\theta)-1)}{\partial x_a} \ = \ \phi(x_a+\theta)+\phi(x_a-\theta) \ \leq \ \sqrt{\frac{2}{\pi}}, \forall x_a, \n
\end{eqnarray}
and
\[\Phi(0+\theta)+\Phi(0-\theta)-1=0.\]
Therefore, from \eqref{eq:ata5} and mean value theorem, we have
\begin{eqnarray}
1-2\funcP(x_a, x_b, \theta)\leq \sqrt{\frac{2}{\pi}}x_a.\n
\end{eqnarray}
Together with \eqref{eq:ata4} and $\funcP(x_a, x_b, \theta))\leq \frac{1}{2}$, we have
\begin{eqnarray}
\frac{\funcQ(x_a, x_b, \theta)(1-2\funcP(x_a, x_b, \theta))}{2\funcP(x_a, x_b, \theta)(1-\funcP(x_a, x_b, \theta))}&\leq&x_a+\frac{\sqrt{\frac{2}{\pi}}(1-2\funcP(x_a, x_b, \theta))-x_a}{2(1-\funcP(x_a, x_b, \theta))}\n\\
&\leq&x_a-\frac{1-\frac{2}{\pi}}{2(1-\funcP(x_a, x_b, \theta))}x_a\n\\
&\leq&(\frac{1}{2}+\frac{1}{\pi})x_a. \label{eq:finalboundcaseii}
\end{eqnarray}

\item[(iii)] $x_a=0$: It is straightforward to prove that $\funcP(0, x_b, \theta)=\frac{1}{2}$. Hence,
\[
\frac{\funcQ(x_a, x_b, \theta)(1-2\funcP(x_a, x_b, \theta))}{2\funcP(x_a, x_b, \theta)(1-\funcP(x_a, x_b, \theta))} =0.
\]
\end{itemize}
Combining Case (i), (ii), (iii), we conclude that if we define $$\kappa_a(\theta)=\left\{\begin{aligned}
			& \max(\bar{\kappa}_a(\theta), \frac{1}{2}+ \frac{1}{\pi}),
			& &\theta>0 \\
			& \frac{1}{2}+ \frac{1}{\pi},
			& & \theta=0\\
		\end{aligned}\right. , $$ then the statement of Lemma \ref{proof:lemubkappa_a} holds.

\subsection{Proof of Theorem \ref{thm:convergenceratemagnitude}}\label{sec:prooftheomeconvergenceratemag}

It is straightforward to use Theorem \ref{thm:convergencerate1}  and show that for every $\delta_a>0$, there exists a value of $T_{\delta_a}$ such that for every $t> T_{\delta_a}$, $\|\pa{t}\|\leq \delta_a$. For the moment suppose that the following claim is true: there exists $\de_a>0, \kappa_b, c_b$ only depending on $\|\stt\|$, $|\ptt{1, 0}|$ and the initialization $\{\|\pa{0}\|, \|\pb{0}\|\}$, such that if $\|\pa{t}\|\leq \de_a$ for some $t$, then the next iteration $\pb{t+1}$ satisfies the following equation:
\begin{eqnarray}
\|\pb{t+1}-\stt\|^2&\leq&\kappa_{b}^2\|\pb{t}-\stt\|^2+c_{b}\|\pa{t}\|.\n
\end{eqnarray}
If we combine this claim with the result of Theorem \ref{thm:convergencerate1}, we obtain Theorem \ref{thm:convergenceratemagnitude}. Hence, the problem reduces to proving the above claim.

Note that in Lemma \ref{lemma:upperboundEMestimates}, Lemma \ref{lem:lowerboundbt} and Lemma \ref{lem:planarestimates}, we have
\[\|\pa{t}\|\in[0, c_{U, 1}], \|\pb{t}\|\in [c_{L, 1}, c_{U, 3}], \ptt{1, t}\in [\ptt{1, 0}, \|\stt\|], \forall t\geq 0.\]
Therefore, it is again straightforward to see that the following lemma implies our claim:

\begin{lemma}\label{lemma:conratepre1}
For any $\pa{t}, \pb{t}, \stt\in \bbR^2$, if $\|\pa{t}\|\in [0, U_a], \|\pb{t}\|\in [L_b, U_b], \frac{\dotp{\stt, \pb{t}}}{\|\pb{t}\|}\in [L_\theta, \|\stt\|], \forall t\geq 0$, where $L_b>0, L_\theta>0$ then
there exists $\de_a\in (0, \min\{L_\theta, 1\}];\kappa_b\in (0, 1);c_b>0$ such that $\forall \|\pa{t}\|\in [0, \de_a], \|\pb{t}\|\in [L_b, U_b], \frac{\dotp{\stt, \pb{t}}}{\|\pb{t}\|}\in [L_\theta, \|\stt\|], $ the next iteration $\pb{t+1}$ satisfying
\begin{eqnarray}
\|\pb{t+1}-\stt\|^2&\leq&\kappa_{b}^2\|\pb{t}-\stt\|^2+c_{b}\|\pa{t}\|, \n
\end{eqnarray}
where
$\de_a, \kappa_b$ and $c_b$ are functions of only $U_a, L_b, U_b, L_\theta, \|\stt\|$.
\end{lemma}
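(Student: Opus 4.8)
The plan is to reduce the bound to its special case $\pa{t}=\v0$ --- which is exactly one Population EM step for Model~1 --- and then absorb the effect of $\pa{t}\neq\v0$ into the additive term $c_b\|\pa{t}\|$ via a first-order estimate in $\pa{t}$. Rotate coordinates by the matrix $\vU_t$ of Appendix~\ref{sec:notation}, so $\tpb{t}=(\|\pb{t}\|,0)$ and $\tstt=(\ptt{1,t},\ptt{2,t})$ with $\ptt{1,t}\ge0$; set $x_b\triangleq\|\pb{t}\|\in[L_b,U_b]$ and $x_\theta\triangleq\ptt{1,t}=\dotp{\stt,\pb{t}}/\|\pb{t}\|\in[L_\theta,\|\stt\|]$, so that $(\ptt{2,t})^2=\|\stt\|^2-x_\theta^2$. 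The key structural point is that $\w_d(\y-\pa{t},\pb{t})=(1+e^{-2\dotp{\y-\pa{t},\pb{t}}})^{-1}$ depends on $\pa{t}$ only through $\dotp{\pa{t},\pb{t}}$; hence $\pb{t+1}$ depends on $\pa{t}$ only through the scalar $x_a\triangleq\dotp{\pa{t},\pb{t}}/\|\pb{t}\|=\tpai{1,t}$ (with $|x_a|\le\|\pa{t}\|$), and by \eqref{eq:tildeiterationab2}, \eqref{eq:Q1SRR1}, \eqref{eq:q2thetaS} and \eqref{equ:itep2},
\[
  \tpb{t+1}\ =\ \frac{1}{2p(1-p)}\bigl(x_\theta S_s+R_s,\ \ptt{2,t}\,S_s\bigr),\qquad p\triangleq\funcP(x_a,x_b,x_\theta),
\]
where $S_s\triangleq S(x_a,x_b,x_\theta)$ and $R_s\triangleq R(x_b,x_a-x_\theta)+R(x_b,x_a+x_\theta)$. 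Since $S(\cdot,x_b,x_\theta)$ and $R(x_b,\cdot)$ are even and $\funcP(-x_a,x_b,x_\theta)=1-\funcP(x_a,x_b,x_\theta)$ (all by the substitution $y\mapsto-y$), the map $x_a\mapsto\tpb{t+1}$ is even, so $g(x_a)\triangleq\|\tpb{t+1}-\tstt\|^2=\|\pb{t+1}-\stt\|^2$ is even in $x_a$ and it suffices to bound $g$ on $[0,\delta_a]$.

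For the base case $x_a=0$ we have $p=\funcP(0,x_b,x_\theta)=\tfrac12$, so by \eqref{eq:QFrelation} the first coordinate of $\tpb{t+1}$ is $F(x_b,x_\theta)$ and the second is $2\ptt{2,t}S(0,x_b,x_\theta)$, whence
\[
  g(0)\ =\ \bigl(F(x_b,x_\theta)-x_\theta\bigr)^2+(\ptt{2,t})^2\bigl(2S(0,x_b,x_\theta)-1\bigr)^2 .
\]
Two one-dimensional facts finish this case: (a) $F(x_\theta,x_\theta)=x_\theta$ (Model~1 self-consistency) and $x_b\mapsto F(x_b,x_\theta)$ is a \emph{strict} contraction toward $x_\theta$, so $|F(x_b,x_\theta)-x_\theta|\le\kappa_1(x_b,x_\theta)|x_b-x_\theta|$ with $\kappa_1<1$; and (b) $0<S(0,x_b,x_\theta)<\tfrac12$ for $x_b,x_\theta>0$ (since $S(0,x_b,x_\theta)=\int_0^\infty\tanh(yx_b)\cdot\tfrac12(\phi(y-x_\theta)-\phi(y+x_\theta))\dif y$, with $0<\tanh(yx_b)<1$ on $y>0$ and $\int_0^\infty\tfrac12(\phi(y-x_\theta)-\phi(y+x_\theta))\dif y=\Phi(x_\theta)-\tfrac12<\tfrac12$), so $|2S(0,x_b,x_\theta)-1|<1$. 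Both $\kappa_1(x_b,x_\theta)$ and $|2S(0,x_b,x_\theta)-1|$ are continuous on the compact rectangle $[L_b,U_b]\times[L_\theta,\|\stt\|]$, hence admit a common upper bound $\kappa_b\in(0,1)$ depending only on $L_b,U_b,L_\theta,\|\stt\|$; since $\|\tpb{t}-\tstt\|^2=(x_b-x_\theta)^2+(\ptt{2,t})^2$, this gives $g(0)\le\kappa_b^2\|\pb{t}-\stt\|^2$.

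To pass from $g(0)$ to $g(x_a)$, differentiate in $x_a$. On the compact box $[0,\delta_a]\times[L_b,U_b]\times[L_\theta,\|\stt\|]$ the function $p=\funcP(x_a,x_b,x_\theta)$ is continuous, strictly positive, and at most $\tfrac12$ (Lemma~\ref{lemma:sign}, as $x_a\ge0$), so $2p(1-p)$ is bounded below by a positive constant; differentiating the integrals defining $\funcP$, $S$ and $R$ under the integral sign (the Gaussian densities, and their products with polynomials, provide the dominating functions) shows $\tpb{t+1}$, and hence $g$, is $C^1$ in $x_a$ with $\|\tpb{t+1}\|$ and $\|\tfrac{d}{dx_a}\tpb{t+1}\|$ bounded on the box by constants depending only on $U_a,L_b,U_b,L_\theta,\|\stt\|$. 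Thus $|g'(x_a)|\le c_b$ on $[0,\delta_a]$ for such a constant $c_b$, and integrating,
\[
  g(\tpai{1,t})=g(|\tpai{1,t}|)\le g(0)+c_b|\tpai{1,t}|\le\kappa_b^2\|\pb{t}-\stt\|^2+c_b\|\pa{t}\| ,
\]
which is the claimed inequality; one may simply take $\delta_a=\min\{L_\theta,1\}$, the constraint $\delta_a\le L_\theta$ only serving to keep $x_a$ in the range $x_a\le x_\theta$.

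The main obstacle is fact (a): the strict --- and, via compactness, uniform --- contraction of $x_b\mapsto F(x_b,x_\theta)$ toward its fixed point $x_\theta$ on $[L_b,U_b]\times[L_\theta,\|\stt\|]$. This is the one-dimensional core of the Model~1 convergence statement and has to be proved here by direct estimation of the integral in \eqref{eq:Fdefinition}: one shows the difference quotient $(F(x_b,x_\theta)-x_\theta)/(x_b-x_\theta)$ is strictly below $1$ for every admissible $x_b\neq x_\theta$ --- note $\partial_{x_b}F(0,x_\theta)=1+x_\theta^2>1$ (by \eqref{eq:QFrelation} and the derivative computed in the proof of Lemma~\ref{lem:lowerboundbt}), so the estimate genuinely requires $x_b$ bounded away from $0$ and exploits cancellation between $F(x_b,x_\theta)$ and $x_\theta$ --- and continuity, together with the limiting value $\partial_{x_b}F(x_\theta,x_\theta)<1$ at $x_b=x_\theta$, then upgrades this to a uniform $\kappa_1<1$. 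Everything else in the argument is compactness and routine differentiation under the integral sign.
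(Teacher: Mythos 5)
Your argument is correct, and it reaches the conclusion by a genuinely different route in its second half. The base case $x_a=0$ (where $x_a\triangleq\tpai{1,t}=\dotp{\pa{t},\pb{t}}/\|\pb{t}\|$) is handled exactly as in the paper: the second coordinate contracts because $|2S(0,x_b,x_\theta)-1|<1$ uniformly on the compact rectangle, and the first coordinate contracts because $F(\cdot,x_\theta)$ is a uniform strict contraction toward its fixed point $x_\theta$. Where you diverge is in passing from $x_a=0$ to $x_a\in[0,\delta_a]$: the paper performs an explicit term-by-term decomposition of $2\funcQ-\ptt{1,t}$ (bounding $|2\funcP-1|$ by $2x_a$ near zero, bounding $|F(x_b,x_\theta\pm x_a)-F(x_b,x_\theta)|$ via $\partial_{x_\theta}F$, and lower-bounding $4\funcP(1-\funcP)$), producing concrete constants such as $16\|\stt\|+6$. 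You instead observe that $\pb{t+1}$ depends on $\pa{t}$ only through the scalar $x_a$, that the resulting map is even in $x_a$ and $C^1$ on a compact box on which $2p(1-p)\geq 1/4$ (using $x_a\le L_\theta\le x_\theta$ so $\funcP\ge 1/4$, and $\funcP\le 1/2$), and then absorb the whole perturbation into $c_b\|\pa{t}\|$ by a single mean-value estimate on $g(x_a)=\|\pb{t+1}-\stt\|^2$. This is cleaner and shorter, at the cost of producing a non-explicit $c_b$; both versions yield constants depending only on $U_a,L_b,U_b,L_\theta,\|\stt\|$, and your choice $\delta_a=\min\{L_\theta,1\}$ is legitimate since the smallness of $\delta_a$ in the paper is only needed to make its explicit constants work.

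The one ingredient you leave as a sketch is the crux you correctly identify: the uniform bound $|F(x_b,x_\theta)-x_\theta|\le\kappa_1|x_b-x_\theta|$ with $\kappa_1<1$ on $[L_b,U_b]\times[L_\theta,\|\stt\|]$. Your outline (pointwise strict inequality for $x_b\neq x_\theta$, the derivative value $e^{-x_\theta^2/2}<1$ at the fixed point, and compactness to handle the $0/0$ issue on the diagonal) is exactly the structure of the paper's Lemma~\ref{lemma:1a0conratepre}. To make the pointwise step rigorous you would still need an argument such as the paper's Lemma~\ref{lem:Fpropert1}: $F(\cdot,x_\theta)$ is increasing and concave with $F(0,x_\theta)=0$ and $F(x_\theta,x_\theta)=x_\theta$, which forces $|F(x_b,x_\theta)-x_\theta|<|x_b-x_\theta|$ for all $x_b>0$ with $x_b\neq x_\theta$; "direct estimation of the integral" without this concavity (or an equivalent) is where the real work hides, especially given your own observation that $\partial_{x_b}F(0,x_\theta)=1+x_\theta^2>1$.
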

To prove this lemma, we use the notations and definition that are summarized in Appendix \ref{sec:notation}. In particular, we use the rotation matrix $\vU_t$ introduced in that section and rotate all the vectors $a$, $b$, $\theta$, $q$ with this matrix.
 Note that according to Lemma \ref{lemma:projection} we know that $\pb{t+1}$ lies in the span of $\stt$ and $\pb{t}$ and hence, $\tpb{{\langle t\rangle, i}, t+1}=0$ for $i \geq 3$. Therefore we only need to consider the first two coordinates.

Our strategy of proving this lemma is to prove the following two claims for the first two coordinates:

\begin{enumerate}

\item There exists $\kappa_s \in (0, 1)$ such that $|\tpbi{{\langle t\rangle, 2}, t+1}-\ptt{2, t}| \leq \kappa_s |\ptt{2, t}|$.
\item There exists $\kappa'_b\in (0, 1)$ and $\de_a>0$ such that if $\|\pa{t}\|\leq \de_a$, then $$|\tpbi{{\langle t\rangle, 1}, t+1} - \ptt{1, t}|  \leq \kappa'_b |\|\pb{t}\|-\ptt{1, t}|+ (16 \|\theta\|+6) \|\pa{t}\|.$$
\end{enumerate}
 We will then combine the above two claims to obtain Lemma \ref{lemma:conratepre1}.
 \begin{enumerate}
 \item Proof of  $|\tpbi{{\langle t\rangle, 2}, t+1}-\ptt{2, t}| \leq \kappa_s |\ptt{2, t}|$: To prove our first claim, first note that according to \eqref{eq:tildeiterationab2} and \eqref{eq:q2thetaS} we have
\begin{eqnarray}
\tpbi{{\langle t\rangle, 2}, t+1} &=& \frac{\tpqi{{\langle t\rangle, 2}, t+1}}{ 2 \funcP (\tpai{1, t}, \|\pb{t}\|, \ptt{1, t} ) (1-\funcP(\tpai{1, t}, \|\pb{t}\|, \ptt{1, t}))} \nonumber \\
&=&\ptt{2, t}\frac{S(\tpai{1, t}, \|\pb{t}\|, \ptt{1, t} )}{2 \funcP (\tpai{1, t}, \|\pb{t}\|, \ptt{1, t} ) (1-\funcP(\tpai{1, t}, \|\pb{t}\|, \ptt{1, t}))}\n
\end{eqnarray}
Hence
\begin{eqnarray}
|\tpbi{{\langle t\rangle, 2}, t+1}-\ptt{2, t}|
&=&|\ptt{2, t}|(1-\frac{S(\tpai{1, t}, \|\pb{t}\|, \ptt{1, t} )}{2 \funcP (\tpai{1, t}, \|\pb{t}\|, \ptt{1, t} ) (1-\funcP(\tpai{1, t}, \|\pb{t}\|, \ptt{1, t}))})\n\\
&\leq&|\ptt{2, t}|(1-2S(\tpai{1, t}, \|\pb{t}\|, \ptt{1, t} ))\n
\end{eqnarray}
By definition of function $S$, it is straightforward to conclude that $S(\tpai{1, t}, \|\pb{t}\|, \ptt{1, t})$ is only zero if $\|\pb{t}\|=0$ and can only go to zero if $\tpai{1, t} \rightarrow \infty$ or $\|\pb{t}\| \rightarrow \infty$. Therefore, combined with the continuity of $S$, we conclude that
\begin{eqnarray}\label{eq:upperboundforS}
\kappa_s \triangleq \sup_{\tpai{1, t} \in [0, U_a] , \|\pb{t}\| \in [L_b, U_b], \ptt{1, t} \in [L_\theta, \|\stt\|]} 1-2S(\tpai{1, t}, \|\pb{t}\|, \ptt{1, t})<1,
\end{eqnarray}
where $\kappa_s$ only depends on $U_a, L_b, U_b, L_\theta$ and $\|\stt\|$. Hence,
\begin{equation}\label{eq:boundtheta2b2}
|\tpbi{{\langle t\rangle, 2}, t+1}-\ptt{2, t}| \leq \kappa_s |\ptt{2, t}|.
\end{equation}
\item Proof of $|\tpbi{{\langle t\rangle, 1}, t+1} - \ptt{1, t}|  \leq \kappa_b' |\|\pb{t}\|-\ptt{1, t}|+ (16 \|\theta\|+6) \|\pa{t}\|$: Note that
\begin{eqnarray}
|\tpbi{{\langle t\rangle, 1}, t+1}- \ptt{1, t}| &=& \left| \frac{\funcQ(\tpai{1, t}, \|\pb{t}\|, \ptt{1, t})}{ 2 \funcP(\tpai{1, t}, \|\pb{t}\|, \ptt{1, t}) (1- \funcP(\tpai{1, t}, \|\pb{t}\|, \ptt{1, t}))} - \ptt{1, t} \right| \nonumber \\
&=&  \left| \frac{2\funcQ(\tpai{1, t}, \|\pb{t}\|, \ptt{1, t})- \ptt{1, t} + \ptt{1, t} (2\funcP(\tpai{1, t}, \|\pb{t}\|, \ptt{1, t})-1)^2}{ 4 \funcP(\tpai{1, t}, \|\pb{t}\|, \ptt{1, t}) (1- \funcP(\tpai{1, t}, \|\pb{t}\|, \ptt{1, t}))} \right| \nonumber \\
&\leq&  \left| \frac{2\funcQ(\tpai{1, t}, \|\pb{t}\|, \ptt{1, t})- \ptt{1, t}}{ 4 \funcP(\tpai{1, t}, \|\pb{t}\|, \ptt{1, t}) (1- \funcP(\tpai{1, t}, \|\pb{t}\|, \ptt{1, t}))} \right| \nonumber \\
&&+  \left| \frac{\ptt{1, t} (2\funcP(\tpai{1, t}, \|\pb{t}\|, \ptt{1, t})-1)^2}{ 4 \funcP(\tpai{1, t}, \|\pb{t}\|, \ptt{1, t}) (1- \funcP(\tpai{1, t}, \|\pb{t}\|, \ptt{1, t}))} \right|\label{eq:con1}
\end{eqnarray}
Furthermore in \eqref{eq:Q1upper1}, we proved that
\[\funcQ(x_{a}, x_{b}, x_{\theta})=x_a \funcP(x_a, x_b, \theta)-\frac{1}{2}x_a +\frac{1}{4}(F(x_b , x_a+x_\theta)+F(x_b , x_\theta-x_a)).\]
To see the definitions of $\funcP, \funcQ, $ and $F$ you may refer to Appendix \ref{sec:notation}. Hence, we have
\begin{eqnarray}
&&|2\funcQ(\tpai{1, t}, \|\pb{t}\|, \ptt{1, t})- \ptt{1, t}|\n\\
&=&|\tpai{1, t}(2\funcP(\tpai{1, t}, \|\pb{t}\|, \ptt{1, t})-1)+\frac{1}{2}(F(\|\pb{t}\|, \ptt{1, t}-\tpai{1, t})-F(\|\pb{t}\|, \ptt{1, t}))\n\\
&~&+\frac{1}{2}(F(\|\pb{t}\|, \ptt{1, t}+\tpai{1, t})-F(\|\pb{t}\|, \ptt{1, t}))+(F(\|\pb{t}\|, \ptt{1, t})-\ptt{1, t})|.\label{eq:con2}
\end{eqnarray}
Combining \eqref{eq:con1} and \eqref{eq:con2}, we conclude that in order to obtain an upper bound for $|\tpbi{{\langle t\rangle, 1}, t+1}- \ptt{1, t}|$ we have to find the following bounds:
\begin{enumerate}
\item Obtain an upper bound for $|2\funcP(\tpai{1, t}, \|\pb{t}\|, \ptt{1, t})-1|$.

\item Obtain an upper bound for $|F(\|\pb{t}\|, x_\theta)-F(\|\pb{t}\|, \ptt{1, t})|$ for all $ \ptt{1, t}\in[L_\theta, \|\stt\|]$ and $|x_\theta-\ptt{1, t}|\leq L_\theta$.

\item Obtain an upper bound for $|F(\|\pb{t}\|, \ptt{1, t})-\ptt{1, t}|$ for all $ \ptt{1, t}\in[L_\theta, \|\stt\|]$ and $\|\pb{t}\|\in [L_b, U_b]$

\item Obtain a lower bound for $4 \funcP(\tpai{1, t}, \|\pb{t}\|, \ptt{1, t}) (1- \funcP(\tpai{1, t}, \|\pb{t}\|, \ptt{1, t}))$.
\end{enumerate}

We summarize our strategy for bounding each of these terms below:
\begin{enumerate}
\item Upper bound for $|2\funcP(\tpai{1, t}, \|\pb{t}\|, \ptt{1, t})-1|$: It is straightforward to confirm $2 \funcP(0, \|\pb{t}\|, \ptt{1, t}) = \frac{1}{2}$. Hence, we have to calculate $|2\funcP(\tpai{1, t}, \|\pb{t}\|, \ptt{1, t})-2\funcP(0, \|\pb{t}\|, \ptt{1, t})|$. According to mean value theorem
\begin{eqnarray}\label{eq:2p-1bound1}
|\funcP(\tpai{1, t}, \|\pb{t}\|, \ptt{1, t})-\funcP(0, \|\pb{t}\|, \ptt{1, t})| = \left|\frac{\partial \funcP(x_a, \|\pb{t}\|, \ptt{1, t})}{\partial x_a}|_{x_a=\xi}\right| (\tpai{1, t}), \
\end{eqnarray}
where $\xi \in [0, \tpai{1, t}]$.
Therefore we only need to bound $|\frac{\partial \funcP(x_a, \|\pb{t}\|, \ptt{1, t})}{\partial x_a}|$. Note that
\begin{eqnarray}
&~&\left|\frac{\partial \funcP(x_a, \|\pb{t}\|, \ptt{1, t}))}{\partial x_a}|_{x_a=0}\right| \nonumber \\
&=&\int\frac{2\|\pb{t}\|}{(e^{y\|\pb{t}\| -x_a \|\pb{t}\| }+e^{-y\|\pb{t}\| +x_a \|\pb{t}\|
})^2}\pdf{, y, \ptt{1, t}} \dif y|_{x_a=0}\n\\
&=&\int\frac{2\|\pb{t}\|}{(e^{y\|\pb{t}\|}+e^{-y\|\pb{t}\|})^2}\pdf{, y, \ptt{1, t}} \dif y\n\\
&=&\int\frac{2\|\pb{t}\|}{(e^{y\|\pb{t}\|}+e^{-y\|\pb{t}\|})^2}\phi(y-\ptt{1, t}) \dif y.\label{eq:derP1x_a0}
\end{eqnarray}
Next we show that $\left|\frac{\partial \funcP(x_a, \|\pb{t}\|, \ptt{1, t}))}{\partial x_a}|_{x_a=0}\right|$ is a decreasing function of $\ptt{1, t}$ and hence can be upper bounded by $\left|\frac{\partial \funcP(x_a, \|\pb{t}\|, 0))}{\partial x_a}|_{x_a=0}\right|$:
\begin{eqnarray}
&~&\frac{\partial \int\frac{2\|\pb{t}\|}{(e^{y\|\pb{t}\|}+e^{-y\|\pb{t}\|})^2}\phi(y-\ptt{1, t}) \dif y}{\partial \ptt{1, t}} \nonumber \\
&=&\int\frac{2\|\pb{t}\|}{(e^{y\|\pb{t}\|}+e^{-y\|\pb{t}\|})^2}(y-\ptt{1, t})\phi(y-\ptt{1, t}) \dif y\n\\
&=&-\int\frac{2\|\pb{t}\|}{(e^{y\|\pb{t}\|}+e^{-y\|\pb{t}\|})^2}d\phi(y-\ptt{1, t})\n\\
&=&-\int\frac{4\|\pb{t}\|^2(e^{y\|\pb{t}\|}-e^{-y\|\pb{t}\|})}{(e^{y\|\pb{t}\|}+e^{-y\|\pb{t}\|})^3}\phi(y-\ptt{1, t})
  \dif y \leq 0\n
\end{eqnarray}
Hence,
\begin{eqnarray}
\left|\frac{\partial \funcP(x_a, \|\pb{t}\|, \ptt{1, t}))}{\partial
x_a}|_{x_a=0}\right|&\leq&\int\frac{2\|\pb{t}\|}{(e^{y\|\pb{t}\|}+e^{-y\|\pb{t}\|})^2}\phi(y) \dif y\n\\
&\leq&\int_{0}^{\infty}4\|\pb{t}\| e^{-2y\|\pb{t}\|}\phi(y) \dif y
\leq\sqrt{\frac{2}{\pi}}.\label{eq:P1der00}
\end{eqnarray}
Our next goal is to show that  there exists $\de_1>0$ is a function of only $L_b, U_b, L_\theta, \|\stt\|$ such that
\begin{eqnarray}
\sup_{ \tpai{1, t}\in [0, \de_1], \|\pb{t}\|\in [L_b, U_b], \ptt{1, t}\in [L_\theta, \|\stt\|]} \left|\frac{\partial\funcP(\tpai{1, t}, \|\pb{t}\|, \ptt{1, t})}{\partial \tpai{1, t}} \right|\leq 1.\label{eq:2p-1bound2}
\end{eqnarray}
This is a simple proof by contradiction. Since we have already done similar arguments in the proof of Lemma \ref{lem:lowerboundderivative}, for the sake of brevity we skip this argument. By combining \eqref{eq:2p-1bound1} and \eqref{eq:2p-1bound2} we conclude:
\begin{eqnarray}
|1-2\funcP(\tpai{1, t}, \|\pb{t}\|, \ptt{1, t})|\leq 2\tpai{1, t}, \forall \tpai{1, t}\in [0, \de_1], \|\pb{t}\|\in [L_b, U_b], \ptt{1, t}\in [L_\theta, \|\stt\|].\label{eq:precon1}
\end{eqnarray}

\item Upper bound for $|F(\|\pb{t}\|, x_\theta)-F(\|\pb{t}\|, \ptt{1, t})|$:
Again by employing the mean value theorem, we conclude that we have to bound $\frac{\partial F(\|\pb{t}\|, x_\theta)}{\partial x_\theta}$ in a neighborhood of $x_\theta =\ptt{1, t}$ for all $ \|\pb{t}\|\in [L_b, U_b], \ptt{1, t}\in[L_\theta, \|\stt\|]$. Note that, $\forall x_\theta\geq 0$
\begin{eqnarray}
\left|\frac{\partial F(\|\pb{t}\|, x_\theta)}{\partial x_\theta}\right|
&=&\left|\int(2\w(y, \|\pb{t}\|)-1)y(y-x_\theta)\phi(y-x_\theta) \dif y\right|\n\\
&=&\left|\int(2\w(y, \|\pb{t}\|)-1)\{(y-x_\theta)^2+x_\theta(y-x_\theta)\}\phi(y-x_\theta) \dif y \right|\n\\
&=&\Big|x_\theta\int\frac{e^{y\|\pb{t}\|}-e^{-y\|\pb{t}\|}}{e^{y\|\pb{t}\|}+e^{-y\|\pb{t}\|}}
(y-x_\theta)\phi(y-x_\theta) \dif y\n\\
&~&+\int\frac{e^{y\|\pb{t}\|}-e^{-y\|\pb{t}\|}}{e^{y\|\pb{t}\|}+e^{-y\|\pb{t}\|}}(y-x_\theta)^2\phi(y-x_\theta)
\dif y\Big|\n\\
&\overset{(a)}{<}&\left|x_\theta\int\frac{4\|\pb{t}\|}{(e^{y\|\pb{t}\|}+e^{-y\|\pb{t}\|})^2}\phi(y-x_\theta)
\dif y\right|+1\n\\
&\overset{(b)}{=}&2x_\theta \left|\frac{\partial (\funcP(x_a, \|\pb{t}\|, x_\theta))}{\partial x_a}|_{x_a=0}\right|+1, \n
\end{eqnarray}
where to obtain Inequality (a) we used integration by parts and also the fact that $\frac{e^{y\|\pb{t}\|}-e^{-y\|\pb{t}\|}}{e^{y\|\pb{t}\|}+e^{-y\|\pb{t}\|}}<1$. To see why (b) holds, one may check \eqref{eq:derP1x_a0}.
By employing \eqref{eq:P1der00}, we then conclude that
\begin{eqnarray}
|\frac{\partial F(\|\pb{t}\|, x_\theta)}{\partial x_\theta}|<x_\theta \frac{4}{\sqrt{2\pi}}+1\leq 4\|\stt\|+1, \forall x_\theta\in [0, 2\|\stt\|]. \n
\end{eqnarray}
Therefore, using mean value theorem, we have $\forall |x_\theta-\ptt{1, t}|\leq L_\theta,  \ptt{1, t}\in[L_\theta, \|\stt\|]$,
$$|F(\|\pb{t}\|, x_\theta)-F(\|\pb{t}\|, \ptt{1, t})|\leq(4\|\stt\|+1)|x_\theta-\ptt{1, t}|.$$
Hence, we have $\forall \tpai{1, t}\in [0, L_\theta], \ptt{1, t}\in [L_\theta, \|\stt\|]$,
\begin{eqnarray}
&~&|\frac{1}{2}(F(\|\pb{t}\|, \ptt{1, t}-\tpai{1, t})-F(\|\pb{t}\|, \ptt{1, t}))+\frac{1}{2}(F(\|\pb{t}\|, \ptt{1, t}+\tpai{1, t})-F(\|\pb{t}\|, \ptt{1, t}))|\n\\
&\leq&
(4\|\stt\|+1)\tpai{1, t}.\label{eq:precon2}
\end{eqnarray}

\item Upper bound for $|F(\|\pb{t}\|, \ptt{1, t})-\ptt{1, t}|$:

Because the proof of this part has many algebraic steps we postpone it to the Appendix \ref{sec:proofoflemma1a0conratepre}.
\begin{lemma}
\label{lemma:1a0conratepre}
Given $x_b\in [L_b, U_b], x_\theta\in [L_\theta, \|\stt\|]$ where $0<L_b\leq L_\theta\leq \|\stt\|\leq U_b<\infty $, there exists $\kappa_b''\in (0, 1)$ is a function of only $L_b, U_b, L_\theta, \|\stt\|$ such that
$$|F(x_b, x_\theta)-x_\theta|\leq \kappa_b'' |x_b-x_\theta|, \forall x_b\in [L_b, U_b], x_\theta\in [L_\theta, \|\stt\|].$$
\end{lemma}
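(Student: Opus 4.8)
The plan rests on three facts about the function $F$ of \eqref{eq:Fdefinition}, after which the claimed estimate follows from a compactness argument. First, recast $F$ probabilistically: substituting $X=y+x_\theta$ in \eqref{eq:Fdefinition} and using $2\w(u,x_b)-1=\tanh(ux_b)$ gives $F(x_b,x_\theta)=\E[\tanh(Xx_b)\,X]$ with $X\sim N(x_\theta,1)$. In particular $F(0,x_\theta)=0$, and $F(x_\theta,x_\theta)=x_\theta$ --- the latter being exactly the self-consistency of Population EM for Model~1 at its non-degenerate fixed point (compare \eqref{equ:iteb}, \eqref{eq:QFrelation} and Corollary~\ref{lemma:mainsymmetric}; it can also be checked directly from the $u\mapsto -u$ symmetry of the mixture density, which makes the relevant integrand odd).

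Second, I would show that for each fixed $x_\theta$ the map $x_b\mapsto F(x_b,x_\theta)$ is increasing and strictly concave on $(0,\infty)$. Differentiating under the integral (legitimate since $\tanh$ and $\operatorname{sech}^2$ are bounded and $N(x_\theta,1)$ has all moments),
\[
  \partial_{x_b}F(x_b,x_\theta)=\E\bigl[X^2\operatorname{sech}^2(Xx_b)\bigr]>0,
  \qquad
  \partial_{x_b}^2 F(x_b,x_\theta)=-2\,\E\bigl[X^3\tanh(Xx_b)\operatorname{sech}^2(Xx_b)\bigr]<0
\]
for $x_b>0$, the last sign because $X^3\tanh(Xx_b)=|X|^3\tanh(|X|x_b)\geq 0$ when $x_b>0$. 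From strict concavity together with the two boundary values $F(0,x_\theta)=0$ and $F(x_\theta,x_\theta)=x_\theta$ I then read off: (a) on $(0,x_\theta)$ the graph of $F(\cdot,x_\theta)$ lies strictly above its chord, which has slope $1$, so $x_b<F(x_b,x_\theta)<x_\theta$; and (b) the mean value theorem applied to that same chord produces a point at which $\partial_{x_b}F=1$, so by strict monotonicity of $\partial_{x_b}F$ one gets $\partial_{x_b}F(x_\theta,x_\theta)<1$, whence (the tangent line at $x_\theta$ having slope $<1$) $x_\theta<F(x_b,x_\theta)<x_b$ for $x_b>x_\theta$. In both regimes this gives the pointwise strict inequality $|F(x_b,x_\theta)-x_\theta|<|x_b-x_\theta|$ whenever $x_b\neq x_\theta$.

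Third, to upgrade to a uniform constant, I would define on the compact rectangle $\mathcal{R}:=[L_b,U_b]\times[L_\theta,\|\stt\|]$ the function $g(x_b,x_\theta):=|F(x_b,x_\theta)-x_\theta|/|x_b-x_\theta|$ for $x_b\neq x_\theta$ and $g(x_\theta,x_\theta):=\partial_{x_b}F(x_\theta,x_\theta)$. Because $F(x_\theta,x_\theta)=x_\theta$, the mean value theorem writes $g(x_b,x_\theta)=\partial_{x_b}F(\xi,x_\theta)$ for some $\xi$ between $x_b$ and $x_\theta$, which makes $g$ continuous across the diagonal (and it is trivially continuous off it). By the previous paragraph $g<1$ at every point of $\mathcal{R}$, so $\kappa_b'':=\max_{\mathcal{R}}g$ lies in $(0,1)$ and depends only on $L_b,U_b,L_\theta,\|\stt\|$; this is the asserted bound, with $\kappa_b''$ a function of those four quantities alone.

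The one place demanding care --- and the reason the argument goes through concavity rather than a crude derivative bound --- is that $\partial_{x_b}F(x_b,x_\theta)=\E[X^2\operatorname{sech}^2(Xx_b)]$ is \emph{not} bounded by $1$ over $\mathcal{R}$: for $x_b$ near $L_b$ it is close to $1+x_\theta^2$. So one cannot simply invoke the mean value theorem with a uniform bound on $\partial_{x_b}F$. The decisive input is that the exact values $F(0,x_\theta)=0$ and $F(x_\theta,x_\theta)=x_\theta$ sandwich the concave graph between its chord and its tangent, which controls $|F(x_b,x_\theta)-x_\theta|$ across the whole interval $[L_b,U_b]$ rather than only near $x_b=x_\theta$.
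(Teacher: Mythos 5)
Your proof is correct and follows essentially the same route as the paper's: both rest on the concavity of $x_b\mapsto F(x_b,x_\theta)$ together with the anchor values $F(0,x_\theta)=0$ and $F(x_\theta,x_\theta)=x_\theta$ (the paper's Lemma~\ref{lem:Fpropert1}), a derivative bound at the diagonal, and compactness of the rectangle. The only cosmetic differences are that you deduce $\partial_{x_b}F(x_\theta,x_\theta)<1$ from concavity rather than computing the explicit bound $e^{-x_\theta^2/2}$ as the paper does, and you extend the ratio $|F(x_b,x_\theta)-x_\theta|/|x_b-x_\theta|$ continuously across the diagonal and take a single maximum, whereas the paper splits into a near-diagonal mean-value-theorem case (established by a compactness--contradiction argument) and an off-diagonal supremum.
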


\item Lower bound for $4 \funcP(\tpai{1, t}, \|\pb{t}\|, \ptt{1, t}) (1- \funcP(\tpai{1, t}, \|\pb{t}\|, \ptt{1, t}))$:
Note that
\begin{eqnarray}
\frac{1}{4\funcP(0, \|\pb{t}\|, \ptt{1, t})(1-\funcP(0, \|\pb{t}\|, \ptt{1, t}))}-1=0.\n
\end{eqnarray}
Let $\ep_{\funcp}=\min\{\frac{1-\kappa_b''}{2\kappa_b''}, 1\}$ (This choice will become clear later in the proof). Using contradiction arguments similar to the ones employed in the proof of Lemma \ref{lem:lowerboundderivative}, it is straight forward to see that there exists $\de_2>0$ only depending on $L_b, U_b, L_\theta, \|\stt\|$ such that
\begin{eqnarray}
\sup_{\tpai{1, t}\in [0, \de_2], \|\pb{t}\|\in [L_b, U_b], \ptt{1, t}\in [L_\theta, \|\stt\|]} \frac{1}{4\funcP(\tpai{1, t}, \|\pb{t}\|, \ptt{1, t})(1-\funcP(\tpai{1, t}, \|\pb{t}\|, \ptt{1, t}))}-1\leq \ep_{\funcp}.\label{eq:precon4}
\end{eqnarray}
\end{enumerate}
Now combining \eqref{eq:con1}, \eqref{eq:con2}, \eqref{eq:precon1}, \eqref{eq:precon2}, \eqref{eq:precon4} and Lemma \ref{lemma:1a0conratepre} we conclude that for all $ \tpai{1, t}\in [0, \min\{\de_1, L_\theta, 1\}], \|\pb{t}\|\in[L_b, U_b], \ptt{1, t}\in [L_\theta, \|\stt\|]$,
\begin{eqnarray}
&&|2\funcQ(\tpai{1, t}, \|\pb{t}\|, \ptt{1, t})- \ptt{1, t}|\n\\
&\leq&|\tpai{1, t}(2\funcP(\tpai{1, t}, \|\pb{t}\|, \ptt{1, t})-1)|+|\frac{1}{2}(F(\|\pb{t}\|, \ptt{1, t}-\tpai{1, t})-F(\|\pb{t}\|, \ptt{1, t}))|\n\\
&~&+|\frac{1}{2}(F(\|\pb{t}\|, \ptt{1, t}+\tpai{1, t})-F(\|\pb{t}\|, \ptt{1, t}))|+|F(\|\pb{t}\|, \ptt{1, t})-\ptt{1, t}|\n\\
&\leq&2(\tpai{1, t})^2+(4\|\stt\|+1)\tpai{1, t}+\kappa_b''|\|\pb{t}\|-\ptt{1, t}|\n\\
&\leq&(4\|\stt\|+3)\tpai{1, t}+\kappa_b''|\|\pb{t}\|-\ptt{1, t}|.\n
\end{eqnarray}
Hence together with \eqref{eq:precon1} again and \eqref{eq:precon4} in \eqref{eq:con1}, we have $\forall \tpai{1, t}\in [0, \min\{\de_1, L_\theta, 1, \de_2\}], \|\pb{t}\|\in[L_b, U_b], \ptt{1, t}\in [L_\theta, \|\stt\|]$
\begin{eqnarray}
|\tpbi{{\langle t\rangle, 1}, t+1} - \ptt{1, t}|&\leq&\left| \frac{2\funcQ(\tpai{1, t}, \|\pb{t}\|, \ptt{1, t})- \ptt{1, t}}{ 4 \funcP(\tpai{1, t}, \|\pb{t}\|, \ptt{1, t}) (1- \funcP(\tpai{1, t}, \|\pb{t}\|, \ptt{1, t}))} \right| \nonumber \\
&&+  \left| \frac{\ptt{1, t} (2\funcP(\tpai{1, t}, \|\pb{t}\|, \ptt{1, t})-1)^2}{ 4 \funcP(\tpai{1, t}, \|\pb{t}\|, \ptt{1, t}) (1- \funcP(\tpai{1, t}, \|\pb{t}\|, \ptt{1, t}))} \right|\n\\
&\leq&(1+\ep_{\funcp})\left(|2\funcQ(\tpai{1, t}, \|\pb{t}\|, \ptt{1, t})- \ptt{1, t}|+\ptt{1, t} \left(2\funcP(\tpai{1, t}, \|\pb{t}\|, \ptt{1, t})-1\right)^2\right)\n\\
&\leq&(1+\ep_{\funcp})((4\|\stt\|+3)\tpai{1, t}+\kappa_b''|\|\pb{t}\|-\ptt{1, t}|+4\|\stt\|\tpai{1, t})\n\\
&\leq&2(8\|\stt\|+3)\tpai{1, t}+(\frac{1-\kappa_b''}{2\kappa_b''}+1)\kappa_b''|\|\pb{t}\|-\ptt{1, t}|\n\\
&\leq&(16\|\stt\|+6)\|\pa{t}\|+\frac{1+\kappa_b''}{2}|\|\pb{t}\|-\ptt{1, t}|.\n
\end{eqnarray}
In summary, if we set $\de_a\triangleq \min\{\de_1, L_\theta, 1, \de_2\}>0$ and $\kappa_b'\triangleq \frac{1+\kappa_b''}{2}<1$, we have $\forall \tpai{1, t}\leq \|\pa{t}\|\in [0, \de_a], \|\pb{t}\|\in[L_b, U_b], \ptt{1, t}\in [L_\theta, \|\stt\|]$,
\begin{eqnarray}\label{eq:boundtheta1b1}
|\tpbi{{\langle t\rangle, 1}, t+1} - \ptt{1, t}|\leq (16\|\stt\|+6)\|\pa{t}\|+\kappa_b'|\|\pb{t}\|-\ptt{1, t}|.\n
\end{eqnarray}
\end{enumerate}
So far we have proved in \eqref{eq:boundtheta2b2}  and \eqref{eq:boundtheta1b1} and the following bounds:
\begin{eqnarray}
|\tpbi{{\langle t\rangle, 1}, t+1} - \ptt{1, t}| &\leq& (16\|\stt\|+6)\|\pa{t}\|+\kappa_b'|\|\pb{t}\|-\ptt{1, t}| \nonumber \\
|\tpbi{{\langle t\rangle, 2}, t+1} - \ptt{2, t}| &\leq& \kappa_s | \ptt{2, t}|.\n
\end{eqnarray}

 Let $\kappa_b=\max\{\kappa_s, \kappa_b'\}\in (0, 1)$ and $c_b'=16\|\stt\|+6$. Then, we conclude that
\begin{eqnarray}
\|\pb{t+1} - \stt\|^2&=&|\tpbi{{\langle t\rangle, 1}, t+1} - \ptt{1, t}|^2+|\tpbi{{\langle t\rangle, 2}, t+1}-\ptt{2, t}|^2\n\\
&\leq&((16\|\stt\|+6)\|\pa{t}\|+\kappa_b'|\|\pb{t}\|-\ptt{1, t}|)^2+(\kappa_s\ptt{2, t})^2\n \\
&\leq&\kappa_b^2(|\|\pb{t}\|-\ptt{1, t}|^2+|\ptt{2, t}|^2)+(c_b')^2\|\pa{t}\|^2+2c_b'\|\pa{t}\|\kappa_b|\|\pb{t}\|-\ptt{1, t}|\n\\
&\leq&\kappa_b^2\|\pb{t}-\stt\|^2+((c_b')^2+2c_b'U_b+2c_b'\|\stt\|)\|\pa{t}\|.\n
\end{eqnarray}
Setting $c_b=(c_b')^2+2c_b'U_b+2c_b'\|\stt\|$ completes the proof of Lemma \ref{lemma:conratepre1}.

\subsection{Proof of Theorem \ref{thm:popEMsymmfp1} }\label{sec:prooftheorem1symmetric}

To prove this result we will use some of the results we have proved in the last few sections. We summarize them here.
\begin{itemize}

\item[(i)]  In Section \ref{sec:maincont} we showed that to study the dynamics of Population EM for Model 1, it is sufficient to study
\begin{equation}\label{eq:popEMsymm2l}
\pt{t+1} = \mathbb{E} ( (2\w_d(\Y, \pt{t}) -1 ) \Y),
\end{equation}
where $\w_d(\y, \pt{t})=\frac{\phi(\y - \pt{t}; I)}{\phi(\y - \pt{t}; I) + \phi(\y + \pt{t}; I)}$ and $\Y \sim \frac{1}{2} N(- \stt, \vI_d) + \frac{1}{2} N( \stt, \vI_d)$.

\item[(ii)] In Section \ref{sec:maincont} and \ref{sec:connection1} we showed that to study the dynamics of Population EM for Model 2, it is sufficient to study
\begin{eqnarray}
\pa{t+1}&=&\frac{\pq{t+1}(1-2\pp{t+1})}{2\pp{t+1}(1-\pp{t+1})}, \nonumber \\
\pb{t+1}&=&\frac{\pq{t+1}}{2\pp{t+1}(1-\pp{t+1})}.\label{equ:iteb2}
\end{eqnarray}
where
\begin{eqnarray*}
\pq{t+1}
&=&\bbE \w_d(\Y-\pa{t}, \pb{t})\Y , \\
\pp{t+1} &=&\bbE \w_d(\Y-\pa{t}, \pb{t}),
\end{eqnarray*}
where $Y \sim \frac{1}{2} N(- \stt, \vI_d) + \frac{1}{2} N( \stt, \vI_d)$.

\item [(iii)]  According to Lemma \ref{lem:connectionpopEMModel1and2}, \eqref{equ:iteb2} reduces to \eqref{eq:popEMsymm2l}, if $\pa{0}=\v0$. In other words, if we set $\pa{0}=\v0$, then $\pa{t}=\v0$ and $\pb{t} = \pt{t}$. This in turn implies that if we analyze the convergence dynamics of \eqref{equ:iteb2}, we immediately obtain the convergence of \eqref{eq:popEMsymm2l} by setting $\pa{0}=\v0$.

\item[(iv)] If $\pbeta{t}$ denotes the angle between $\stt$ and $\pb{t}$, then we proved in Appendix \ref{sec:proof:thm:convergencerate1} that for \eqref{equ:iteb2} we have
\begin{eqnarray}
|\sin \pbeta{t+1}| \leq \kappa_\beta |\sin \pbeta{t}|.\n
\end{eqnarray}
The same is true for $\pa{0}=\v0$ initialization.
\item[(v)]According to Lemma \ref{lemma:upperboundEMestimates} and Lemma \ref{lem:lowerboundbt}, we have $\|\pb{t}\|\in [c_{L, 1}, c_{U, 3}]$. According to Lemma \ref{lem:planarestimates}, we have $\ptt{1, t}\in [\ptt{1, 0}, \|\stt\|]$. The same is true for $\pa{0}=\v0$ initialization.
\end{itemize}

Note that we can employ Theorem \ref{thm:convergenceratemagnitude} to claim that (by setting $\pa{t}=\v0$) if $\langle \pb{t}, \stt \rangle> 0$, then for the symmetric case, there exists $T_0$ such that for every $t>T_0$,
\[
|\pb{t+1} - \stt| \leq \kappa_b^2 |\pb{t}- \stt|.
\]
However, our claim in Theorem \ref{thm:popEMsymmfp1} is stronger. In fact we would like to show that for the symmetric case the geometric convergence starts at iteration 1. We use the notations and equations developed in Appendix \ref{sec:notation}. In particular, we use the rotation matrix $\vU_t$ introduced there and rotate all the vectors $\pb{t}$ and $\pq{t}$ with $\vU_t$. Note that according to Lemma \ref{lemma:projection} we know that $\pb{t+1}$ lies in the span of $\stt$ and $\pb{t}$ and hence, $\tpbi{{\langle t\rangle, i}, t+1}=0$ for $i \geq 3$. Therefore we only need to consider the first two coordinates.
According to \eqref{eq:tildeiterationab2} and \eqref{eq:q2thetaS}, we have
\begin{eqnarray}\label{eq:bt2generic}
|\tpbi{{\langle t\rangle, 2}, t+1} - \ptt{2, t}| &=& \frac{|\tpqi{{\langle t\rangle, 2}, t+1}|}{ 2 \funcP (0, \|\pb{t}\|, \ptt{1, t} ) (1-\funcP(0, \|\pb{t}\|, \ptt{1, t}))} \nonumber \\
&=& |\ptt{2, t}| \left|\frac{ S(0, \|\pb{t}\|, \ptt{1, t} ) }{ 2 \funcP (0, \|\pb{t}\|, \ptt{1, t} ) (1-\funcP(0, \|\pb{t}\|, \ptt{1, t})) }-1\right| \nonumber \\
&=& |\ptt{2, t}| |2S(0, \|\pb{t}\|, \ptt{1, t} )-1|,
\end{eqnarray}
where the last equality is due to the fact that $\funcP(0, \|\pb{t}\|, \ptt{1, t})=\frac{1}{2}$. Also, according to \eqref{eq:upperboundforS} we have
\begin{equation}
\kappa_s \triangleq \sup_{\tpai{1, t}\in [0, U_a] , \|\pb{t}\| \in [L_b, U_b], \ptt{1, t} \in [L_\theta, \|\stt\|]} 1-2S(\tpai{1, t}, \|\pb{t}\|, \ptt{1, t}) <1.\n
\end{equation}
Hence, let $U_a=0, L_b=c_{L, 1}, U_b=c_{L, 3}$ and $L_{\theta}=\ptt{1, 0}$, we conclude that
\begin{equation}\label{eq:secondcoordinate2}
|\tpbi{{\langle t\rangle, 2}, t+1}- \ptt{2, t}| \leq \kappa_s |\ptt{2, t}|.
\end{equation}
Similarly, employing \eqref{eq:tildeiterationab2} for the first coordinate and using the fact that $\funcP(0, \|\pb{t}\|, \ptt{1, t}) = \frac{1}{2}$, we obtain
\begin{eqnarray}\label{eq:firstcoordinateF1}
|\tpbi{{\langle t\rangle, 1}, t+1}-\ptt{1, t} | = | 2 \funcQ (0, \|\pb{t}\|, \ptt{1, t}) - \ptt{1, t}|= |F(\|\pb{t}\|, \ptt{1, t})- \ptt{1, t} |.
\end{eqnarray}
Note that the last equality is due to \eqref{eq:QFrelation}. According to Lemma \ref{lemma:1a0conratepre}, we know that there exists $\kappa_b''\in (0, 1)$ which is a function of only $L_b, U_b, L_\theta, \|\stt\|$ such that
\begin{equation}\label{eq:firstcoordinateF2}
|F(x_b, x_\theta)-x_\theta|\leq \kappa_b'' |x_b-x_\theta|, \forall x_b\in [L_b, U_b], x_\theta\in [L_\theta, \|\stt\|].
\end{equation}
Let  $L_b=c_{L, 1}, U_b=c_{L, 3}$ and $L_{\theta}=\ptt{1, 0}$. Combining \eqref{eq:secondcoordinate2}, \eqref{eq:firstcoordinateF1} and \eqref{eq:firstcoordinateF2} completes the proof.

\subsection{Proof of Theorem \ref{thm:hyperplaneinit1}} \label{sec:proofthm5}

\subsubsection{Main Steps of the Proof}
The proof for the case $\langle \pb{0}, \stt \rangle =0$ is very different from the proof of the case  $\langle \pb{0}, \stt \rangle \neq 0$. It seems that the convergence of the algorithm to its stationary point may not be geometric and hence proof ideas we developed for the case $\langle \pb{0}, \stt\rangle \neq 0$ are not applicable here. Hence, we prove Theorem \ref{thm:convergenceratemagnitude} using the following strategy:

\begin{itemize}

\item [(i)]We first characterize all the stationary points of Population EM. Let $(\va, \vb)$ denote the stationary points and we show that $\va =\v0$ and $\vb \in \{- \stt, \v0, \stt \}$. This is discussed in Appendix \ref{ssec:fixedpointsEM}.

\item[(ii)] We then show that any accumulation point of $\{(\pa{t}, \pb{t})\}$ is one of the stationary points. Let $(\va^\infty, \vb^\infty)$ denote any accumulation point. This is discussed in (i) in Appendix \ref{ssec:proofconvergenceEMgeneral}.

\item [(iii)] We show that if $\langle \pb{0}, \stt \rangle =0$, $\vb^\infty$ can not converge to $-\stt$ or $\stt$. Hence, the algorithm has to converge to $\v0$. Since $\va^\infty=\v0$ for all stationary points, we have $\{\pa{t}, \pb{t}\}$ converges to $(\v0, \v0)$. This is discussed in (ii) in Appendix \ref{ssec:proofconvergenceEMgeneral}

\end{itemize}

\subsubsection{Characterizing the Fixed Points of Population EM}\label{ssec:fixedpointsEM}

 First note that if we write the iterations of Population EM in terms of $\pa{t}$ and $\pb{t}$ we obtain
\begin{eqnarray}
\pa{t+1}&=&\frac{\pq{t+1}(1-2\pp{t+1})}{2\pp{t+1}(1-\pp{t+1})}, \n \\
\pb{t+1}&=&\frac{\pq{t+1}}{2\pp{t+1}(1-\pp{t+1})}, \n
\end{eqnarray}
where
\begin{eqnarray}
\pq{t+1}&=&\int \w_d(\y-\pa{t}, \pb{t})\y\pdf{d, \y, \stt}\dif\y, \n\\
\pp{t+1}&=&\int \w_d(\y-\pa{t}, \pb{t})\pdf{d, \y, \stt}\dif\y.\n
\end{eqnarray}
If $(\pq{t}, \pp{t}, \pa{t}, \pb{t})$ converges to $(\vfuncq, \vfuncp, \va, \vb)$, then it is straightforward to show that
\begin{eqnarray}
\va&=&\frac{\vfuncq(1-2\funcp)}{2\funcp(1-\funcp)}\label{equ:defva}, \\
\vb&=&\frac{\vfuncq}{2\funcp(1-\funcp)}\label{equ:defvb}, \\
\vfuncq
&=&\int \w_d(\y-\va, \vb)\y\pdf{d, \y, \stt}\dif\y, \label{equ:defvq}\\
\vfuncp&=&\int \w_d(\y-\va, \vb)\pdf{d, \y, \stt}\dif\y.\label{equ:defvp}
\end{eqnarray}
Hence, the main step of the proof is to characterize the solutions of these four equations. We first consider the one-dimensional setting in which $Y \in \mathbb{R}$ and prove the following two facts:
\begin{itemize}
\item[(i)] The only feasible solution for $a$ is zero.
\item[(ii)] We then set $a=0$ and show that the only possible solutions for $b$ are $-\theta^*, 0, \theta^*$.
\end{itemize}
We should prove the above two by considering the following four different cases: (1) $a\geq 0, b\geq 0$, (2)  $a\geq 0, b\leq 0$, (3) $a\leq 0, b\geq 0$, (4) $a\leq 0, b\leq 0$. Since the four cases are similar we focus on the first case only, i.e., $a\geq 0, b\geq 0$. To prove that the only possible solution of $a$ is zero, note that
\eqref{equ:defva} can be written as
\begin{equation}\label{eq:amustbezero}
a = \frac{\funcQ(a, b, \theta^*)(1-2\funcP(a, b, \theta^*))}{2\funcP(a, b, \theta^*)(1-\funcP(a, b, \theta^*))} \overset{(1)}{\leq} \kappa_a a.
\end{equation}
where $\kappa_a <1$. Note that Inequality (1) is a result of Lemma \ref{lemma:aconvergeupperbound}. Note that \eqref{eq:amustbezero} implies that $a$ must be zero.

 The only remaining step is to examine the solutions for $b$. It is straightforward to prove that $\funcP(0, b, \theta^*) =\frac{1}{2}$. Hence, we can simplify \eqref{equ:defvb} to
\begin{eqnarray}\label{eq:fixedpointb}
b\ =\ 2 \funcQ(0, b, \theta^*) \ =\  F(b, \theta^*),
\end{eqnarray}
where the last equality is due to \eqref{eq:QFrelation}. The following lemma enables us to characterize the solutions of \eqref{eq:fixedpointb}.
\begin{lemma}\label{lem:Fpropert1}
$F(x_b, x_\theta)$ is a concave function of $x_b \geq 0 $. Furthermore, we have the following: (i) $F(0, x_\theta)=0$, (ii) $F(x_\theta, x_\theta) = x_\theta$.
\end{lemma}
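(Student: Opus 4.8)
The plan is to put $F$ into a form in which its dependence on $x_b$ is transparent, and then read off the three assertions. Since
\[
2\w(z,x_b)-1 \ = \ \frac{e^{zx_b}-e^{-zx_b}}{e^{zx_b}+e^{-zx_b}} \ = \ \tanh(zx_b),
\]
the definition \eqref{eq:Fdefinition} becomes
\[
F(x_b,x_\theta) \ = \ \int (y+x_\theta)\,\tanh\!\bigl((y+x_\theta)x_b\bigr)\,\phi(y)\,\dif y .
\]
From this single expression all three claims follow; the concavity statement is the only one that requires any real work.

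For concavity, I would differentiate twice under the integral sign with respect to $x_b$. Writing $c \triangleq y+x_\theta$, this is legitimate because $c\tanh(cx_b)\phi(y)$ and its first two $x_b$-derivatives are bounded, uniformly in $x_b$, by $|c|\phi(y)$, $c^2\phi(y)$ and $2|c|^3\phi(y)$ respectively (using $|\tanh|\le 1$ and $0\le 1-\tanh^2\le 1$), all integrable in $y$. Using $\tfrac{d}{dz}\tanh z = 1-\tanh^2 z$ one gets $\partial_{x_b}\bigl[c\tanh(cx_b)\bigr] = c^2\bigl(1-\tanh^2(cx_b)\bigr)$ and hence
\[
\frac{\partial^2 F}{\partial x_b^2}(x_b,x_\theta)
\ = \
-2\int (y+x_\theta)^3\,\tanh\!\bigl((y+x_\theta)x_b\bigr)\,\bigl(1-\tanh^2\!\bigl((y+x_\theta)x_b\bigr)\bigr)\,\phi(y)\,\dif y .
\]
The key observation is that for $x_b\ge 0$ the sign of $\tanh((y+x_\theta)x_b)$ agrees with that of $y+x_\theta$, so $(y+x_\theta)^3\tanh((y+x_\theta)x_b)\ge 0$ pointwise; combined with $1-\tanh^2\ge 0$ and $\phi>0$, the integrand is nonnegative, whence $\partial^2_{x_b}F\le 0$ on $[0,\infty)$. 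Thus $F(\cdot,x_\theta)$ is concave there.

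For part (i), setting $x_b=0$ makes $\tanh(0)=0$, so $F(0,x_\theta)=0$. For part (ii), I would invoke the identity \eqref{eq:QFrelation}, which gives $F(x_b,x_\theta) = 2\funcQ(0,x_b,x_\theta) = 2\int \w(y,x_b)\,y\cdot\tfrac12\bigl(\phi(y-x_\theta)+\phi(y+x_\theta)\bigr)\,\dif y$; at $x_b=x_\theta$ the weight collapses, since $\w(y,x_\theta) = \phi(y-x_\theta)/\bigl(\phi(y-x_\theta)+\phi(y+x_\theta)\bigr)$ yields $\w(y,x_\theta)\cdot\tfrac12\bigl(\phi(y-x_\theta)+\phi(y+x_\theta)\bigr) = \tfrac12\phi(y-x_\theta)$, whence $F(x_\theta,x_\theta) = \int y\,\phi(y-x_\theta)\,\dif y = x_\theta$, the mean of $N(x_\theta,1)$.

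I do not anticipate a genuine obstacle. The mathematical content is concentrated in the pointwise sign argument for $\partial^2_{x_b}F$; the remainder is the routine justification of differentiation under the integral sign together with elementary Gaussian moment computations for parts (i) and (ii).
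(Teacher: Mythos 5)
Your proposal is correct and follows essentially the same route as the paper: both rewrite $F$ via \eqref{eq:QFrelation} as an integral of $(y+x_\theta)\tanh((y+x_\theta)x_b)$ against $\phi(y)$, differentiate twice in $x_b$, and conclude concavity from the pointwise sign of $(y+x_\theta)^3\tanh((y+x_\theta)x_b)\bigl(1-\tanh^2((y+x_\theta)x_b)\bigr)$ (the paper phrases this after a shift $y\mapsto y-x_\theta$, but it is the same observation), with (i) and (ii) handled by the same elementary Gaussian computations. Your added justification of differentiation under the integral sign is a minor strengthening the paper omits.
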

The proof of this lemma is presented in the Appendix \ref{sec:proofoflemmaFpropert1}. It is straightforward to use the above properties and show that $F(x_b, x_\theta)$ has in fact the shape that is exhibited in Figure \ref{fig:funcF}, which proves our claim in the one dimensional setting.

\begin{figure}
\begin{center}
\includegraphics[width=8cm]{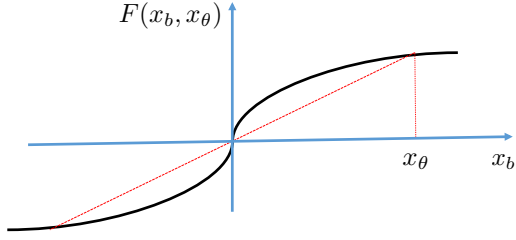}
\vspace{.1cm}
\caption{$F(x_b, x_\theta)$ as a function of $x_b$ when $x_\theta$ is fixed. The black line represents the curve of function $F$ and red line represents the diagonal line: $y=x$. }
\label{fig:funcF}
\end{center}
\end{figure}
To extend the proof to higher dimensions, we rotate the coordinates. Suppose that the fixed point is $\va, \vb, \vfuncp, \vfuncq$. Let $\tilde{M}$ denote a rotation for which the following two hold: (i)  $\tb \triangleq \tilde{M} \vb = (\|\vb\|_2, 0, 0, \ldots, 0)$ and (ii) $\tstt\triangleq \tilde{M} \stt = (\tstti{1}, \tstti{2}, 0 , \ldots, 0)$. Define $\tq = \tilde{M}\vfuncq$ and $\ta = \tilde{M}\va$. Lemma \ref{lemma:rotate} shows that if we let $\tb, \ta, \tq, \vfuncp$ denote the corresponding fixed point in the new coordinates, then they satisfy the same equations, i.e.,
\begin{eqnarray}
\ta&=&\frac{\tq(1-2\vfuncp)}{2\vfuncp(1-\vfuncp)}\label{equ:defta}, \\
\tb&=&\frac{\tq}{2\vfuncp(1-\vfuncp)}\label{equ:deftb}, \\
\tq
&=&\int \w_d(\y-\ta, \tb)\y\pdf{d, \y, \stt}\dif\y, \label{equ:deftq}\\
\vfuncp&=&\int \w_d(\y-\ta, \tb)\pdf{d, \y, \stt}\dif\y.\label{equ:deftp}
\end{eqnarray}
First, it is straightforward to employ \eqref{equ:defta} and \eqref{equ:deftb} and confirm that $\forall i \geq 2$ we have
\begin{eqnarray}
\tqi{i}&=&2\tbi{i}\vfuncp(1-\vfuncp)\ =\ 0, \n\\
\tai{i}&=& \tbi{i}(1-2\vfuncp)\ =\ 0.\n
\end{eqnarray}
Hence, with $\tstt_i=0, \forall i\geq 3$, we only need to consider the first two coordinates. Our goal is to prove the following two statements:
\begin{itemize}
\item[(i)] If $\tstti{2}\neq 0$, then $\tbi{1} =0$ and $\tai{1} =0$. In other words, both $\va$ and $\vb$ are zero.
\item[(ii)] If $\tstti{2} =0$, then the problem will be reduced to the one-dimensional problem that we have already discussed. Hence, it is straightforward to characterize the fixed points.
\end{itemize}

Here we focus on case (i), i.e., $\tstti{2} \neq 0$. Since $\tqi{2}=0$, we have
\begin{eqnarray}
0=\tqi{2}&=&\int \w_d(\y-\ta, \tb)y_2\pdf{d, \y, \tstt}\dif\y\n\\
&=&\frac{1}{2}\int \w(y_1-\tai{1}, \|\vb\|)\phi(y_1-\tstti{1}) \dif y_1\int y_2\phi(y_2-\tstti{2})
  \dif y_2\n\\
&~&+\frac{1}{2}\int \w(y_1-\tai{1}, \|\vb\|)\phi(y_1+\tstti{1}) \dif y_1\int y_2\phi(y_2+\tstti{2})
  \dif y_2\n\\
&=&\tstti{2}\int \w(y_1-\tai{1}, \|\vb\|)\dpdf{y_1, \tstti{1}} \dif y_1\n\\
&=&\tstti{2}\int_{0}^{+\infty}(\w(y_1-\tai{1}, \|\vb\|)-\w(-y_1-\tai{1}, \|\vb\|))\dpdf{y_1, \tstti{1}}
  \dif y_1\n\\
&=&\tstti{2}\int_{0}^{+\infty}\frac{e^{2y_1\|\vb\|}-e^{-2y_1\|\vb\|}}{e^{2y_1\|\vb\|}+e^{-2y_1\|\vb\|}+e^{2\|\ta\|\|\vb\|}+e^{-2\|\ta\|\|\vb\|}}\frac{1}{2\sqrt{2\pi}}(e^{-(y_1-\tstti{1})^2/2}-e^{-(y_1+\tstti{1})^2/2})
  \dif y_1.\n
\end{eqnarray}
It is straightforward to see that since $\tstti{2} \neq 0$, then $\tstti{1}=0$. Hence, from \eqref{equ:defta} and the definitions of $\funcQ$ and $\funcP$ functions given in Appendix \ref{sec:notation} we have
\begin{eqnarray}
\|\va\|=\|\ta\|= |\tai{1}|=|\tbi{1}||1-2p|=\frac{\funcQ(\|\ta\|, \|\vb\|, 0)(1-2\funcP(\|\ta\|, \|\vb\|, 0))}{2\funcP(\|\ta\|, \|\vb\|, 0)(1-\funcP(\|\ta\|, \|\vb\|, 0))}\leq \kappa_a\|\ta\|, \n
\end{eqnarray}
where the last inequality is due to Lemma \ref{lemma:aconvergeupperbound}. We know that $\kappa_a<1$.
Therefore we have $\va=\v0$ and
\begin{eqnarray}
\|\vb\|\ =\ \tbi{1}&=&\frac{\tqi{1}}{2\vfuncp(1-\vfuncp)}\n\\
&=&2\tqi{1}\n\\
&=&2\int \w(y-\ta, \tb)y_1 \pdf{d, \y, \tstt}\dif\y\n\\
&=&2\int\frac{e^{y_1\|\vb\|}}{e^{y_1\|\vb\|}+e^{-y_1\|\vb\|}}y_1\frac{1}{\sqrt{2\pi}}e^{-y_1^2/2}
  \dif y\n\\
&=&0.\n
\end{eqnarray}
Thus the only solution is $$(\va, \vb)=(\v0, \v0).$$

\subsubsection{Proof of Convergence for Population EM}\label{ssec:proofconvergenceEMgeneral}
We can break the proof into the following steps. Let $\{ \pa{t}, \pb{t}\}_{t=1}^\infty$ denote all the estimates of the Population EM algorithm.
\begin{itemize}
\item[(i)] We first prove that every accumulation point of  $\{ \pa{t}, \pb{t}\}_{t=1}^\infty$ satisfies the fixed point equations:
\begin{eqnarray}
\va&=&\frac{\vfuncq(1-2\vfuncp)}{2\vfuncp(1-\vfuncp)}, \n\\
\vb&=&\frac{\vfuncq}{2\vfuncp(1-\vfuncp)}, \n\\
\vfuncq
&=&\mathbb{E} \w_d(\Y-\va, \vb) \Y \n\\
\vfuncp&=&\mathbb{E} \w_d(\Y-\va, \vb), \n
\end{eqnarray}
where $\Y \sim \frac{1}{2} N(-\stt, I)+ \frac{1}{2} N(\stt, I)$. This proof is presented in Appendix \ref{sec:Laccumulationpoitnsproof}.

We have already proved that these fixed point equations only have the following solutions : $\va=\v0$ and $\vb \in \{ -\stt, \v0, \stt\}$. We proved in Lemma \ref{lemma:sign} that $\sgn(\langle \pb{t}, \stt \rangle)= \sgn(\langle \pb{t+1}, \stt \rangle)$. Hence, we conclude that $\langle \pb{t}, \stt \rangle =0$ for every $t$. The only possible fixed point is hence $(\v0, \v0)$. In summary, in the first step we prove that it only has one accumulation point, that is $(\v0, \v0)$.

\item[(ii)] Next we prove that $\{ \pa{t}, \pb{t}\}_{t=1}^\infty$ is a convergent sequence. Suppose that the sequence does not converge to $(\v0, \v0)$, then there exists an $\epsilon$ such that for every $T$, there exists a $t>T$ such that
\[
\|(\pa{t}, \pb{t})\|_2 > \epsilon.
\]
We construct a subsequence of our sequence in the following way: Set $T=1$ and pick $t_1>T$ such that $\|(\pa{t_1}, \pb{t_1})\|_2 > \epsilon$. Now, set $T= t_1+1$, and pick $t_2> T$ such that $\|(\pa{t_2}, \pb{t_2})\|_2 > \epsilon$. Continue the process until we construct a sequence $\{(\pa{t_n}, \pb{t_n})\}_{n=1}^{\infty}$. According to Lemma \ref{lemma:upperboundEMestimates} $\{(\pa{t_n}, \pb{t_n})\}_{n=1}^{\infty}$ is in a compact set and has a convergent subsequence. But according to part (i) the converging subsequence of this sequence must converge to $(\v0, \v0)$ which is in contradiction with the construction of the sequence $\{(\pa{t_n}, \pb{t_n})\}_{n=1}^{\infty}$. Hence $\{ \pa{t}, \pb{t}\}_{t=1}^\infty$ must be a convergent sequence and converges to $(\v0, \v0)$

\end{itemize}

\subsection{Proof of Theorem \ref{lemma:pconv}}\label{sec:proofsampleEM-populationfiniett}
Let $\sa{t}=\frac{\sm{1, t}+\sm{2, t}}{2}$ and $\sb{t}=\frac{\sm{2, t}-\sm{1, t}}{2}$. Then the iteration functions based on $(\sa{t}, \sb{t})$ are the following:
\begin{eqnarray}
\sa{t+1}&=&\frac{\sq{t+1}(1-2\sp{t+1})}{2\sp{t+1}(1-\sp{t+1})}+\frac{\bar{\y}}{2(1-\sp{t+1})}, \label{equ:iteha}\\
\sb{t+1}&=&\frac{\sq{t+1}}{2\sp{t+1}(1-\sp{t+1})}-\frac{\bar{\y}}{2(1-\sp{t+1})}.\label{equ:itehb}
\end{eqnarray}
where
\begin{eqnarray}
\bar{\y}&=&\frac{1}{n}\sum_{i=1}^n\y_i, \n\\
\sq{t+1}&=&\frac{1}{n}\sum_{i=1}^n \w_d(\y_i-\sa{t}, \sb{t})\y_i, \label{equ:itehq}\\
\sp{t+1}&=&\frac{1}{n}\sum_{i=1}^n \w_d(\y_i-\sa{t}, \sb{t}), \label{equ:itehp}
\end{eqnarray}
where
\begin{eqnarray}
\w_d(\y-\vx_a, \vx_b)&=&\frac{\phi_d(\y-\vx_a-\vx_b)}{\phi_d(\y-\vx_a+\vx_b)+\phi_d(\y-\vx_a-\vx_b)}\n\\
&=&\frac{e^{\dotp{\y-\vx_a, \vx_b}}}{e^{\dotp{\y-\vx_a, \vx_b}}+e^{-\dotp{y-\vx_a, \vx_b}}}.\n
\end{eqnarray}
Therefore $\sq{t}$ and $\sp{t}$ are the empirical versions of $\pq{t}$ and $\pp{t}$ respectively. Our first goal is, for each $i \in \{1,2\}$, to compare the Population EM sequence $(\pm{i,t})_{t\geq0}$ to the Sample-based EM sequence $(\sm{i,t})_{t\geq0}$, provided that the initial values $\pm{i,0}$ and $\sm{i,0}$ are the same.  We prove that
\begin{eqnarray}
\sa{t}{\rightarrow}\pa{t} \ \text{in probability}, \quad \text{and}\quad \sb{t}{\rightarrow}\pb{t}\ \text{in probability, \quad as}\ n\rightarrow \infty.\label{equ:conhab}
\end{eqnarray}

We prove by induction. For $t=0$, it is clear that \eqref{equ:conhab} holds because both Population EM and Sample-based EM start with the same initialization. For $t=1$, by Weak Large Law Numbers (WLLN), we have
\begin{eqnarray}
\sq{1}&=&\frac{1}{n}\sum_{i=1}^n\w_d(\y_i-\sa{0}, \sb{0})\y_i\conp \bbE\w_d(\y-\sa{0}, \sb{0})y\ =\ \pq{1}, \n\\
\sp{1}&=&\frac{1}{n}\sum_{i=1}^n\w_d(\y_i-\sa{0}, \sb{0})\conp \bbE \w_d(\y-\sa{0}, \sb{0})\ =\ \pp{1}, \n\\
\bar{\y}&=&\frac{1}{n}\sum_{i=1}^n\y_i\conp \bbE \y\ =\ \v0.\n
\end{eqnarray}
Since $\pp{1}\in (0, 1)$, by employing the continuous mapping theorem, we have
\begin{eqnarray}
\sa{1}&=&\frac{\sq{1}(1-2\sp{1})}{2\sp{1}(1-\sp{1})}+\frac{\bar{\y}}{2(1-\sp{1})}\ \rightarrow\  \frac{\pq{1}(1-2\pp{1})}{2\pp{1}(1-\pp{1})}=\pa{1}\ \text{in probability}, \n\\
\sb{1}&=&\frac{\sq{1}}{2\sp{1}(1-\sp{1})}+\frac{\bar{\y}}{2(1-\sp{1})}\ \rightarrow\  \frac{\pq{1}}{2\pp{1}(1-\pp{1})}\ =\ \pb{1}\ \text{in probability}.\n
\end{eqnarray}
Therefore  \eqref{equ:conhab} holds for $t=1$. Now we assume that \eqref{equ:conhab} holds for $t\geq 1$, and our goal is to prove it for $t+1$. Note that
\begin{eqnarray}
\left\|\frac{\partial \w_d(\y-\vx_a, \x_b)}{\partial \x_a} \right\|&=& \left\|-\frac{2\vx_b}{(e^{\dotp{\y, \vx_b}-\dotp{\vx_a, \x_b}}+e^{-\dotp{\y, \x_b}+\dotp{\x_a, \x_b}})^2} \right\|\n\\
&\leq&\frac{\|\x_b\|}{2}, \n\\
\left\|\frac{\partial \w_d(\y-\x_a, \x_b)}{\partial \x_b}\right\|&=&\left\|\frac{2(\y-\x_a)}{(e^{\dotp{\y-\x_a, \x_b}}+e^{-\dotp{\y-\x_a, \x_b}})^2} \right\| \n\\
&\leq&\|\frac{\y-\x_a}{2}\|\n\\
&\leq&\frac{\|\y\|+\|\x_a\|}{2}.\n
\end{eqnarray}
Therefore we have
\begin{eqnarray}
&&|\pp{t+1}-\sp{t+1}|=|\bbE \w_d(\y-\pa{t}, \pb{t})-\frac{1}{n}\sum_{i=1}^n\w_d(\y_i-\sa{t}, \sb{t})|\n\\
&\leq&|\bbE \w_d(\y-\pa{t}, \pb{t})-\frac{1}{n}\sum_{i=1}^n\w_d(\y_i-\pa{t}, \pb{t})|+\left|\frac{1}{n}\sum_{i=1}^n\w_d(\y_i-\pa{t}, \pb{t})-\frac{1}{n}\sum_{i=1}^n\w_d(\y_i-\sa{t}, \sb{t})\right|\n\\
&\leq&|\bbE \w_d(\y-\pa{t}, \pb{t})-\frac{1}{n}\sum_{i=1}^n\w_d(\y_i-\pa{t}, \pb{t})|\n\\
&&+\left|\frac{1}{n}\sum_{i=1}^n\left(\frac{\|\pb{t}_{\xi}\|}{2}\|\sa{t}-\pa{t}\|+\frac{\|\y_i\|+\|\pa{t}_{\xi}\|}{2}\|\sb{t}-\pb{t}\|\right)\right|\n\\
&\leq&|\bbE \w_d(\y-\pa{t}, \pb{t})-\frac{1}{n}\sum_{i=1}^n\w_d(\y_i-\pa{t}, \pb{t})|+\frac{1}{2}\frac{\sum_{i=1}^n\|\y_i\|}{n}\|\sb{t}-\pb{t}\|\n\\
&&+\left(\frac{\|\pb{t}_{\xi}\|}{2}\|\sa{t}-\pa{t}\|+\frac{\|\pa{t}_{\xi}\|}{2}\|\sb{t}-\pb{t}\|\right), \n
\end{eqnarray}
where $$\pa{t}_{\xi}=\xi \pa{t}+(1-\xi)\sa{t}, \quad \text{and}\quad \pb{t}_{\xi}=\xi \pb{t}+(1-\xi)\sb{t}, \ \text{for some}\ \xi\in [0, 1].$$
By WLLN, induction assumption and $$\|\pa{t}_{\xi}\|\leq 2\|\pa{t}\|+\|\pa{t}-\sa{t}\|, \quad \text{and}\quad \|\pb{t}_{\xi}\|\leq 2\|\pb{t}\|+\|\pb{t}-\sb{t}\|, $$
we have
\begin{eqnarray}
|\pp{t+1}-\sp{t+1}|&\leq&|\bbE\w_d(\y-\pa{t}, \pb{t})-\frac{1}{n}\sum_{i=1}^n\w_d(\y_i-\pa{t}, \pb{t})|+\frac{2\|\pb{t}\|+\|\pb{t}-\sb{t}\|}{2}\|\sa{t}-\pa{t}\|\n\\
&~&+\frac{2\|\pa{t}\|+\|\pa{t}-\sa{t}\|}{2}\|\sb{t}-\pb{t}\|+\frac{1}{2}\frac{\sum_{i=1}^n\|\y_i\|}{n}\|\sb{t}-\pb{t}\| \rightarrow 0\ \text{in probability}.\n
\end{eqnarray}
Similarly, we have
\begin{eqnarray}
\lefteqn{\|\pq{t+1}-\sq{t+1}\|=\|\bbE\w_d(\y-\pa{t}, \pb{t})\y-\frac{1}{n}\sum_{i=1}^n\w_d(\y_i-\sa{t}, \sb{t})\y_i\|}\n\\
&\leq&\|\bbE \w_d(\y-\pa{t}, \pb{t})\y-\frac{1}{n}\sum_{i=1}^n\w_d(\y_i-\pa{t}, \pb{t})\y_i\|\n\\
&&+\|\frac{1}{n}\sum_{i=1}^n\w_d(\y_i-\pa{t}, \pb{t})y_i-\frac{1}{n}\sum_{i=1}^n\w_d(\y_i-\sa{t}, \sb{t})\y_i\|\n\\
&\leq&\|\bbE\w_d(\y-\pa{t}, \pb{t})\y-\frac{1}{n}\sum_{i=1}^n\w_d(\y_i-\pa{t}, \pb{t})\y_i\|\n\\
&&+\left\|\frac{1}{n}\sum_{i=1}^n(\frac{\|\pb{t}_{\xi}\|}{2}\|\sa{t}-\pa{t}\|+\frac{\|\y_i\|+\|\pa{t}_{\xi}\|}{2}\|\sb{t}-\pb{t}\|)\y_i\right\|\n\\
&\leq&\|\bbE\w_d(\y-\pa{t}, \pb{t})\y-\frac{1}{n}\sum_{i=1}^n\w_d(\y_i-\pa{t}, \pb{t})\y_i\|+\left\|\frac{1}{2n}\sum_{i=1}^n\|\y_i\|\y_i\right\|\|\sb{t}-\pb{t}\|\n\\
&~&+(\|\pa{t}\|\|\pb{t}-\sb{t}\|+\|\pb{t}\|\|\pa{t}-\sa{t}\|+\|\pa{t}-\sa{t}\|\|\pb{t}-\sb{t}\|)\|\frac{1}{n}\sum_{i=1}^n\y_i\|.\n
\end{eqnarray}
By WLLN and induction assumption, we have
\begin{eqnarray}
\|\pq{t+1}-\sq{t+1}\|\rightarrow 0 \ \ \text{in probability}.\n
\end{eqnarray}
Therefore with $\pp{t+1}\in (0, 1)$, we have
\begin{eqnarray}
\sa{t+1}&=&\frac{\sq{t+1}(1-2\sp{t+1})}{2\sp{t+1}(1-\sp{t+1})}+\frac{\bar{y}}{2(1-\sp{t+1})}\rightarrow \frac{\pq{t+1}(1-2\pp{t+1})}{2\pp{t+1}(1-\pp{t+1})}\ =\ \pa{t+1}\ \text{in probability}, \n\\
\sb{t+1}&=&\frac{\sq{t+1}}{2\sp{t+1}(1-\sp{t+1})}+\frac{\bar{y}}{2(1-\sp{t+1})}\rightarrow \frac{\pq{t+1}}{2\pp{t+1}(1-\pp{t+1})}\ =\ \pb{t+1}\ \text{in probability}.\n
\end{eqnarray}
Hence \eqref{equ:conhab} holds for $t+1$. With induction, we completes the proof of this lemma.

\subsection{Proof of Theorem \ref{lemma:empconverge2}}\label{sec:proofofempconverge2}

\subsubsection{Roadmap of the Proof}
The main idea of the proof is simple. We first show that if we initialize Sample-based EM in a way that $\sa{0}$ is small enough and $\sb{0}$ is in small neighborhood of $\stt$, then the sampled based EM will converge to a point whose distance from $\stt$ is $O(\sqrt{d/n})$ with probability converging to 1 as $n \rightarrow \infty$. Let's call this neighborhood of $(\va, \vb)$, $\mathcal{N}_{\v0, \stt}$.

According to Theorem \ref{thm:convergenceratemagnitude} and \ref{thm:convergencerate1} we know that Population EM converges to the true parameter under quite general initialization. Hence, there exists an iteration $T_0$ at which the estimate of Population EM is in $\mathcal{N}_{\v0, \stt}$. We know from Theorem \ref{lemma:pconv} that at iteration $T_0$, $\sa{T_0} \rightarrow \pa{T_0}$ and $\sb{T_0} \rightarrow \pb{T_0}$ in probability. Hence, with probability converging to $1$, ($\sa{T_0}, \sb{T_0}) \in \mathcal{N}_{\v0, \stt}$, and hence $(\sa{t}, \sb{t})$ converge to a point that is at a distance $O(\sqrt{d/n})$ from $(\v0, \stt)$. In other words, if $\hat{\va}^{\infty}$ and $\hat{\vb}^{\infty}$ the limiting estimates, then
\begin{eqnarray}
\|\hat{\va}^{\infty}\| = O(\sqrt{d/n}), \nonumber \\
\|\hat{\vb}^{\infty} - \stt\| = O(\sqrt{d/n}), \n
\end{eqnarray}
with probability converging to $1$, which is equivalent to what we wanted to prove.

As is clear from the above discussion, the only challenging part is to prove that if  $(\sa{0}, \sb{0})$ is in small neighborhood of $(\v0, \stt)$, then the sampled-based EM will converge to a point whose distance from  $(\v0, \stt)$ is $O(\sqrt{d/n})$. The proof of this fact is our main goal in the rest of this proof.

We remind the reader that according to Theorems \ref{thm:convergencerate1} and \ref{thm:convergenceratemagnitude} the estimates of Population EM satisfy the following equations (if initialized properly):
\begin{eqnarray}
\|\pa{t}\|&\rightarrow & 0, \n\\
\|\pb{t}-\stt\|&\rightarrow& 0, \label{eq:populationconvergenceresult}
\end{eqnarray}
Also, we know from the arguments provided in the proof of Theorem \ref{lemma:pconv} that $\sa{t}$ and $\sb{t}$ converge to $\pa{t}$ and $\pb{t}$ in probability. Hence, we expect to have a similar equations for $\sa{t}$ and $\sb{t}$, except for probably an error term that will vanish as $n \rightarrow \infty$. The only issue that may happen is that the errors that are introduced in each iteration may accumulate and will let to a non-vanishing error for $t \rightarrow \infty$. Our first lemma shows that this does not happen.

\begin{lemma}\label{lem:expavoidserroracc}
Suppose that there exist $\kappa_a\in (0, 1), \kappa_b\in (0, 1)$ and $c_b>0$ such that for all $ t'\geq 1$, we have
\begin{eqnarray}
\|\sa{t'}\|&\leq& \kappa_a\|\sa{t'-1}\|+\ep_a, \label{eq:condition1}\\
\|\sb{t'}-\stt\|&\leq& \kappa_b\|\sb{t'-1}-\stt\|+\sqrt{c_b\|\sa{t'-1}\|}+\ep_b, \label{eq:condition2}
\end{eqnarray}
for some $\ep_a, \ep_b>0$. Then we have $\forall t\geq 0, $
\begin{eqnarray}
\|\sa{t}\|&\leq& (\kappa_a)^t\|\sa{0}\|+\frac{1}{1-\kappa_a}\ep_a, \label{eq:result1}\\
\|\sb{t}-\stt\|&\leq& (\kappa_b)^t\|\sb{0}-\stt\|+t\sqrt{c_b\|\sa{0}\|}(\max\{\sqrt{\kappa_a}, \kappa_b\})^t+\frac{1}{1-\kappa_b}\sqrt{\frac{c_b}{1-\kappa_a}\ep_a}+\frac{1}{1-\kappa_b}\ep_b\n\\
\label{eq:result2}
\end{eqnarray}
\end{lemma}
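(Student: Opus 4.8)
The plan is the usual ``unroll a contractive recursion'' argument, carried out in two stages: first close the recursion for $\|\sa{t}\|$ on its own, then feed the resulting bound into the recursion for $\|\sb{t}-\stt\|$.

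Stage one: simply iterate \eqref{eq:condition1}. Since $\kappa_a\in(0,1)$, for every $t\ge 0$
\[
  \|\sa{t}\|
  \ \le\
  \kappa_a^{\,t}\|\sa{0}\| + \ep_a\sum_{j=0}^{t-1}\kappa_a^{\,j}
  \ \le\
  \kappa_a^{\,t}\|\sa{0}\| + \frac{\ep_a}{1-\kappa_a},
\]
which is exactly \eqref{eq:result1}. This is the only place the first recursion is used; the term $\ep_a/(1-\kappa_a)$ is the ``asymptotic floor'' of $\|\sa{t}\|$.

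Stage two: write $u_{t}\triangleq\|\sb{t}-\stt\|$, so that \eqref{eq:condition2} reads $u_{t'}\le\kappa_b u_{t'-1}+\sqrt{c_b\|\sa{t'-1}\|}+\ep_b$. Iterating this contraction gives
\[
  u_t
  \ \le\
  \kappa_b^{\,t}u_0
  + \sum_{s=1}^{t}\kappa_b^{\,t-s}\Bigl(\sqrt{c_b\|\sa{s-1}\|}+\ep_b\Bigr).
\]
The $\ep_b$ part contributes at most $\ep_b\sum_{s\ge 0}\kappa_b^{\,s}=\ep_b/(1-\kappa_b)$, the fourth term of \eqref{eq:result2}. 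For the coupled term I would substitute the Stage-one bound for $\|\sa{s-1}\|$ and use subadditivity of the square root, $\sqrt{x+y}\le\sqrt{x}+\sqrt{y}$, to split
\[
  \sqrt{c_b\|\sa{s-1}\|}
  \ \le\
  \sqrt{c_b\|\sa{0}\|}\,\bigl(\sqrt{\kappa_a}\bigr)^{\,s-1}
  + \sqrt{\tfrac{c_b}{1-\kappa_a}\,\ep_a}.
\]
The constant piece again sums geometrically to at most $\tfrac{1}{1-\kappa_b}\sqrt{\tfrac{c_b}{1-\kappa_a}\ep_a}$, matching the third term of \eqref{eq:result2}. The remaining ``mixed'' sum is $\sqrt{c_b\|\sa{0}\|}\sum_{s=1}^{t}\kappa_b^{\,t-s}\bigl(\sqrt{\kappa_a}\bigr)^{s-1}$; since the exponents of the two decaying factors add, each of the $t$ summands is at most $\bigl(\max\{\sqrt{\kappa_a},\kappa_b\}\bigr)^{t-1}$, so the sum is $\le t\bigl(\max\{\sqrt{\kappa_a},\kappa_b\}\bigr)^{t}$ up to a harmless constant, which is the second term of \eqref{eq:result2}. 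Collecting the four contributions yields \eqref{eq:result2}.

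There is no genuine obstacle: the statement is a deterministic fact about two coupled scalar recursions, and everything reduces to summing partial geometric series and bounding them by their limits $1/(1-\kappa_a)$ and $1/(1-\kappa_b)$. The one step that needs a moment's thought is the coupling between the two sequences — one should not try to keep $\sqrt{c_b\|\sa{s-1}\|}$ exact but instead split it at once by subadditivity, and then recognize that the sum of products $\kappa_b^{\,t-s}\kappa_a^{(s-1)/2}$ is governed by the single worst rate $\max\{\sqrt{\kappa_a},\kappa_b\}$, which is what forces the extra factor $t$ in front of that term. That factor $t$ is precisely why \eqref{eq:result2} degrades by a polynomial factor relative to the purely geometric bound \eqref{eq:result1}.
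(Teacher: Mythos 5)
Your proof is correct and follows essentially the same route as the paper's: unroll the recursion for $\|\sa{t}\|$, substitute the resulting bound into the unrolled recursion for $\|\sb{t}-\stt\|$ via subadditivity of the square root, and bound the mixed sum $\sum_{s}\kappa_b^{t-s}(\sqrt{\kappa_a})^{s-1}$ by $t$ times the worse of the two rates. The only cosmetic point (present in the paper's proof as well) is that the summands are really bounded by $(\max\{\sqrt{\kappa_a},\kappa_b\})^{t-1}$ rather than $(\max\{\sqrt{\kappa_a},\kappa_b\})^{t}$, which you correctly flag as a harmless constant-factor discrepancy.
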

The proof of this lemma will be presented to in Appendix \ref{sec:proofexpoavoidserroracc}. According to this lemma as long as the errors that are introduced in each iteration are bounded by $\epsilon_a$ and $\epsilon_b$, the overall error will also remain bounded and are, in the worst case, proportional to $\sqrt{\epsilon_a}$ and $\epsilon_b$. Hence, if $\epsilon_\a \rightarrow 0$ and $\epsilon_b \rightarrow 0$ as $n \rightarrow \infty$, the overall errors will go to zero too. Hence, proving that \eqref{eq:condition1} and \eqref{eq:condition2} hold for $\epsilon_a \rightarrow 0$ and $\epsilon \rightarrow 0$ will complete the proof Theorem \ref{lemma:empconverge2}.

\begin{lemma}\label{lemma:empconverge}
There exists constants $\kappa_a\in (\frac{\sqrt{3}}{2}, 1), \kappa_{b}\in (0, 1);c_b>0$ and
$$\de_a\in (0, \min\{1, \frac{\sqrt{3}}{2}\|\stt\|, \frac{(1-\kappa_b)^2(1-(\kappa_a)^2)\|\stt\|^2}{4c_b}\})$$ only depending on $\stt$, such that if the initialization $(\sa{0}, \sb{0})$ satisfies
$$\|\sa{0}\|\leq \de_a, \quad \text{and}\quad \|\sb{0}-\stt\|\leq \sqrt{1-(\kappa_a)^2}\|\stt\|, $$
then $\forall t\geq 0$, we have
\begin{eqnarray}
\|\sa{t+1}\|&\leq& \kappa_a\|\sa{t}\|+\ep_a, \n\\
\|\sb{t+1}-\stt\|&\leq& \kappa_b\|\sb{t}-\stt\|+\sqrt{c_b\|\sa{t}\|}+\ep_b, \n
\end{eqnarray}
with probability at least $1-3\de$. The value of the other constants are the following
\begin{eqnarray}
c_\theta&=&4(\|\stt\|+2)\sqrt{\frac{3d+\ln(1/\de)}{n}}, \n\\
 C_\theta&=&3\|\stt\|c_\theta, \n\\
\ep_a=\ep_b&=&\frac{9C_\theta+c_\theta}{\rho(2-\rho)}+\frac{12C_\theta}{\rho(2-\rho)}+\frac{c_\theta}{2-\rho}, \n\\
\end{eqnarray}
where $\rho=\sup_{\|\vx_a\|\leq 1, \|\vx_b\|\leq \frac{3}{2}\|\stt\|}\max\{\funcP(\vx_a, \vx_b, \stt), 1-\funcP(\vx_a, \vx_b, \stt)\}\in (0, 1)$. The function $\funcP$ is defined in Appendix \ref{sec:notation}.
In addition, assume $n$ is large enough to satisfy the following conditions:
\begin{eqnarray}
C_\theta&<&\frac{\rho}{2}, \n\\
\ep_a=\ep_b&\leq&\min\{ (1-\kappa_a)\de_a, \frac{1}{2}(1-\kappa_b)\sqrt{1-(\kappa_a)^2}\|\stt\|\}.\n\\\label{eq:conditiononn}
\end{eqnarray}
\end{lemma}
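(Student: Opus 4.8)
The plan is to run a single induction on $t$ that simultaneously keeps the Sample-based iterates inside a fixed compact neighbourhood of $(\v0,\stt)$ and establishes the two stated recursions on a high-probability event. The guiding principle is that one Sample-based step is, up to an $O(\sqrt{d/n})$ error, one \emph{Population} step applied to the \emph{current} iterate, and that a single Population step contracts towards $(\v0,\stt)$ by the pointwise estimates already isolated in Lemmas~\ref{lemma:aconvergeupperbound} and~\ref{lemma:conratepre1} (rather than their trajectory versions). Concretely, by \eqref{equ:iteha}--\eqref{equ:itehp} the pair $(\sa{t+1},\sb{t+1})$ is produced from $\sp{t+1},\sq{t+1},\bar\y$ by the same rational maps that produce $(\pa{t+1},\pb{t+1})$ from $\pp{t+1},\pq{t+1},\v0$, while $\sp{t+1},\sq{t+1}$ are empirical averages of the very integrands defining $\pp{t+1},\pq{t+1}$ at $(\sa{t},\sb{t})$. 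The induction invariant is $\|\sa{t}\|\le\de_a$ and $\|\sb{t}-\stt\|\le\sqrt{1-\kappa_a^2}\,\|\stt\|$; since $\kappa_a>\tfrac{\sqrt3}{2}$ this forces $\|\sb{t}\|\le\tfrac32\|\stt\|$ and $\dotp{\sb{t},\stt}/\|\sb{t}\|\ge L_\theta>0$, so the triple $(\sa{t},\sb{t},\stt)$ always lies in a fixed compact region on which Lemmas~\ref{lemma:aconvergeupperbound} and~\ref{lemma:conratepre1} apply with $U_a=\de_a$ and with $L_b,U_b,L_\theta$ read off from the invariant.

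First I would prove a uniform deviation bound: with probability at least $1-3\de$ (a union bound over three events),
\[
  \sup_{\|\vx_a\|\le1,\ \|\vx_b\|\le\frac{3}{2}\|\stt\|}\Abs{\frac{1}{n}\sum_{i=1}^{n}\w_d(\y_i-\vx_a,\vx_b)-\bbE\,\w_d(\Y-\vx_a,\vx_b)}\le c_\theta,
\]
\[
  \sup_{\|\vx_a\|\le1,\ \|\vx_b\|\le\frac{3}{2}\|\stt\|}\Norm{\frac{1}{n}\sum_{i=1}^{n}\w_d(\y_i-\vx_a,\vx_b)\y_i-\bbE\,\w_d(\Y-\vx_a,\vx_b)\Y}\le C_\theta,\qquad\text{and}\qquad\|\bar\y\|\le c_\theta.
\]
Because $\w_d(\y-\vx_a,\vx_b)$ depends on $\vx_a$ only through $\dotp{\vx_a,\vx_b}$ and is Lipschitz in $(\vx_a,\vx_b)$ with the gradients already bounded in the proof of Theorem~\ref{lemma:pconv}, and because $\Y$ is sub-Gaussian, a Lipschitz net over the (effectively $d$-dimensional) region together with sub-Gaussian/bounded-differences concentration yields the rate $\sqrt{(3d+\ln(1/\de))/n}$; this is how $d$ enters $c_\theta$ and $C_\theta=3\|\stt\|c_\theta$. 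The first condition on $n$ in \eqref{eq:conditiononn} then keeps $\sp{t+1}$ bounded away from $0$ and $1$.

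On this event, fix an iterate $(\sa{t},\sb{t})$ in the invariant region and write $\sp{t+1}=\funcp_t+\De_p$, $\sq{t+1}=\vfuncq_t+\De_q$, $\bar\y=\De_y$ with $\funcp_t\triangleq\bbE\,\w_d(\Y-\sa{t},\sb{t})\in[1-\rho,\rho]$, $\vfuncq_t\triangleq\bbE\,\w_d(\Y-\sa{t},\sb{t})\Y$, $|\De_p|\le c_\theta$, $\|\De_q\|\le C_\theta$, $\|\De_y\|\le c_\theta$; here $\rho\in(0,1)$ is the supremum of $\max\{\funcP,1-\funcP\}$ over the region, which guarantees the maps $(\funcp,\vfuncq,\v w)\mapsto\frac{\vfuncq(1-2\funcp)}{2\funcp(1-\funcp)}\pm\frac{\v w}{2(1-\funcp)}$ are Lipschitz there. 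Hence $\sa{t+1}$ and $\sb{t+1}$ differ from \emph{one Population-EM step applied to $(\sa{t},\sb{t})$} by at most the explicit multiple of $C_\theta+c_\theta$ that defines $\ep_a=\ep_b$. For the $\boldsymbol{b}$-component of that Population step, reducing to the two-dimensional subspace $\mathrm{span}(\stt,\sb{t})$ via Lemmas~\ref{lemma:rotate} and~\ref{lemma:general-projection} and applying Lemma~\ref{lemma:conratepre1} bounds its squared distance to $\stt$ by $\kappa_b^2\|\sb{t}-\stt\|^2+c_b\|\sa{t}\|$; for the $\boldsymbol{a}$-component, Lemma~\ref{lemma:aconvergeupperbound} contracts the part along $\sb{t}$ by $\kappa_a$, and the estimate $1-2\funcP=O(\dotp{\sa{t},\sb{t}}/\|\sb{t}\|)$ controls the part orthogonal to $\sb{t}$, giving total norm $\le\kappa_a\|\sa{t}\|$. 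Combining with the perturbation bound yields the two claimed recursions, and the choice $\de_a\le\min\{1,\tfrac{\sqrt3}{2}\|\stt\|,\tfrac{(1-\kappa_b)^2(1-\kappa_a^2)\|\stt\|^2}{4c_b}\}$ together with the second condition in \eqref{eq:conditiononn} is exactly what makes $\kappa_a\de_a+\ep_a\le\de_a$ and $\kappa_b\sqrt{1-\kappa_a^2}\|\stt\|+\sqrt{c_b\de_a}+\ep_b\le\sqrt{1-\kappa_a^2}\|\stt\|$, reproducing the invariant at step $t+1$ and closing the induction.

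The main obstacle is twofold. The routine-but-load-bearing part is the uniform empirical-process bound with the correct $\sqrt{(d+\log(1/\de))/n}$ rate, and the clean propagation of this error through the rational update maps (which is what forces $\funcP$ away from $0$ and $1$ and pins down the conditions on $n$). The genuinely delicate part is the $\boldsymbol{a}$-recursion: in the Population analysis the off-axis ($\sin\beta$) contribution to $\pa{t+1}$ was controlled by the separate angle recursion of Theorem~\ref{thm:convergencerate1}, whereas here one must argue that on the small neighbourhood this contribution is already absorbed into the geometric factor $\kappa_a\|\sa{t}\|$ --- which is why $\kappa_a$ is taken in $(\tfrac{\sqrt3}{2},1)$ and $\de_a$ correspondingly small --- and checking that the interlocking choices of $\kappa_a,\kappa_b,c_b,\de_a,c_\theta,\ep_a$ can be made consistently, without circularity, is the crux.
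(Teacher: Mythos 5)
Your proposal is correct and follows essentially the same route as the paper: the three uniform concentration events at rate $\sqrt{(d+\log(1/\de))/n}$, the decomposition of each Sample-based step into one Population step applied to the current sample iterate (the paper's $\bpa{t+1},\bpb{t+1}$) plus a perturbation controlled through the rational update maps with $\funcP$ pinned away from $0$ and $1$ by $\rho$, the one-step contraction via Lemmas~\ref{lemma:aconvergeupperbound} and~\ref{lemma:conratepre1} (the paper packages this as Lemma~\ref{lemma:connecpopsamp}), and the induction closing the invariant via the conditions on $\de_a$ and $n$. The point you flag as delicate --- absorbing the off-axis contribution to the $\boldsymbol{a}$-update into $\kappa_a\|\sa{t}\|$ --- is handled in the paper by noting $\pa{t+1}$ is parallel to $\pb{t+1}$ and dividing by $\cos\pbeta{0}\ge\kappa_a$, which is the same idea in slightly different clothing.
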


\begin{proof}
Note that
\begin{eqnarray}
\w_d(\y-\x_a, \x_b)&\triangleq&\frac{e^{\dotp{\y-\x_a, \x_b}}}{e^{\dotp{\y-\x_a, \x_b}}+e^{-\dotp{\y-\x_a, \x_b}}}.\n
\end{eqnarray}

We showed the following equations in Appendix \ref{sec:proofsampleEM-populationfiniett}:

\begin{eqnarray}
\sa{t+1}&=&\frac{\sq{t+1}(1-2\sp{t+1})}{2\sp{t+1}(1-\sp{t+1})}+\frac{\bar{\y}}{2(1-\sp{t+1})}, \n\\
\sb{t+1}&=&\frac{\sq{t+1}}{2\sp{t+1}(1-\sp{t+1})}-\frac{\bar{\y}}{2(1-\sp{t+1})}.\n
\end{eqnarray}
where
\begin{eqnarray}
\bar{\y}&=&\frac{1}{n}\sum_{i=1}^n\y_i, \n\\
\sq{t+1} &=&\frac{1}{n}\sum_{i=1}^n\w_d(\y_i-\sa{t}, \sb{t})\y_i, \n\\
\sp{t+1} &=&\frac{1}{n}\sum_{i=1}^n\w_d(\y_i-\sa{t}, \sb{t}), \n
\end{eqnarray}

We will show in Appendix \ref{sssec:proofequconc} that with probability at least $1-3\de$, we have
\begin{eqnarray}
\|\frac{1}{n}\sum_{i=1}^n\y_i\|\leq 4(\|\stt\|+2)\sqrt{\frac{3d+\ln(1/\de)}{n}}= c_\theta, \label{eq:probcon1}
\end{eqnarray}
\begin{eqnarray}
&~&\sup_{\|\x_b\|\leq \frac{3}{2}\|\stt\|, \|\x_a\|\leq 1}|\frac{1}{n}\sum_{i=1}^n\w_d(\y_i-\x_a, \x_b)-\bbE_Y\w_d(\y-\x_a, \x_b)|\leq  C_\theta, \label{eq:uitselfconc}\\
&~&\sup_{\|\x_b\|\leq \frac{3}{2}\|\stt\|, \|\x_a\|\leq 1}\|\frac{1}{n}\sum_{i=1}^n(\w_d(\y_i-\x_a, \x_b)-\frac{1}{2})\y_i-\bbE (\w_d(\y-\x_a, \x_b)-\frac{1}{2})\y\| \leq \frac{9}{2}C_\theta.\n\\\label{equ:epcon}
\end{eqnarray}
Note that by setting $\delta = \frac{1}{n}$, we see that $c_\theta \rightarrow 0$, $C_\theta \rightarrow 0$, and $\delta \rightarrow 0$ simultaneously.
In the rest of the proof we assume that \eqref{eq:probcon1}, \eqref{eq:uitselfconc} and \eqref{equ:epcon} hold. Let
$$\bpq{t+1}=\bbE\w_d(\y-\sa{t}, \sb{t})\y, \quad \  \bpp{t+1}=\bbE \w_d(\y-\sa{t}, \sb{t}), $$
and
$$\bpa{t+1}=\frac{\bpq{t+1}(1-2\bpp{t+1})}{2\bpp{t+1}(1-\bpp{t+1})}, \quad \ \bpb{t+1}=\frac{\bpq{t+1}}{2\bpp{t+1}(1-\bpp{t+1})}.$$

The following lemma that will be proved in Appendix \ref{sec:proof:lem:expconvalast} is a key step in our analysis:
\begin{lemma}\label{lemma:connecpopsamp}
There exists $\kappa_a\in (0, 1)$ such that if $\|\sb{t}-\stt\|\leq \min\{\sqrt{1-(\kappa_a)^2}, \frac{1}{2}\}\|\stt\|$, then
\begin{eqnarray}
\|\bpa{t+1}\|&\leq& \kappa_a\|\sa{t}\|, \label{equ:empcon12}
\end{eqnarray}
Furthermore, there exist $\de_a'\in(0, 1)$, $\kappa_b\in (0, 1)$ and $c_b>0$ such that if $\|\sa{t}\|\in [0, \de_a']$, then
\begin{eqnarray}
\|\bpb{t+1}-\stt\|&\leq&\kappa_b\|\sb{t}-\stt\|+\sqrt{c_{b}\|\sa{t}\|}.\label{equ:empcon1}
\end{eqnarray}
Constant $\kappa_a, \kappa_b, \de_a'$ and $c_b$ only depend on $\stt$.
\end{lemma}

The above equations provide connections between $(\bpa{t+1}, \bpb{t+1})$ and $(\sa{t}, \sb{t})$. Next, we establish connection between $(\bpa{t+1}, \bpb{t+1})$ and $(\sa{t+1}, \sb{t+1})$. In the rest of the proof we assume that $\kappa_a \in (\sqrt{3}/2, 1)$. If $\kappa_a$ is less than $\sqrt{3}/2$ we set it to $\sqrt{3}/2$. This is just for making notations simpler and has no specific technical reason.

Note that from \eqref{equ:iteha}, we have
\begin{eqnarray}
\lefteqn{\|\sa{t+1}\|=\|\frac{\sq{t+1}(1-2\sp{t+1})}{2\sp{t+1}(1-\sp{t+1})}+\frac{\bar{y}}{2(1-\sp{t+1})}\|}\n\\
&\leq&\|\frac{\sq{t+1}(1-2\sp{t+1})}{2\sp{t+1}(1-\sp{t+1})}\|+\|\frac{\bar{y}}{2(1-\sp{t+1})}\|\n\\
&\leq&\left|\frac{(1-2\sp{t+1})}{2\sp{t+1}(1-\sp{t+1})}\right|\|\sq{t+1}-\bpq{t+1}\|+\|\frac{\bpq{t+1}(1-2\bpp{t+1})}{2\bpp{t+1}(1-\bpp{t+1})}\|+\|\frac{\bar{y}}{2(1-\sp{t+1})}\|\n\\
&~&+\left\|\frac{\bpb{t+1}((\bpp{t+1})^2+(1-\bpp{t+1})^2-(1-2\bpp{t+1})|\sp{t+1}-\bpp{t+1}|)}{\sp{t+1}(1-\sp{t+1})}\right\||\sp{t+1}-\bpp{t+1}|\n\\
&\leq&\left|\frac{1}{2\sp{t+1}(1-\sp{t+1})}\right|\|\sq{t+1}-\bpq{t+1}\|+\left\|\frac{3\bpb{t+1}}{\sp{t+1}(1-\sp{t+1})}\right\||\sp{t+1}-\bpp{t+1}|\n\\
&&+\|\bpa{t+1}\|+\|\frac{\bar{y}}{2(1-\sp{t+1})}\|.\label{equ:difa}
\end{eqnarray}
Furthermore, from \eqref{equ:itehb} we have
\begin{eqnarray}
\lefteqn{\|\sb{t+1}-\bpb{t+1}\|=\|\frac{\sq{t+1}}{2\sp{t+1}(1-\sp{t+1})}-\frac{\bar{y}}{2(1-\sp{t+1})}-\frac{\bpq{t+1}}{2\bpp{t+1}(1-\bpp{t+1})}\|}\n\\
&\leq&\|\frac{\sq{t+1}}{2\sp{t+1}(1-\sp{t+1})}-\frac{\bpq{t+1}}{2\bpp{t+1}(1-\bpp{t+1})}\|+\|\frac{\bar{y}}{2(1-\sp{t+1})}\|\n\\
&\leq&\left|\frac{1}{2\sp{t+1}(1-\sp{t+1})}\right|\|\sq{t+1}-\bpq{t+1}\|+\|\frac{\bar{y}}{2(1-\sp{t+1})}\|\n\\
&~&+\left\|\frac{\bpb{t+1}(1-2\bpp{t+1}+|\sp{t+1}-\bpp{t+1}|)}{\sp{t+1}(1-\sp{t+1})}\right\||\sp{t+1}-\bpp{t+1}|\n\\
&\leq&\left|\frac{1}{2\sp{t+1}(1-\sp{t+1})}\right|\|\sq{t+1}-\bpq{t+1}\|+\left\|\frac{3\bpb{t+1}}{\sp{t+1}(1-\sp{t+1})}\right\||\sp{t+1}-\bpp{t+1}|+\|\frac{\bar{y}}{2(1-\sp{t+1})}\|.\n\\\label{equ:difb}
\end{eqnarray}
Suppose for the moment that $\|\sa{t}\|\in [0, 1]$ and $\|\sb{t}-\stt\|\leq \frac{1}{2}\|\stt\|$.
It is straightforward to use \eqref{eq:uitselfconc} and \eqref{eq:conditiononn} and the definition of $\rho$ in the statement of Lemma \ref{lemma:empconverge} to prove
\begin{equation}\label{eq:pconcent}
\sp{t+1}\in (\frac{\rho}{2}, 1-\frac{\rho}{2}).
\end{equation}
By combining \eqref{eq:probcon1}-\eqref{equ:epcon}, \eqref{equ:difa}), \eqref{equ:difb}, and \eqref{eq:pconcent} we obtain
\begin{eqnarray}\label{eq:hatandtildedisc}
\|\sa{t+1}\|&\leq&\|\bpa{t+1}\|+\frac{9C_\theta+c_\theta}{\rho(2-\rho)}+\frac{12C_\theta}{\rho(2-\rho)}+\frac{c_\theta}{2-\rho}=\|\bpa{t+1}\|+\ep_a, \n\\
\|\sb{t+1}-\bpb{t+1}\|&\leq&\frac{9C_\theta+c_\theta}{\rho(2-\rho)}+\frac{12C_\theta}{\rho(2-\rho)}+\frac{c_\theta}{2-\rho}=\ep_b,
\end{eqnarray}
and hence $\|\sb{t+1}-\stt\|\leq \|\bpb{t+1}-\stt\|+\ep_b$.

Now suppose that the assumptions of Lemma \ref{lemma:connecpopsamp} hold, i.e., $\|\sa{t}\|\in [0, \de_a']$ and $\|\sb{t}-\stt\|\leq \sqrt{1-(\kappa_a)^2}\|\stt\|$. Then \eqref{eq:hatandtildedisc} implies that
\begin{eqnarray}
\|\sa{t+1}\|&\leq&\|\bpa{t+1}\|+\ep_a\leq \kappa_a\|\sa{t}\|+\ep_a, \n\\
\|\sb{t+1}-\stt\|&\leq&\|\bpb{t+1}-\stt\|+\ep_b\leq \kappa_b\|\sb{t}-\stt\|+\sqrt{c_{b}\|\sa{t}\|}+\ep_b.\label{equ:empcon2}
\end{eqnarray}
Note that \eqref{equ:empcon2} is the result we claimed in Lemma \ref{lemma:empconverge}.  However, to obtain \eqref{equ:empcon1}, which is one of the main steps in deriving \eqref{equ:empcon2} we have assumed that
\begin{eqnarray}
\|\sa{t}\|\in [0, \de_a'] \quad \text{and} \quad \|\sb{t}-\stt\|\leq  \sqrt{1-(\kappa_a)^2}\|\stt\|\n.
\end{eqnarray}
In order to prove the above equation holds for every $t$, we will prove an even stronger statement:
\begin{eqnarray}
\|\sa{t}\|\in [0, \de_a] \quad \text{and} \quad \|\sb{t}-\stt\|\leq  \sqrt{1-(\kappa_a)^2}\|\stt\|, \label{equ:empcon3}
\end{eqnarray}
where $\de_a=\min\{\de_a', \frac{(1-\kappa_b)^2(1-(\kappa_a)^2)\|\stt\|^2}{4c_b}\}$.
We use induction to prove that \eqref{equ:empcon3} holds $\forall t\geq 0$.  By the assumptions of this Lemma, the initial estimates $(\sa{0}, \sb{0})$ satisfy \eqref{equ:empcon3}. Hence the base of the induction is true. Suppose \eqref{equ:empcon3} holds for $t\geq 0$, then for $t+1$ \eqref{equ:empcon2} holds. Hence all we need to prove is that
\[
\kappa_a\|\sa{t}\|+\ep_a\leq \de_a, \]
and
\begin{equation}\label{eq:secondforind}
\kappa_b\|\sb{t}-\stt\|+\sqrt{c_{b}\|\sa{t}\|}+\ep_b\leq \sqrt{1-(\kappa_a)^2}\}\|\stt\|.
\end{equation}
For the first inequality, since the condition on $n$ in \eqref{eq:conditiononn} ensure that $\ep_a\leq (1-\kappa_a)\de_a$, together with induction assumption that $\|\sa{t}\|\leq \de_a$, we have
\[
\|\sa{t+1}\|\leq \kappa_a\|\sa{t}\|+\ep_a\leq \kappa_a\de_a+(1-\kappa_a)\de_a\leq \de_a.\]
To prove  \eqref{eq:secondforind} note that the condition on $n$ ensure that
\[\ep_b\leq \frac{1}{2}(1-\kappa_b)\sqrt{1-(\kappa_a)^2}\|\stt\|.\]
Also the condition on $\de_a$ and $\|\sa{t}\|\leq \de_a$ ensure that
\[\sqrt{c_{b}\|\sa{t}\|}\leq \frac{1}{2}(1-\kappa_b)\sqrt{1-(\kappa_a)^2}\|\stt\|.\]
Hence with induction assumption that $\|\sb{t}-\stt\|\leq \sqrt{1-(\kappa_a)^2}\|\stt\|$, we have
\begin{eqnarray}
\|\sb{t+1}-\stt\|&\leq&\kappa_b\|\sb{t}-\stt\|+\sqrt{c_{b}\|\sa{t}\|}+\ep_b\n\\
&\leq&\kappa_b\sqrt{1-(\kappa_a)^2}\|\theta\|+ \frac{1}{2}(1-\kappa_b)\sqrt{1-(\kappa_a)^2}\|\stt\|+ \frac{1}{2}(1-\kappa_b)\sqrt{1-(\kappa_a)^2}\|\stt\|\n\\
&=&\sqrt{1-(\kappa_a)^2}\|\stt\|.\n
\end{eqnarray}
Hence the second  part of \eqref{equ:empcon3} holds for $t+1$. This completes the proof.
\end{proof}

\subsubsection{Proof of Lemma \ref{lem:expavoidserroracc}} \label{sec:proofexpoavoidserroracc}

We first prove \eqref{eq:result1} for $\|\sa{t}\|$. Clearly the result holds for $t=0$. For all $t\geq 1$, using the condition \eqref{eq:condition1} on $\|\sa{t'}\|$ for all $t'\leq t$, we have
\begin{eqnarray}
\|\sa{t}\|&\leq& \kappa_a\|\sa{t-1}\|+\ep_a\n\\
&\leq& \kappa_a(\kappa_a\|\sa{t-2}\|+\ep_a)+\ep_a\n\\
&\leq& (\kappa_a)^t\|\sa{0}\|+\ep_a\sum_{i=0}^{t-1}(\kappa_a)^i\n\\
&\leq&(\kappa_a)^t\|\sa{0}\|+\frac{1}{1-\kappa_a}\ep_a.\n
\end{eqnarray}
Hence \eqref{eq:result1} holds. Next, we prove \eqref{eq:result2} for $\|\sb{t}\|$. Clearly the result holds for $t=0$. For all $t\geq 1$, using the condition \eqref{eq:condition2} on $\|\sb{t'}\|$ for all $t'\leq t$, we have
\begin{eqnarray}
\|\sb{t}-\stt\|&\leq& \kappa_b\|\sb{t-1}-\stt\|+\sqrt{c_b\|\sa{t-1}\|}+\ep_b\n\\
&\leq&\kappa_b(\kappa_b\|\sb{t-2}-\stt\|+\sqrt{c_b\|\sa{t-2}\|}+\ep_b)+\ep_b\n\\
&\leq&(\kappa_b)^t\|\sb{0}-\stt\|+\sqrt{c_b}\sum_{i=0}^{t-1}(\kappa_b)^{t-1-i}\sqrt{\|\sa{i}\|}+\ep_b\sum_{i=0}^{t-1}(\kappa_b)^i\n\\
&\leq&(\kappa_b)^t\|\sb{0}-\stt\|+\sqrt{c_b}\sum_{i=0}^{t-1}(\kappa_b)^{t-1-i}\sqrt{\|\sa{i}\|}+\frac{1}{1-\kappa_b}\ep_b.\n
\end{eqnarray}
From \eqref{eq:result1}, we have $\forall t\geq 0$,
\begin{eqnarray}
\sqrt{\|\sa{t}\|}\ \leq\ \sqrt{(\kappa_a)^t\|\sa{0}\|+\frac{1}{1-\kappa_a}\ep_a}\ \leq\  (\kappa_a)^{\frac{t}{2}}\sqrt{\|\sa{0}\|}+\sqrt{\frac{1}{1-\kappa_a}\ep_a}.\n
\end{eqnarray}
Hence we have
\begin{eqnarray}
\|\sb{t}-\stt\|&\leq&(\kappa_b)^t\|\sb{0}-\stt\|+\sqrt{c_b}\sum_{i=0}^{t-1}(\kappa_b)^{t-1-i}\sqrt{\|\sa{i}\|}+\frac{1}{1-\kappa_b}\ep_b\n\\
&\leq&(\kappa_b)^t\|\sb{0}-\stt\|+\sqrt{c_b}\sum_{i=0}^{t-1}(\kappa_b)^{t-1-i}((\sqrt{\kappa_a})^{i}\sqrt{\|\sa{0}\|}+\sqrt{\frac{1}{1-\kappa_a}\ep_a})+\frac{1}{1-\kappa_b}\ep_b\n\\
&=&(\kappa_b)^t\|\sb{0}-\stt\|+\sqrt{c_b\|\sa{0}\|}\sum_{i=0}^{t-1}(\kappa_b)^{t-1-i}(\sqrt{\kappa_a})^{i}+\sqrt{\frac{c_b}{1-\kappa_a}\ep_a}\sum_{i=0}^{t-1}(\kappa_b)^{i}+\frac{1}{1-\kappa_b}\ep_b\n\\
&\leq&(\kappa_b)^t\|\sb{0}-\stt\|+t\sqrt{c_b\|\sa{0}\|}(\max\{\sqrt{\kappa_a}, \kappa_b\})^t+\frac{1}{1-\kappa_b}\sqrt{\frac{c_b}{1-\kappa_a}\ep_a}+\frac{1}{1-\kappa_b}\ep_b.\n
\end{eqnarray}
This completes the proof of this lemma.

\subsubsection{Proof of \eqref{eq:probcon1}-\eqref{equ:epcon}}\label{sssec:proofequconc}

\begin{lemma}\label{lemma:prob}
Let $y_1, \cdots, y_n \overset{i.i.d.}{\sim} \frac{1}{2}N(\stt, \vI_d)+\frac{1}{2}N(-\stt, \vI_d)$. Then, we have
\begin{itemize}
\item[(1)] $\|\frac{1}{n}\sum_{i=1}^n\y_i\|\leq 4(\|\stt\|+1)\sqrt{\frac{2d+\ln(1/\de)}{n}}, $ with probability at least $1-\de$.
\item[(2)] $\sup_{\|\vx_b\|\leq c, \|\vx_a\|\leq 1} |\frac{1}{n}\sum_{i=1}^n\w_d(\y_i-\vx_a, \vx_b)-\bbE \w_d(\Y-\vx_a, \vx_b)|\leq 8c(\|\stt\|+2)\sqrt{\frac{d+2+\ln(1/\de)}{n}}, $
with probability at least $1-\de$.
\item[(3)] $\sup_{\|\vx_b\|\leq c, \|\vx_a\|\leq 1}\|\frac{1}{n}\sum_{i=1}^n(\w_d(\y_i-\vx_a, \vx_b)-\frac{1}{2})\y_i-\bbE(\w_d(\Y-\vx_a, \vx_b)-\frac{1}{2})\Y\|\leq 36c(\|\stt\|+2)\sqrt{\frac{d+2+\ln(1/\de)}{n}}, $ with probability at least $1-\de$.
\end{itemize}
\end{lemma}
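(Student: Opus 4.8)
The plan is to prove the three inequalities separately, in each case reducing the claim to a standard concentration or empirical-process tool after exploiting the explicit form of $\w_d$. Throughout, represent a draw from the mixture as $\y_i=\epsilon_i\stt+\vg_i$, where $\epsilon_1,\dots,\epsilon_n$ are i.i.d.\ Rademacher signs and $\vg_1,\dots,\vg_n\sim N(\v0,\vI_d)$ are independent of the signs. Also record the identity $\w_d(\y-\vx_a,\vx_b)=\sigma\bigl(2\dotp{\y-\vx_a,\vx_b}\bigr)$ for the logistic function $\sigma(t)=1/(1+e^{-t})$, with two consequences: $\w_d$ takes values in $[0,1]$, and since $\sigma$ is $\tfrac14$-Lipschitz the map $s\mapsto\w_d$ is $\tfrac12$-Lipschitz in the scalar argument $s=\dotp{\y-\vx_a,\vx_b}$ (equivalently, $\vx\mapsto\w_d(\y-\vx_a,\vx_b)$ is $\tfrac12(\|\y\|+\|\vx_b\|)$-Lipschitz on $\|\vx_a\|\le1$). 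I also record $\w_d-\tfrac12=\tfrac12\tanh(s)$, which is $\tfrac12$-Lipschitz in $s$ and vanishes at $s=0$.

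For part (1), decompose $\tfrac1n\sum_i\y_i=\bigl(\tfrac1n\sum_i\epsilon_i\bigr)\stt+\tfrac1n\sum_i\vg_i$. Hoeffding's inequality gives $|\tfrac1n\sum_i\epsilon_i|\le\sqrt{2\ln(2/\de)/n}$ with probability $\ge1-\de/2$, and since $\tfrac1n\sum_i\vg_i\sim N(\v0,\tfrac1n\vI_d)$ the standard $\chi^2$/Gaussian-norm tail gives $\|\tfrac1n\sum_i\vg_i\|\le\sqrt{\bigl(d+2\sqrt{d\ln(2/\de)}+2\ln(2/\de)\bigr)/n}$ with probability $\ge1-\de/2$. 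A union bound and the triangle inequality, followed by absorbing universal constants into the prefactor $4(\|\stt\|+1)$, give the stated bound.

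For parts (2) and (3) the common engine is symmetrization, then a contraction/covering argument, then a bounded-differences step to pass from expectations to high probability. For part (2), symmetrization bounds $\bbE\sup_{\|\vx_a\|\le1,\|\vx_b\|\le c}|\tfrac1n\sum_i\w_d(\y_i-\vx_a,\vx_b)-\bbE\w_d(\Y-\vx_a,\vx_b)|$ by $2\bbE\sup|\tfrac1n\sum_i\epsilon_i\w_d(\y_i-\vx_a,\vx_b)|$; writing $\w_d=\tfrac12+\tfrac12\tanh(s)$ and applying the Ledoux--Talagrand contraction inequality to the $\tfrac12$-Lipschitz, zero-at-zero part (handling the constant $\tfrac12$ via a separate $\tfrac12\bbE|\tfrac1n\sum_i\epsilon_i|$ term) bounds this in turn by $\bbE\sup_{\|\vx_a\|\le1,\|\vx_b\|\le c}|\tfrac1n\sum_i\epsilon_i\dotp{\y_i-\vx_a,\vx_b}|+\tfrac12\bbE|\tfrac1n\sum_i\epsilon_i|$; since the expression is linear in $\vx_b$, taking the supremum over $\vx_b$ then $\vx_a$ leaves at most $c\bigl(\bbE\|\tfrac1n\sum_i\epsilon_i\y_i\|+\bbE|\tfrac1n\sum_i\epsilon_i|\bigr)\le c\bigl(\sqrt{(d+\|\stt\|^2)/n}+1/\sqrt n\bigr)$. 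Because each summand lies in $[0,1]$, changing one $\y_i$ perturbs the supremum by at most $1/n$, so McDiarmid's inequality adds a $\sqrt{\ln(1/\de)/n}$ tail, and collecting constants yields part (2). Part (3) follows the same scheme after one extra step: the summand $(\w_d(\y_i-\vx_a,\vx_b)-\tfrac12)\y_i$ is unbounded, so first restrict to the event $\{\max_i\|\y_i\|\le R\}$ with $R\asymp\sqrt d+\sqrt{\ln(n/\de)}$ (which holds with probability $\ge1-\de$ by a Gaussian tail and a union bound), on which the vector summand has norm $\le R/2$; then write $\sup_{\vx}\|\tfrac1n\sum_i(\w_d(\y_i-\vx_a,\vx_b)-\tfrac12)\y_i-\bbE[\cdot]\|=\sup_{\vx,\|\vu\|\le1}\tfrac1n\sum_i(\cdots)\dotp{\vu,\y_i}$, symmetrize, use $\w_d-\tfrac12=\tfrac12\tanh(s)$ together with a (weighted) contraction principle to strip the nonlinearity, bound the resulting linear process by $\|\tfrac1n\sum_i\epsilon_i\y_i\|$-type quantities, and finish again with a bounded-differences tail; the book-keeping produces the prefactor $36c(\|\stt\|+2)$ and the rate $\sqrt{(d+\ln(1/\de))/n}$.

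The main obstacle is part (3): controlling, uniformly over the $2d$-dimensional parameter ball, a vector-valued summand that is (a) unbounded, which forces the truncation argument and care that the level $R$ not degrade the rate, and (b) only bilinearly parametrized by $(\vx_a,\vx_b)$ while additionally multiplied by the varying direction $\vu$ once one passes to the scalar process. Making the contraction step legitimate in the presence of the $\dotp{\vu,\y_i}$ multiplier---via a weighted Ledoux--Talagrand inequality, or Maurer's vector-contraction inequality, or alternatively accepting a harmless $\sqrt{\log n}$ loss by replacing contraction with a Dudley covering bound---is the one genuinely delicate point; parts (1) and (2), and the remaining steps of part (3), are routine.
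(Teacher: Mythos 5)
Your part (1) is exactly the paper's argument (Rademacher--Gaussian decomposition, Hoeffding plus a $\chi^2$ tail, union bound). For parts (2) and (3) you take a genuinely different route to the same difficulty. The paper never bounds the expected supremum separately: it works directly with exponential moments of the supremum, i.e.\ it symmetrizes inside $\E e^{\lambda Z}$ via Jensen, strips the logistic nonlinearity with the convex-$\Psi$ form of the Ledoux--Talagrand comparison inequality (Koltchinskii's version, with $\Psi(x)=e^{2\lambda x}$), reduces the linear process to $\E\exp(\lambda c\|\tfrac1n\sum_i\xi_i\y_i\|)$ and, in part (3), to $\E\exp(\lambda c\|\tfrac1n\sum_i\xi_i\y_i\y_i^\t\|_{op})$ via a $\tfrac12$-net of the sphere, and finishes with a Chernoff bound. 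This one-shot MGF computation is what produces the additive $\sqrt{(d+\ln(1/\de))/n}$ rate without any truncation. The delicate point you correctly identify in part (3) --- contraction in the presence of the multiplier $\dotp{\vu,\y_i}$ --- is resolved in the paper by taking $\psi_i(x)=(\tanh x)\dotp{\y_i,\vu_j}$ and importing the corresponding step from Appendix B.2 of Balakrishnan et al., which is essentially the weighted contraction you propose; your Maurer/Dudley alternatives would also work.

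The one substantive shortfall is quantitative and only in part (3): your truncation to $\{\max_i\|\y_i\|\le R\}$ with $R\asymp\|\stt\|+\sqrt d+\sqrt{\ln(n/\de)}$ followed by McDiarmid produces a deviation term of order $R\sqrt{\ln(1/\de)/n}$, i.e.\ a multiplicative cross term $\sqrt{d\ln(1/\de)}/\sqrt n$ (or the $\sqrt{\log n}$ loss you concede if you use Dudley), whereas the stated bound has the additive form $\sqrt{(d+2+\ln(1/\de))/n}$. So as written your part (3) proves a slightly weaker inequality than the lemma claims; this does not affect how the lemma is used downstream (only that the right-hand side vanishes as $n\to\infty$ for the relevant choice $\de=1/n$ matters), but to recover the exact statement you would need the paper's exponential-moment route, or a Bernstein/Adamczak-type bound for suprema of unbounded empirical processes in place of McDiarmid. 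Part (2) is unaffected since its summands are bounded by $1$ and $\sqrt a+\sqrt b\le\sqrt{2(a+b)}$ absorbs the two terms into the stated form.
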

\begin{proof}
We first prove the first claim (1). Note that $\y_i$ can be expressed by $\y_i=\zeta_i\stt+\boldsymbol{\omega}_i$, where $\zeta_i$ are i.i.d sequence of Rademacher variables and $\boldsymbol{\omega}_i$ are i.i.d  $N(\v0, \vI_d)$ Gaussian random variables. Therefore we have,
\begin{eqnarray}
\|\frac{1}{n}\sum_{i=1}^n\y_i\|^2&=&\|\frac{1}{n}\sum_{i=1}^n\zeta_i\stt+\frac{1}{n}\sum_{i=1}^n\boldsymbol{\omega}_i\|^2\n\\
&=&\frac{1}{n}\|\frac{1}{\sqrt{n}}\sum_{i=1}^n\zeta_i\stt+\frac{1}{\sqrt{n}}\sum_{i=1}^n\boldsymbol{\omega}_i\|^2.\n
\end{eqnarray}
Note that $\|\frac{1}{\sqrt{n}}\sum_{i=1}^n\boldsymbol{\omega}_i\|^2\overset{dist.}{=}\boldsymbol{\nu}, $ where $\boldsymbol{\nu}\sim \chi^2(d)$. Hence, using Cram\'{e}r-Chernoff inequality, we have probability at least $1-\frac{\de}{2}$ such that
$$\left|\|\frac{1}{\sqrt{n}}\sum_{i=1}^n\boldsymbol{\omega}_i\|^2-d\right|\leq \sqrt{8d\ln(2/\de)}\leq d+2\ln(2/\de)\ \text{for sufficiently large}\ n.$$
Moreover, for Rademacher variables $\zeta_i$, using Hoeffding's inequality, we have with probability at least $1-\frac{\de}{2}$ such that
$$|\frac{1}{\sqrt{n}}\sum_{i=1}^n\zeta_i|\leq \sqrt{2\ln(2/\de)}$$
Therefore, we have probability at least $1-\de$ such that
\begin{eqnarray}
\|\frac{1}{n}\sum_{i=1}^n\y_i\|&\leq& \frac{1}{\sqrt{n}}\|\frac{1}{\sqrt{n}}\sum_{i=1}^n\zeta_i\stt+\frac{1}{\sqrt{n}}\sum_{i=1}^n\boldsymbol{\omega}_i\|\n\\
&\leq&\frac{1}{\sqrt{n}}\sqrt{2\|\frac{1}{\sqrt{n}}\sum_{i=1}^n\zeta_i\stt\|^2+2\|\frac{1}{\sqrt{n}}\sum_{i=1}^n\boldsymbol{\omega}_i\|^2}\n\\
&\leq&\frac{1}{\sqrt{n}}\sqrt{ 2(2\ln(2/\de))\|\stt\|^2+2(2d+2\ln{2/\de})}\n\\
&=&2\sqrt{\frac{\ln(2/\de)(\|\stt\|^2+1)+d}{n}}\n\\
&\leq& 4(\|\stt\|+1)\sqrt{\frac{2d+\ln(1/\de)}{n}}.\n
\end{eqnarray}
For the second claim, define $$Z_{+}\triangleq \sup_{\|\vx_b\|\leq c, \|\vx_a\|\leq 1}\frac{1}{n}\sum_{i=1}^n\w_d(\y_i-\vx_a, \vx_b)-\bbE \w_d(\Y-\vx_a, \vx_b).$$ Then we have $\forall \|\vx_b\|\leq c, \|\vx_a\|\leq 1$
\begin{eqnarray}
\bbE e^{\la Z_{+}}&\overset{(i)}{\leq}&\bbE_{\Y, \Y'}e^{\la\sup_{\|\vx_b\|\leq c, \|\vx_a\|\leq 1}\frac{1}{n}\sum_{i=1}^n(\w_d(\y_i-\vx_a, \vx_b)-\w_d(\y_i'-\vx_a, \vx_b))} \n\\
&=&\bbE_{\Y, \Y', \xi}e^{\la\sup_{\|\vx_b\|\leq c, \|\vx_a\|\leq 1}\frac{1}{n}\sum_{i=1}^n\xi_i(\w_d(\y_i-\vx_a, \vx_b)-\w_d(\y_i'-\vx_a, \vx_b))}\n\\
&\leq&\bbE_{\Y, \Y', \xi}e^{\la\sup_{\|\vx_b\|\leq c, \|\vx_a\|\leq 1}|\frac{1}{n}\sum_{i=1}^n\xi_i(\w_d(\y_i-\vx_a, \vx_b)-\w_d(\y_i'-\vx_a, \vx_b))|}\n\\
&\leq & \bbE_{\xi} \{\bbE_{\Y} \left(e^{\la\sup_{\|\vx_b\|\leq c, \|\vx_a\|\leq 1}|\frac{1}{n}\sum_{i=1}^n\xi_i(\w_d(\y_i-\vx_a, \vx_b)-\frac{1}{2})|} \right)\n\\
 &&\times \mathbb{E}_{\Y'} \left( e^{\la\sup_{\|\vx_b\|\leq c, \|\vx_a\|\leq 1}|\frac{1}{n}\sum_{i=1}^n\xi_i(\w_d(\y_i'-\vx_a, \vx_b)-\frac{1}{2})|} \right)\}\n \\
&\leq&\bbE_{\Y, \xi}e^{2\la\sup_{\|\vx_b\|\leq c, \|\vx_a\|\leq 1}|\frac{1}{n}\sum_{i=1}^n\xi_i(\w_d(\y_i-\vx_a, \vx_b)-\frac{1}{2})|}, \n
\end{eqnarray}
Note that to obtain Inequality (i) we have used Jensen's inequality. Also, $\xi_i$ are i.i.d sequence of Rademacher variables. To simplify the final expression even further, we use the following lemma from \cite{VKoltchinskii2011tailbound}
\begin{lemma}\label{lemma:symmetrize}
Let $\mathcal{H}\in\bbR^n$ and let $\psi_i: \bbR\mapsto \bbR, i=1, \cdots, n$ be functions such that $\psi_i(0)=0$ and
$$|\psi_i(u)-\psi_i(v)|\leq|u-v|\in \bbR.$$
For all convex nondecreasing functions $\Psi:\bbR_+\mapsto \bbR_+$,
$$\bbE\Psi(\frac{1}{2}\sup_{\vh\in \mathcal{H}}|\sum_{i=1}^n\psi_i(h_i)\ep_i|)\leq \bbE\Psi(\sup_{\vh\in \mathcal{H}}|\sum_{i=1}^nh_i\ep_i|), $$
where $\ep_i$ are i.i.d.~Rademacher random variables.
\end{lemma}
\noindent Since $\w_d(\y-\vx_a, \vx_b)$ is a function of $\dotp{\y-\vx_a, \vx_b}$ and
$$|\w_d(\y-\vx_a, \vx_b)-\w_d(\y-\vx_a', \vx_b')|\leq \frac{1}{2}|\dotp{\y-\vx_a, \vx_b}-\dotp{\y-\vx_a', \vx_b'}|, $$
letting $\Psi(x)=e^{2\la x}$ and $\psi_i(x)=\frac{2e^x}{e^x+e^{-x}}-1$ with $h_i=\dotp{\y_i-\vx_a, \vx_b}$
in Lemma \ref{lemma:symmetrize},
we have
\begin{eqnarray}
\bbE e^{\la Z_{+}}&\leq&\bbE_{\Y, \xi}e^{2\la\sup_{\|\vx_b\|\leq c, \|\vx_a\|\leq 1}|\frac{1}{n}\sum_{i=1}^n\xi_i(\w_d(\y_i, \vx_a, \vx_b)-\frac{1}{2})|}\n\\
&\leq&\bbE_{Y, \xi}e^{2\la\sup_{\|\vx_b\|\leq c, \|\vx_a\|\leq 1}|\frac{1}{n}\sum_{i=1}^n\xi_i\dotp{\y_i-\vx_a, \vx_b}|}\n\\
&\stackrel{(ii)}{=}&\bbE_{\Y, \xi}e^{2\la\sup_{\|\vx_b\|\leq c, \|\vx_a\|\leq 1}\frac{1}{n}\sum_{i=1}^n\xi_i\dotp{\y_i-\vx_a, \vx_b}}\n\\
&\leq&\bbE_{\Y, \xi}e^{2\la\sup_{\|\vx_b\|\leq c}\frac{1}{n}\sum_{i=1}^n\xi_i\dotp{\y_i, \vx_b}}e^{2\la\sup_{\|\vx_b\|\leq c, \|\vx_a\|\leq 1}\dotp{\vx_a, \vx_b}\frac{1}{n}\sum_{i=1}^n\xi_i}\n\\
&\leq&\bbE_{\Y, \xi}e^{2\la c\|\frac{1}{n}\sum_{i=1}^n\xi_i\y_i\|}e^{2\la c|\frac{1}{n}\sum_{i=1}^n\xi_i|}\n\\
&\leq&(\bbE_{\Y, \xi}e^{4\la c\|\frac{1}{n}\sum_{i=1}^n\xi_i\y_i\|})^{1/2}(\bbE_{\xi}e^{4\la c|\frac{1}{n}\sum_{i=1}^n\xi_i|})^{1/2}\n\\
&\leq&(\underbrace{\bbE_{\Y}e^{4\la c\|\frac{1}{n}\sum_{i=1}^n\y_i\|}}_{part~1})^{1/2}(\underbrace{\bbE_{\xi}e^{4\la c|\frac{1}{n}\sum_{i=1}^n\xi_i|}}_{part~2})^{1/2}, \n
\end{eqnarray}
where last equality holds for the fact that the distribution of $\y_i$ is symmetric and equality (ii) holds for the fact that $\frac{1}{n}\sum_{i=1}^n\xi_i\dotp{\y_i-\vx_a, \vx_b}$ is symmetric in terms of $\vx_b$ and the constraints on $\vx_b$ is symmetric.

For part 1,  we use the notation $\{\vu_j, j=1, \cdots, M\}$ for a $1/2$-covering of the $d$-dimensional sphere, $Sp^d\triangleq \{\vv\in\bbR^d, \|\vv\|=1\}$.
Note that, for all $ \vv', \vv\in Sp^d$,
$$|\frac{1}{n}\sum_{i=1}^n\dotp{\y_i, \vv'}-\frac{1}{n}\sum_{i=1}^n\dotp{\y_i, \vv}|\leq \|\vv'-\vv\|\sup_{\|\vu\|=1}\frac{1}{n}\sum_{i=1}^n\dotp{\y_i, \vu}, $$
therefore, we have for all $ \vu\in Sp^d$
\begin{eqnarray}
\frac{1}{n}\sum_{i=1}^n\dotp{\y_i, \vu}\leq \max_{j\in[M]}\{\frac{1}{n}\sum_{i=1}^n\dotp{\vy_i, \vu_j}\}+\|\vu_j-\vu\|\sup_{\|\vu\|=1}\frac{1}{n}\sum_{i=1}^n\dotp{\vy_i, \vu}, \n
\end{eqnarray}
and hence
\begin{eqnarray}
\frac{1}{n}\|\sum_{i=1}^n\y_i\|=\sup_{\|\vu\|=1}\frac{1}{n}\sum_{i=1}^n\dotp{\vy_i, \vu}\leq 2\max_{j\in[M]}\{\frac{1}{n}\sum_{i=1}^n\dotp{\vy_i, \vu_j}\}. \label{equ:eqlast1}
\end{eqnarray}
recall that $\y_i=\zeta_i\stt+\boldsymbol{\omega}_i$. Hence, we have
\begin{eqnarray}
&&\bbE_{\Y} e^{\dotp{\y_i, \vu_j}}\ =\ \bbE_{\zeta}e^{\zeta\dotp{\stt, \vu_j}}\bbE_{\boldsymbol{\omega}} e^{\dotp{\boldsymbol{\omega}_i, \vu_j}}\n\\
&=&\frac{1}{2}(e^{\dotp{\stt, \vu_j}}+e^{-\dotp{\stt, \vu_j}})e^{\frac{1}{2}}\ \leq\ e^{\frac{\|\stt\|^2+1}{2}}, \label{equ:dotpyu}
\end{eqnarray}
where last inequality holds because of
$$\frac{1}{2}(e^{\|\stt\|}+e^{-\|\stt\|})\leq e^{\frac{\|\stt\|^2}{2}}.$$
Therefore we have
\begin{eqnarray}
\bbE_{\Y, \xi}e^{4\la c\|\frac{1}{n}\sum_{i=1}^n\y_i\|}&=&\bbE_{\Y, \xi}e^{4\la c\sup_{\|\vu\|=1}\frac{1}{n}\sum_{i=1}^n\dotp{\y_i, \vu}}\n\\
&\leq&\bbE_{\Y, \xi}e^{8\la c\max_{j\in[M]}\frac{1}{n}\sum_{i=1}^n\dotp{\y_i, \vu_j}}\n\\
&\leq&\sum_{j=1}^M\bbE_{\Y, \xi}e^{8\la c\frac{1}{n}\sum_{i=1}^n\dotp{\y_i, \vu_j}}\n\\
&\leq&e^{32\la^2c^2\frac{\|\stt\|^2+1}{n}+2d}.\label{equ:eqlast3}
\end{eqnarray}
For part 2, notice that $\frac{1}{n}\sum_{i=1}^n\xi_i$ is symmetric, we have
\begin{eqnarray}
\bbE_{\xi}e^{4\la c|\frac{1}{n}\sum_{i=1}^n\xi_i|}&\leq&2\bbE_{\xi}e^{4\la c\frac{1}{n}\sum_{i=1}^n\xi_i}\n\\
&\leq&2(\bbE_{\xi}e^{\frac{4\la c}{n}\xi})^n\n\\
&\leq&e^{\frac{8\la^2c^2}{n}+1}.\label{equ:eqlast2}
\end{eqnarray}
Therefore combining \eqref{equ:eqlast3} and \eqref{equ:eqlast2}, we have
\begin{eqnarray}
\bbE e^{\la Z_{+}}&\leq&e^{16\la^2c^2\frac{\|\stt\|^2+1}{n}+d}\times e^{\frac{4\la^2c^2}{n}+\frac{1}{2}}\n\\
&\leq&e^{16\la^2c^2\frac{\|\stt\|^2+2}{n}+d+\frac{1}{2}}.\n
\end{eqnarray}
Using Markov Inequality:
$$P(Z_+>\ep)\leq\bbE e^{\la Z_{+}-\la\ep}, \forall \ep, \la>0, $$
choosing $\la=\frac{\ep n}{32c^2(\|\stt\|^2+2)}$, we have
\begin{eqnarray}
P(Z_+>\ep)&\leq& e^{\frac{16c^2\la^2(\|\stt\|^2+2)}{n}+d+\frac{1}{2}-\la \ep}\n\\
&=&e^{-\frac{n\ep^2}{64c^2(\|\stt\|^2+2)}+d+\frac{1}{2}}.\n
\end{eqnarray}
Therefore
\begin{eqnarray}
|\sup_{\|\vx_b\|\leq c, \|\vx_a\|\leq 1}\frac{1}{n}\sum_{i=1}^n\w_d(\y_i-\vx_a, \vx_b)-\bbE\w_d(\Y-\vx_a, \vx_b)|\leq 8c(\|\stt\|+2)\sqrt{\frac{d+2+\ln(1/\de)}{n}}, \n
\end{eqnarray}
with probability at least $1-\de$.

For the last claim, we borrow a technique in the proof of corollary 2 in B.2 in \cite{balakrishnan2014statistical}. Let $$Z=\sup_{\|\vx_b\|\leq c, \|\vx_a\|\leq 1}\|\frac{1}{n}\sum_{i=1}^n(\w_d(\y_i-\vx_a, \vx_b)-\frac{1}{2})\y_i-\bbE (\w_d(\Y-\vx_a, \vx_b)-\frac{1}{2})\Y\|, $$
and
$$Z_{\vu}=\sup_{\|\vx_b\|\leq c, \|\vx_a\|\leq 1}|\frac{1}{n}\sum_{i=1}^n(\w_d(\y_i-\vx_a, \vx_b)-\frac{1}{2}))\dotp{\y_i, \vu}-\bbE (\w_d(\Y-\vx_a, \vx_b)-\frac{1}{2})\dotp{\Y, \vu}|$$
we have
\begin{eqnarray}
\bbE e^{\la Z}&=&\bbE_Ye^{\la\sup_{\|\vu\|=1}Z_{\vu}}\leq\bbE e^{2\la\max_{j\in[M]}Z_{\vu_j}}\leq\sum_{j=1}^M\bbE e^{2\la Z_{\vu_j}}\n\\
&\leq&\sum_{j=1}^M\bbE_{\Y, \xi}e^{4\la\sup_{\|\vx_b\|\leq c, \|\vx_a\|\leq 1}\frac{1}{n}\sum_{i=1}^n\xi_i(\w_d(\y_i-\vx_a, \vx_b)-\frac{1}{2})\dotp{\y_i, u_{j}}}, \n
\end{eqnarray}
where $\xi_i$ are i.i.d.~sequence of Rademacher variables and the last inequality holds for standard symmetrization result for empirical process.
Since
$$|(2\w_d(\y_i-\vx_a, \vx_b)-1)\dotp{\y_i, \vu_{j}}-(2\w_d(\y_i-\vx_a', \vx_b')-1)\dotp{\y_i, \vu_{j}}|\leq |\dotp{\y_i-\vx_a, \vx_b}-\dotp{\y_i-\vx_a', \vx_b'}|\dotp{\y_i, \vu_{j}}, $$
let $\Psi(x)=e^{2\la x}$ and $\psi_i(x)=(\frac{2e^x}{e^x-e^{-x}}-1)\dotp{\y_i, \vu_{j}}$ with $h_i=\dotp{\y_i-\vx_a, \vx_b}$
 in Lemma \ref{lemma:symmetrize}, we have
\begin{eqnarray}
&~&\bbE e^{\la Z}\leq\sum_{j=1}^M\bbE_{\Y, \xi}e^{4\la\sup_{\|\vx_b\|\leq c, \|\vx_a\|\leq 1}|\frac{1}{n}\sum_{i=1}^n\xi_i\dotp{\y_i-\vx_a, \vx_b}\dotp{\y_i, \vu_{j}}|}\n\\
&\stackrel{iii}{=}&\sum_{j=1}^M\bbE_{\Y, \xi}e^{4\la\sup_{\|\vx_b\|\leq c, \|\vx_a\|\leq 1}\frac{1}{n}\sum_{i=1}^n\xi_i\dotp{\y_i-\vx_a, \vx_b}\dotp{\y_i, \vu_{j}}}\n\\
&\leq&\sum_{j=1}^M\bbE_{\Y, \xi}e^{4\la\sup_{\|\vx_b\|\leq c}\frac{1}{n}\sum_{i=1}^n\xi_i\dotp{\y_i, \vx_b}\dotp{\y_i, \vu_{j}}} e^{4\la\sup_{\|\vx_b\|\leq c, \|\vx_a\|\leq 1}\frac{1}{n}\sum_{i=1}^n\xi_i\dotp{\vx_a, \vx_b}\dotp{\y_i, \vu_{j}}}\n\\
&\leq&\sum_{j=1}^M(\bbE_{\Y, \xi}e^{8\la\sup_{\|\vx_b\|\leq c}\frac{1}{n}\sum_{i=1}^n\xi_i\dotp{\y_i, \vx_b}\dotp{\y_i, \vu_{j}}})^{\frac{1}{2}}(\bbE_{\Y, \xi}e^{8\la\sup_{\|\vx_b\|\leq c, \|\vx_a\|\leq 1}\frac{1}{n}\sum_{i=1}^n\xi_i\dotp{\vx_a, \vx_b}\dotp{\y_i, \vu_{j}}})^{\frac{1}{2}}\n\\
&\leq&\sum_{j=1}^M(\underbrace{\bbE_{\Y, \xi}e^{8\la c\|\frac{1}{n}\sum_{i=1}^n\xi_i\y_i\y_i^\t\|_{op}}}_{part 1})^{\frac{1}{2}} (\underbrace{\bbE_{\Y, \xi}e^{8\la c\frac{1}{n}|\sum_{i=1}^n\xi_i\dotp{\y_i, \vu_{j}}|}}_{part 2})^{\frac{1}{2}}, \n
\end{eqnarray}
where $\|\cdot\|_{op}$ is $l_2$-operator norm of a matrix(maximum singular value), equality (iii) holds for the fact that $\frac{1}{n}\sum_{i=1}^n\xi_i\dotp{\y_i-\vx_a, \vx_b}\dotp{\y_i, \vu_{j}}$ is symmetric in terms of $\vx_b$ and constraints of $\vx_b$ is symmetric. The correctness of the last inequality is shown in B.2 of \cite{balakrishnan2014statistical}. For part 1, as shown in B.2 of \cite{balakrishnan2014statistical} we have
\begin{eqnarray}
\bbE_{\Y, \xi}e^{8\la c\|\frac{1}{n}\sum_{i=1}^n\xi_i\y_i\y_i^\t\|_{op}}\leq\bbE_{\Y, \xi}e^{16\la c\max_{j'\in[M]}\frac{1}{n}\sum_{i=1}^n\xi_i\dotp{\y_i, \vu_{j'}}^2}\n\\
\end{eqnarray}
Recall that $\y_i=\zeta_i\stt+\boldsymbol{\omega}_i$ and \eqref{equ:dotpyu},
we have
$$\bbE_{\Y} e^{\dotp{\y_i, \vu_j}}=\bbE_{\zeta}e^{\zeta\dotp{\stt, \vu_j}}\bbE_{\boldsymbol{\omega}} e^{\dotp{\boldsymbol{\omega}_i, \vu_j}}\leq e^{\frac{\|\stt\|^2+1}{2}}.$$
Therefore
$$\bbE e^{\la\xi\dotp{\y_i, \vu_j}^2}\leq e^{\frac{(\|\stt\|^2+1)\la^2}{2}}, \ \text{for small enough }\ \la.$$
Therefore
\begin{eqnarray}
\bbE_{\Y, \xi}e^{16\la c\max_{j'\in[M]}\frac{1}{n}\sum_{i=1}^n\xi_i\dotp{\y_i, \vu_{j'}}^2}&\leq&\sum_{j'=1}^M\bbE_{\Y, \xi}e^{16\la c\frac{1}{n}\sum_{i=1}^n\xi_i\dotp{\y_i, \vu_{j'}}^2}\n\\
&\leq&e^{\frac{(\|\stt\|^2+1)(16c\la)^2}{2n}+2d}.\n
\end{eqnarray}
For part 2, since $\xi_i\dotp{\y_i, \vu_{j}}\overset{dist.}{=}\dotp{\y_i, \vu_{j}}$, using \eqref{equ:dotpyu}, we have
\begin{eqnarray}
\bbE_{\Y, \xi}e^{8\la c\frac{1}{n}|\sum_{i=1}^n\xi_i\dotp{\y_i, \vu_{j}}|}&=&\bbE_{\Y}e^{8\la c\frac{1}{n}|\sum_{i=1}^n \dotp{\y_i, \vu_{j}}|}\n\\
&\overset{iv}{\leq}&2\bbE_{\Y}e^{8\la c\frac{1}{n}\sum_{i=1}^n\dotp{\y_i, \vu_{j}}}\n\\
&\leq&e^{\frac{(\|\stt\|^2+1)(8c\la)^2}{2n}+1}, \n
\end{eqnarray}
where inequality (iv) holds for the fact that the distribution of $\frac{1}{n}\sum_{i=1}^n\dotp{\y_i, \vu_{j}}$ is symmetric.
Therefore, combining part 1 and part 2, we have
\begin{eqnarray}
\bbE e^{\la Z}&\leq&\sum_{j=1}^Me^{\frac{(\|\stt\|^2+1)(16c\la)^2}{4}+d}e^{\frac{(\|\stt\|^2+1)(8c\la)^2}{4}+\frac{1}{2}}\n\\
&\leq&e^{\frac{81(\|\stt\|^2+1)c^2\la^2}{n}+3d+\frac{1}{2}}.\n
\end{eqnarray}
Using Markov Inequality:
$$P(Z>\ep)\leq\bbE_Ye^{\la Z-\la\ep}, \forall \ep, \la>0, $$
choosing $\la=\frac{\ep n}{32c^2(\|\stt\|^2+2)}$, we have
\begin{eqnarray}
P(Z>\ep)&\leq& e^{\frac{81c^2\la^2(\|\stt\|^2+1)}{n}+3d+\frac{1}{2}-\la \ep}\n\\
&=&e^{-\frac{n\ep^2}{324c^2(\|\stt\|^2+1)}+3d+\frac{1}{2}}.\n
\end{eqnarray}
Therefore
\begin{eqnarray}
\sup_{\|\vx_b\|\leq c, \|\vx_a\|\leq 1}\|\frac{1}{n}\sum_{i=1}^n(\w_d(\y_i-\vx_a, \vx_b)-\frac{1}{2})\y_i-\bbE \w_d(\Y-\vx_a, \vx_b)\Y\|\leq 36c(\|\stt\|+2)\sqrt{\frac{d+2+\ln(1/\de)}{n}}, \n
\end{eqnarray}
with probability at least $1-\de$.
\end{proof}

\subsubsection{Proof of Lemma \ref{lemma:connecpopsamp}}\label{sec:proof:lem:expconvalast}
Since $\bpa{t+1}$ and $\bpb{t+1}$ are the result of first iteration based on initialization $(\sa{t}, \sb{t})$ in Population EM model where initialization $(\sa{t}, \sb{t})$ satisfying the corresponding condition mentioned in the lemma. Hence to prove the lemma holds for all $t\geq 0$, it is sufficient to prove that for any initialization $(\pa{0}, \pb{0})$ satisfying the same condition, we have $\|\pa{1}\|\leq \kappa_a\|\pa{0}\|$ for \eqref{equ:empcon12} and $\|\pb{1}-\stt\|\leq \kappa_b\|\pb{0}-\stt\|+\sqrt{c_b\|\pa{0}\|}$ for \eqref{equ:empcon1}.
To achieve this goal, we use the notations and definition that are summarized in Appendix \ref{sec:notation}. We first prove the first claim:
\begin{eqnarray}
\|\pa{1}\|\leq \kappa_a\|\pa{0}\|.\label{equ:connecpopsampeq1}
\end{eqnarray}
If $\pa{0}=\v0$, we immediately have \eqref{equ:connecpopsampeq1} holds. If $\pa{0}\neq \v0$, because of
Lemma \ref{lemma:sign} and Lemma \ref{lemma:symmetrizeinitial}, we assume $\dotp{\pa{0}, \pb{0}}>0$
without loss of generality, thus $\tpai{1, 0}>0$.
Since $\tpb{1}(1-2\pp{1})=\tpa{1}$, we know they are in the same direction, thus the angle between $\pa{1}$ and $\stt$ is the same angle between $\pb{1}$ and $\stt$, i.e., $\pbeta{1}$. Furthermore, according to Lemma \ref{lem:planarestimates}, we have $\pbeta{1}\leq \pbeta{0}$. Hence we have
\begin{eqnarray}
\|\pa{1}\|=\frac{\tpai{1, 1}}{\cos{\pbeta{1}}}\leq\frac{\tpai{1, 1}}{\cos{\pbeta{0}}}. \label{equ:connecpopsampeq2}
\end{eqnarray}
Therefore, we need to bound $\tpai{1, 1}$ and $\frac{1}{\cos{\pbeta{0}}}$. According to \eqref{eq:a_1boundconvergence} and Lemma \ref{lemma:aconvergeupperbound} we have,
\[
\tpai{1, 1} = \frac{\funcQ ( \tpai{1, 0}, \|\pb{0}\|, \ptt{1, 0})(1 -2\funcP( \tpai{1, 0}, \|\pb{0}\|, \ptt{1, 0}))}{\funcP( \tpai{1, 0}, \|\pb{0}\|, \ptt{1, 0}) (1-\funcP( \tpai{1, 0}, \|\pb{0}\|, \ptt{1, 0})) } \leq \kappa_a' \tpai{1, 0} \leq \kappa_a' \|\pa{0}\|,
\]
where $\kappa_a'\in (0, 1)$ is a continuous function of $\ptt{1, 0}>0$. Since the condition of $\|\pb{0}-\stt\|\leq \frac{1}{2}\|\stt\|$ implies that $\ptt{1, 0}\geq \frac{\sqrt{3}}{2}\|\stt\|>0$, we have
\begin{eqnarray}
\kappa_a\triangleq \sup_{\ptt{1, 0}\in [\frac{\sqrt{3}}{2}\|\stt\|, \|\stt\|]}\sqrt{\kappa_a'(\ptt{1, 0})}\in (0, 1), \n
\end{eqnarray}
and $\kappa_a$ only depends on $\stt$. Now for $\frac{1}{\cos{\pbeta{0}}}$, by the condition of $\|\pb{0}-\stt\|\leq \sqrt{1-(\kappa_a)^2}\|\stt\|$,
we have $\cos{\pbeta{0}}\geq \kappa_a$. Hence, combining the two parts in \eqref{equ:connecpopsampeq2}, we have
\begin{eqnarray}
\|\pa{1}\|\leq\frac{\tpai{1, 1}}{\cos{\pbeta{0}}}\leq \frac{\kappa_a'(\ptt{1, 0})}{\kappa_a} \|\pa{0}\|\leq \kappa_a\|\pa{0}\|.\n
\end{eqnarray}
Hence \eqref{equ:connecpopsampeq1} holds. Next we prove the second claim:
\begin{eqnarray}
\|\pb{1}-\stt\|\leq \kappa_b\|\pb{0}-\stt\|+\sqrt{c_b\|\pa{0}\|}.\n
\end{eqnarray}
According to Lemma \ref{lemma:conratepre1} we can conclude there exists $\delta_a'\in (0, 1)$, $\kappa_b\in (0, 1)$ and $c_b>0$ such that
that if $\|\pa{0}\| \leq \delta_a'$, then
\begin{eqnarray}
\|\pb{1}-\stt\|&\leq&\sqrt{\kappa_{b}^2\|\pb{0}-\stt\|^2+c_{b}\|\pa{0}\|}\leq \kappa_b\|\pb{0}-\stt\|+\sqrt{c_{b}\|\pa{0}\|}, \n
\end{eqnarray}
where $\delta_a', \kappa_b$ and $c_b$ only depend on $U_a=1, L_b=\frac{1}{2}\|\stt\|, U_b=\frac{3}{2}\|\stt\|, L_\theta=\frac{\sqrt{3}}{2}\|\stt\|$ and $\|\stt\|$. Hence $\delta_a', \kappa_b$ and $c_b$ only depend on $\stt$.
This completes the proof.

\section{Proofs of Auxiliary Results}\label{sec:auxiliary}

\subsection{Proof of Lemma \ref{lemma:sign}} \label{sec:proof:lemma:sign}
From \eqref{equ:itea} and \eqref{equ:itea}, we know that
\begin{eqnarray}
\dotp{\pb{t+1}, \stt}&=&\frac{\dotp{\pq{t+1}, \stt}}{\pp{t+1}(1-\pp{t+1})}, \n\\
\dotp{\pa{t+1}, \pb{t+1}}&=&\|\pb{t+1}\|^2(1-2\pp{t+1}).\n
\end{eqnarray}
Since $\pp{t+1}\in (0, 1)$ and $\|\stt\|>0$, we have
\begin{eqnarray}
\sgn (\dotp{\pb{t+1}, \stt})&=&\sgn(\dotp{\pq{t+1}, \stt})\n\\
\sgn(\dotp{\pa{t+1}, \pb{t+1}})&=&\sgn(1-2\pp{t+1})\sgn(\|\pb{t+1}\|^2)\n\\
&=&\sgn(1-2\pp{t+1})|\sgn(\dotp{\pb{t+1}, \stt})|.\n
\end{eqnarray}
Hence if we have
\begin{eqnarray}
\sgn(\dotp{\pq{t+1}, \stt})=\sgn(\dotp{\pb{t}, \stt}), \label{eq:signcondition1}
\end{eqnarray}
and
\begin{eqnarray}
\sgn(1-2\pp{t+1})=\sgn(\dotp{\pa{t}, \pb{t}}), \label{eq:signcondition2}
\end{eqnarray}
then immediately, we have
\begin{eqnarray}
\sgn(\dotp{\pb{t+1}, \stt})&=&\sgn(\dotp{\pb{t}, \stt}), \n
\end{eqnarray}
and
\begin{eqnarray}
\sgn(\dotp{\pa{t+1}, \pb{t+1}})&=&\sgn(1-2\pp{t+1})|\sgn(\dotp{\pb{t+1}, \stt})|\n\\
&=&\sgn(\dotp{\pa{t}, \pb{t}})|\sgn(\dotp{\pb{t}, \stt})|\n\\
&=&\sgn(\dotp{\pa{t}, \pb{t}}).\n
\end{eqnarray}
Hence our next goal is to prove \eqref{eq:signcondition1} and \eqref{eq:signcondition2}. Consider the rotation matrix $\vO$ for which $\tstt \triangleq \vO \stt$ has all its coordinates except the first one equal to zero, i.e., $\tstt = (\|\stt\|, 0, \ldots, 0)^\t$. Also, let $\tpb{t} \triangleq \vO \pb{t}$, and $\tpa{t} \triangleq \vO \pa{t}$. According to Lemma \ref{lemma:rotate}, we have
\begin{eqnarray}
&& \dotp{ \tpq{t+1}, \tstt}\ =\ \dotp{ \tstt, \bbE \w_d(\Y-\tpa{t}, \tpb{t})\Y}\n\\
&=&\|\tstt\|\int \w_d(\y-\tpa{t}, \tpb{t}) y_1\pdf{d, \y, \tstt}\dif \y\n\\
&=& \|\tstt\|\int \frac{e^{-\sum_{i=2}^d y_i^2/2}}{\sqrt{2\pi}^{d-1}}(\int \w_d(\y-\tpa{t}, \tpb{t})y_1\pdf{, y_1, \|\tstt\|}\dif y_1)\dif y_2\cdots \dif y_d\n\\\label{equ:qta}
\end{eqnarray}
Hence, define $\y_{2\ldots d}=(y_2, \cdots, y_d)$ and $B(\y_{2\ldots d})\triangleq\sum_{i=2}^d y_ib^{\langle t \rangle}_i$, then to prove \eqref{eq:signcondition1}, it is sufficient to prove
\begin{eqnarray}
\sgn(\int \w_d(\y-\tpa{t}, \tpb{t})y_1\pdf{, y_1, \|\tstt\|}\dif y_1)&=&\sgn(\dotp{\pb{t}, \stt})\ =\ \sgn(\|\stt\|\tpbi{1, t})\n\\
&=&\sgn(\tpbi{1, t}), \ \ \forall \y_{2\ldots d}, B(\y_{2\ldots d}).\n
\end{eqnarray}
Note that
\begin{eqnarray}
&~&\int \w_d(\y-\tpa{t}, \tpb{t})y_1\pdf{, y_1, \|\tstt\|}\dif y_1\n\\
&=&\int_{0}^{+\infty} (\w_d((y_1, \y_{2\ldots d})^\t-\tpa{t}, \tpb{t})- \w_d((-y_1, \y_{2\ldots d})^\t-\tpa{t}, \tpb{t}))y_1\pdf{, y_1, \|\tstt\|}\dif y_1\n\\
&=&\int_{0}^{+\infty}\frac{e^{2y_1\tpbi{1, t}}-e^{-2y_1\tpbi{1, t}}}{e^{2y_1\tpbi{1, t}}+e^{-2y_1\tpbi{1, t}}+e^{2B(\y_{2\ldots d})-2\langle \tpa{t}, \tpb{t}\rangle}+e^{-2B(\y_{2\ldots d})+2\langle \tpa{t}, \tpb{t}\rangle}}y_1\pdf{, y_1, \|\tstt\|}\dif y_1.\n
\end{eqnarray}
Hence, we have
\[\sgn(\int \w_d(\y-\tpa{t}, \tpb{t})y_1\pdf{, y_1, \|\tstt\|}\dif y_1)=\sgn(\tpbi{1, t}), \ \ \forall \y_{2\ldots d}, B(\y_{2\ldots d}).\]
To prove \eqref{eq:signcondition2}, according to Lemma \ref{lemma:rotate}, we have
\begin{eqnarray}
2\pp{t+1}-1&=&\bbE (2\w_d(\Y-\tpa{t}, \tpb{t})-1)\n\\
&=&\bbE (\w_d(\Y-\tpa{t}, \tpb{t})+\w_d(-\Y-\tpa{t}, \tpb{t})-1)\n\\
&=&\bbE(\frac{e^{2\dotp{\Y, \tpb{t}}}+e^{-2\dotp{\Y, \tpb{t}}}+2e^{-2\dotp{\tpa{t}, \tpb{t}}}}{e^{2\dotp{\Y, \tpb{t}}}+e^{-2\dotp{\Y, \tpb{t}}}+e^{2\dotp{\tpa{t}, \tpb{t}}}+e^{-2\dotp{\tpa{t}, \tpb{t}}}}-1)\n\\
&=&\bbE\frac{e^{-2\dotp{\tpa{t}, \tpb{t}}}-e^{2\dotp{\tpa{t}, \tpb{t}}}}{e^{2\dotp{\Y, \tpb{t}}}+e^{-2\dotp{\Y, \tpb{t}}}+e^{2\dotp{\tpa{t}, \tpb{t}}}+e^{-2\dotp{\tpa{t}, \tpb{t}}}}.\n
\end{eqnarray}
Hence, we have
\begin{eqnarray}
\sgn(1-2\pp{t+1})&=&\sgn(\dotp{\tpa{t}, \tpb{t}}).\n
\end{eqnarray}
This completes the proof of this lemma.

\subsection{Proof of Lemma \ref{lemma:symmetrizeinitial}} \label{sec:proof:lemma:symmetrizeinitial}
\begin{proof}
We use induction to prove for claim (i) and the proof for claim (ii) is similar. Clearly (i) holds for $t=0$. If (i) holds for $t$, then for $t+1$, note that $\forall \va, \vb\in \bbR^d$
\begin{eqnarray}
\bbE \w_d(\Y-\va, \vb)\Y&=&\bbE\w_d(\Y+\va, \vb)\Y, \n\\
\bbE \w_d(\Y-\va, \vb)&=&1-\bbE\w_d(\Y+\va, \vb), \n\\
\bbE \w_d(\Y-\va, \vb)\Y&=&-\bbE\w_d(\Y-\va, -\vb)\Y, \n\\
\bbE \w_d(\Y-\va, \vb)&=&1-\bbE\w_d(\Y-\va, -\vb).\n
\end{eqnarray}
Hence by definition of $\pa{t}$ and $\pb{t}$, it is straight forward to see
\begin{eqnarray}
\pa{t+1}_{(1)}&=&-\pa{t+1}_{(2)}, \n\\
\pb{t+1}_{(1)}&=&\pb{t+1}_{(2)}.\n
\end{eqnarray}
Hence claim (i) holds for $t+1$. By induction, we know claim (i) holds for all $ t\geq 0$.
\end{proof}

\subsection{Proof of Lemma \ref{lem:Plowerbound}}\label{sec:proofoflemmaPlowerbound}
According to the definition of $P(x_a, x_b, x_\theta)$ presented in Appendix \ref{sec:notation} we have
\begin{eqnarray}
\funcP(x_{a}, x_{b}, x_{\theta})&=&\int \w(y-x_a, x_b)\pdf{, y, x_\theta}\dif y\n\\
&=&\int\w(y, x_b)\pdf{, y+x_a, x_\theta}\dif y\n\\
&=&\int_{y\geq 0}\pdf{, y+x_a, x_\theta}\dif y+\int_{y\geq 0}\w(-y, x_b)(\pdf{, y-x_a, x_\theta}-\pdf{, y+x_a, x_\theta})\dif y.\n\label{eq:plow2}\\
\end{eqnarray}
where the last equality used the fact that $\w(y, x_b)+\w(-y, x_b)=1$.
If $x_{a}\geq x_{\theta}\geq 0$, then
\begin{eqnarray}
\lefteqn{2(\pdf{, y-x_a, x_\theta}-\pdf{, y+x_a, x_\theta})}\n\\
&=&\phi(y-x_a+x_\theta)+\phi(y-x_a-x_\theta)-\phi(y+x_a+x_\theta)+\phi(y+x_a-x_\theta)\n\\
&=&\phi(y-x_a+x_\theta)-\phi(y+x_a-x_\theta)+\phi(y-x_a-x_\theta)-\phi(y+x_a+x_\theta)\n\\
&\geq& 0, \ \ \forall y\geq 0.\n
\end{eqnarray}
Hence with \eqref{eq:plow2}, the above equation implies that if $x_{a}\geq x_{\theta}\geq 0$, we have
\begin{eqnarray}
\funcP(x_{a}, x_{b}, x_{\theta})&\geq&\int_{y\geq 0}\pdf{, y+x_a, x_\theta}\dif y\n\\
&=&\frac{1}{2}(1-\Phi(x_{a}-x_{\theta}))+\frac{1}{2}(1-\Phi(x_{a}+x_{\theta})).\label{eq:plow3}
\end{eqnarray}
This completes the proof of the first part of Lemma \ref{lem:Plowerbound}. Now we discuss the second part, i.e., the case $x_{a}<x_{\theta}$. First note that $\funcP$ is a decreasing function of $x_a$, since $x_b\geq 0$ and
\begin{eqnarray}
\frac{\partial \funcP(x_a, x_b, x_\theta)}{\partial x_a}=-\int\frac{2x_b}{(e^{yx_b-x_ax_b}+e^{-yx_b+x_ax_b})^2}\pdf{, y, x_\theta}\dif y\leq0.\n
\end{eqnarray}
Therefore, we have
\begin{eqnarray}
\funcP(x_{a}, x_{b}, x_{\theta})\ \geq\ \funcP(x_{\theta}, x_{b}, x_{\theta})\ \geq\ \frac{1}{4}+\frac{1}{2}(1-\Phi(2x_{\theta}))\n
\end{eqnarray}
where the last inequality holds because $x_a=x_\theta$ satisfies the condition of \eqref{eq:plow3}. Hence, it immediately gives us that
if $x_{a}<x_{\theta}$, then $$\funcP_1(x_{a}, x_{b}, x_{\theta})\geq\frac{1}{4}.$$
This completes the proof.

\subsection{Proof of Lemma \ref{lem:Q1overPbound}}\label{sec:proofoflemmaQ1overPbound}
We warn the reader that in this proof we use the proof of Lemma \ref{lem:Plowerbound}, presented in the last section. According to the definition of  $\funcQ$ presented in Appendix \ref{sec:notation}, we have
\begin{eqnarray}
\lefteqn{\funcQ(x_{a}, x_{b}, x_{\theta})=\int \w(y-x_a, x_b)y\pdf{, y, x_\theta}\dif y}\n\\
&=&x_{a}\funcP(x_{a}, x_{b}, x_{\theta})+\int \w(y-x_a, x_b)(y-x_a)\pdf{, y, x_\theta}\dif y\n\\
&=&x_{a}\funcP(x_{a}, x_{b}, x_{\theta})+\int \w(y, x_b)y\pdf{, y+x_a, x_\theta}\dif y\n\\
&<&x_{a}\funcP(x_{a}, x_{b}, x_{\theta})+\int_{y\geq 0}y\pdf{, y+x_a, x_\theta}\dif y\n\\
&=&x_{a}\funcP(x_{a}, x_{b}, x_{\theta})+\frac{1}{2}\int_{y\geq 0}\{(y+x_{a}-x_{\theta})\phi(y+x_{a}-x_{\theta})+(y+x_{a}+x_{\theta})\phi(y+x_{a}+x_{\theta})\}\dif y\n\\
&~&-\frac{1}{2}\left\{(x_{a}-x_{\theta})\int_{y\geq 0}\phi(y+x_{a}-x_{\theta})\dif y-(x_{a}+x_{\theta})\int_{y\geq 0}\phi(y+x_{a}+x_{\theta})\dif y\right\}\n\\
&=&x_{a}\funcP(x_{a}, x_{b}, x_{\theta})+\frac{1}{2}\left\{\phi(x_{\theta}-x_{a})-(x_{a}-x_{\theta})(1-\Phi(x_{a}-x_{\theta}))\right\}\n\\
&~&+\frac{1}{2}\left\{\phi(x_{\theta}+x_{a})-(x_{a}+x_{\theta})(1-\Phi(x_{a}+x_{\theta}))\right\}\n\\
&=&x_{a}\funcP(x_{a}, x_{b}, x_{\theta})+\frac{1}{2}(W(x_a+x_\theta)+W(x_a-x_\theta)), \label{eq:equq2}
\end{eqnarray}
where $W(x)=\phi(x)-x(1-\Phi(x))$. Therefore we should find an upper bound for $W(x)$. Towards this goal we use the following lemma:
\begin{lemma}\label{lemma:gaussian}
Let $\phi(x), \Phi(x)$ denote the pdf and CDF of standard Gaussian respectively. Then we have
$$\frac{\phi(x)}{1-\Phi(x)}<
x+\sqrt{\frac{2}{\pi}}, \ \ \forall x>0.$$
\end{lemma}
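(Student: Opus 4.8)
The plan is to prove the equivalent statement that
$g(x) \triangleq \phi(x) - (x + \sqrt{2/\pi})(1-\Phi(x)) < 0$ for all $x > 0$; since $1 - \Phi(x) > 0$, this is exactly the claimed bound on the Mills-ratio reciprocal. Write $c = \sqrt{2/\pi}$ for brevity. The first step is to record the two boundary values. At $x = 0$ we have $\phi(0) = 1/\sqrt{2\pi}$ and $1 - \Phi(0) = 1/2$, so $g(0) = 1/\sqrt{2\pi} - c/2 = 0$ exactly (this is precisely why the constant $\sqrt{2/\pi}$ is the right one). As $x \to \infty$, the elementary bound $1 - \Phi(x) = \int_x^\infty \phi(t)\,dt < \int_x^\infty (t/x)\phi(t)\,dt = \phi(x)/x$ gives $0 < (x+c)(1-\Phi(x)) < (1 + c/x)\phi(x) \to 0$, so $g(x) \to 0$ as well.

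Next I would study the derivatives. A direct computation using $\phi'(x) = -x\phi(x)$ and $\Phi'(x) = \phi(x)$ yields $g'(x) = c\,\phi(x) - (1 - \Phi(x))$ and then $g''(x) = \phi(x)(1 - cx)$. Hence $g''$ is positive on $(0, 1/c)$ and negative on $(1/c, \infty)$, so $g'$ is strictly increasing on $(0, 1/c)$ and strictly decreasing on $(1/c, \infty)$. At the left end, $g'(0) = c\,\phi(0) - 1/2 = 1/\pi - 1/2 < 0$. Since $g'$ is decreasing on $(1/c, \infty)$ and $g'(x) \to 0$ (which follows from $\phi(x) \to 0$ and $1-\Phi(x) \to 0$), we must have $g'(x) > 0$ throughout $(1/c, \infty)$, and in particular $g'(1/c) > 0$. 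Combining these facts: $g'$ rises from the negative value $g'(0)$, crosses zero exactly once at some $x_1 \in (0, 1/c)$, and stays positive on $(x_1, \infty)$.

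Finally, this sign pattern for $g'$ shows that $g$ is strictly decreasing on $(0, x_1)$ and strictly increasing on $(x_1, \infty)$. Since $g(0) = 0$, monotonicity gives $g(x) < 0$ on $(0, x_1]$; since $g(x) \to 0$ as $x \to \infty$ while $g$ increases on $(x_1, \infty)$, we also get $g(x) < 0$ there. Hence $g(x) < 0$ for every $x > 0$, which is the desired inequality. The only genuinely delicate point is the step asserting that $g'$ has exactly one zero on $(0,\infty)$, but this is forced once we know $g''$ changes sign exactly once, $g'(0) < 0$, and $g'(\infty) = 0$ with $g'$ eventually decreasing; everything else is routine calculus, so I do not anticipate any real obstruction.
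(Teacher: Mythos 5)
Your proof is correct and follows essentially the same route as the paper: both compute the second derivative $\phi(x)(1-\sqrt{2/\pi}\,x)$ to locate the single sign change, deduce the unimodal shape of the first derivative and then of the function itself, and conclude from the boundary values at $0$ and $\infty$ (you work with the negative of the paper's $r(x)$, and you replace the paper's numerical evaluation of $r'(\sqrt{\pi/2})$ with a cleaner monotonicity-plus-limit argument, but the structure is identical).
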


The proof of this Lemma is presented in Appendix \ref{sec:proofofgaussian}. Therefore from this lemma, we have $$W(x)<\sqrt{\frac{2}{\pi}}(1-\Phi(x)).$$
Hence we can upper bound \eqref{eq:equq2} by the following inequality:
\begin{eqnarray}
\funcQ(x_{a}, x_{b}, x_{\theta})\leq x_{a}\funcP(x_{a}, x_{b}, x_{\theta})+\sqrt{\frac{2}{\pi}}\{\frac{1}{2}(1-\Phi(x_a+x_\theta))+\frac{1}{2}(1-\Phi(x_a-x_\theta))\}.\n
\end{eqnarray}
From Lemma \ref{lem:Plowerbound}, we have
\begin{eqnarray}
\funcP(x_a, x_b, x_\theta)\geq \frac{1}{2}(1-\Phi(x_{a}-x_{\theta}))+\frac{1}{2}(1-\Phi(x_{a}+x_{\theta})), \ \ \forall x_a\geq x_\theta.\n
\end{eqnarray}
Therefore, if $x_{a}\geq x_{\theta}$, then we have
\begin{eqnarray}
\funcQ(x_{a}, x_{b}, x_{\theta})&\leq&x_{a}\funcP(x_{a}, x_{b}, x_{\theta})+\sqrt{\frac{2}{\pi}}\{\frac{1}{2}(1-\Phi(x_a+x_\theta))+\frac{1}{2}(1-\Phi(x_a-x_\theta))\}\n\\
&\leq&x_{a}\funcP(x_{a}, x_{b}, x_{\theta})+\sqrt{\frac{2}{\pi}}\funcP(x_{a}, x_{b}, x_{\theta}), \n\\
\end{eqnarray}
which completes the proof.

\subsection{Proof of Lemma \ref{lemma:gaussian}}\label{sec:proofofgaussian}
It is equivalent to show that
$$r(x)\triangleq(x+\sqrt{\frac{2}{\pi}})(1-\Phi(x))-\phi(x)>0, \ \ \forall x>0.$$
Taking the first derivative of the left hand side, we have
\begin{eqnarray}
\frac{\dif r(x)}{\dif x}=1-\Phi(x)-\phi(x)(x+\sqrt{\frac{2}{\pi}})+x\phi(x)=1-\Phi(x)-\sqrt{\frac{2}{\pi}}\phi(x).\n
\end{eqnarray}
Taking the second derivative, we have
\begin{eqnarray}
\frac{\dif^2r(x)}{\dif x^2}=-\phi(x)+\sqrt{\frac{2}{\pi}}x\phi(x)=(\sqrt{\frac{2}{\pi}}x-1)\phi(x).\n
\end{eqnarray}
Hence, we have $r''(x)<0$ if $x<\sqrt{\pi/2}$ and $r''(x)>0$ if $x>\sqrt{\pi/2}$. Therefore, $r'(x)$ is first strictly decreasing then strictly increasing function of $x$ for $x\geq 0$. Since $r'(0)=1/2-1/\pi>0$, $r'(\sqrt{\pi/2})=-0.04008391$ and
\[\lim_{x\rightarrow \infty}r'(x)\ =\ \lim_{x\rightarrow \infty}1-\Phi(x)-\sqrt{\frac{2}{\pi}}\phi(x)\ =\ 0, \]
we know there exists $x_0\in (0, \sqrt{\pi/2})$ such that $r'(x)>0$ if $x<x_0$ and $r'(x)<0$ if $x>x_0$. Hence, $r(x)$ is first strictly increasing and then strictly decreasing function of $x\geq 0$. Since $r(x)=0$ and
\begin{eqnarray}
|\lim_{x\rightarrow \infty}r(x)|&\leq& \lim_{x\rightarrow \infty}(x+\sqrt{\frac{2}{\pi}})(1-\Phi(x))+\phi(x)\n\\
&\leq&2\lim_{x\rightarrow \infty}\int_{y=x}^{+\infty}x\phi(y)\dif y\n\\
&\leq& 2\lim_{x\rightarrow \infty}\int_{y=x}^{+\infty}y\phi(y)\dif y\n\\
&=&2\lim_{x\rightarrow \infty}\phi(x)=0.\n
\end{eqnarray}
Hence, we have $r(x)>0, \ \forall x>0$. This completes the proof of this Lemma.

\subsection{Proof of Lemma \ref{lem:lowerboundderivative}}\label{sec:proofoflemmalowerboundderivative}
We first calculate the derivative $\frac{\partial \funcQ(x_{a}, x_{b}, x_{\theta})}{\partial x_{b}}$ at zero:
\begin{eqnarray}
\frac{\partial \funcQ(x_{a}, x_{b}, x_{\theta})}{\partial x_{b}}|_{x_b=0}&=&\int\frac{2(y-x_{a})}{(e^{yx_{b}-x_{a}x_{b}}+e^{-yx_{b}+x_{a}x_{b}})^2}y\pdf{, y, x_\theta}\dif y|_{x_b=0}\n\\
&=&\frac{1}{2}\int\frac{y^2-x_a y}{2}(\phi(y-x_{\theta})+\phi(y+x_{\theta}))\dif y\n\\
&=&\frac{1}{2}(1+x_{\theta}^2).\n
\end{eqnarray}
This derivative is clearly larger than $0.5$. Now we prove the main result by contradiction. Suppose that the claim of the lemma is not correct. Then, for any fixed $\{c_{U, 1}, |\ptt{1, 0}|, \|\stt\|\}$, $\forall \de>0$, we have $a_{\de}\in [0, c_{U, 1}], b_{\de}\in [0, \de], \theta_{\de}\in [\ptt{1, 0}, \|\stt\|]$ such that
$$\frac{\partial \funcQ(x_a, x_b, x_\theta)}{\partial x_b}|_{x_{a}=a_{\de}, x_{b}=b_{\de}, x_{\theta}=\theta_{\de}}<\frac{1}{2}.$$
Therefore, for any sequence $\{\de_i\}$ such that $\de_i\rightarrow 0$, we have
$$a_{\de_i}\in [0, c_{U, 1}], b_{\de_i}\in [0, \de_i], \theta_{\de_i}\in [\ptt{1, 0}, \|\stt\|].$$
Since the sequence $\{a_{\delta_i}, b_{\delta_i}, \theta_{\delta_i}\}_{i=1}^{\infty}$, belong to a compact set,  there exists a subsequence $\de_{i_j}$ such that $\{(a_{\de_{i_j}}, b_{\de_{i_j}}, \theta_{\de_{i_j}})\}$ converges to a limit $(a^{\infty}, b^{\infty}, \theta^{\infty})$ satisfying $$a^{\infty}\in [0, c_{U, 1}], b^{\infty}\in[0, \lim_{j\rightarrow \infty}\de_{i_j}=0], \theta^{\infty}\in [\ptt{1, 0}, \|\stt\|].$$
By continuity of $\frac{\partial \funcQ(x_a, x_b, x_\theta)}{\partial x_b}$, we have
\begin{eqnarray}
\frac{1}{2}&\geq& \lim_{j\rightarrow \infty}\frac{\partial \funcQ(x_a, x_b, x_\theta)}{\partial x_b}|_{x_a=a_{\de_{i_j}}, x_b=b_{\de_{i_j}}, x_\theta=\theta_{\de_{i_j}}}\n\\
&=&\frac{\partial \funcQ(x_a, x_b, x_\theta)}{\partial x_b}|_{x_a=a^{\infty}, x_b=0, x_\theta=\theta^{\infty}}\n\\
&=&\frac{1}{2}(1+(\theta^{\infty})^2)\ >\ \frac{1}{2}.\n
\end{eqnarray}
This contradiction proves that Lemma \ref{lem:lowerboundderivative} is correct.

\subsection{Proof of Lemma \ref{lem:concavityK}}\label{sec:proofoflemmaconcavityK}
Firs note that if $x=0$, then
\[
K(0, x_b) = \int \frac{e^{yx_b}-e^{-yx_b}}{2(e^{yx_b}+e^{-yx_b})}\frac{1}{\sqrt{2\pi}}e^{-y^2/2}\dif y,
\]
which is the integral of an odd function and is hence equal to zero. To prove that the function is increasing and concave for $x\geq 0$, we calculate its derivatives. It is straightforward to see that
\begin{eqnarray}
\frac{\partial K(x, x_b)}{\partial x}&=&\int\frac{e^{yx_b}-e^{-yx_b}}{2(e^{yx_b}+e^{-yx_b})}(y-x)\phi(y-x)\dif y\n\\
&=&-\int\frac{e^{yx_b}-e^{-yx_b}}{2(e^{yx_b}+e^{-yx_b})}\dif \phi(y-x)\n\\
&=&\int \phi(y-x)\frac{2x_b}{(e^{yx_b}+e^{-yx_b})^2}\dif y>0, \n
\end{eqnarray}
where the last equality is the result of integration by parts. Similarly, $\forall x\geq 0$
\begin{eqnarray}
\frac{\partial^2 K(x, x_b)}{\partial x^2}&=&\int (y-x)\phi(y-x)\frac{2x_b}{(e^{yx_b}+e^{-yx_b})^2}\dif y\n\\
&=&-\int \frac{2x_b}{(e^{yx_b}+e^{-yx_b})^2}\dif \phi(y-x)\n\\
&\overset{(a)}{=}&-\int \phi(y-x)\frac{4x_b^2(e^{yx_b}-e^{-yx_b})}{(e^{yx_b}+e^{-yx_b})^3}\dif y\n\\
&=&\int_{0}^{\infty}(\phi(y+x)-\phi(y-x))\frac{4x_b^2(e^{yx_b}-e^{-yx_b})}{(e^{yx_b}+e^{-yx_b})^3}\dif y<0, \n
\end{eqnarray}
where equality (a) is an application of integration by parts.

\subsection{Proof of Lemma \ref{lem:lupperbound}}\label{sec:proofoflemmalupperbound}
Recall the definition of $l(x)$ in Appendix \ref{proof:lemubkappa_a}:
$$l(x)=x(1-2\Phi(-x))+2\phi(x).$$
Define
\begin{eqnarray}
J(x)\ =\ \frac{1}{2}(x-l(x)(1-2\Phi(-x)))\ =\ 2\phi(x)\Phi(-x)+2x\Phi(-x)-\phi(x)-2x\Phi(-x)^2.\n
\end{eqnarray}
We would like to show that $J(x) \geq 0$. Hence, we analyze the shape of the function $J(x)$ by taking the derivatives, for all $x>0$
\begin{eqnarray}
\frac{\dif J(x)}{\dif x}&=&-2\phi(x)^2+2\Phi(-x)-x\phi(x)-2\Phi(-x)^2\n\\
\frac{\dif^2 J(x)}{\dif x^2}&=&\phi(x)(4\phi(x)x-3+x^2+4\Phi(-x))\n\\
\frac{\dif J''(x)/\phi(x)}{\dif x}&=&2x(1-2x\phi(x))\geq 2x(1-\sqrt{\frac{2}{\pi}})>0.\n
\end{eqnarray}
Therefore $J''(x)/\phi(x)$ is an strictly increasing function of $x$. With $J''(0)<0$ and $J''(10)>0$, we have $J'(x)$ is first strictly decreasing then strictly increasing function of $x$. Since $J'(0)=1/2-1/\pi>0$ and $\lim_{x\rightarrow \infty}J'(x)=0$, we know $J(x)$ achieves its minimum at either $0$ or $\infty$. Since $J(0)=0$ and $\lim_{x\rightarrow \infty}J(x)=0$, we have
$$J(x)>0, \ \ \forall x>0.$$
This completes the proof of this lemma.

\subsection{Proof of Lemma \ref{lem:Fpropert1}}\label{sec:proofoflemmaFpropert1}
According to the definition of functions $F(x_b, x_\theta)$, $Q(x_a, x_b, \theta)$ and \eqref{eq:QFrelation} in Appendix \ref{sec:notation} we have
\begin{eqnarray}
F(x_b, x_\theta)&=&2\funcQ(0, x_b, x_\theta)\n\\
&=&\int\frac{e^{(y+x_{\theta})x_{b}}-e^{-(y+x_{\theta})x_{b}}}{e^{(y+x_{\theta})x_{b}}+e^{-(y+x_{\theta})x_{b}}}(y+x_{\theta})\phi(y)\dif y.\label{eq:Fpropert11}
\end{eqnarray}
To prove the concavity of this function we show that $$\frac{\partial^2 F(x_b, x_\theta)}{\partial x_b^2} \leq 0.$$ We have
\begin{eqnarray}
\frac{\partial F(x_b, x_\theta)}{\partial x_b}&=&\int\frac{4}{(e^{(y+x_{\theta})x_{b}}+e^{-(y+x_{\theta})x_{b}})^2}(y+x_{\theta})^2\phi(y)\dif y \geq 0, \n\\
\frac{\partial^2 F(x_b, x_\theta)}{\partial x_b^2}&=&\int\frac{-8(e^{(y+x_{\theta})x_{b}}-e^{-(y+x_{\theta})x_{b}})}{(e^{(y+x_{\theta})x_{b}}+e^{-(y+x_{\theta})x_{b}})^3}(y+x_{\theta})^3\phi(y)\dif y\n\\
&=&-\int\frac{8(e^{yx_{b}}-e^{-yx_{b}})}{(e^{yx_{b}}+e^{-yx_{b}})^3}y^3\phi(y-x_{\theta})\dif y\leq 0.\n
\end{eqnarray}
Therefore $F$ is increasing and strictly concave in $x_b>0$. Now we only need to calculate $F(0, x_\theta)$ and $F(x_\theta, x_\theta)$. From \eqref{eq:Fpropert11}, we have
\begin{eqnarray}
F(0, x_\theta)\ =\ \int\frac{e^{0}-e^{-0}}{e^{0}+e^{0}}(y+x_{\theta})\phi(y)\dif y\ =\ 0.\n
\end{eqnarray}
Using definition of $F$, we have
\begin{eqnarray}
F(x_\theta, x_\theta)&=&2Q(0, x_\theta, x_\theta)\n\\
&=&2\int\frac{e^{yx_{\theta}}}{e^{yx_{\theta}}+e^{-yx_{\theta}}}y\frac{1}{2\sqrt{2\pi}}(e^{-(y-x_{\theta})^2/2}+e^{-(y+x_{\theta})^2/2})\dif y\n\\
&=&\int e^{yx_{\theta}}y\frac{1}{\sqrt{2\pi}}e^{-(y^2+x_{\theta}^2)/2}\dif y\n\\
&=&x_{\theta}\n
\end{eqnarray}
This completes the proof of this lemma.

\subsection{Proof of  Lemma \ref{lemma:1a0conratepre}}\label{sec:proofoflemma1a0conratepre}
According to the definition of function $F$, we have
\begin{eqnarray}
\frac{\partial F(x_b, x_\theta)-x_\theta}{\partial x_b}|_{x_b=x_\theta}&=&\int\frac{4y^2}{(e^{yx_b}+e^{-yx_b})^2}\frac{1}{\sqrt{2\pi}}e^{-(y-x_\theta)^2/2}\dif y|_{x_b=x_\theta}\n\\
&=&\int\frac{2y^2}{e^{yx_\theta}+e^{-yx_\theta}}\frac{1}{\sqrt{2\pi}}e^{-(y^2+x_\theta^2)/2}\dif y\n\\
&\leq&e^{-\frac{x_\theta^2}{2}}.\n
\end{eqnarray}
We claim there exists $\de>0$ is a function of only $L_b, U_b, L_\theta, \|\stt\|$ such that $\forall |x_b-x_\theta|\in [0, \de], x_b\in [L_b, U_b], x_\theta\in [L_\theta, \|\stt\|], $
\begin{eqnarray}
\frac{\partial F(x_b, x_\theta)-x_\theta}{\partial x_b}\leq \frac{1+e^{-\frac{L_\theta^2}{2}}}{2}.\label{equ:bound2}
\end{eqnarray}
We prove it by contradiction. If not, for all $\de>0$, we have $b_{\de}\in [L_b, U_b], \theta_{\de}\in [L_\theta, \|\stt\|], |b_{\de}-\theta_{\de}|\in [0, \de]$ such that
\begin{eqnarray}
\frac{\partial F(x_b, x_\theta)-x_\theta}{\partial x_b}|_{x_b=b_\de, x_\theta=\theta_\de}>\frac{1+e^{-\frac{L_\theta^2}{2}}}{2}.\n
\end{eqnarray}
For any sequence $\{\de_i\}$ such that $\de_i\rightarrow 0$, there exists subsequence $\de_{i_j}$ such that $\{(b_{\de_{i_j}}, \theta_{\de_{i_j}})\}$ converge to the limits $(b^{\infty}, \theta^\infty)$. By compactness of the choice of $x_b, x_\theta$, we have $$b^\infty\in[L_b, U_b], \theta^\infty\in[L_\theta, \|\stt\|], |b^\infty-\theta^\infty|\in [0, \lim_{j\rightarrow \infty}\de_{i_j}=0].$$ By continuity of $\frac{\partial F(x_b, x_\theta)-x_\theta}{\partial x_b}$, we have
\begin{eqnarray}
\frac{1+e^{-\frac{L_\theta^2}{2}}}{2}&\leq& \lim_{j\rightarrow \infty}\frac{\partial F(x_b, x_\theta)-x_\theta}{\partial x_b}|_{x_b=b_{\de_{i_j}}, x_\theta=\theta_{\de_{i_j}}}\n\\
&=&\frac{\partial F(x_b, x_\theta)-x_\theta}{\partial x_b}|_{x_b=x_\theta=\theta^\infty}\n\\
&=&e^{-\frac{(\theta^\infty)^2}{2}}\ <\ \frac{1+e^{-\frac{L_\theta^2}{2}}}{2}.\n
\end{eqnarray}
Contradiction! Hence we have Eq.(\ref{equ:bound2}) holds and $\forall |x_b-x_\theta|\in [0, \de], x_b\in [L_b, U_b], x_\theta\in [L_\theta, \|\stt\|], $
$$|F(x_b, x_\theta)-x_\theta|\ \leq\  |F(x_\theta, x_\theta)|+|\frac{1+e^{-\frac{L_\theta^2}{2}}}{2}(x_b-x_\theta)|\ =\ \frac{1+e^{-\frac{L_\theta^2}{2}}}{2}|x_b-x_\theta|.$$
From Lemma \ref{lem:Fpropert1}, we have $$|F(x_b, x_\theta)-x_\theta|<|x_b-x_\theta|, \ \ \forall |x_b-x_\theta|\notin [0, \de), x_b\in [L_b, U_b], x_\theta\in [L_\theta, \|\stt\|].$$
Let
$$\kappa_b''=\max\{\frac{1+e^{-\frac{L_\theta^2}{2}}}{2}, \sup_{|x_b-x_\theta|\notin [0, \de), x_b\in [L_b, U_b], x_\theta\in [L_\theta, \|\stt\|]}\frac{|F(x_b, x_\theta)-x_\theta|}{|x_b-x_\theta|}\}, $$ by continuity of the function $\frac{|F(x_b, x_\theta)-x_\theta|}{|x_b-x_\theta|}$, we have $\kappa_b''\in (0, 1)$ is a function of only $L_b, U_b, L_\theta, \|\stt\|$ and
\begin{eqnarray}
|F(x_b, x_\theta)-x_\theta|\leq \kappa_b'' |x_b-x_\theta|, \ \ \forall x_b\in [L_b, U_b], x_\theta\in [L_\theta, \|\stt\|].\n
\end{eqnarray}
This completes the proof of this lemma.

\subsection{Cluster Points of Population EM}\label{sec:Laccumulationpoitnsproof}

\begin{lemma}\label{lemma:converge}
Any clustering point $(\va, \vb)$ of the estimates of the Population EM $\{(\pa{t}, \pb{t})\}_t$ satisfy the following equations:
\begin{eqnarray*}
\va&=&\frac{\vfuncq(1-2\vfuncp)}{2\vfuncp(1-\vfuncp)} , \\
\vb&=&\frac{\vfuncq}{2\vfuncp(1-\vfuncp)}, \\
\vfuncq
&=&\mathbb{E} \w_d(\Y-\va, \vb)\Y, \\
\vfuncp&=&\mathbb{E} \w_d(\Y-\va, \vb),
\end{eqnarray*}
where $\Y \sim 0.5 N(-\stt, I) + 0.5 N(\stt, I)$.
\end{lemma}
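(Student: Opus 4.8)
The plan is to combine a compactness argument for passing to the limit in the Population EM update formulas with the classical monotonicity of the expected log-likelihood along EM iterates. First I would record, using Lemma~\ref{lemma:upperboundEMestimates}, that all iterates $\{(\pa{t},\pb{t})\}_{t\ge 0}$ lie in a fixed compact set $\mathcal{K}$, and let $g$ denote the one-step Population EM map, i.e.\ the map $(\pa{t},\pb{t})\mapsto(\pa{t+1},\pb{t+1})$ given by \eqref{equ:iteq}--\eqref{equ:itep} and \eqref{equ:itea}--\eqref{equ:iteb}. I would check that $g$ is well-defined and continuous on $\mathcal{K}$: the weight $\w_d$ is bounded by $1$ and jointly continuous in its arguments, so dominated convergence (dominating function $\|\Y\|$, integrable under $0.5N(-\stt,\vI_d)+0.5N(\stt,\vI_d)$) makes both $(\va_0,\vb_0)\mapsto\E\w_d(\Y-\va_0,\vb_0)\Y$ and $(\va_0,\vb_0)\mapsto\E\w_d(\Y-\va_0,\vb_0)$ continuous; the latter takes values in $(0,1)$, hence by compactness of $\mathcal{K}$ it stays in a closed subinterval of $(0,1)$, so the denominator $2\pp{t+1}(1-\pp{t+1})$ appearing in \eqref{equ:itea}--\eqref{equ:iteb} is bounded away from $0$.

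Next I would invoke the ascent property of the expected log-likelihood. Let $\tilde G(\va,\vb)$ denote $G$ evaluated at the mixture means recovered from $(\va,\vb)$ through the affine change of variables \eqref{equ:reparamization}, so that (again by dominated convergence) $\tilde G$ is continuous. The standard EM decomposition $G(\veta')=Q(\veta'\mid\veta)-H(\veta'\mid\veta)$, together with nonnegativity of the $\Y$-averaged Kullback--Leibler divergence between the posteriors $f(\cdot\mid\Y;\veta)$ and $f(\cdot\mid\Y;\veta')$ (well-defined because the Gaussian mixture components share full support, so R1 holds), gives $\tilde G(g(\va_0,\vb_0))\ge\tilde G(\va_0,\vb_0)$ for every $(\va_0,\vb_0)$, with equality if and only if $g(\va_0,\vb_0)=(\va_0,\vb_0)$; the ``only if'' direction uses that for this model the M-step objective $Q(\cdot\mid\veta)$ is a strictly concave function of the means with a unique maximizer, so equality of the $Q$-values forces $\veta$ itself to be that maximizer. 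In particular $\tilde G(\pa{t},\pb{t})$ is nondecreasing in $t$, and being bounded above by the value of $G$ at the true parameters (the information inequality) it converges to some limit $G^\star$.

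Now let $(\va,\vb)$ be a cluster point of $\{(\pa{t},\pb{t})\}_t$, with $(\pa{t_k},\pb{t_k})\to(\va,\vb)$ along a subsequence; by continuity $\tilde G(\va,\vb)=G^\star$. Passing to a further subsequence, $(\pa{t_k+1},\pb{t_k+1})\to(\va'',\vb'')$ for some $(\va'',\vb'')\in\mathcal{K}$ (as $\mathcal{K}$ is closed). Since $(\pa{t_k+1},\pb{t_k+1})=g(\pa{t_k},\pb{t_k})$ and $g$ is continuous on $\mathcal{K}$, we get $(\va'',\vb'')=g(\va,\vb)$; and $\tilde G(\va'',\vb'')=\lim_k\tilde G(\pa{t_k+1},\pb{t_k+1})=G^\star$ as well, since the whole sequence $\tilde G(\pa{t},\pb{t})$ converges to $G^\star$. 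Hence $\tilde G(g(\va,\vb))=\tilde G(\va,\vb)$, and the equality case above forces $g(\va,\vb)=(\va,\vb)$. Unwinding $g(\va,\vb)=(\va,\vb)$ with $\vfuncp\triangleq\E\w_d(\Y-\va,\vb)$ and $\vfuncq\triangleq\E\w_d(\Y-\va,\vb)\Y$ yields exactly the four displayed equations.

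The main obstacle I expect is this last self-consistency step: passing to the limit in the update equations by itself only relates the cluster point to expectations potentially evaluated at a \emph{neighboring} cluster point, and it is the monotonicity of $\tilde G$ along the entire trajectory---not any contraction or convergence-rate estimate, which are unavailable in the degenerate regime this lemma is meant for---that pins the cluster point down as a genuine fixed point. The remaining ingredients, namely the interchange of limit and expectation and the uniform control of $2\pp{t+1}(1-\pp{t+1})$ over $\mathcal{K}$, are routine consequences of the boundedness already established in Lemma~\ref{lemma:upperboundEMestimates}.
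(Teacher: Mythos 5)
Your proof is correct, and it reaches the fixed-point equations by a genuinely different route than the paper in the one step that carries all the difficulty. Both arguments start from the same EM ascent inequality $L(\pm{,t+1})\ge L(\pm{,t})+D(\pm{,t+1},\pm{,t})$ and the boundedness of the iterates from Lemma~\ref{lemma:upperboundEMestimates}. The paper then extracts from $D(\pm{,t+1},\pm{,t})\to 0$ the statement $\|\pm{,t+1}-\pm{,t}\|\to 0$, which requires a fairly delicate conversion (a second-order Taylor expansion of $\ln$, truncation to $\{\|\Y\|\le M\}$, a uniform lower bound on the posteriors over the compact set of iterates, and Cauchy--Schwarz); with vanishing successive differences, the subsequence of successors converges to the same cluster point and one passes to the limit in the update equations. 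You instead keep only the monotone convergence of $G$ along the trajectory, use continuity of the one-step map $g$ on the compact set (so the successors converge to $g(\va,\vb)$ rather than, a priori, to $(\va,\vb)$), and then pin down $g(\va,\vb)=(\va,\vb)$ via the equality case of the ascent inequality: equality forces $Q(g(\veta)\mid\veta)=Q(\veta\mid\veta)$, and since $Q(\cdot\mid\veta)$ is a negative-definite quadratic in the means with a unique maximizer, $\veta$ must equal that maximizer. This is the classical Zangwill-style argument; it trades the paper's technical truncation lemma for the (easily verified, model-specific) strict concavity and unique maximizer of the M-step objective, and in that sense is cleaner here, though slightly less portable to models where the M-step maximizer is not unique. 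Your identification of the self-consistency step as the real obstacle, and your handling of the denominator $2\pp{t+1}(1-\pp{t+1})$ via compactness, are both on target.
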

\begin{proof}
Here is a summary of our strategy to prove this result. We first prove that $\|\pa{t+1}-\pa{t}\| \rightarrow 0$ and $\|\pb{t+1} - \pb{t}\| \rightarrow 0$ as $t \rightarrow \infty$. Then we use the following simple argument to prove that in fact the clustering points must satisfy the above fixed point equations. Suppose that $(\va, \vb)$ is an accumulation point. Then there is a subsequence $\{(\pa{t_i}, \pb{t_i})\}_{i=1}^\infty$ that converges to $(\va, \vb)$. Since we have $\|\pa{t+1}-\pa{t}\| \rightarrow 0$ and $\|\pb{t+1} - \pb{t}\| \rightarrow 0$, we can simply argue that $\{(\pa{t_i+1}, \pb{t_i+1})\}_{i=1}^\infty$ also converges to $(\va, \vb)$. We know that
\begin{eqnarray*}
\pa{t_i+1}&=&\frac{\pq{t_i}(1-2\pp{t_i})}{2\pp{t_i}(1-\pp{t_i})} , \\
\pb{t_i+1}&=&\frac{\pq{t_i}}{2\pp{t_i}(1-\pp{t_i})}, \\
\pq{t_i+1} &=&\mathbb{E} \w_d(\Y-\pa{t}, \pb{t})\Y  \\
\pp{t_i+1}&=&\mathbb{E} \w_d(\Y-\pa{t}, \pb{t}).
\end{eqnarray*}
By taking the limit $i \rightarrow \infty$ from both sides of the above equations we obtain the fixed point equations. Hence, the rest of the section is devoted to the proof of $\|\pa{t+1}-\pa{t}\| \rightarrow 0$ and $\|\pb{t+1} - \pb{t}\| \rightarrow 0$. The technique we us to prove this claim was first developed in [2]. Since $\pa{t}=(\pm{1, t}+\pm{2, t})/2$ and $\pb{t}=(\pm{2, t}-\pm{1, t})/2$, we only need to prove that $\|\pm{1, t+1}-\pm{1, t}\| \rightarrow 0$ and $\|\pm{2, t+1} - \pm{2, t}\| \rightarrow 0$.

Define the following notion of distance between two parameter vectors:
\begin{eqnarray}
D(\veta, \vnu)&=&-\bbE f(z|\Y; \vnu) \sum_{z}\ln{(\frac{f(z|\Y ; \veta)}{f(z|\Y; \vnu)})}, \n
\end{eqnarray}
where $f(\cdot)$ indicates corresponding pdf.
Let $\pm{, t}$ is a shorthand for $(\pm{1, t}, \pm{2, t})$. As the first step of our proof we would like to show that $D(\pm{, t+1}, \pm{, t}) \rightarrow 0$. From \eqref{eq:populEMgeneric}, we have
\begin{eqnarray}
Q_{f} (\veta| \vnu) &=& \bbE \sum_{z}f(z|\Y;\vnu)\ln{(f(z, \Y;\veta))}\ =\ \bbE \ln{(f(\Y;\veta))}+\bbE \sum_{z}f(z|\Y;\vnu)\ln{(f(z|\Y;\veta))}\n\\
&=&\bbE \ln{(f(\Y;\veta))}-D(\veta, \vnu)+H(\vnu, \vnu)\n\\
&=&L(\veta)-D(\veta, \vnu)+H(\vnu, \vnu), \n
\end{eqnarray}
where
\begin{eqnarray}
L(\veta) &\triangleq& \bbE \ln{(f(\Y|\veta))}\ =\ \bbE \ln{(\frac{1}{2}\phi_d(\Y-\veta_1)+\frac{1}{2}\phi_d(\Y-\veta_2))}\n\\
 &\leq& -\frac{d}{2}\ln{2\pi}, \label{eq:likelihoodbound}\\
H(\veta, \vnu) &\triangleq& \bbE \sum_zf(z|y;\vnu)\ln{(f(z|y;\veta))}.
\end{eqnarray}
Hence,
\[
\pm{, t+1}\ =\ \text{argmax}_{\boldsymbol{\mu}'} Q_f(\boldsymbol{\mu}' | \pm{, t})\ =\ \text{argmax}_{\boldsymbol{\mu}'}\{L(\boldsymbol{\mu}')-D(\boldsymbol{\mu}', \pm{, t})\}.
\]
Note that every estimate of Population EM is obtained in a trade-off between maximizing the expected log-likelihood and minimizing the distance between the two consecutive estimates. First note that
\begin{equation}\label{eq:comparetwolike}
L(\pm{, t+1}) - D(\pm{, t+1}, \pm{, t})\ \geq\ L(\pm{, t}) - D(\pm{, t}, \pm{, t})\ =\ L(\pm{, t})
\end{equation}
Hence, $L(\pm{, t+1}) \geq L(\pm{, t}) + D(\pm{, t+1}, \pm{, t})$. Therefore, $\{L(\pm{, t})\}$ is a non-decreasing sequence. Since according to \eqref{eq:likelihoodbound}, $L(\boldsymbol{\mu})$ is upper bounded, thus $\{L(\pm{, t})\}_t$ converges.  Also, according to \eqref{eq:comparetwolike} we have
\begin{equation}
0\ \leq\ D(\pm{, t+1}, \pm{, t}) \leq L(\pm{, t+1}) -L(\pm{, t}) \rightarrow 0, \ \ \ {\rm as} \ \ t \rightarrow \infty.\n
\end{equation}
This implies that $$\{D(\pm{, t+1}, \pm{, t})\}\rightarrow 0, \ \text{as}\ t\rightarrow\infty.$$
Note that $D(\cdot, \cdot)$ is a measure of discrepancy between its two arguments. However, our goal is to show that the Euclidean distance between $\pm{, t}$ and $\pm{, t+1}$ goes to zero. The rest of the proof is devoted to this claim. Since,
\begin{eqnarray}
\pm{, t} &=& \frac{\mathbb{E} f(z| \Y; \pm{, t-1}) \Y}{\mathbb{E} f(z| \Y; \pm{, t-1})}, \n\\
\pm{, t+1} &=& \frac{\mathbb{E} f(z| \Y; \pm{, t}) \Y}{\mathbb{E} f(z| \Y; \pm{, t})}, \n
\end{eqnarray}
in order to prove $\|\pm{, t+1} - \pm{, t}\| \rightarrow 0$, we should show that $\forall z\in\{1, 0\}$
\[\|\mathbb{E} f(z| \Y; \pm{, t+1})\Y- \mathbb{E} f(z| \Y; \pm{, t}) \Y\| \rightarrow 0, \]
and
\[|\mathbb{E} f(z| \Y; \pm{, t+1})- \mathbb{E} f(z| \Y; \pm{, t})| \rightarrow 0.\]
If we define $\psi(x)=-\ln{x}+x-1$, then
\begin{eqnarray}
D(\veta, \vnu)= \bbE\sum_z\psi(\frac{f(z|\Y;\veta)}{f(z|\Y;\vnu)})f(z|\Y;\vnu), \label{equ:defD}
\end{eqnarray}
Since $\psi(x) >0$ for every value of $x>0$, the fact that $D(\pm{, t+1}, \pm{, t}) \rightarrow 0$ implies that $\forall z\in\{1, 0\}$
\begin{eqnarray}
\bbE\psi(\frac{f(z|\Y;\pm{, t+1})}{f(z|\Y;\pm{, t})})f(z|\Y; \pm{, t})&\rightarrow& 0, \ \text{as}\ t\rightarrow\infty.\label{equ:converge1}
\end{eqnarray}
Hence, for all $z\in\{1, 0\}$
\begin{eqnarray}
&&\bbE \psi(\frac{f(z|\Y;\pm{, t+1})}{f(z|\Y; \pm{, t})})f(z|\Y;\pm{, t})\n\\
&=&\bbE\left\{(f(z|\Y;\pm{, t+1})-f(z|\Y;\pm{, t}))-\left(\ln{f(z|\Y;\pm{, t+1})}-\ln{f(z|\Y;\pm{, t})}\right)f(z|\Y; \pm{, t})\right\}\n\\
&\overset{(i)}{=}&\bbE \frac{f(z|\Y;\pm{, t})}{2\xi^2}(f(z|\Y; \pm{, t+1})-f(z|\Y;\pm{, t}))^2\n\\
&\overset{(ii)}{\geq}&\bbE f(z|\Y;\pm{, t})(f(z|\Y;\pm{, t+1})-f(z|\Y; \pm{, t}))^2\n\\
&\geq&\bbE f(z|\Y;\pm{, t})(f(z|\Y; \pm{, t+1})-f(z|\Y;\pm{, t}))^2I(\|\Y\|<M), \ \ \forall t, M>0.\n
\end{eqnarray}
where Equality (i) is the result of the Taylor expansion on $\ln X$ and $\xi$ is a number between $f(z|\Y;\pm{, t+1})$ and $f(z|\Y;\pm{, t})$ and Inequality (ii) holds for the fact that $$f(z|\Y;\pm{, t+1}), f(z|\Y;\pm{, t})\in (0, 1), \ \ \forall z\in\{1, 0\}.$$
Hence, with \eqref{equ:converge1}, we have
\begin{eqnarray}
\bbE f(z|\Y;\pm{, t})(f(z|\Y;\pm{, t+1})-f(z|\Y;\pm{, t}))^2I(\|\Y\|<M)\rightarrow 0, \ \text{as}\ t\rightarrow\infty, \ \forall M>0, z\in\{1, 0\}.\n
\end{eqnarray}
According to Lemma \ref{lemma:upperboundEMestimates} $\{(\pa{t_n}, \pb{t_n})\}_{n=1}^{\infty}$ is in a compact set and hence so is $\{\pm{, t}\}$. Since $f(z|\y; \pm{, t})$ is a continuous function of $\y$ and $\pm{, t}$ with $f(z|\y;\pm{, t})>0$ and compactness of $\{\pm{, t}\}$, there exists a constant $c$ only depending on $M$ such that $$f(z|\y;\pm{, t})>c, \ \ \forall \|\y\|<M, z\in\{1, 0\}, t>0.$$
Therefore, for all $M>0, z\in\{1, 0\}$, we have
\begin{eqnarray}
\bbE (f(z|\Y; \pm{, t+1})-f(z|\Y; \pm{, t}))^2I(\|\Y\|<M)\rightarrow 0, \ \text{as}\ t\rightarrow\infty.\label{equ:converge3}
\end{eqnarray}
Also, for all $t\geq 0, z\in\{1, 0\}$, we have
\begin{eqnarray}
\bbE (f(z|\Y;\pm{, t+1})-f(z|\Y;\pm{, t}))^2I(\|y\|\geq M)\leq \bbE I(\|\Y\|\geq M) \rightarrow 0, \ \text{as}\ M\rightarrow\infty.\label{equ:converge4}
\end{eqnarray}
With \eqref{equ:converge3} and \eqref{equ:converge4}, we have
\begin{eqnarray}
\bbE (f(z|\Y;\pm{, t+1})-f(z|\Y;\pm{, t}))^2&\rightarrow& 0, \ \text{as}\ t\rightarrow\infty, \ \ \forall z\in\{1, 0\}.\n
\end{eqnarray}
Therefore for all $z\in\{1, 0\}$, as $t\rightarrow\infty$, we have,
\begin{eqnarray}
\|\bbE (f(z|\Y;\pm{, t+1})-f(z|\Y;\pm{, t}))\Y\|&\leq& \sqrt{\bbE(f(z|\Y;\pm{, t+1})-f(z|\Y;\pm{, t}))^2\bbE \|\Y\|^2}\rightarrow 0, \n\\
|\bbE f(z|\Y;\pm{, t+1})-f(z|\Y;\pm{, t})|&\leq& \sqrt{\bbE(f(z|\Y;\pm{, t+1})-f(z|\Y;\pm{, t}))^2}\rightarrow 0.\n
\end{eqnarray}
Hence with compactness on sequence $\{\pm{, t}\}$, we have
\begin{eqnarray}
\|\pm{1, t+2}-\pm{1, t+1}\|=\left\|\frac{\bbE f(z=0|\Y;\pm{, t+1})\Y}{\bbE f(z=0|\Y;\pm{, t+1})}-\frac{\bbE f(z=0|\Y;\pm{, t})\Y}{\bbE f(z=0|\Y;\pm{, t})} \right\|\rightarrow 0, \ \text{as}\ t\rightarrow\infty, \n
\end{eqnarray}
and
\begin{eqnarray}
\|\pm{2, t+2}-\pm{2, t+1}\|=\left\|\frac{\bbE f(z=1|\Y;\pm{, t+1})\Y}{\bbE f(z=1|\Y;\pm{, t+1})}-\frac{\bbE f(z=1|\Y;\pm{, t})\Y}{\bbE f(z=1|\Y;\pm{, t})} \right\|\rightarrow 0, \ \text{as}\ t\rightarrow\infty.\n
\end{eqnarray}
This completes the proof of this lemma.
\end{proof}

\end{document}